\documentclass{amsart}
\usepackage[T1]{fontenc}

\usepackage{lmodern}

\usepackage{amssymb,amsmath,amsfonts,amsthm,epsfig,amscd,graphicx,tikz-cd}
\usepackage{stmaryrd,mathdots}
\usepackage{IEEEtrantools}
\usepackage[all,cmtip,poly]{xy}
\usepackage{svn}
\usepackage{mathrsfs}
 \usepackage[utf8]{inputenc}

 \usepackage[pdfusetitle]{hyperref}

\hypersetup{
  colorlinks = false,
}

\newtheorem{thm}[subsubsection]{Theorem}
\newtheorem{lemma}[subsubsection]{Lemma}
\newtheorem{lem}[subsubsection]{Lemma}

\newtheorem{cor}[subsubsection]{Corollary}

\newtheorem{prop}[subsubsection]{Proposition}

\newtheorem{atheorem}[subsection]{Theorem}

\newtheorem{defn}[subsubsection]{Definition}

\theoremstyle{remark}
\newtheorem{remark}[subsubsection]{Remark}
\newtheorem{rem}[subsubsection]{Remark}

\numberwithin{equation}{subsection}
\def\nummultline{\addtocounter{subsubsubsection}{1}\begin{multline}}
\def\anumequation{\addtocounter{subsection}{1}\begin{equation}}

\newif\iffinalrun
\iffinalrun
\else
\fi

\iffinalrun
  \newcommand{\need}[1]{}
  \newcommand{\mar}[1]{}
\else
  \newcommand{\need}[1]{{\tiny *** #1}}
  \newcommand{\mar}[1]{\marginpar{\raggedright\tiny #1}}
\fi

\newcommand{\A}{\AA}
\def\C{\CC}
\newcommand{\F}{\FF}

\newcommand{\Q}{\QQ}
\newcommand{\R}{\RR}
\newcommand{\Z}{\ZZ}

\newcommand{\p}{\frakp}

\renewcommand{\AA}{{\mathbb A}}

\newcommand{\CC}{{\mathbb C}}

\newcommand{\FF}{{\mathbb F}}
\newcommand{\GG}{{\mathbb G}}

\newcommand{\QQ}{{\mathbb Q}}
\newcommand{\RR}{{\mathbb R}}

\newcommand{\ZZ}{{\mathbb Z}}

\newcommand{\bG}{\ensuremath{\mathbf{G}}}

\renewcommand{\bf}{\ensuremath{\mathbf{f}}}

\newcommand{\cF}{{\mathcal F}}

\newcommand{\cH}{{\mathcal H}}

\newcommand{\cO}{{\mathcal O}}

\newcommand{\cV}{{\mathcal V}}

\newcommand{\cX}{{\mathcal X}}

\newcommand{\cZ}{{\mathcal Z}}

\newcommand{\rB}{{\mathrm{B}}}

\newcommand{\rG}{{\mathrm{G}}}

\newcommand{\rN}{{\mathrm{N}}}

\newcommand{\rP}{{\mathrm{P}}}

\newcommand{\rT}{{\mathrm{T}}}
\newcommand{\rU}{{\mathrm{U}}}

\newcommand{\rW}{{\mathrm{W}}}

\newcommand{\frakp}{\mathfrak{p}}

\newcommand{\frakX}{\mathfrak{X}}

\newcommand{\Fp}{\F_p}

\newcommand{\Zp}{\Z_p}

\newcommand{\Qp}{\Q_p}

\newcommand{\DXG}[1][m]{\partial X^G_{K(m)}}

\DeclareMathOperator{\codim}{codim}

\DeclareMathOperator{\Ext}{Ext}

\DeclareMathOperator{\Map}{Map}
\DeclareMathOperator{\GL}{GL}

\DeclareMathOperator{\GSp}{GSp}

\DeclareMathOperator{\Hom}{Hom}

\DeclareMathOperator{\ord}{ord}

\DeclareMathOperator{\Spa}{Spa}
\DeclareMathOperator{\Spec}{Spec}

\DeclareMathOperator{\tr}{tr}

\newcommand{\HT}{\mathrm{HT}}

\newcommand{\id}{\mathrm{id}}

\newcommand{\et}{\mathrm{\acute{e}t}}

\newcommand{\toisom}{\xrightarrow{\sim}}

\newcommand{\mbf}{\mathbf}

\newcommand{\Gm}{\GG_m}

\newcommand{\Fl}{\mathscr{F}\!\ell}

\DeclareMathOperator{\Spd}{Spd}

\newcommand{\sD}{\mathscr{D}}

\newcommand{\tG}{\widetilde{G}}
\newcommand{\tK}{\widetilde{K}}
\newcommand{\tH}{\widetilde{H}}

\newcommand{\tP}{\widetilde{P}}

\newcommand{\Perf}{\mathrm{Perf}}

\newcommand{\ol}{\overline}
\newcommand{\ul}{\underline}
\newcommand{\wt}{\widetilde}

\newcommand{\sub}{\subseteq}

\newcommand{\oc}{\mathcal{O}_{C}}

\newcommand{\wh}{\widehat}
\newcommand{\ctX}{\widetilde{\mathcal{X}}}
\newcommand{\tX}{\widetilde{X}}
\newcommand{\ocX}{\overline{\mathcal{X}}}
\newcommand{\rFl}{\mathrm{Fl}}
\newcommand{\us}[1]{\underline{|#1|}}

\usepackage{colonequals}
\newcommand{\defeq}{\colonequals}

\newcommand{\suchthat}{\;\ifnum\currentgrouptype=16 \middle\fi|\;}

\renewcommand{\)}{\right)}

\newcommand{\BS}{\mathrm{BS}}
\newcommand{\BM}{\mathrm{BM}}

\begin{document}
\title{Vanishing theorems for Shimura varieties at unipotent level}
\author{Ana Caraiani, Daniel R. Gulotta, Christian Johansson}
\address{Department of
  Mathematics, Imperial College London,
  London SW7 2AZ, UK}
\email{caraiani.ana@gmail.com}
\address{Mathematical Institute, University of Oxford, Oxford OX2 6GG, UK}
\email{Daniel.Gulotta@maths.ox.ac.uk}
\address{Department of Mathematical Sciences, Chalmers University of Technology and the University of Gothenburg,
  SE-412 96, Sweden}
\email{chrjohv@chalmers.se}

\maketitle

\begin{abstract}
We show that the compactly supported cohomology of Shimura varieties of Hodge type of infinite $\Gamma_1(p^\infty)$-level (defined with respect to a Borel subgroup) vanishes above the middle degree, under the assumption that the group of the Shimura datum splits at $p$. This generalizes and strengthens the vanishing result proved in \cite{arizona}. As an application of this vanishing theorem, we prove a result on the codimensions of ordinary completed homology for the same groups, analogous to conjectures of Calegari--Emerton for completed (Borel--Moore) homology. 
\end{abstract}

\section{Introduction}

This paper proves a generalization of the main geometric result of \cite{arizona}, and gives an application to the bounds on the codimensions of ordinary completed cohomology groups for certain Shimura varieties. Along the way we prove results on finite group quotients of adic spaces and diamonds, and a Poincar\'e duality spectral sequence for ordinary completed cohomology, which we consider to be of independent interest. Before giving a brief introduction to our results, we refer the interested reader to the introduction of \cite{arizona} for further context.

\medskip

Fix a prime $p$. We give an overview of the setup, referring to the main text for details. Let $G$ be a connected reductive group over $\Q$ admitting a Shimura datum of Hodge type. Assume that $G$ is split at $p$ and choose a split model over $\Zp$. Choose a Borel subgroup $B$ of $G$ over $\Z_p$ and let $U\sub B$ be its unipotent radical. If $K \sub G(\Zp)$ is a compact open subgroup, we write $X_{K}$ for the complex Shimura variety for $G$ of level $K$ at $p$ and some fixed tame level\footnote{$K^p$ is assumed to be sufficiently small in a way that we make precise in \S \ref{subsec: anticanonical tower}.} $K^p \sub G(\A_f^p)$, viewed as an algebraic variety. We write $X_K(\C)$ for the corresponding complex manifold. We may state our main vanishing theorem as follows:

\begin{atheorem}[Corollary \ref{strongest vanishing theorem}, Remark \ref{remark on main thm}]\label{main introduction}
Let $d$ be the complex dimension of the Shimura varieties for $G$. Let $H \sub U(\Zp)$ be a closed subgroup.
Then
\[
 \varinjlim_{K \supseteq H} H^i_c(X_{K}(\CC), \Z /p^r ) = 0
\]
for all $r\geq 1$ and all $i>d$.
\end{atheorem}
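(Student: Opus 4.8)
The plan is to extend the strategy of \cite{arizona}, which treats the case $H=U(\Zp)$, working one further profinite quotient $\cX_\infty/H$ down and in Hodge-type generality. First I would reduce to adic spaces and infinite level: by Artin's comparison theorem, the comparison between the \'etale cohomology of a variety over a $p$-adic field and that of its adic analytification, and invariance under the base change $\Qpbar\to\Cp$, one has $H^i_c(X_K(\CC),\Z/p^r)\cong H^i_{c,\et}(X_{K,\Cp}^{\mathrm{ad}},\Z/p^r)$ compatibly in $K$. Fixing the tame level $K^p$, let $\cX_\infty$ be the perfectoid Shimura variety at infinite level at $p$ and $\cX_\infty^*$ its minimal compactification, which in the Hodge-type case --- together with the Hodge--Tate period map --- are produced by embedding into a Siegel Shimura variety and pulling back the constructions of Scholze and Caraiani--Scholze. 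Using the theory of quotients of locally spatial diamonds by profinite groups developed earlier in the paper, form the diamonds $\cX_\infty/H$ and $\cX_\infty^*/H$; and using that compactly supported \'etale cohomology commutes with cofiltered inverse limits of qcqs diamonds along qcqs transition maps, identify $\varinjlim_{K\supseteq H}H^i_{c,\et}(X_{K,\Cp}^{\mathrm{ad}},\Z/p^r)\cong H^i_c(\cX_\infty/H,\Z/p^r)$. It then suffices to prove this vanishes for $i>d$.

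For the case $H=1$ I would recover (the method of) \cite{arizona}. Via the Hodge--Tate period morphism $\pi_{\HT}\col\cX_\infty^*\to\Fl$, with $\Fl=\Fl_{G,\mu^{-1}}$ the relevant partial flag variety (of dimension $d$), choose a finite affinoid cover $\{V_a\}$ of $\Fl$, closed under intersection, such that each $W_a\defeq\pi_{\HT}^{-1}(V_a)\subseteq\cX_\infty^*$ is affinoid perfectoid and the resulting \v{C}ech complex has amplitude $\le d$ (which one can arrange for a flag variety). Since affinoid perfectoid spaces over $\Cp$ have vanishing higher $\Z/p^r$-cohomology --- deduced from the almost acyclicity of $\cO^+/p$ and the primitive comparison theorem --- the \v{C}ech spectral sequence gives $H^i(\cX_\infty^*,\Z/p^r)=0$ for $i>d$. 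By induction on $d$ --- the boundary strata of $\cX_\infty^*$ are finite unions of infinite-level Shimura varieties, and their minimal compactifications, for the Levi quotients of proper parabolics of $G$, which again split at $p$ and have strictly smaller dimension --- one obtains $H^j(\partial\cX_\infty^*,\Z/p^r)=0$ for $j\ge d$, and then the excision triangle $R\Gamma_c(\cX_\infty,\Z/p^r)\to R\Gamma(\cX_\infty^*,\Z/p^r)\to R\Gamma(\partial\cX_\infty^*,\Z/p^r)\xrightarrow{+1}$ yields $H^i_c(\cX_\infty,\Z/p^r)=0$ for $i>d$.

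To incorporate the quotient by $H$, arrange the cover $\{V_a\}$ to be $U(\Zp)$-stable and compatible with the stratification of $\Fl$ by $U(\Zp)$-orbits (Bruhat/Schubert cells). Then $\cX_\infty^*/H$ is covered by the quotients $W_a/H$ of the affinoid perfectoid pieces $W_a$ by the free profinite action of $H$ (free since $H\subseteq U(\Zp)$ is torsion-free and $K^p$ is sufficiently small), and one computes $R\Gamma(W_a/H,\Z/p^r)$ by continuous cohomology of $H$ acting on $R\Gamma(W_a,\Z/p^r)$, using the $U$-equivariant cell structure of $\Fl$ to control its amplitude stratum by stratum. Running the combined \v{C}ech, stratification and $H$-cohomology spectral sequence over $\cX_\infty^*/H$ and over its boundary --- where the boundary induction is now carried out at the induced $H$-levels for the Levi Shimura data --- together with the excision triangle for $\cX_\infty/H\hookrightarrow\cX_\infty^*/H$, would complete the argument.

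The hard part, where I expect essentially all of the genuine work to lie, is precisely to show that the $\Z/p^r$-cohomological amplitude of $\cX_\infty/H$ (and of its boundary) is still $\le d$ rather than $\le d+\dim H$: a naive combination of the \v{C}ech and $H$-cohomology spectral sequences over-counts, and one must use more precisely that $H$ sits inside the unipotent radical --- concretely, that on each Bruhat cell the effective part of the $H$-action is by translations of an affine space, so that the relevant quotients stay perfectoid-like and the spectral sequences degenerate in the expected range, the part of $H$ acting trivially on a given cell being the delicate point. Secondary technical obstacles are the Hodge-type constructions (the perfectoid minimal, and toroidal, compactifications and $\pi_{\HT}$ via Siegel embeddings) and making the profinite-quotient formalism for diamonds interact correctly with compactly supported cohomology, with $\pi_{\HT}$, and with the stratification.
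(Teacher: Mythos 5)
Your proposal correctly identifies the ambient framework (pass to infinite level, use the Hodge--Tate period map, exploit perfectoidness of preimages of suitable opens in $\Fl$, and combine spectral sequences), but it leaves unproved exactly the point on which the theorem lives: why the $\Z/p^r$-cohomological amplitude of the $H$-quotient is $\le d$ rather than $\le d+\dim H$. You flag this yourself as ``the hard part,'' and nothing in the proposal supplies it. The paper's resolution is a precise interplay between the Bruhat stratification of $\Fl_{G,\mu}$ by $\ol{B}$-orbits and the structure of $H$ inside $\ol{U}(\Zp)$: over a point of the $w$-cell $\Fl^w_{G,\mu}$, the fiber of the descended period map is already affinoid perfectoid (with Zariski closed boundary) at the intermediate level $H_w = H\cap \ol{U}_w(\Zp)$, because $\ol{U}_w\subseteq \ol{P}_{\mu,w}$ and quotients of the anticanonical tower by closed subgroups of $\ol{P}_{\mu,w}(\Zp)$ stay affinoid perfectoid (Theorem \ref{Noncanonical locus}, Corollary \ref{General noncanonical loci}, combined with the stabilizer and finite-quotient results of \S\ref{sec: prelim}); only the quotient $H/H_w$, filtered by the nilpotency of $\ol{U}$ with graded pieces $\Zp^{d_i}$, contributes continuous cohomology, of total dimension $\dim\ol{U}-\dim\ol{U}_w$; and the dimension count Lemma \ref{dimension count} says $\dim\ol{U}-\dim\ol{U}_w = d-\dim\Fl^w_{G,\mu}$, which, fed into the Leray spectral sequence for $\pi_H$ together with Scheiderer's bound on the cohomological dimension of the ($<r$)-dimensional support $|\ol{Y}_r|/H$, gives exactly $d$. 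Your heuristic that ``the effective part of the $H$-action on each cell is by translations of an affine space'' gestures at this, but the actual mechanism is the perfectoidness of the fiber at level $H_w$ (not a statement about the action on the cell), plus the dimension count; without these two inputs the argument does not close.

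A second concern is your treatment of compact supports. You propose to work with the minimal compactification and an excision triangle, handling the boundary by induction on infinite-level Shimura varieties for Levi quotients ``at the induced $H$-levels.'' For Hodge type data this is not available off the shelf: the boundary of the minimal compactification at infinite level is not cleanly a union of infinite-level Levi Shimura varieties with controllable $H$-actions, and setting this up (perfectoidness, period maps, and the quotient formalism on the boundary) would be a substantial project in itself. The paper deliberately avoids it by working with the ad hoc compactification $\ocX$ (pulled back from the Siegel minimal compactification) and the sheaf $j_!\cO^+_{\cX}/p$, where the only input needed about the boundary is that it is Zariski closed inside the relevant affinoid perfectoid pieces, so that $H^i_{\et}(-,j_!(\cO^+/p))^a$ vanishes in positive degrees; no induction on boundary strata or Levi data is needed. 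If you want to salvage your route, you would need either to carry out that boundary analysis in Hodge type, or to switch to the paper's $j_!$-formulation; and in either case the core of the proof remains the fiber analysis and dimension count described above.
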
 

The cohomology groups here are singular cohomology with compact support. This theorem generalizes \cite[Theorem 1.1.2]{arizona} in two ways. First, that theorem only treats Shimura varieties for quasi-split (general) unitary and symplectic groups over totally real fields --- here we generalize this to all Hodge type Shimura varieties (in both cases assuming the same splitness conditon at $p$). Second, in \cite[Theorem 1.1.2]{arizona} we require the subgroup $H$ to be contained in the $\Zp$-points of the unipotent radical of the Siegel parabolic; this is a stronger assumption than the containment $H\sub U(\Zp)$ in Theorem \ref{main introduction}.

\medskip

The method of proof is a variation of that of \cite[Theorem 1.1.2]{arizona}, and we refer to the introduction of \cite{arizona} for a more elaborate sketch. Choosing an embedding $\C \hookrightarrow C$ into algebraically closed non-archimedean field $C$ we may view base change the $X_K$ to $C$ and then analytify to get rigid analytic varieties $\cX_K$. As in \cite{scholze-galois}, we make use of compactifications $\ocX_K$ of the $\cX_K$ which are closely related to the minimal compactifications. Through a string of comparison theorems, one reduces Theorem \ref{main introduction} to proving $H_{\et}^i(\ocX, j_! (\cO_{\cX}^+/p))^a = 0$ for $i>d$, where 
\[
j : \cX_H \defeq \varprojlim_{K\supseteq H} \cX_{K} \hookrightarrow \ocX_H \defeq \varprojlim_{K \supseteq H} \ocX_{K}
\]
is the inclusion, the inverse limits are taken as diamonds in the sense of \cite{diamonds}, and $-^a$ denotes the corresponding almost module with respect to $\cO_C$ and its maximal ideal. As in \cite{arizona}, $H_{\et}^i(\ocX_H, j_! (\cO_{\cX}^+/p))^a$ is analyzed using the Leray spectral sequence for a descent $\pi$ of the Hodge--Tate period map which goes from $\ocX_H$ to a quotient of a partial flag variety $\Fl_{G,\mu}$ for $G$, and the ``Bruhat'' stratification of $\Fl_{G,\mu}$ into Schubert cells for the action of $B$. 

Apart from the fact that we treat more general Shimura varieties, there are two principal differences between the argument presented here and that of \cite{arizona} that we wish to point out. The first is that we need to adapt the results of \cite[\S 4]{arizona} on the Schubert cells for the Siegel parabolic on $\Fl_{G,\mu}$ to the Schubert cells for $B$. In fact, it turns out that the arguments flow more naturally in this setting. The second is that our analysis of the fibers of $\pi$ uses some new and different techniques. The argument in \cite{arizona} relies heavily on a general result about the existence of invariant rational neighborhoods for profinite group actions on affinoid adic spaces (\cite[Proposition 5.2.1]{arizona}). Here, we instead make use of some new results on quotients of diamonds by finite groups, which we consider to be of independent interest. A corollary is a strengthening of \cite[Theorem 1.4]{hansen} showing that quotients of affinoid perfectoid spaces are affinoid perfectoid; see Theorem \ref{quotients of aff perf spaces}. We discuss the differences between our argument and the argument of \cite{arizona} in more detail in the introduction to \S \ref{sec: vanishing theorem} and in Remark \ref{differences}.

\medskip

We give one application of Theorem \ref{main introduction} in this paper. Hida's theory of the \emph{ordinary projector} and \emph{ordinary automorphic forms} has played a key role in the $p$-adic study of automorphic forms since its introduction in the 1980's. Hida's constructions come in different flavors, with the most general being in terms of the singular cohomology of locally symmetric spaces. It was later realized by Emerton \cite{emerton-ord1} that the ordinary projector is closely related to the right adjoint of the parabolic induction functor in the mod $p$ and $p$-adic representation theory of $p$-adic reductive groups. Moreover, Hida's construction can be recovered\footnote{This statement needs some care to be made precise. Since this statement is only for context and will not be used in the paper, we will not elaborate on it.} by applying this right adjoint to Emerton's \emph{completed cohomology}, which plays a prominent role in the $p$-adic Langlands program at present (see \cite{calegari-emerton,emerton-icm} for surveys). 

In this paper we follow Hida's approach and look at the ordinary ``completed''\footnote{As is now common, we will occasionally use the term ``completed (co)homology'' to refer to more general (co)limits of (co)homology groups of locally symmetric spaces, in the spirit of the constructions in \cite{calegari-emerton}).} (Borel--Moore) homology groups
\[
\tH^{(\BM),\mathrm{ord}}_i \defeq \varprojlim_{K \supseteq U(\Zp)} H^{(\BM)}_i(X_{K}(\C),\Zp)^{\mathrm{ord}},
\]
where on the right hand side $H_i^{(\BM)}$ denotes $i$-th (Borel--Moore) homology and the superscript $-^{\mathrm{ord}}$ denotes the ordinary part (which is the image of the ordinary projector, i.e. the direct summand where certain $U_p$-like operators for $G$ act invertibly\footnote{We refer to \S~\ref{subsec:ordinary parts} for a precise definition.}). These are direct summands of the corresponding completed cohomology groups
\[
\tH^{(\BM)}_i(U(\Zp)) \defeq \varprojlim_{K \supseteq U(\Zp)} H^{(\BM)}_i(X_{K}(\C),\Zp)
\]
at unipotent level. The module $\tH_i(U(\Zp))$ is dual, in an appropriate sense, to the direct limits appearing in Theorem \ref{main introduction} for $H = U(\Zp)$; see~\eqref{eq:precise dual} for the precise statement. If $T \sub B$ is a maximal split torus, then $\tH^{(\BM),\mathrm{ord}}_i$ is a finitely generated module over the Iwasawa algebra $\mathscr{D}(T_0)$, where $T_0 \defeq T(\Zp)$. Our second main result is then the following.

\begin{atheorem}[Theorem \ref{homology vanishing}]\label{codimensions thm intro}
We have the following:
\begin{enumerate}
\item $\tH^{\BM,\mathrm{ord}}_{i}=0$ for $i >d$. In fact more is true: Let $H \subseteq U(\Zp)$ be a closed subgroup.  Then
\[ \varprojlim_{K \supseteq H} H_i^{\BM}(X_{K}(\C),\Zp) = 0 \]
for $i>d$.

\smallskip

\item
We have
\[ \codim_{\mathscr{D}(T_0)} \tH_i^{\ord} \ge d-i \]
for all $0 \le i \le d$.
\end{enumerate}
\end{atheorem}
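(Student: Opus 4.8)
The plan is to deduce part~(1) from Theorem~\ref{main introduction} by Poincar\'e--Lefschetz duality applied level by level, and to deduce part~(2) from part~(1) by the Poincar\'e duality spectral sequence for ordinary completed homology together with the homological algebra of the Iwasawa algebra $\mathscr{D}(T_0)$. For part~(1), each $X_K(\C)$ is a complex manifold of real dimension $2d$, hence canonically oriented; Poincar\'e--Lefschetz duality, the fact that $\Z/p^r$ is self-injective (so universal coefficients over $\Z/p^r$ is exact), and a second application of duality combine to give a natural isomorphism $H_i^{\BM}(X_K(\C),\Z/p^r)\cong H^i_c(X_K(\C),\Z/p^r)^\vee$ (Pontryagin dual), under which, for $K'\subseteq K$, the proper pushforward along the finite (hence proper) map $X_{K'}\to X_K$ on Borel--Moore homology is dual to pullback on compactly supported cohomology. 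Since Pontryagin duality exchanges filtered colimits with cofiltered limits, $\varprojlim_{K\supseteq H}H_i^{\BM}(X_K(\C),\Z/p^r)\cong\big(\varinjlim_{K\supseteq H}H^i_c(X_K(\C),\Z/p^r)\big)^\vee$, which vanishes for $i>d$ by Theorem~\ref{main introduction}. As the $X_K(\C)$ have the homotopy type of finite CW complexes, the groups $H_i^{\BM}(X_K(\C),\Z_p)$ are finitely generated over $\Z_p$, so $H_i^{\BM}(X_K(\C),\Z_p)=\varprojlim_r H_i^{\BM}(X_K(\C),\Z/p^r)$ with vanishing $\varprojlim^1$; exchanging the two inverse limits gives $\varprojlim_{K\supseteq H}H_i^{\BM}(X_K(\C),\Z_p)=0$ for $i>d$, and for $H=U(\Zp)$ the summand $\tH_i^{\BM,\ord}$ of this limit vanishes as well.

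For part~(2), the Poincar\'e duality spectral sequence provides a perfect complex $C_\bullet$ of $\mathscr{D}(T_0)$-modules with $H_i(C_\bullet)\cong\tH_i^{\ord}$ --- perfectness encoding the finiteness over $\mathscr{D}(T_0)$ furnished by Hida's control theorem, which applies because the ordinary projector collapses all directions of the level tower except the one governed by $T_0$ --- together with an identification $H^j\big(\RHom_{\mathscr{D}(T_0)}(C_\bullet,\mathscr{D}(T_0))\big)\cong\tH_{2d-j}^{\BM,\ord}$; the shift is by the real dimension $2d$ and there is no orientation twist, since the Shimura varieties are complex. Writing $D^\bullet\defeq\RHom_{\mathscr{D}(T_0)}(C_\bullet,\mathscr{D}(T_0))$, part~(1) shows $D^\bullet$ is concentrated in cohomological degrees $\ge d$. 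Now $\mathscr{D}(T_0)$ is the Iwasawa algebra of a compact $p$-adic analytic group, hence Auslander--Gorenstein, so $\codim_{\mathscr{D}(T_0)}\Ext_{\mathscr{D}(T_0)}^p(M,\mathscr{D}(T_0))\ge p$ for every finitely generated $M$ and every $p\ge0$. Since $C_\bullet$ is perfect, biduality gives $C_\bullet\simeq\RHom_{\mathscr{D}(T_0)}(D^\bullet,\mathscr{D}(T_0))$, and the associated hypercohomology spectral sequence
\[
E_2^{p,q}=\Ext_{\mathscr{D}(T_0)}^p\big(H^{-q}(D^\bullet),\mathscr{D}(T_0)\big)=\Ext_{\mathscr{D}(T_0)}^p\big(\tH_{2d+q}^{\BM,\ord},\mathscr{D}(T_0)\big)\;\Longrightarrow\;\tH_{-(p+q)}^{\ord}
\]
has $E_2^{p,q}=0$ unless $2d+q\le d$, i.e.\ $q\le -d$; hence every term contributing to $\tH_i^{\ord}$ (the part of total degree $-i$) has $p=-i-q\ge d-i$, and therefore codimension $\ge d-i$. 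The same bound holds for every subquotient, in particular for each $E_\infty^{p,q}$, and hence for $\tH_i^{\ord}$, which admits a finite filtration whose graded pieces are among these. This yields $\codim_{\mathscr{D}(T_0)}\tH_i^{\ord}\ge d-i$ for all $0\le i\le d$.

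The main obstacle is not the homological algebra above --- which is routine once its inputs are in place --- but the construction of the Poincar\'e duality spectral sequence used at the start of part~(2): one must make sense of ordinary completed Borel--Moore homology, prove its finiteness over $\mathscr{D}(T_0)$, and, most delicately, check that Poincar\'e--Lefschetz duality is compatible with the transition maps of the tower \emph{and} interchanges the ordinary projector on homology with the ordinary projector on Borel--Moore homology, so that $\RHom_{\mathscr{D}(T_0)}(C_\bullet,\mathscr{D}(T_0))$ computes $\tH_{2d-\bullet}^{\BM,\ord}$ and not some other Hecke summand. Part~(1), by contrast, is essentially formal given Theorem~\ref{main introduction}.
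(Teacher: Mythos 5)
Your proposal is correct, and its part (1) is essentially the paper's argument: the paper proves the finite-level duality $H_{\bullet}^{\BM}(X_K,\Z/p^r)\simeq\Hom_{\Z/p^r}(H^{\bullet}_c(X_K,\Z/p^r),\Z/p^r)$ directly from the universal coefficient isomorphism for the Borel--Serre complexes (Lemma \ref{universal coefficient}) rather than via Poincar\'e--Lefschetz duality twice, but the content and the level-compatibility statement are the same, and your extra $\varprojlim_r$ step correctly bridges the $\Z/p^r$ formulation of Theorem \ref{homology vanishing}(1) and the $\Zp$ formulation of the introduction. For part (2) you take the same main input as the paper --- the ordinary Poincar\'e duality statement, i.e.\ Theorems \ref{thm:completed hom via distributions} and \ref{thm:ordinary PD}, whose construction you rightly identify as the real work and treat as given, exactly as the paper's own (very short) proof does --- but a genuinely different endgame. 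The paper runs the spectral sequence in the forward direction, $\Ext^i_{\sD(T_0)}(\tH_j(N_1)^{T^+-\ord},\sD(T_0))\Rightarrow\tH^{\BM}_{2d-i-j}(N_1)^{T^+-\ord}$, where the vanishing sits on the abutment, and then invokes the descending-induction/localization argument of \cite[Corollary 4.2.3]{scholze-galois} to show the lowest nonvanishing $\Ext$ would survive and contradict the vanishing. You instead dualize once more: biduality for the (derived-)perfect ordinary complex gives the reverse spectral sequence $\Ext^p_{\sD(T_0)}(\tH^{\BM,\ord}_{2d+q},\sD(T_0))\Rightarrow\tH^{\ord}_{-(p+q)}$, where part (1) kills $E_2$-terms outright, so only the elementary bound $\codim\Ext^p(M,\sD(T_0))\ge p$ and stability of codimension under subquotients and finite filtrations are needed; no survival argument is required. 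The price is the extra input that the ordinary complex is quasi-isomorphic to a perfect complex of $\sD(T_0)$-modules; this does follow from what the paper has (its homology $\tH_j(N_1)^{T^+-\ord}$ is finitely generated over $\sD(T_0)$, the complex is bounded, and $\sD(T_0)$ is a finite product of regular local rings, hence of finite global dimension), but you should state this deduction explicitly rather than attribute perfectness loosely to ``Hida's control theorem''; note that the paper's route needs only the finite generation, not perfectness or biduality. With that point made precise, your argument is a valid and arguably cleaner alternative to the citation of Scholze's inductive argument.
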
 

The codimension function may be defined as 
\[
\codim_{\mathscr{D}(T_0)}\tH_i^{\ord} = \inf_j \{j \mid \Ext^j_{\mathscr{D}(T_0)}(\tH_i^{\ord}, \mathscr{D}(T_0))\not = 0\};
\]
we discuss it further in \S \ref{subsec: application}. Theorem \ref{codimensions thm intro} is an analogue of an important conjecture of Calegari and Emerton for completed homology and completed Borel--Moore homology \cite[Conjecture 1.5]{calegari-emerton}. The analogue of Theorem \ref{codimensions thm intro} in that setting was proved by Scholze for Shimura varieties of Hodge type \cite[Corollary 4.2.3]{scholze-galois}. The main ingredients in the proof of Theorem \ref{codimensions thm intro} are Theorem \ref{main introduction} and a ``Poincar\'e duality'' spectral sequence relating ordinary completed Borel--Moore homology to ordinary completed homology, which we consider to be of independent interest. Such a result has previously been announced by Emerton, though relying on a different method than ours. 

The conjecture \cite[Conjecture 1.5]{calegari-emerton} is symmetric when swapping the roles of homology and Borel--Moore homology. However, we remark that one will need more care when formulating (conjectural) versions of Theorem \ref{codimensions thm intro} with homology and Borel--Moore homology swapped, or in analogus situations like eigenvarieties. When swapping homology and Borel--Moore homology, one has to factor in contributions from ordinary boundary homology. In particular, we remark that Theorem \ref{main introduction} fails if one replaces compactly supported cohomology with cohomology, and Theorem \ref{codimensions thm intro}(1) should fail if one replaces Borel--Moore homology with homology due to the presence of boundary homology. For eigenvarieties, conjectures on codimensions have been given by Urban \cite[Conjecture 5.7.5]{urban}, with partial results by Hansen and Newton \cite{hansen-thesis} that are somewhat orthogonal to our Theorem \ref{codimensions thm intro}.

\medskip

Let us now give short overview of the paper. Section \ref{sec: prelim} discusses results on quotients of diamonds by (pro)finite groups which are used in the analysis of the fibers of the Hodge--Tate period map. Section \ref{sec: shimura} introduces the Shimura varieties we consider in this paper and proves the perfectoidness results needed for Theorem \ref{main introduction}, and section \ref{sec: vanishing theorem} is devoted to the proof of Theorem \ref{main introduction}. Finally, section \ref{sec: ordinary parts} introduces ordinary completed (Borel--Moore) homology and proves Theorem \ref{codimensions thm intro}.

\subsection*{Acknowledgments} The authors wish to thank David Hansen, whose joint work in progress with one of us (C.J.) has had an impact on the material in \S \ref{sec: ordinary parts}. A.C. and D.G. wish to thank Matthew Emerton for discussions on ordinary parts and in particular for explaining his approach to the Poincar\'e duality spectral sequence. This paper arose as a continuation of \cite{arizona}, which in turn came out of A.C. and C.J.'s project group at the 2017 Arizona Winter School. We wish to thank the organizers of the A.W.S. again for the fantastic experience, and we wish to thank our collaborators from \cite{arizona} and all participants of the project group. We thank the anonymous referee for their many useful comments. 

This project has received funding from the European Research Council (ERC) under the European Union’s Horizon 2020 research and innovation programme
(grant agreement No.\ 804176). A.C.~was supported by a Royal Society University Research Fellowship and D.G.~was supported by Royal Society grant RP\textbackslash{}EA\textbackslash{}180020. 

\section{Preliminaries on diamonds}\label{sec: prelim}

In this section, we will prove some results on diamonds and v-sheaves that will be used later in the paper. We use the notation, terminology and conventions regarding diamonds and v-sheaves of \cite{diamonds}. Unless otherwise specified, v-sheaves are assumed to be on the category $\mathrm{Perf}$ of perfectoid spaces of characteristic $p$.

\subsection{Quotients of diamond spectra and perfectoid spaces by finite groups}
\begin{prop} \label{surjection finite level}
Let $A$ be a complete Tate $\Zp$-algebra with a continuous left action of a finite
group $G$, and let $A^+$ be an open and integrally closed subring of $A$
that is preserved by $G$.
Let $X = \Spd(A,A^+)$, $X_G = \Spd(A^G,A^{+G})$.
Then $X \times \underline{G} \rightrightarrows X$
is a presentation of $X_G$ as a v-sheaf.
\end{prop}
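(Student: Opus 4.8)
The plan is to prove that the map $q\colon X\to X_G$ of v-sheaves induced by the inclusion $(A^G,A^{+G})\hookrightarrow(A,A^+)$ exhibits $X_G$ as the coequalizer of the two maps $X\times\underline{G}\rightrightarrows X$ (projection and action); equivalently, that $q$ identifies $X_G$ with the v-sheaf quotient $X/\underline{G}$. The map $q$ is $\underline{G}$-invariant, since for $g\in G$ the composite $A^G\hookrightarrow A\xrightarrow{g}A$ is again the inclusion, so $q$ factors through the coequalizer $X/\underline{G}$, which is the v-sheafification of $S\mapsto X(S)/\underline{G}(S)$. It therefore remains to show that the induced map $X/\underline{G}\to X_G$ is (i) surjective and (ii) injective as a map of v-sheaves. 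I may test everything on affinoid perfectoid $S=\Spa(R,R^+)$; a point of $X(S)$ is then an untilt $S^\sharp$ of $S$ together with a continuous homomorphism $f\colon(A,A^+)\to(\mathcal{O}_{S^\sharp}(S^\sharp),\mathcal{O}^+_{S^\sharp}(S^\sharp))$. Two elementary facts will be used repeatedly: $A$ is integral over $A^G$ (each $a\in A$ is a root of $\prod_{g\in G}(T-ga)\in A^G[T]$), and $A^G$ is again a complete Tate ring (if $\varpi\in A$ is a pseudo-uniformizer, then $\prod_{g\in G}g\varpi\in A^G$ is one).

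For surjectivity (i): the v-sheaves $X=\Spd(A,A^+)$ and $X_G=\Spd(A^G,A^{+G})$ are qcqs and spatial, with underlying spectral spaces the Huber spectra $\Spa(A,A^+)$ and $\Spa(A^G,A^{+G})$ (see \cite{diamonds}). Since $A$ is integral over $A^G$, any continuous valuation on $A^G$ extends to a continuous valuation $w$ on $A$, and automatically $w\le 1$ on $A^+$ because $A^+\subseteq A^\circ$; hence $\Spa(A,A^+)\to\Spa(A^G,A^{+G})$ is surjective. Thus $q$ is a surjective qcqs map of spatial v-sheaves and therefore a v-cover by \cite{diamonds}, so $X/\underline{G}\to X_G$ is surjective.

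For injectivity (ii): suppose $x_1,x_2\in X(S)$ have $q(x_1)=q(x_2)$, i.e. there is a common untilt $S^\sharp$ and continuous maps $f_1,f_2\colon A\to\mathcal{O}_{S^\sharp}(S^\sharp)$ with $f_1|_{A^G}=f_2|_{A^G}$; I must produce a v-cover of $S$ over which $x_1$ and $x_2$ become $\underline{G}$-conjugate. For $g\in G$ let $S_g\hookrightarrow S$ be the closed sub-v-sheaf cut out by the equations $f_1(ga)=f_2(a)$, $a\in A$ — it is the equalizer of the two maps $x_1\cdot g,\,x_2\colon S\rightrightarrows X$, hence a closed immersion because the diagonal $X\hookrightarrow X\times X$ is one — so that over $S_g$ the points $x_1$ and $x_2$ lie in the same $\underline{G}$-orbit by construction. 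The key point is that $\bigsqcup_{g\in G}S_g\to S$ is surjective, which I would verify on geometric points. Given $\Spa(C,\mathcal{O}_C)\to S$ with $C$ an algebraically closed perfectoid field, one obtains $\bar f_1,\bar f_2\colon A\to C^\sharp$ into the algebraically closed field $C^\sharp$, agreeing on $A^G$, with prime kernels $\mathfrak{q}_1,\mathfrak{q}_2$ over a common prime $\mathfrak{p}=\mathfrak{q}_i\cap A^G$. By the classical Galois theory of invariant subrings (cf.\ Bourbaki, \emph{Alg\`ebre commutative}, Ch.~V, \S 2): $G$ acts transitively on the primes of $A$ over $\mathfrak{p}$, so some $g_0$ has $g_0\mathfrak{q}_2=\mathfrak{q}_1$; the residue extension $\kappa(\mathfrak{q}_2)/\kappa(\mathfrak{p})$ is algebraic and normal, so the two embeddings $\bar f_2$ and $\bar f_1\circ g_0$ of $\kappa(\mathfrak{q}_2)$ into $C^\sharp$, which agree on $\kappa(\mathfrak{p})$, differ by some $\tau\in\Aut(\kappa(\mathfrak{q}_2)/\kappa(\mathfrak{p}))$; and the decomposition group $\{g\in G:g\mathfrak{q}_2=\mathfrak{q}_2\}$ surjects onto $\Aut(\kappa(\mathfrak{q}_2)/\kappa(\mathfrak{p}))$, so lifting $\tau$ to an element $h$ of this group gives $\bar f_2=\bar f_1\circ(g_0h)$. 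Hence $\bigsqcup_{g}S_g\to S$ is a surjective qcqs map of spatial v-sheaves, so a v-cover, over which $x_1$ and $x_2$ differ by the locally constant $G$-valued function equal to $g$ on the component $S_g$. This gives (ii), and together with (i) finishes the proof.

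I expect the main obstacle to be the diamond-theoretic bookkeeping rather than the algebra: one needs from \cite{diamonds} that $\Spd$ of a complete Tate ring over $\Zp$ is a qcqs spatial v-sheaf with the expected underlying space, that a surjective qcqs map of spatial v-sheaves is a v-cover, and that the closed subfunctors $S_g$ behave as used — points that are a little delicate here because $A$ need not be sheafy. The residue-field step also requires attention to left/right conventions (how the decomposition group acts on $\kappa(\mathfrak{q}_2)$ versus how $g_0$ identifies $\kappa(\mathfrak{q}_2)$ with $\kappa(\mathfrak{q}_1)$). But the substantive input — and the only place where the finiteness of $G$ is indispensable — is the classical package of transitivity of $G$ on primes together with surjectivity of the decomposition group onto residue-field automorphisms.
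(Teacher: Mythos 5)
Your overall skeleton matches the paper's proof: reduce to (a) surjectivity of $X\to X_G$ and (b) the statement that two maps $(A,A^+)\to(K,K^+)$ into a perfectoid field agreeing on $(A^G,A^{+G})$ are related by an element of $G$, using quasicompactness/spatiality and \cite[Lemma 12.11, Prop.~11.13]{diamonds} to pass between v-surjectivity and statements about points. Your step (ii) is essentially the paper's: the paper quotes \cite[Thm.~V.4.1(iii)]{SGA31} for the surjectivity of $\Spec A\times\underline{G}\to\Spec A\times_{\Spec A^G}\Spec A$, and your Bourbaki argument (transitivity of $G$ on primes over $\mathfrak{p}$, normality of the residue extension, surjectivity of the decomposition group onto its automorphisms) is exactly the content of that result, so this part is sound, modulo the diamond bookkeeping you yourself flag (representability of the equalizers $S_g$, etc.), which the paper sidesteps by arguing purely with topological spaces.

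The genuine gap is in step (i). The paper does not reprove surjectivity of $\Spa(A,A^+)\to\Spa(A^G,A^{+G})$; it cites \cite[Theorem 3.1]{hansen}, and your replacement argument is flawed at the key point. First, the assertion ``automatically $w\le 1$ on $A^+$ because $A^+\subseteq A^\circ$'' rests on the false principle that a continuous valuation is $\le 1$ on power-bounded elements: e.g.\ on $\Qp\langle T\rangle$ the rank-two valuation at the Gauss point with $|T|$ infinitesimally larger than $1$ (value group $\R_{>0}\times\gamma^{\Z}$, lexicographically ordered) is continuous, yet takes a value $>1$ on the power-bounded element $T$; this is precisely why $\Spa(A,A^\circ)$ is in general strictly smaller than the space of all continuous valuations. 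The correct way to secure $w\le 1$ on $A^+$ is to use that $A^+$ is integral over $A^{+G}$ (the coefficients of $\prod_{g}(T-ga)$ lie in $A^+\cap A^G$) and to extend the \emph{valuation ring}: pick a prime $\mathfrak{q}$ of $A$ over the support $\mathfrak{p}$ of $v$ and a valuation ring of $\kappa(\mathfrak{q})$ lying over that of $v$ in $\kappa(\mathfrak{p})$; such a ring contains the integral closure of the latter, hence the image of $A^+$. Second, continuity of the extension $w$ is asserted but never checked; it is not automatic and needs an argument (e.g.\ cofinality of $w(\varpi)$ in $\Gamma_w$, using that $\Gamma_w/\Gamma_v$ is torsion for the algebraic residue extension, together with boundedness on a ring of definition). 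Either carry out this more careful valuation-theoretic argument, or do as the paper does and invoke \cite[Theorem 3.1]{hansen}, which is exactly this statement; with that repair the rest of your proof goes through.
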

\begin{proof}
We need to show that $X \to X_G$ and
$X \times \underline{G} \to X \times_{X_G} X$ are surjections of v-sheaves.

The diamonds $X$, $X_G$, $X \times \underline{G}$, and $X \times_{X_G} X$ are spatial (in the last case we use \cite[Corollary 11.29]{diamonds}), hence qcqs.
So $X \to X_G$ and
$X \times \underline{G} \to X \times_{X_G} X$ are qc.  So by
\cite[Lemma 12.11]{diamonds}, it suffices to show that
$|X| \to |X_G|$ and
$|X \times \underline{G}| \to |X \times_{X_G} X|$ are surjections.  The former map is surjective by
\cite[Theorem 3.1]{hansen}.

To prove that the latter map is surjective, we use
the characterization of the topological space of a diamond given in
\cite[Proposition 11.13]{diamonds}.
It suffices to show that if $(K,K^+)$ is a perfectoid field and
$\phi_1, \phi_2 \colon (A,A^+) \to (K,K^+)$ have the same restriction
to $(A^G,A^{+G})$, then $\phi_1, \phi_2$ are related by an element of $G$.
Let $Y = \Spec(A)$, $Y_G = \Spec(A^G)$.
Let $y_1$, $y_2$ be points of $Y(K)$ corresponding to $\phi_1$, $\phi_2$.
By \cite[Thm.~V.4.1(iii)]{SGA31},
$Y \times \underline{G} \to Y \times_{Y_G} Y$ is surjective.
It follows that after extending $K$, $y_1$ and $y_2$ become related by an
element of $G$.
But then they must already be related by an element of $G$ over $K$.
So $\phi_1$ and $\phi_2$ are related by an element of $G$.
\end{proof}

Our main motivation for introducing the above result is to prove Proposition \ref{shimura variety quotient}, but we also mention the following generalization of \cite[Theorem 1.4]{hansen}, which will not be used in the rest of the paper. Recall that the category of adic spaces is, by Huber's definition, a full subcategory of a certain category of locally topologically ringed spaces with valuations on the stalks of the structure sheaf. We follow Kedlaya--Liu in calling such spaces \emph{locally v-ringed spaces}; see \cite[Definitions 8.2.1 and 8.2.2]{kedlaya-liu}.

\begin{thm}\label{quotients of aff perf spaces} 
Let $X$ be a perfectoid space with a right action of a finite group $G$.
Suppose that $X$ has a covering by $G$-stable open subspaces of the form
$\Spa(A,A^+)$ with $A$ perfectoid Tate.
Let $X/G$ be the coequalizer of $X \times \underline{G} \rightrightarrows X$ in the
category of locally v-ringed spaces.  Then $X/G$ is perfectoid.
\end{thm}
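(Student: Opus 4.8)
The plan is to reduce the global statement to the affinoid case handled by Proposition~\ref{surjection finite level}, and then to upgrade the v-sheaf-theoretic quotient there to a genuine perfectoid affinoid. First I would observe that the question is local on $X/G$: since $X$ is covered by $G$-stable affinoid subspaces $U_i = \Spa(A_i,A_i^+)$ with $A_i$ perfectoid Tate, and $G$ is finite, the images of the $U_i$ in $X/G$ form an open cover (the map $X\to X/G$ is open because it is the quotient by a finite group action on the underlying topological space), and each such image is the coequalizer of $U_i\times\underline G\rightrightarrows U_i$ in v-ringed spaces. So it suffices to treat $X=\Spa(A,A^+)$ with $A$ perfectoid Tate and $G$ acting on the right, i.e.\ to show $X/G$ is the affinoid perfectoid space $\Spa(A^G,(A^+)^G)$ (here one uses that $(A^G,(A^+)^G)$ is again an affinoid perfectoid pair, which is part of \cite[Theorem 1.4]{hansen}, or can be reproved directly: $A^G$ is uniform and $\Q_p$-Banach with the same tilt behaviour, and perfectness of the Frobenius on the tilt is inherited by $G$-invariants).

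Next, writing $Z=\Spa(A^G,(A^+)^G)$, I would use the associated diamonds: $X^\diamond = \Spd(A,A^+)$, $Z^\diamond = \Spd(A^G,(A^+)^G)$, and by Proposition~\ref{surjection finite level} (applied to the Tate $\Z_p$-algebra $A$ with its $G$-action, noting a right action is a left action of the opposite group, which is irrelevant for finite groups up to relabeling) the diagram $X^\diamond\times\underline G\rightrightarrows X^\diamond$ is a presentation of $Z^\diamond$ as a v-sheaf. On the other hand, the coequalizer $X/G$ in v-ringed spaces comes with a natural map $X/G\to Z$ of v-ringed spaces, and I claim this is an isomorphism. The key point is that forming the diamond is compatible with this: $(X/G)^\diamond$, computed as a v-sheaf, is the coequalizer of $X^\diamond\times\underline G\rightrightarrows X^\diamond$ — this is because $(-)^\diamond$ on (nice) analytic adic spaces over $\Z_p$ preserves the relevant colimits, and $\underline G$ is exactly $(G\text{-many copies of a point})^\diamond$. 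Hence $(X/G)^\diamond \cong Z^\diamond$ as v-sheaves. Since $Z$ is an honest perfectoid space and the diamond functor is fully faithful on perfectoid spaces (indeed on seminormal analytic adic spaces over $\Z_p$), to conclude $X/G\cong Z$ as v-ringed spaces it remains to identify the structure sheaves, i.e.\ to show $\O_{X/G}=\O_Z$; but both are computed as the $G$-invariants of $\O_X$ pushed forward along $X\to X/G=Z$ (by definition of the coequalizer in v-ringed spaces on the one hand, and by \cite[Theorem 1.4]{hansen} / faithfully flat descent along $X\to Z$ on the other), so they agree.

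The step I expect to be the main obstacle is the precise comparison of colimits: showing that the coequalizer of $X^\diamond\times\underline G\rightrightarrows X^\diamond$ taken in the category of v-sheaves is the same as the diamond of the coequalizer $X/G$ taken in v-ringed spaces — i.e.\ that "diamondification" commutes with the finite-group quotient here. One has to be careful because colimits of adic spaces are subtle, and the quotient $X/G$ as a v-ringed space is not a priori known to be adic before the argument is complete; so the cleanest route is probably to avoid asserting $(X/G)^\diamond\cong Z^\diamond$ abstractly and instead argue directly that the canonical map $X/G\to Z$ of v-ringed spaces is an isomorphism on underlying topological spaces (using the surjectivity and injectivity-up-to-$G$ statements that are the content of the proof of Proposition~\ref{surjection finite level}, namely \cite[Theorem 3.1]{hansen} together with the relation-by-$G$ argument via \cite[Thm.~V.4.1(iii)]{SGA31}) and an isomorphism on structure sheaves (using \cite[Theorem 1.4]{hansen} locally). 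Once underlying spaces and structure sheaves match, $X/G$ is by construction a v-ringed space isomorphic to the perfectoid space $Z$, which is exactly the assertion. The globalization at the end is then routine: the isomorphisms over each $G$-stable affinoid $U_i$ glue because they are all induced by the universal property of the coequalizer.
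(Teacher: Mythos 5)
Your reduction to the affinoid case and your use of Proposition \ref{surjection finite level} to present $\Spd(A^G,A^{+G})$ by $X^\lozenge\times\underline{G}\rightrightarrows X^\lozenge$ are sound and parallel the paper (this is Proposition \ref{v sheaf and v ring quotient}(1)). The genuine gap is at the step you yourself flag as the main obstacle, and none of the three justifications you offer for it works. The whole content of the theorem is the sheaf-level statement that the coequalizer in v-ringed spaces, whose structure sheaf is by construction $(q_*\cO_X)^G$ on $|X|/G$, agrees with $\cO_{\Spa(A^G,A^{+G})}$; concretely, that for every rational subset $U=X_G\left(\frac{T}{s}\right)$ the map $A^G\langle T/s\rangle \to A\langle T/s\rangle^G$ is an isomorphism. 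Your route (a), that diamondification takes the v-ringed coequalizer to the v-sheaf coequalizer, is not available because $X/G$ is not yet known to be an adic space (so its diamond is undefined) and the functor is not known to commute with such colimits -- as you admit. Your route (b), ``faithfully flat descent along $X\to Z$'', fails because $A$ is not flat over $A^G$ in general. Your route (c), invoking \cite[Theorem 1.4]{hansen} ``locally'' for the structure sheaves, is circular in spirit: Theorem \ref{quotients of aff perf spaces} is stated precisely as a strengthening of that result, which does not supply the identification $\cO_{X_G}(U)\cong \cO_X(q^{-1}U)^G$ on rational subsets; if it did, there would be nothing to prove. Note also that matching underlying topological spaces plus an isomorphism of global sections does not suffice -- one must compare sections over every rational subset -- and that the perfectoidness of $A^G$ should be cited (\cite[Thm.~3.3.25]{kedlaya-liu-2}); your one-line ``reproof'' via invariants of the tilt is not an argument.

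For comparison, the paper closes exactly this gap as follows: by the argument of \cite[Thm.~3.3]{hansen} one reduces to the rational-subset statement above; then one applies the v-sheaf presentation (item (1)) to the perfectoid ring $A\langle T/s\rangle$ with its $G$-action, and uses that coequalizers commute with base change in a topos to identify the v-sheaf $\Spa\bigl(A\langle T/s\rangle^G, A\langle T/s\rangle^{+G}\bigr)$ with $X_G\times_{X_G}U=U=\Spa\bigl(A^G\langle T/s\rangle, A^G\langle T/s\rangle^+\bigr)$; since affinoid perfectoid spaces are determined by their v-sheaves, the ring map is an isomorphism. (Alternatively, the remark following Proposition \ref{v sheaf and v ring quotient} gives a direct argument using elementary symmetric polynomials, the binomial coefficient $\binom{|G|}{p^m}$, and perfectness.) Your proposal needs one of these arguments, or a genuine substitute, at its core; as written it asserts the key identity rather than proving it.
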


The theorem is an
immediate consequence of item (\ref{vring}) of the following proposition.

\begin{prop} \label{v sheaf and v ring quotient}
Let $(A,A^+)$ be a perfectoid Tate-Huber pair with a continuous left action of a
finite group $G$.
Let $X = \Spa(A,A^+)$, and let $X_G = \Spa(A^G,A^{+G})$.  Then:
\begin{enumerate}
\item $X \times \underline{G} \rightrightarrows X \to X_G$ represents
$X_G$ as a coequalizer in the category of v-sheaves on $\mathrm{Perfd}$. \label{coeq perfd}
\item $X \times \underline{G} \rightrightarrows X \to X_G$ represents
$X_G$ as a coequalizer in the category of locally v-ringed spaces. \label{vring}
\end{enumerate}
\end{prop}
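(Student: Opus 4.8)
The plan is to reduce the statement to Proposition~\ref{surjection finite level} together with v-descent for the structure sheaf on $\mathrm{Perfd}$. Write $\pi\colon X\to X_G$ for the projection, and recall that $(A^G,A^{+G})$ is again a perfectoid Tate--Huber pair (\cite[Theorem~1.4]{hansen}), so that $X_G=\Spa(A^G,A^{+G})$ makes sense both as a perfectoid space and, via its structure sheaf $\mathcal{O}_{X_G}$, as a v-ringed space. Part~(\ref{coeq perfd}) then follows from Proposition~\ref{surjection finite level}: the same proof applies with $\Spa$ in place of $\Spd$, since the topological inputs (surjectivity of $|X|\to|X_G|$ and of $|X\times\underline{G}|\to|X\times_{X_G}X|$) are unchanged. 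In particular one obtains the extra fact, which will be used below, that $X\times\underline{G}\to X\times_{X_G}X$ is a surjection of v-sheaves --- hence a v-cover, being a surjective quasicompact map of affinoid perfectoid spaces.

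For part~(\ref{vring}) I would first grant the identification, as sheaves of rings on the big v-site of $X_G$,
\[
(\pi_*\mathcal{O}_X)^G \;=\; \mathcal{O}_{X_G},
\qquad\text{where }(\pi_*\mathcal{O}_X)(T):=\mathcal{O}_X(T\times_{X_G}X),
\]
and then deduce the coequalizer property formally. The projection $X\to X_G$ is a $G$-invariant morphism of v-ringed spaces (the two structure maps $\mathcal{O}_{X_G}\to\mathcal{O}_{X\times\underline{G}}$ agree, since the pullback along $\pi$ of a function is $G$-invariant). Conversely, if $X\to\mathcal{Z}$ is a $G$-invariant morphism of v-ringed spaces, then its underlying morphism of v-sheaves factors uniquely through $\pi$ as some $g\colon X_G\to\mathcal{Z}$ by~(\ref{coeq perfd}), and the structure map $\mathcal{O}_{\mathcal{Z}}\to g_*\pi_*\mathcal{O}_X$ is forced by $G$-invariance to land in $g_*(\pi_*\mathcal{O}_X)^G=g_*\mathcal{O}_{X_G}$, which yields the required factorization of v-ringed spaces; uniqueness is clear. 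Globalizing over a $G$-stable affinoid perfectoid cover, this also recovers Theorem~\ref{quotients of aff perf spaces}.

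It thus remains to prove the displayed identity. Both sides are v-sheaves on the big v-site of $X_G$ ($\mathcal{O}$ satisfies v-descent on $\mathrm{Perfd}$ by \cite{diamonds}, and pushforwards and $G$-invariants of v-sheaves are again v-sheaves), and pulling back functions along $\pi$ gives a natural map $\mathcal{O}_{X_G}\to(\pi_*\mathcal{O}_X)^G$; it suffices to check it is an isomorphism on sections over each affinoid perfectoid $T=\Spa(R,R^+)\to X_G$, i.e.\ that $R\hookrightarrow(R\,\widehat{\otimes}_{A^G}A)^G$ is an equality, where $\widetilde{T}:=T\times_{X_G}X=\Spa(R\,\widehat{\otimes}_{A^G}A)$ is affinoid perfectoid (a fibre product of perfectoid spaces) with $G$ acting through the second factor. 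The map $\widetilde{T}\to T$ is the base change of the v-cover $\pi$, hence a v-cover, so by v-descent $R=\mathcal{O}(T)=\mathrm{eq}\bigl(\mathcal{O}(\widetilde{T})\rightrightarrows\mathcal{O}(\widetilde{T}\times_T\widetilde{T})\bigr)$. Now $\widetilde{T}\times_T\widetilde{T}=T\times_{X_G}(X\times_{X_G}X)$, and base-changing the surjection $X\times\underline{G}\to X\times_{X_G}X$ of~(\ref{coeq perfd}) along $T\to X_G$ gives a v-cover $\widetilde{T}\times\underline{G}\to\widetilde{T}\times_T\widetilde{T}$ carrying the two projections to $\widetilde{T}$ to the projection and the $G$-action on $\widetilde{T}\times\underline{G}$. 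Since $\mathcal{O}(\widetilde{T}\times\underline{G})=\Map(G,\mathcal{O}(\widetilde{T}))$ and $\mathcal{O}$ is a v-sheaf, an element $b\in\mathcal{O}(\widetilde{T})$ lies in the above equalizer iff it does so after pulling back along this v-cover, i.e.\ iff $b=\gamma\cdot b$ in $\mathcal{O}(\widetilde{T})$ for all $\gamma\in G$. Hence the equalizer equals $\mathcal{O}(\widetilde{T})^G=(R\,\widehat{\otimes}_{A^G}A)^G$, as desired.

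The step I expect to be the main obstacle is this last identification of the equalizer with the ring of $G$-invariants. The tempting idea that the \v{C}ech nerve of $\pi$ coincides with the bar construction $X\times\underline{G}^{\bullet}\to X_G$ is \emph{false} in general: the $G$-action on the points of $X$ need not be free (e.g.\ it fixes the point $t=0$ of $\Spa(\Qp\langle t^{1/p^\infty}\rangle)$ under the $\Z/2$-action $t\mapsto -t$, $p$ odd), so $X\times\underline{G}\to X\times_{X_G}X$ is only a surjection, not an isomorphism. What rescues the argument is that one needs this surjection only to \emph{detect} equalities of sections of the v-sheaf $\mathcal{O}$, for which being a v-cover is enough; the remaining ingredients --- v-descent for $\mathcal{O}$, stability of v-covers under base change, perfectoidness of fibre products, and the input of Proposition~\ref{surjection finite level} --- are then routine, as is the cited perfectoidness of $A^G$.
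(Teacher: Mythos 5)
Your part (1) and the computational core of part (2) are fine, and the latter is essentially the paper's mechanism in a slightly different guise: the paper applies part (1) to the perfectoid pair $\left(A\langle T/s\rangle, A\langle T/s\rangle^+\right)$ and uses that coequalizers of v-sheaves commute with base change along $U \to X_G$, whereas you run v-descent for $\mathcal{O}$ along the cover $T\times_{X_G}X \to T$ and detect the equalizer via the surjection $X\times\underline{G}\to X\times_{X_G}X$; both arguments rest on the same two covers supplied by Proposition \ref{surjection finite level}, and your identity $R \cong (R\,\widehat{\otimes}_{A^G}A)^G$, specialized to the case where the test object is a rational subset $U\subseteq X_G$, is exactly the isomorphism $A^G\langle T/s\rangle \cong A\langle T/s\rangle^G$ that the paper's proof needs. (For $A^G$ being perfectoid the paper cites \cite[Thm.~3.3.25]{kedlaya-liu-2} rather than \cite{hansen}; this is immaterial.)

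The genuine gap is in your ``formal deduction'' of the coequalizer property in the category of v-ringed spaces. That category, as in \cite{hansen} (to whose Theorem 3.3 the paper appeals), consists of topological spaces equipped with a sheaf of complete topological rings and with valuations at the points; here ``v'' stands for valuation, not for the v-topology. A general object $\mathcal{Z}$ of this category has no underlying v-sheaf on $\mathrm{Perfd}$, so the step ``its underlying morphism of v-sheaves factors uniquely through $\pi$ by (1)'' is unavailable, and your argument verifies the universal property only against targets that are themselves (ringed) v-sheaves --- a different statement from (2). You also never address the valuation data, nor the identification of the underlying topological space of the coequalizer with $|X_G|$. The paper bridges exactly this point by invoking the argument of \cite[Thm.~3.3]{hansen}: the coequalizer in v-ringed spaces is $|X|/G$ with structure sheaf the $G$-invariants of the pushforward and the induced valuations, and identifying it with $\Spa(A^G,A^{+G})$ reduces to the homeomorphism $|X|/G\cong|X_G|$ together with the comparison $A^G\langle T/s\rangle \toisom A\langle T/s\rangle^G$ on rational subsets. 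If you replace your formal deduction by this reduction, your descent computation supplies precisely the missing input and the proof closes; as written, it proves a statement about ringed v-sheaves rather than the stated one.
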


\begin{proof}
In the statement of the proposition, we are implicitly using
the fact that $A^G$ is perfectoid \cite[Thm.~3.3.25]{kedlaya-liu-2}
and that any perfectoid space may be regarded as v-sheaf on $\mathrm{Perfd}$
since the v-site is subcanonical \cite[Thm.~8.7]{diamonds}. 

By the same
argument as in Proposition \ref{surjection finite level},
$X \to X_G$ and $X \times \underline{G} \to X \times_{X_G} X$ are
v-covers.  This proves item (\ref{coeq perfd}).

By the argument of \cite[Thm.~3.3]{hansen}, to prove item (\ref{vring}), it is
enough to show that if
$U = X_G\left( \frac{T}{s} \right)$ is a rational subset of $X_G$,
then the natural map $A^G \left< \frac{T}{s} \right> \to A\left< \frac{T}{s} \right>^G$ is an isomorphism.

By item (\ref{coeq perfd}),
in the category of v-sheaves on $\mathrm{Perfd}$,
$\Spa\left(A\left< \frac{T}{s} \right>^G, A\left< \frac{T}{s} \right>^{+G}\right)$ is the
coequalizer of $(X \times_{X_G} U) \times \underline{G} \rightrightarrows X \times_{X_G} U$.
In a topos, coequalizers commute with base change, so this v-sheaf is canonically
isomorphic to $X_G \times_{X_G} U = U = \Spa\left(A^G \left<\frac{T}{s} \right>, A^G \left< \frac{T}{s} \right>^+\right)$.
Therefore $A^G \left< \frac{T}{s} \right> \to A\left< \frac{T}{s} \right>^G$ must be an isomorphism.
\end{proof}
\begin{remark}
The above argument seems to indicate that for general $A$,
the map $A^G \left<T/s \right> \to A \left<T/s \right>^G$ must be
``totally inseparable'' (since $v$-sheafifying generally loses information
about nilpotents and totally inseparable field extensions),
but any ``totally inseparable'' extension of perfectoid rings must
be an isomorphism, giving the result.

This observation led us to find a more
direct proof of Proposition \ref{v sheaf and v ring quotient}(\ref{vring}), which we now sketch.
Again we show that $A^G \left< T/s \right> \to A\left<T/s \right>^G$
is an isomorphism.  By the tilting correspondence, it is enough
to consider the case where $A$ has characteristic $p$.
Since $A^G \left< T/s \right> \to A\left<T/s \right>^G$ induces a bijection of adic spectra and $A^G \left<T/s\right>$ is uniform,
the map is injective and $A^G \left< T/s \right>$ has the subspace topology.
Let $p^m$ be the largest power of $p$ dividing the order of $G$.
Any $a \in A\left<T/s \right>^G$ is a limit of elements of $A[1/s]$;
then $\binom{|G|}{p^m} a^{p^m}$ is a limit of the $p^m$th elemetary
symmetric polynomials in the translates of each element.
So $\binom{|G|}{p^m} a^{p^m}$ is a limit of elements of $A^G[1/s]$; hence
$\binom{|G|}{p^m} a^{p^m} \in A^G \left< T/s \right>$.
Since $\binom{|G|}{p^m}$ is not divisible by $p$, $a^{p^m} \in A^G \left< T/s \right>$, implying $a \in A^G \left< T/s \right>$ since $A^G \left<T/s\right>$ is perfect and $A \left< T/s \right>^G$ is reduced.

\end{remark}

\subsection{Inverse limits of surjections of v-sheaves}

This short subsection consists of a single lemma, we will later need. While we state the lemma in its natural generality, in practice we will only need the case when the indexing system $I$ has a cofinal subsystem isomorphic to $(\Z_{\geq 1}, \leq )$.

\begin{lem} \label{finite to infinite}
Suppose we have cofiltered inverse systems of v-sheaves $X_i, Y_i, i \in I$,
with compatible morphisms $X_i \to Y_i$ that are qcqs and surjective.
Let $X \defeq \varprojlim_i X_i$ and $Y \defeq \varprojlim_i Y_i$.  Then
$X \to Y$ is surjective.
\end{lem}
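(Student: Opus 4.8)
\emph{Proof idea.} The plan is to reduce the statement, by passing to underlying topological spaces, to the classical fact that a cofiltered limit of surjective spectral maps of spectral spaces (with spectral transition maps) is surjective. The one subtlety is that the transition maps of the systems $(X_i)$, $(Y_i)$ are \emph{not} assumed quasi-compact, so the limit map $X \to Y$ need not itself be quasi-compact; this is repaired by first base changing everything to an affinoid perfectoid.

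First recall that a map of v-sheaves is surjective if and only if, after base change along any affinoid perfectoid mapping to the target, it becomes v-locally split --- this uses only that every v-sheaf admits a v-cover by a disjoint union of affinoid perfectoids. So fix an affinoid perfectoid $T$ with a map $T \to Y$ and put $W_i \defeq X_i \times_{Y_i} T$; then $W \defeq X \times_Y T = \varprojlim_i W_i$, and it suffices to show $W \to T$ is surjective. Each $W_i \to T$ is qcqs and surjective, being a base change of $X_i \to Y_i$. Moreover the transition maps $W_i \to W_j$ are now qcqs: they are quasi-separated because $W_i \to T$ is quasi-separated and factors through $W_j$, and quasi-compact because $W_i \to T$ is quasi-compact and factors through the quasi-separated map $W_j \to T$. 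Thus we are in the situation of a cofiltered system of qcqs v-sheaves with qcqs transition maps, each qcqs and surjective over the affinoid perfectoid $T$ (and, in the cases actually arising in this paper, the $X_i$, $Y_i$ are spatial diamonds, so the $W_i$ are spatial).

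Now the limit $W$ is again qcqs over $T$, and by the results of \cite[\S 11]{diamonds} on cofiltered limits the $|W_i|$ and $|W|$ are spectral spaces with $|W| \xrightarrow{\sim} \varprojlim_i |W_i|$, compatibly with the spectral maps to $|T|$. By \cite[Lemma 12.11]{diamonds} it is enough to show $|W| \to |T|$ is surjective, and by the same lemma each $|W_i| \to |T|$ is surjective. Given $t \in |T|$, the fibres $F_i \defeq |W_i| \times_{|T|} \{t\}$ are non-empty and pro-constructible (a point of a spectral space is pro-constructible, and spectral maps preserve pro-constructible subsets), and the transition maps induce spectral maps $F_i \to F_j$. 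Replacing each $F_i$ by the intersection of the images of the $F_j$ for $j$ above $i$ --- still non-empty, since these images are pro-constructible and spectral spaces are quasi-compact in the constructible topology --- and iterating, one obtains a point of $\varprojlim_i F_i = |W| \times_{|T|} \{t\}$, so $t$ lies in the image of $|W| \to |T|$. This completes the proof.

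The step I expect to require the most care is the bookkeeping in the second paragraph: because the transition maps are not assumed quasi-compact, one cannot feed the original systems directly into the limit and spatiality machinery of \cite{diamonds}, and must first descend to the affinoid perfectoid level --- after which everything is qcqs with qcqs transition maps and the topological input, though essentially classical, applies cleanly.
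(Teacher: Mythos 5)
Your overall strategy is the same as the paper's: test surjectivity against a qcqs (in your case affinoid perfectoid) object $T \to Y$, base change the whole system, observe via the standard cancellation argument that the transition maps of the base-changed system become qcqs, and then conclude by a cofiltered-limit-of-spectral-spaces argument. The reduction to $T$, the identification $X \times_Y T = \varprojlim_i (X_i \times_{Y_i} T)$, the cancellation step, and the final pro-constructible fibre argument are all fine.

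The genuine gap is in the step where you assert that the $|W_i|$ and $|W|$ are spectral spaces with $|W| \toisom \varprojlim_i |W_i|$, "by the results of \cite[\S 11]{diamonds}." Those results (e.g. \cite[Lemma 11.22]{diamonds}) concern (locally) spatial \emph{diamonds}, whereas in the generality of the lemma the $X_i$ and $Y_i$ are arbitrary v-sheaves, so $W_i = X_i \times_{Y_i} T$ is merely a v-sheaf that is qcqs over the affinoid perfectoid $T$. For such a v-sheaf it is not known (and is exactly the reason the notion "spatial v-sheaf" exists as an extra condition beyond qcqs) that $|W_i|$ is spectral, nor that $|{\varprojlim_i W_i}| = \varprojlim_i |W_i|$; so the topological argument cannot be run directly on the $W_i$. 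Your parenthetical remark that "in the cases arising in this paper the $X_i, Y_i$ are spatial diamonds, so the $W_i$ are spatial" would indeed rescue the argument in that special case, but it does not prove the lemma as stated. The paper repairs precisely this point by interposing, via the argument of \cite[Lemma 12.17]{diamonds}, a compatible cofiltered system of spatial diamonds with qcqs surjections onto the base-changed system, and then running the spectral-space limit argument on those spatial diamonds; if you add this intermediate step (or restrict the statement to spatial diamonds), your proof goes through.
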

\begin{proof}
It suffices to show that for any qcqs $Z$ with a map $Z \to Y$,
$Z \times_Y X \to Z$ is surjective.
For each $i$, let
$Z_i \defeq Z \times_{Y_i} X_i$; then $Z \times_Y X = \varprojlim_i Z_i$.
For each $i < j$, the map $Z_j \to Z_i$ is qcqs since it is a base
change of $X_j \to X_i \times_{Y_i} Y_j$, and $X_j \to Y_j$ is qcqs and
$X_i \to Y_i$ is qs.
By the argument of \cite[Lemma 12.17]{diamonds}, we can find a diagram
$W_i$ of spatial diamonds with compatible qcqs surjections $W_i \to Z_i$.
Then each $W_i \to Z$ is qcqs and surjective.  By the argument of loc.~cit.,
$\varprojlim_i W_i \to Z$ is surjective.
So $\varprojlim_i Z_i \to Z$ must be surjective.
\end{proof}

\subsection{Subsheaves and quotients}

Recall that if $S$ is any topological space, then $\ul{S}$ denotes the v-sheaf $Z \mapsto \Map_{cts}(|Z|,S)$ on $\Perf$. In the following, subsets of topological spaces will be equipped with the subspace topology.

\begin{lem} \label{qc generalizing subsheaf}
Let $X$ be a spatial v-sheaf, and let $S$ be a qc and generalizing subset of $|X|$.
Then: 
\begin{enumerate}
\item $X \times_{\us{X}} \ul{S} = \varprojlim_{U \supseteq S} U$,
where $U$ runs over qc open sheaves of $X$ containing $S$. 
\item $X \times_{\us{X}} \ul{S}$ is spatial. \label{qc generalizing spatial}
\item If $X$ is a diamond, then $X \times_{\us{X}} \ul{S}$ is a diamond.
\end{enumerate}
\end{lem}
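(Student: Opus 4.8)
The plan is to prove the three items in order, deducing (2) and (3) from (1). For item (1), I would first recall that a qc generalizing subset $S \subseteq |X|$ is an intersection of qc open subsets of $|X|$: since $|X|$ is a spectral space and $S$ is generalizing (i.e.\ closed under generization) and qc, the set $S$ is pro-(qc open), so $S = \bigcap_{U \supseteq S} U$ with $U$ ranging over qc opens containing $S$, and this family is cofiltered. Taking underlying topological spaces is compatible with cofiltered limits of qcqs v-sheaves along qc transition maps, so $\ul{\bigcap_U U} = \varprojlim_U \ul{U}$ as subsheaves of $\us{X}$ (here one uses that each $\ul U \hookrightarrow \us X$ is the subsheaf cut out by the open $U$, and that $\ul{(-)}$ sends the cofiltered intersection of opens to the limit). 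Pulling back along $X \to \us X$ and using that each qc open sheaf $U \subseteq X$ satisfies $U = X \times_{\us X} \ul U$ (this is how open subsheaves of a spatial v-sheaf are described, \cite[\S 11]{diamonds}), and that fiber products commute with inverse limits, gives
\[
X \times_{\us X} \ul S \;=\; X \times_{\us X} \varprojlim_{U \supseteq S} \ul U \;=\; \varprojlim_{U \supseteq S} \bigl( X \times_{\us X} \ul U \bigr) \;=\; \varprojlim_{U \supseteq S} U,
\]
which is item (1).

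For item (2), I would invoke the stability of spatial v-sheaves under cofiltered inverse limits with qcqs transition maps: by \cite[Proposition 11.20]{diamonds} (or the analogous statement there), a cofiltered limit of spatial v-sheaves along qcqs maps is spatial. Each $U$ in the system of item (1) is a qc open subsheaf of the spatial v-sheaf $X$, hence itself spatial and qcqs, and each transition map $U' \hookrightarrow U$ is an open immersion of qcqs objects, so qcqs. Therefore $\varprojlim_U U$ is spatial, which combined with (1) gives (2). For item (3), if $X$ is a diamond then each qc open subsheaf $U$ is a diamond (open subsheaves of diamonds are diamonds), and I would again appeal to the fact that a cofiltered inverse limit of diamonds along qcqs maps is a diamond (\cite[Lemma 11.22]{diamonds} or the relevant inverse-limit statement in loc.\ cit.); together with (1) this yields (3).

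The main obstacle I anticipate is the first step of item (1): justifying cleanly that a qc generalizing subset of $|X|$ is the cofiltered intersection of the qc opens containing it, and — more delicately — that forming $\ul{(-)}$ and then the pullback to $X$ genuinely commutes with this particular cofiltered intersection. The topological statement is a standard fact about spectral spaces (a generalizing qc subset is pro-open), but one must be slightly careful that the ``system of qc open sheaves of $X$ containing $S$'' is cofiltered and that its limit of underlying spaces really is $S$ rather than something larger; this uses spatiality of $X$ (so that $|X|$ is spectral and the qc open subsheaves are in bijection with the qc opens of $|X|$, \cite[Proposition 11.19]{diamonds}) together with $T_0$-ness to rule out extra points in the limit. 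Once the limit presentation in (1) is established, items (2) and (3) are formal consequences of the permanence properties of spatial v-sheaves and diamonds under qcqs cofiltered limits recorded in \cite{diamonds}.
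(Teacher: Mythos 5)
Your proof is correct and follows essentially the same route as the paper: write $S$ as the cofiltered intersection of the qc opens containing it, identify $X \times_{\us{X}} \ul{S}$ with the inverse limit of the corresponding qc open subsheaves, and conclude (2) and (3) from the permanence of spatial v-sheaves and spatial diamonds under cofiltered limits with qcqs transition maps (\cite[Lem.~12.17]{diamonds} and \cite[Lem.~11.22]{diamonds}). The only difference is that the paper outsources the identification in (1) to \cite[Prop.~12.9]{diamonds}, whereas you unfold that argument by hand (your citation numbers for the other permanence statements are slightly off, but the statements you invoke are the right ones).
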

\begin{proof}
The first claim follows from \cite[Prop.~12.9]{diamonds} (since $S = \bigcap |U|$), and then the remaining claims follow immediately from the first claim and
\cite[Lem.~12.17]{diamonds} and \cite[Lem.~11.22]{diamonds}, respectively.
\end{proof}

The following lemma will be key to the arguments of this paper.

\begin{lem} \label{stabilizers}
Let $X$ be a spatial v-sheaf with a right action of a profinite group $G$. Let $X/G$ denote the quotient of $X$ by $G$ in the category of v-sheaves, and let $\pi \colon X \to X/G$ be the quotient map.
Let $S \subset |X|$ be a qc and generalizing subset.  Suppose that the multiplication map
$S \times G \to S \cdot G$ is a bijection.  Then the natural map
$X \times_{\us{X}} \ul{S} \to X/G \times_{\us{X/G}} \ul{\pi(S)}$ is an isomorphism of v-sheaves.
\end{lem}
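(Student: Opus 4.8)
The plan is to show the two v-sheaves in question represent the same functor by producing mutually inverse maps, using the hypothesis that $S \times G \to S\cdot G$ is a bijection to pin down the fibers. First I would observe that there is a natural map: the composite $X \times_{\us X} \ul S \to X \to X/G$ lands in the subsheaf $X/G \times_{\us{X/G}} \ul{\pi(S)}$, because on underlying topological spaces the image of $S$ under $|X| \to |X/G|$ is exactly $\pi(S)$. This uses Lemma \ref{qc generalizing subsheaf}(\ref{qc generalizing spatial}) to know both sides are spatial (in particular $S\cdot G$ and $\pi(S)$ are qc and generalizing — generalizing because $S$ is and these properties are preserved under the continuous open quotient/orbit maps, qc because $G$ is profinite so $S\cdot G$ is a qc union). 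I would also note that $X \times_{\us X}\ul S \to X/G\times_{\us{X/G}}\ul{\pi(S)}$ is the base change along $\ul{\pi(S)} \to \us{X/G}$ of the map $X \times_{\us X}\ul{S\cdot G}\to X/G\times_{\us{X/G}}\ul{\pi(S)}$ once one checks $S\cdot G$ is the full preimage of $\pi(S)$ in $|X|$ (which is where the bijectivity of $S\times G \to S\cdot G$, together with $G$-invariance of $S\cdot G$, is used — it says the $G$-orbits meeting $S$ meet it in a single point, so nothing collapses or spreads out unexpectedly).

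Next, to build the inverse, I would check the map is both injective and surjective as a map of v-sheaves, testing on perfectoid affinoid test objects $T = \Spa(R,R^+)$ with $T$ qcqs (or on geometric points using \cite[Prop.~11.13]{diamonds}, since everything in sight is spatial). Surjectivity: given a $T$-point of $X/G \times_{\us{X/G}}\ul{\pi(S)}$, i.e. a map $T \to X/G$ whose image lies in $\pi(S)$, one wants to lift it to $T \to X$ with image in $S$; v-locally on $T$ the map $\pi$ admits sections (since $\pi$ is a $G$-torsor, or at least a v-cover with the quotient description from the earlier subsection), giving a lift $T' \to X$ after a v-cover $T' \to T$, whose image lies in $S\cdot G$, and then one translates by the locally constant element of $\ul G$ that moves it into $S$ — this element is well-defined precisely because $S\times G \to S\cdot G$ is a bijection, so the lifts glue over the v-cover. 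Injectivity: two $T$-points of $X\times_{\us X}\ul S$ with the same image in $X/G$ differ by an element of $\ul G(T)$; but if both land in $S$, the bijectivity of $S \times G \to S\cdot G$ forces that element to be the identity on underlying spaces, hence (by separatedness of the $G$-action on the spatial v-sheaf, or directly) the two points agree.

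I would organize this as: (1) identify $S\cdot G$ with $\pi^{-1}(\pi(S))$ using the bijection hypothesis; (2) reduce to showing $X\times_{\us X}\ul{S\cdot G} \to X/G\times_{\us{X/G}}\ul{\pi(S)}$ is an isomorphism by the base-change remark; (3) prove that using the v-cover $\pi\colon X\to X/G$ and faithfully-flat-type descent for v-sheaves, with the $\ul G$-action providing the descent datum and the bijection $S\times G\cong S\cdot G$ ensuring the descent datum is trivializable over $S$. The main obstacle I expect is step (1)/(3): carefully justifying that $S\cdot G$ is exactly the preimage of $\pi(S)$ — a priori $\pi^{-1}(\pi(S))$ could be larger, and one must rule this out using that $\pi$ is the orbit map on points (which follows from $X\times\ul G \rightrightarrows X$ presenting $X/G$, as in the earlier propositions) so that $\pi^{-1}(\pi(S)) = S\cdot G$ on the nose — plus checking all the qc/generalizing hypotheses are preserved so that Lemma \ref{qc generalizing subsheaf} applies throughout and the fiber products are the honest intersections/limits of qc opens rather than something more subtle.
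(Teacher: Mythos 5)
Your overall strategy (check the map on test objects, lift v-locally along $\pi$, and translate the lift into $S$ by an element of $\underline{G}$) could in principle be made to work, but as written it has a genuine gap exactly at the point where the lemma's content lies. In the surjectivity step you translate a v-local lift $h\colon T'\to X$, whose image lies pointwise in $S\cdot G$, "by the locally constant element of $\ul{G}$ that moves it into $S$". To do this you need a section of $\underline{G}$ over $T'$, i.e.\ a \emph{continuous} map $|T'|\to G$, $t\mapsto g(t)$, with $h(t)g(t)^{-1}\in S$. The hypothesis that $S\times G\to S\cdot G$ is a bijection only pins down $g(t)$ pointwise; it does not give continuity (and certainly not local constancy). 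This is precisely what the paper's proof supplies: it upgrades the bijection to a homeomorphism, observing that $S\times G\to S\cdot G$ is a bijective, spectral, generalizing map of spectral spaces and invoking \cite[Tag 09XU]{stacks-project}; continuity of the resulting projection $S\cdot G\to G$ is what makes your translating section exist, or equivalently what makes $\ul{S}\times\ul{G}\to\ul{S\cdot G}$ an isomorphism of v-sheaves. Without this step the gluing/descent argument does not go through, so the proposal is missing the key idea rather than just a routine verification.

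There is also an error in your step (2): the map $X\times_{\us{X}}\ul{S\cdot G}\to X/G\times_{\us{X/G}}\ul{\pi(S)}$ is, after the identification $X\times_{\us{X/G}}\ul{\pi(S)}=X\times_{\us{X}}\ul{S\cdot G}$ (which uses only that the fibres of $|X|\to|X/G|$ are the $G$-orbits, not the bijection hypothesis), the base change of the quotient map $\pi$ itself; its "fibres" are $G$-orbits and it is essentially never an isomorphism, so one cannot reduce the lemma to that statement. The correct reduction, which is how the paper argues, is to identify $X/G\times_{\us{X/G}}\ul{\pi(S)}$ with $\bigl(X\times_{\us{X}}\ul{S\cdot G}\bigr)/G$ (quotients commute with this base change) and then to show that $\bigl(X\times_{\us{X}}\ul{S}\bigr)\times\underline{G}\to X\times_{\us{X}}\ul{S\cdot G}$ is an isomorphism, which again comes down to the homeomorphism statement above. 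Two smaller inaccuracies: $\pi$ is a v-cover but need not be a $G$-torsor, so two $T$-points of $X$ with the same image in $X/G$ are related by a section of $\underline{G}$ only after passing to a v-cover of $T$ (harmless here, since equality of maps can be checked v-locally, but it should be said); and the generalizing/qc bookkeeping for $\pi(S)$ is not actually needed for the statement, only for spatiality via Lemma \ref{qc generalizing subsheaf}.
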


\begin{proof}
The product $X/G \times_{\us{X/G}} \ul{\pi(S)}$ can be identified with $\left(X \times_{\us{X}} \ul{S \cdot G} \right)/G$.
Since $\left((X \times_{\us{X}} \ul{S} ) \times \underline{G} \right)/G \cong X \times_{\us{X}} \ul{S}$,
it is enough to show that
$\left(X \times_{\us{X}} \ul{S} \right) \times \underline{G} \to
X \times_{\us{X}} (\ul{S \cdot G})$
is an isomorphism, for which it is enough to show that $\ul{S} \times \ul{G} \to \ul{S \cdot G}$ is an isomorphism. This, in turn, reduces to showing that $S \times G \to S \cdot G$ is a homeomorphism. But this map is easily seen to be spectral and generalizing, and it is bijective by assumption. It is then a homeomorphism by \cite[Tag 09XU]{stacks-project}. 
\end{proof}

\section{Shimura varieties}\label{sec: shimura}

In this section we define the Shimura varieties that we will work with and prove the perfectoidness results and results on flag varieties that we will need. 

\subsection{Setup}\label{setup}
We start by setting up some notation and assumptions. We let $(G,\cH)$ be a Shimura datum of Hodge type, and we assume (crucially!) that $G$ is \emph{split} over $\Qp$. Being split over $\Qp$, $G$ has a natural split reductive model over $\Zp$ \cite[Theorem 6.1.16]{conrad}, and we will use the letter $G$ for this model as well, or sometimes $G_{\Zp}$ for emphasis. Let $E=E(G,\cH)$ be the reflex field of $(G,\cH)$. Since $G$ is split over $\Qp$, $E_{\p}=\Qp$ for every prime $\p$ above $p$ (the local reflex field is the localization of the global reflex field).
From now on we fix such a prime $\p$ of $E$ above $p$, or equivalently an embedding $E \hookrightarrow \Qp$. The remainder of this subsection will be devoted to constructing a particualr embedding $(G,\cH) \hookrightarrow (\tG, \wt{\cH})$ into a Siegel Shimura datum $(\tG,\wt{\cH})$ with certain convenient properties. We start by recalling a lemma from \cite{kisin}.

\begin{lem} \label{integral}
Let $G$ be a reductive group over $\Zp$.
Let $V$ be a finite-dimensonal vector space over $\Qp$, and let
$\rho \colon G_{\Qp} \hookrightarrow \GL(V)$ be a closed embedding of algebraic groups.
Then there exists a lattice $\Lambda \subset V$ so that $\rho$ extends to
a map $G \to \GL(\Lambda)$.
\end{lem}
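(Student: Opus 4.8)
We have a reductive group $G$ over $\Zp$, a finite-dimensional $\Qp$-vector space $V$, and a closed embedding $\rho\colon G_{\Qp}\hookrightarrow \GL(V)$. We want a lattice $\Lambda\subset V$ such that $\rho$ extends to a homomorphism $G\to\GL(\Lambda)$ of group schemes over $\Zp$.

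\emph{Reformulation.} I would first observe that giving a homomorphism $G\to\GL(\Lambda)$ of $\Zp$-group schemes, with $\Lambda\subset V$ a $\Zp$-lattice and generic fibre $\rho$, is the same datum as giving $\Lambda$ a structure of $\O(G)$-comodule which, after inverting $p$, induces the $\O(G_{\Qp})$-comodule structure $\mu\colon V\to V\otimes_{\Qp}\O(G_{\Qp})$ attached to $\rho$; and such a comodule structure exists (and is then unique) if and only if $\mu(\Lambda)\subseteq\Lambda\otimes_{\Zp}\O(G)$ inside $V\otimes_{\Qp}\O(G_{\Qp})$. So the lemma reduces to finding a lattice $\Lambda\subset V$ with $\mu(\Lambda)\subseteq\Lambda\otimes_{\Zp}\O(G)$ --- a ``$G$-stable lattice''. (For this reduction, and for everything below, only affineness, flatness and finite type of $G$ over $\Zp$ are used --- reductivity is not needed --- and the hypothesis that $\rho$ is a closed embedding plays no role, the extension $G\to\GL(\Lambda)$ not being required to be a closed embedding.)

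\emph{Construction.} Fix a basis $e_1,\dots,e_n$ of $V$ and write $\mu(v)=\sum_i e_i\otimes\phi_i(v)$ with $\phi_i\colon V\to\O(G_{\Qp})$ linear. The matrix-coefficient map $v\mapsto(\phi_1(v),\dots,\phi_n(v))$ realizes $V$ as a sub-$\O(G_{\Qp})$-comodule of the free comodule $\O(G_{\Qp})^{\oplus n}=(\O(G)^{\oplus n})[1/p]$ (injectivity is the counit identity, and being a comodule map is coassociativity of $\mu$). By the integral form of local finiteness of comodules over the Noetherian ring $\Zp$, the $\O(G)$-comodule $\O(G)^{\oplus n}$ is the filtered union $\bigcup_\alpha M_\alpha$ of its sub-$\O(G)$-comodules $M_\alpha$ that are finitely generated over $\Zp$; being finitely generated and torsion-free over the PID $\Zp$, each $M_\alpha$ is finite free. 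Since $V$ is finite-dimensional and $\O(G_{\Qp})^{\oplus n}=\bigcup_\alpha M_\alpha[1/p]$, we get $V\subseteq M_\alpha[1/p]$ for some $\alpha$, and then $V$ is a sub-$\O(G_{\Qp})$-comodule of $M_\alpha[1/p]$. I would then set $\Lambda:=V\cap M_\alpha$. One checks: $\Lambda$ is a submodule of the Noetherian module $M_\alpha$, hence finitely generated, and torsion-free, hence finite free; $\Lambda[1/p]=V$, since $M_\alpha[1/p]=\bigcup_k p^{-k}M_\alpha$ forces $p^kv\in V\cap M_\alpha=\Lambda$ for $v\in V$, $k\gg0$; and $M_\alpha/\Lambda$ embeds into the $\Qp$-vector space $M_\alpha[1/p]/V$, so it is torsion-free and $\Lambda$ is a direct summand of $M_\alpha$ with free quotient. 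Consequently $\mu(\Lambda)\subseteq\Lambda\otimes_{\Zp}\O(G)$: the comodule map of $M_\alpha$ carries $\Lambda$ into $M_\alpha\otimes_{\Zp}\O(G)$ and, being the restriction of $\mu$, also into $V\otimes_{\Qp}\O(G_{\Qp})$, and inside $(M_\alpha\otimes_{\Zp}\O(G))[1/p]$ one has $(M_\alpha\otimes_{\Zp}\O(G))\cap(V\otimes_{\Qp}\O(G_{\Qp}))=\Lambda\otimes_{\Zp}\O(G)$ because $M_\alpha\otimes_{\Zp}\O(G)=(\Lambda\otimes_{\Zp}\O(G))\oplus((M_\alpha/\Lambda)\otimes_{\Zp}\O(G))$ with the second summand torsion-free. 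Thus $\Lambda$ is a $G$-stable lattice, the associated representation of $G$ on $\Lambda$ is the desired homomorphism $G\to\GL(\Lambda)$, and its generic fibre is $\rho$ by construction.

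\emph{Main obstacle.} Apart from the integral local finiteness used to produce the $M_\alpha$, every step is elementary module theory over $\Zp$. Over a field, local finiteness of comodules is classical; over $\Zp$ one must produce subcomodules finitely generated as $\Zp$-modules, not merely finite-dimensional after inverting $p$. The point is that for $f\in\O(G)$ the sub-$\O(G)$-comodule it generates lies in (the finite-dimensional sub-$\O(G_{\Qp})$-comodule generated by $f$) $\cap\,\O(G)$, and such an intersection is finitely generated over $\Zp$ --- for instance because $\O(G)$, being a finitely generated flat $\Zp$-algebra, is $p$-adically separated and embeds into its $p$-adic completion, in which bounded $\Zp$-submodules of finite-dimensional $\Qp$-subspaces are finitely generated; passing to $\O(G)^{\oplus n}$ componentwise then gives the exhaustion. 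Alternatively one may simply invoke the Noetherian-base version of local finiteness of comodules from the standard literature on affine group schemes, which is where I expect the authors' (and Kisin's) proof to appeal.
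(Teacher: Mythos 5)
Your argument is correct, but note that the paper does not actually reprove this statement: its ``proof'' consists of citing \cite[Lemma 2.3.1]{kisin}, plus the observation that the extra hypotheses Kisin imposes when $p=2$ are only used to make the integral map a closed embedding, which is not needed here. What you have written is essentially a self-contained version of the standard mechanism underlying Kisin's lemma: reduce the extension problem to finding a coaction-stable lattice, embed $V$ by matrix coefficients into the free comodule $\O(G_{\Qp})^{\oplus n}=(\O(G)^{\oplus n})[1/p]$, invoke local finiteness of comodules over the Noetherian base, and saturate. Your reformulation, the verification that $\phi$ is an injective comodule map, and the check that $\Lambda=V\cap M_\alpha$ is a stable lattice (via the splitting $M_\alpha\cong\Lambda\oplus M_\alpha/\Lambda$ with torsion-free quotient) are all fine, and your observations that reductivity and the closed-embedding hypothesis play no role match the paper's remark exactly. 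The citation buys brevity; your route buys independence from \cite{kisin} and makes visible which hypotheses matter. One small caution on your parenthetical justification of the key finiteness: it is not true in general that a finitely generated flat $\Zp$-algebra is $p$-adically separated (e.g.\ $\Zp[x]/\bigl(x(1+px)\bigr)$ is flat and finitely generated, yet $x\in\bigcap_n p^n$); separatedness does hold for $\O(G)$ here because it is a Noetherian domain in which $p$ is not a unit, so Krull's intersection theorem applies, or one can simply invoke the Noetherian (or Dedekind) base version of local finiteness of comodules for flat affine group schemes, as you propose in your alternative.
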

\begin{proof}
This is proved in \cite[Lemma 2.3.1]{kisin}.  The statement of that lemma includes some additional hypotheses when $p=2$, but these are only used to guarantee that the map of integral models is a closed embedding, which we do not need.
\end{proof}

Next we recall a version of Zarhin's trick.

\begin{lem}[{Zarhin's trick, \cite[\S 2]{zarhin}}] \label{zarhin}
Let $V$ be a finite-dimensional symplectic
$\Q$-vector space and let $(\GSp(V),\cH_V)$ be the associated Siegel Shimura datum. Let $\Lambda \subset V$ be a lattice.  Then
there is a symplectic form on $W = V^4 \oplus (V^{\vee})^4$
preserved up to scaling by $\GSp(V)$ that makes the lattice
$\Lambda^4 \oplus (\Lambda^{\vee})^4 \subset V^4 \oplus (V^{\vee})^4$ self-dual. Moreover, the closed embedding $\GSp(V) \hookrightarrow \GSp(W)$ induces a closed embedding $(\GSp(V),\cH_V) \hookrightarrow (\GSp(W),\cH_W)$ of Siegel Shimura data.
\end{lem}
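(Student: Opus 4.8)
The plan is to recall the classical construction behind Zarhin's trick and then verify the Shimura-datum compatibility, which is the only new content relative to the group-theoretic statement in \cite{zarhin}. First I would set up notation: write $\langle\,,\,\rangle$ for the symplectic form on $V$, and consider $W = V^4 \oplus (V^\vee)^4$. The perfect pairing $V \times V^\vee \to \Q$ together with the identity $V^\vee \cong V^\vee$ gives a natural symplectic form on $V \oplus V^\vee$ (namely $\langle (v,\phi),(v',\phi')\rangle_0 = \phi'(v) - \phi(v')$), and taking the orthogonal sum of four copies produces a symplectic form $\langle\,,\,\rangle_W$ on $W$ for which $\Lambda^4 \oplus (\Lambda^\vee)^4$ is self-dual, since $\Lambda$ and its dual lattice $\Lambda^\vee$ are exchanged by the pairing. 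The group $\GSp(V)$ acts on $V$ with multiplier $\nu$, hence on $V^\vee$ by the contragredient twisted appropriately, and one checks that the diagonal action on $W = V^4 \oplus (V^\vee)^4$ scales $\langle\,,\,\rangle_W$ by $\nu$; this gives the closed embedding $\GSp(V) \hookrightarrow \GSp(W)$ of algebraic groups over $\Q$ (closedness is standard, being a subgroup cut out by the condition of preserving the four-fold block decomposition and commuting with the relevant isomorphisms).

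The substantive step is showing this embedding is a morphism of Shimura data, i.e. that it carries $\cH_V$ into $\cH_W$. A point of $\cH_V$ is a $\GSp(V)(\R)$-conjugacy class of homomorphisms $h \colon \SS \to \GSp(V)_\R$ of the appropriate type (Hodge structures of type $\{(-1,0),(0,-1)\}$ on $V_\R$ polarized by $\pm\langle\,,\,\rangle$). Composing with $\GSp(V) \hookrightarrow \GSp(W)$ gives $h' \colon \SS \to \GSp(W)_\R$, and I would check that $h'$ again has the right Hodge type: the induced Hodge structure on $W_\R = V_\R^4 \oplus (V_\R^\vee)^4$ is a direct sum of copies of the one on $V_\R$ and its dual (which, for a weight $-1$ Hodge structure, is again of weight $-1$ after the Tate-type normalization built into the $\SS$-action on $V^\vee$), so it is of type $\{(-1,0),(0,-1)\}$; and the polarization condition transfers because $\langle\,,\,\rangle_W$ is built functorially from $\langle\,,\,\rangle$. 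Hence $h' \in \cH_W$, and the map $\cH_V \to \cH_W$ is equivariant and injective, giving the claimed embedding of Siegel Shimura data.

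The main obstacle — really the only place requiring care — is the bookkeeping with the $\SS$-action on the dual space $V^\vee$ and the similitude character: one must normalize so that $\GSp(V)$ acts on $V^\vee$ not by the naive contragredient (which would land in weight $+1$) but by the contragredient tensored with the similitude character, so that $V^\vee$ acquires a weight $-1$ Hodge structure and $\langle\,,\,\rangle_0$ on $V \oplus V^\vee$ is $\GSp(V)$-invariant up to the \emph{same} scaling factor $\nu$ as on $V$. Once this normalization is fixed consistently, everything else is formal. I would remark that this is exactly the content of \cite[\S 2]{zarhin} on the level of groups and lattices, and that the Shimura-datum statement is then immediate from the functoriality of the Siegel datum construction $V \rightsquigarrow (\GSp(V),\cH_V)$ under symplectic-similitude maps; so in the write-up I would keep this proof short, citing \cite{zarhin} for the lattice self-duality and multiplier computations and spelling out only the Hodge-theoretic compatibility.
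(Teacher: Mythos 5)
The paper offers no argument for this lemma at all---it simply defers to \cite[\S 2]{zarhin}---so the only question is whether your sketch stands on its own, and it does not. The gap is precisely at the step you dismiss as formal, namely that ``the polarization condition transfers because $\langle\,,\,\rangle_W$ is built functorially.'' Your form on $W$ is an orthogonal sum of four hyperbolic forms on $V \oplus V^{\vee}$, and for the hyperbolic form both $V \oplus 0$ and $0 \oplus V^{\vee}$ are Lagrangian. Since $\GSp(V)$ acts diagonally, preserving each summand, the composite $h' = \iota \circ h$ stabilizes the subspace $V_{\R}^4 \oplus 0 \subset W_{\R}$, so $\langle x, h'(i)x\rangle_W = 0$ for every $x$ in that subspace; a definite form cannot vanish on nonzero vectors, so $\pm\langle \,\cdot\,, h'(i)\,\cdot\,\rangle_W$ is never definite and $h'$ does \emph{not} lie in $\cH_W$. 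With your form there is no morphism of Shimura data at all (your bookkeeping of the similitude twist on $V^{\vee}$ and of the Hodge type is fine; it is exactly the Riemann positivity that breaks). A symptom you could have caught: with the hyperbolic form, already $\Lambda \oplus \Lambda^{\vee}$ is self-dual, so the exponent $4$ in the statement would be pointless.

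The genuine content of Zarhin's trick is to achieve self-duality of the lattice and the positivity condition \emph{simultaneously}: the hyperbolic form gives the former but not the latter, while the obvious positivity-compatible choice $\psi^{\oplus 4} \oplus (\psi^{\vee})^{\oplus 4}$ (the given symplectic form on each $V$-factor and its dual on each $V^{\vee}$-factor) gives the latter but not the former, since the $\psi$-dual of $\Lambda$ is in general not $\Lambda$. Zarhin's construction resolves this tension by mixing the $V^4$ and $(V^{\vee})^4$ blocks using the map $\lambda \colon V \to V^{\vee}$ induced by $\psi$ together with a quaternion $a+bi+cj+dk$ whose norm $a^2+b^2+c^2+d^2$ is congruent to $-1$ modulo a suitable integer measuring the failure of $\psi$ to be perfect on $\Lambda$; Lagrange's four-square theorem supplies such a quaternion, and this is exactly where the fourth power comes from. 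So your citation of \cite[\S 2]{zarhin} for ``the lattice self-duality and multiplier computations'' of the hyperbolic form is a mis-citation---Zarhin constructs a different form---and a correct write-up must either reproduce that quaternionic construction (and then check, as you do correctly in spirit, that the resulting form is a polarization for $h \oplus h^{\vee}(1)$ in each conjugacy class) or defer to it wholesale, as the paper does.
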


For convenience, we make the following definition.

\begin{defn}
Let $R$ be a principal ideal domain, and
let $\Lambda$ be finite free module over $R$ equipped with
a symplectic form.
Let $\mu \colon (\Gm)_R \to \GSp(\Lambda)$ be a cocharacter.
Recall \cite[Proposition I.4.7.3]{SGA31} that
$\mu$ induces a decomposition $\Lambda = \bigoplus_{n \in \Z} \Lambda_n$,
where $\mu$ acts on $\Lambda_n$ by $\mu(z)(v) = z^n v$.

We say that $\mu$ is \emph{standard} if
$\Lambda_0$ and $\Lambda_1$ are nonzero and all other
$\Lambda_i$ are zero.
\end{defn}
For any cocharacter $\mu \colon (\Gm)_R \to \GSp(\Lambda)$, the composition
of $\mu$ with the similitude factor $\GSp(\Lambda) \to (\Gm)_R$ must be
of the form $z \mapsto z^n$ for some integer $n$.  Then for each $i \in \ZZ$,
the sympletic form pairs
$\Lambda_i$ with $\Lambda_{n-i}$.
If $\mu$ is standard, then the nondegeneracy of the symplectic form
forces $n=1$ and $\Lambda_0$ and $\Lambda_1$ to be maximal isotropic.

\medskip

We now return to our Hodge type Shimura datum $(G,\cH)$ and the split integral model $G_{\Zp}$. Let us fix a choice of Hodge cocharacter $\mu$ for $G$, viewed as a cocharacter over $\Zp$. If $\rho^\prime : (G,\cH) \hookrightarrow (\GSp(V),\cH_V)$ is any closed embedding into a Siegel Shimura datum, then $\rho^\prime \circ \mu_{\Qp}$ is standard (i.e. a Hodge cocharacter for $(\GSp(V),\cH_V)$, over $\Qp$). The following proposition summarizes the extra conditions we would like to put on our embedding and shows that they are possible to achieve.

\begin{prop} \label{parabolics}
Let $(G,\cH)$, $G_{\Zp}$ and $\mu$ be as above. Then there exists a symplectic $\Q$-vector space $W$ and a closed embedding $\rho_{\Q} \colon (G,\cH) \to (\GSp(W),\cH_W)$ satisfying the following conditions: There exists a self-dual
$\Zp$-lattice $\Lambda \subset W \otimes_{\Q} \Qp$ such that $\rho_{\Qp}$ extends to a homomorphism $\rho_{\Zp} \colon G_{\Zp} \to \GSp(\Lambda)$, and $\rho_{\Zp} \circ \mu$ is standard.

Under these conditions, let $P_{\mu}$ and $P_{\wt{\mu}}$ be the parabolic subgroups of $G$ and $\wt{G} \colonequals \GSp(\Lambda)$ corresponding to $\mu$ and $\wt{\mu} \defeq \rho_{\Zp} \circ \mu$, respectively, as defined in \cite[\S 2.1]{caraiani-scholze} (we remark that these are the parabolics opposite to the ones defined by the Hodge filtration).  Then
$G(\Zp) = G(\Qp) \cap \tG(\Zp)$ and
$P_{\mu}(\Zp) = P_{\mu}(\Qp) \cap \tP_{\mu}(\Zp)$.
\end{prop}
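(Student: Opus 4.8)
The plan is to construct $\rho_{\Q}$ by feeding a Kisin-type integral lattice into Zarhin's trick, and then to deduce the two equalities from the facts that $\Z_p$ is a discrete valuation ring and that $G(\Z_p)$ is a maximal compact subgroup of $G(\Q_p)$.

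Since $(G,\cH)$ is of Hodge type, fix a closed embedding $\rho'\colon(G,\cH)\hookrightarrow(\GSp(V),\cH_V)$ into a Siegel Shimura datum. Applying Lemma~\ref{integral} to the closed embedding $G_{\Zp}\hookrightarrow\GL(V_{\Qp})$ induced by $\rho'$ produces a $\Zp$-lattice $L\subset V_{\Qp}$ with $\rho'_{\Qp}$ extending to $\rho'_{\Zp}\colon G_{\Zp}\to\GL(L)$; choose a $\Z$-lattice $\Lambda_0\subset V$ with $\Lambda_0\otimes_{\Z}\Zp=L$ (possible since $L$ only constrains $\Lambda_0$ at $p$). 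Now apply Zarhin's trick (Lemma~\ref{zarhin}) to $(\GSp(V),\cH_V)$ and $\Lambda_0$: it produces a symplectic $\Q$-vector space $W=V^4\oplus(V^\vee)^4$, a closed embedding of Siegel Shimura data $(\GSp(V),\cH_V)\hookrightarrow(\GSp(W),\cH_W)$, and a $\Z$-lattice $\Lambda_0^4\oplus(\Lambda_0^\vee)^4\subset W$ self-dual for the symplectic form on $W$. Set $\rho_{\Q}$ to be the composite $(G,\cH)\hookrightarrow(\GSp(V),\cH_V)\hookrightarrow(\GSp(W),\cH_W)$ and $\Lambda\defeq L^4\oplus(L^\vee)^4=(\Lambda_0^4\oplus(\Lambda_0^\vee)^4)\otimes_{\Z}\Zp\subset W\otimes_{\Q}\Qp$, a self-dual $\Zp$-lattice. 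To extend $\rho_{\Qp}$ to $\GSp(\Lambda)$, note that in $W=V^4\oplus(V^\vee)^4$ the group $\GSp(V)$ acts on the summands $V$ by the standard representation and on the summands $V^\vee$ by the similitude-twisted contragredient $g\mapsto\mathrm{sim}(g)\cdot(g^\vee)^{-1}$, where $\mathrm{sim}$ denotes the similitude character; this twist is what makes the similitude characters of $\GSp(V)$ and $\GSp(W)$ agree and so is forced by $\rho_{\Q}$ being a morphism of Shimura data. Since $G_{\Zp}$ is split reductive, the character $\mathrm{sim}\circ\rho'\colon G_{\Qp}\to\Gm$ is defined over $\Zp$, so $\mathrm{sim}(g)\in\Zp^\times$ for $g\in G(\Zp)$; hence $\rho'_{\Zp}$ induces a $\Zp$-linear action of $G_{\Zp}$ on $L^\vee$ by the twisted contragredient, and assembling the four copies $G_{\Zp}$ preserves $\Lambda$. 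The resulting map $G_{\Zp}\to\GL(\Lambda)$ lands in the closed subgroup $\GSp(\Lambda)$ because it does so on the generic fibre; this is the desired $\rho_{\Zp}$.

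To see that $\wt\mu\defeq\rho_{\Zp}\circ\mu$ is standard, recall that $\rho'\circ\mu_{\Qp}$ is standard, so $\mu$ acts on $L$ through $\rho'_{\Zp}$ with weight decomposition $L=L_0\oplus L_1$ (both summands nonzero) and $\mathrm{sim}\circ\mu$ is $z\mapsto z$. Therefore $\mu$ acts on $L^\vee$ through the twisted contragredient with weight $1$ on $L_0^\vee$ and weight $0$ on $L_1^\vee$, so on $\Lambda=L^4\oplus(L^\vee)^4$ the weight-$0$ part is $L_0^4\oplus(L_1^\vee)^4$ and the weight-$1$ part is $L_1^4\oplus(L_0^\vee)^4$, with no other weights occurring; both are nonzero, so $\wt\mu$ is standard. (Equivalently: $\rho_{\Q}$ carries the Hodge cocharacter $\mu$ to a Hodge cocharacter of $(\GSp(W),\cH_W)$, hence to a standard one, and the weight grading on $\Lambda$ is cut out by that on $W\otimes_{\Q}\Qp$.) This completes the existence part.

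For the two equalities we use the extra hypotheses. First, $G(\Zp)\subseteq G(\Qp)\cap\tG(\Zp)$ since $\rho_{\Zp}$ is a map of $\Zp$-group schemes; conversely $G(\Qp)\cap\tG(\Zp)$ is a compact open subgroup of $G(\Qp)$ --- open as the preimage of the open subgroup $\tG(\Zp)\subseteq\tG(\Qp)$, and compact since $\tG(\Zp)$ is compact and $G(\Qp)$ is closed in $\tG(\Qp)$ --- which contains $G(\Zp)$, and since $G_{\Zp}$ is reductive $G(\Zp)$ is a hyperspecial, in particular maximal, compact subgroup of $G(\Qp)$, whence equality. For the second equality, $\rho_{\Zp}$ carries $P_{\mu}$ into $\tP_{\mu}$ because $\mu$ maps to $\wt\mu$ and the ``$\lim_{t\to0}$ exists'' defining condition of \cite[\S2.1]{caraiani-scholze} is preserved, so $P_{\mu}(\Zp)\subseteq P_{\mu}(\Qp)\cap\tP_{\mu}(\Zp)$; conversely, if $g\in P_{\mu}(\Qp)\cap\tP_{\mu}(\Zp)$ then $g\in G(\Qp)\cap\tG(\Zp)=G(\Zp)$ by the first equality, so $g\colon\Spec\Zp\to G_{\Zp}$ has generic point landing in the closed subscheme $P_{\mu}\subseteq G_{\Zp}$; as the only closed subscheme of $\Spec\Zp$ containing the generic point is $\Spec\Zp$ itself, $g$ factors through $P_{\mu}$, i.e.\ $g\in P_{\mu}(\Zp)$. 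No step here is genuinely hard once Lemmas~\ref{integral} and~\ref{zarhin} are available --- the argument is their combination with spreading out over the DVR $\Zp$. The only input that is not formal is the classical fact that $G(\Zp)$ is a maximal compact subgroup of $G(\Qp)$, and the one place demanding care is bookkeeping with the similitude-twist in Zarhin's construction, both to land $G_{\Zp}$ in $\GSp(\Lambda)$ rather than merely $\GL(\Lambda)$ and to verify the standardness of $\wt\mu$.
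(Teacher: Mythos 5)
Your proposal is correct and follows essentially the same route as the paper: an arbitrary Hodge embedding combined with Lemma \ref{integral} and Zarhin's trick (Lemma \ref{zarhin}), standardness of $\wt{\mu}$ read off from the generic fibre, maximality of the compact subgroup $G(\Zp)$ for the first equality, and spreading out over the DVR $\Zp$ (closedness of $P_\mu$ in $G_{\Zp}$) for the second. The only differences are presentational: you make explicit the similitude-twisted contragredient and the extension of the similitude character over $\Zp$, where the paper simply invokes density of the generic fibre and closedness of $\GSp(\Lambda)$ in $\GL(\Lambda)$, and you argue the parabolic equality by two inclusions rather than the paper's chain of equalities.
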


\begin{proof}
Choose an arbitrary closed embedding $\rho' \colon (G,\cH) \to (\GSp(V),\cH_V)$ into a Siegel Shimura datum. By Lemma \ref{integral}, we can find a $\Zp$-lattice $\Lambda_{V_{\Qp}} \subset
V \otimes_{\Q} \Qp$ such that $\rho'_{\Qp}$ extends to a map
$G_{\Zp} \to \GL(\Lambda_{V_{\Qp}})$.
There exists a $\Z$-lattice $\Lambda_V \subset V$ such that
$\Lambda_{V_{\Qp}} = \Lambda_V \otimes_{\Z} {\Zp}$. Set $W = V^4 \oplus (V^{\vee})^4$, $\Lambda = \Lambda_{V_{\Qp}}^4 \oplus (\Lambda_{V_{\Qp}}^{\vee})^4$ and $\Lambda_W = \Lambda_{V}^4 \oplus (\Lambda_{V}^{\vee})^4$. Applying Lemma \ref{zarhin}, we can choose a symplectic form on $W$ giving us an embedding of Shimura data $\rho_\Q \colon (G,\cH) \to (\GSp(W), \cH_W)$ such that $\Lambda_W$ is selfdual. Then $\Lambda$ is also self-dual. The composition 
\[
G_{\Zp} \to \GL(\Lambda_{V_{\Qp}}) \to \GL(\Lambda)
\]
maps $G_{\Qp}$ to $\GSp(W)$, so since $G_{\Qp}$ is dense in the reduced scheme $G_{\Zp}$, the image of $G_{\Zp}$ is contained in $\GSp(\Lambda)$. This gives the $\rho_{\Zp}$ in the statement of the theorem. The cocharacter $\wt{\mu} \defeq \rho_{\Zp} \circ \mu$ is then standard since its generic fiber $\rho_{\Qp} \circ \mu_{\Qp}$ is.

\medskip

Now set $\tG \defeq \GSp(\Lambda)$. To prove the final part of the proposition, first note that it is clear that $G(\Zp) \subseteq G(\Qp) \cap \tilde{G}(\Zp)$. Equality must then hold since $G(\Zp)$ is a maximal compact subgroup of $G(\Qp)$. Then we compute
\[ 
P_{\mu}(\Zp) = G(\Zp) \cap P_\mu(\Qp) = \tG(\Zp) \cap P_\mu(\Qp) = \tG(\Zp) \cap G(\Qp) \cap P_{\wt{\mu}}(\Qp) = G(\Qp) \cap P_{\wt{\mu}}(\Zp), 
\]
finishing the proof.
\end{proof}

From now on, we fix an embedding $(G,\cH) \hookrightarrow (\tG,\wt{\cH})$ into a Siegel Shimura datum satisfying the conditions of Proposition \ref{parabolics}. As with $G$, we will use $\tG$, or sometimes $\tG_{\Zp}$ for emphasis, to denote the split model of $\tG$ over $\Zp$ given by Proposition \ref{parabolics}. Composing our fixed $\mu$ with $G_{\Zp} \to \tG_{\Zp}$ gives a Hodge cocharacter $\wt{\mu}$ for $\tG$, which by Proposition \ref{parabolics} is conjugate over $\Zp$ to the standard cocharacter
$z \mapsto \begin{pmatrix} zI & 0 \\ 0 & I \end{pmatrix}$ in $\tG_{\Zp}$. We let $P_\mu$ and $P_{\wt{\mu}}$ be the parabolic subgroups of $G$ and $\tG$, respectively, that are defined in Proposition \ref{parabolics}.

\subsection{The anticanonical tower}\label{subsec: anticanonical tower}

We now start discussing Shimura varieties. Our notation and definitions will be similar to those of \cite{arizona}, so we will occasionally be rather brief. From now on, we fix a complete algebraically closed extension $C$ of $\Qp$. To start with, it will be more convenient to indicate the full level subgroup in our notation but later we will fix the tame level and only specify the level at $p$. For any compact open subgroup $K \sub G(\A_f)$, always assumed to be neat throughout this paper, we let $X_K$ denote the canonical model (defined over the reflex field $E$) of the Shimura variety of $(G,\cH)$ of level $K$. We set
\[
 \cX_K \defeq (X_K \otimes_E C)^{an},
\]
the analytification (as an adic space over $C$) of the base change of $X_K$ to $C$, via our fixed embedding $E \to \Qp$. If $H \sub G(\A_f)$ is an arbitrary compact subgroup, we set
$$ \cX_H \defeq \varprojlim_{K\supseteq H} \cX_K^\lozenge, $$
where $-^\lozenge$ denotes the diamondification functor on analytic adic spaces over $\Zp$  \cite[Definition 15.5]{diamonds}, the inverse limit is taken over all open $K \sub G(\A_f)$ containing $H$, and the inverse limit exists in the category of diamonds and is locally spatial by \cite[Lemma 11.22]{diamonds}. We note that this is a mild abuse of notation when $H$ itself is open; see \cite[Remark 3.2.8(1)]{arizona} for more details. We define Shimura varieties $\tX_{\tK}$ (over $E$, not $\Q$), $\ctX_{\tK}$ and $\ctX_{\tH}$ for $\tG$ completely analogously, whenever $\tK \sub \tG(\A_f)$ is a compact open subgroup, and $\tH \sub \tG(\A_f)$ is a compact subgroup.  

\medskip

Next, we introduce compactifications. Whenever $K \sub \tK$, there is a natural finite \'etale map $X_K \to \tX_{\tK}$, which extends to a finite map $X_K^\ast \to \tX_{\tK}^\ast$ of minimal compactifications. When $K = \tK \cap G(\A_f)$ by \cite[Proposition 1.15]{deligne-tshimura}, the map $X_K \to \tX_{\tK}$ is a closed immersion but the extension $X_K^\ast \to \tX_{\tK}^\ast$ need not be. Following \cite[\S 4]{scholze-galois}, we define the ad hoc compactification $X^\ast_K \to \ol{X}_K$ to be the universal finite map over which all the $X_K^\ast \to \tX_{\tK}^\ast$ factor (for varying $\tK$ satisfying $K = \tK \cap G(\A_f)$); as noted by Scholze $\ol{X}_K$ is the scheme-theoretic image of $X_K^\ast \to \tX_{\tK}^\ast$ for sufficiently small $\tK$. The right action of $G(\A_f)$ on the tower $(X_K)_K$ extends to an action on $(\ol{X}_K)_K$. We may then analytify: Set $\ol{\cX}_K = (\ol{X} \otimes_E C)^{an}$ and, for $H \sub G(\A_f)$ compact, set $\ol{\cX}_H = \varprojlim_{K\supseteq H} \ol{\cX}_K^\lozenge$. The latter is a spatial diamond by \cite[Lemma 11.22]{diamonds}. We also define $\ctX^\ast_{\tK}$ and $\ctX^\ast_{\tH}$ analogously for $\tG$ (we use minimal compactifications here). We collect some facts about these diamonds.

\begin{lemma}\label{facts about compactifications}
Note that any compact subgroup $H \sub G(\A_f)$ is also a compact subgroup of $\tG(\A_f)$.

\begin{enumerate}
\item Let $H_2 \sub H_1$ be neat compact subgroups of $G(\A_f)$, with $H_2$ normal in $H_1$. Then $|\ol{\cX}_{H_1}| = |\ol{\cX}_{H_2}|/(H_1/H_2)$. Moreover, $|\ocX_{H}|$ is the closure of $|\cX_{H}|$ inside $|\ctX_{H}^*|$ and $\ocX_{H} = \ul{|\ocX_{H}|} \times_{\ul{|\ctX^*_{H}|}} \ctX^*_H$, for any neat compact subgroup $H\sub G(\A_f)$.

\item Let $\tH_2 \sub \tH_1$ be neat compact subgroups of $G(\A_f)$, with $\tH_2$ normal in $\tH_1$. Then $|\ctX^*_{\tH_1}| = |\ctX^*_{\tH_2}|/(\tH_1/\tH_2)$.

\end{enumerate}
\end{lemma}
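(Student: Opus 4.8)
The plan is to prove the two parts largely in parallel, since part (2) is just the analogue of the first sentence of part (1) for the Siegel datum with minimal compactifications, and the argument for the topological-space identity is identical. I would first treat the quotient statement $|\ocX_{H_1}| = |\ocX_{H_2}|/(H_1/H_2)$. The key input is Proposition \ref{surjection finite level} (or rather the general principle behind it): for a neat compact open $K$ with $K' \trianglelefteq K$ finite quotient acting on the finite-level adic space $\ol{X}_{K'}$, the quotient of $\ol{\cX}_{K'}^\lozenge$ by $K/K'$ is $\ol{\cX}_K^\lozenge$, and on underlying topological spaces $|\ol{\cX}_{K'}^\lozenge|/(K/K') = |\ol{\cX}_K^\lozenge|$. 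This is standard for a finite group acting on a (separated, taut) adic space and its diamondification; it follows e.g. from the fact that $\ol{X}_{K'} \to \ol{X}_K$ is finite. Passing to the inverse limit over $K' \supseteq H_2$ (equivalently $K \supseteq H_1$), using that the transition maps are qcqs and that $|\varprojlim| = \varprojlim|\cdot|$ for such cofiltered systems of spatial diamonds (\cite[Lemma 11.22]{diamonds}), and that quotient-by-the-profinite-group $H_1/H_2$ commutes with the limit, yields $|\ocX_{H_1}| = |\ocX_{H_2}|/(H_1/H_2)$. The same argument, applied to $\tG$ with the honest minimal compactifications $\ctX^*_{\tK}$ (for which the finiteness of $\ctX^*_{\tK'} \to \ctX^*_{\tK}$ is classical), gives part (2).

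Next I would prove that $|\ocX_H|$ is the closure of $|\cX_H|$ in $|\ctX_H^*|$. At finite level, $\ol{X}_K$ was \emph{defined} as the scheme-theoretic image of $X_K^* \to \tX_{\tK}^*$ for $\tK$ small enough with $K = \tK \cap G(\A_f)$; hence $\ol{\cX}_K$ is the Zariski closure of $\cX_K$ in $\tilde{\cX}^*_{\tK}$, and since $\cX_K$ is open in $\ol{\cX}_K$ (its boundary being nowhere dense), $|\ol{\cX}_K|$ is the closure of $|\cX_K|$ in $|\tilde{\cX}^*_{\tK}|$, a fortiori in $|\ctX^*_H|$ after pulling back along the projection. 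Taking the limit over $K \supseteq H$ and using that the projections $|\ctX^*_{\tK'}| \to |\ctX^*_{\tK}|$ are closed (they are finite, in particular proper) so that closures are compatible with the inverse limit, one deduces that $|\ocX_H|$ is the closure of $|\cX_H|$ in $|\ctX^*_H|$. Here one should be a little careful: I would phrase "$H$ as a subgroup of $\tG(\A_f)$" concretely, fixing for the limit a cofinal system of pairs $(K,\tK)$ with $K = \tK \cap G(\A_f)$ and $\tK \supseteq H$ small, so that the maps $\ol{\cX}_K \hookrightarrow \ctX^*_{\tK}$ assemble.

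For the identity $\ocX_H = \ul{|\ocX_H|} \times_{\ul{|\ctX^*_H|}} \ctX^*_H$, the plan is to invoke Lemma \ref{qc generalizing subsheaf}: the closed subset $|\ocX_H| \subseteq |\ctX^*_H|$ is qc (both spaces are spatial, hence qcqs) and, being closed, it is in particular specializing; but what Lemma \ref{qc generalizing subsheaf} wants is a \emph{generalizing} subset. The right move is instead to realize $\ocX_H$ as a limit of the locally closed pieces: at each finite level $\ol{\cX}_K \subseteq \ctX^*_{\tK}$ is a Zariski-closed adic subspace, hence $\ol{\cX}_K^\lozenge = \ul{|\ol{\cX}_K|} \times_{\ul{|\ctX^*_{\tK}|}} (\ctX^*_{\tK})^\lozenge$ because a closed immersion of analytic adic spaces is a locally closed immersion of diamonds recovered from its image, and then pass to the limit, noting that the fiber-product description is preserved under cofiltered limits of spatial diamonds with qcqs transition maps and that $\varprojlim_K \ul{|\ol{\cX}_K|} = \ul{|\ocX_H|}$ since $|\cdot|$ commutes with the limit. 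I expect the main obstacle to be exactly this last point — pinning down that the "subspace of the boundary-compactification cut out by a closed subset of the underlying space" behaves well in the limit, i.e. matching the finite-level closed-immersion description with the v-sheaf fiber product and checking the hypotheses (spatiality, qcqs transition maps) needed to commute it past $\varprojlim$. Everything else is a routine assembly of \cite[Lemma 11.22]{diamonds}, \cite[Prop.~12.9/12.15]{diamonds}, finiteness of the transition maps, and the definition of $\ol{X}_K$.
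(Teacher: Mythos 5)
Your overall shape (establish finite-level statements, then pass to the cofiltered limit) is the natural one, and it is essentially a reconstruction of the arguments of \cite{arizona} that the paper's own proof simply cites (Lemma 3.2.11, Corollary 3.2.14, Proposition 3.2.15 there, with the observation that they only use a Siegel embedding). Two of your three pieces are in good shape: for the fibre-product statement you correctly see that Lemma \ref{qc generalizing subsheaf} does not apply to the closed subset $|\ocX_H|$, and your substitute at finite level (a map from a perfectoid space whose image lies in a \emph{reduced} Zariski-closed subspace factors through it, because perfectoid rings are uniform) is sound and passes to the limit, since $\ul{(-)}$ and fibre products commute with cofiltered limits and $|\cdot|$ commutes with the limit by \cite[Lemma 11.22]{diamonds}. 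The closure statement also works, though "closures are compatible with the inverse limit" should be backed up by noting that the transition maps are closed and surjective and that the preimage of $\cX_K$ in $\ol{\cX}_{K'}$ is exactly $\cX_{K'}$, so density persists in the limit; likewise the assertion that profinite quotients commute with cofiltered limits of spectral spaces is a real (if standard) lemma that should be recorded.

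The genuine gap is the finite-level quotient claim for the ad hoc compactifications: you assert $|\ol{\cX}_{K'}|/(K/K') = |\ol{\cX}_K|$ and justify it by "$\ol{X}_{K'} \to \ol{X}_K$ is finite". Finiteness gives a closed surjection, but not that its fibres are single $K/K'$-orbits, and that is precisely the content of the statement. Since $\ol{X}_{K'}$ is defined as a scheme-theoretic image inside a Siegel compactification $\tX^*_{\tK'}$, two boundary points of $\ol{\cX}_{K'}$ over the same point of $\ol{\cX}_K$ are a priori only related by an element of the Siegel level group $\tK/\tK'$, not of $K/K'$; moreover $\ol{X}_K$ need not be normal, so no generic/normality argument rescues this. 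In effect you have assumed the compact-open case of part (1), which is the hardest special case. The robust order of proof is the one behind the cited results: first prove part (2) and the closure/fibre-product description, and then deduce the quotient statement of part (1) from them. Concretely, the quotient map $|\ctX^*_{H_2}| \to |\ctX^*_{H_1}|$ by $H_1/H_2$ is closed (being a limit of finite maps, or by a saturation argument), so it carries the closure of $|\cX_{H_2}|$ onto the closure of $|\cX_{H_1}|$; since $|\ocX_{H_2}|$ is $H_1/H_2$-stable, each fibre over $|\ocX_{H_1}|$ that meets $|\ocX_{H_2}|$ is a full orbit contained in it, so the fibres are exactly the orbits, and a closed continuous surjection is a quotient map. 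With this rearrangement the only finite-level inputs are the classical quotient property of Siegel minimal compactifications and the corresponding, unproblematic, statement for the open Shimura varieties $\cX_K$.
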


\begin{proof}
Part (2) is \cite[Lemma 3.2.11]{arizona}. For part (1), the first statement is essentially \cite[Proposition 3.2.15]{arizona} and the second statement is essentially \cite[Corollary 3.2.14]{arizona}; in both cases the proofs of the cited results only use an embedding into a Siegel Shimura datum and go through verbatim in our situation.
\end{proof}

\begin{prop} \label{shimura variety quotient}

\begin{enumerate}
\item For any neat compact subgroups $H_2 \subseteq H_1 \subseteq G(\A_f)$ with $H_2$
normal in $H_1$,
$\ol{\cX}_{H_2} \times \underline{H_1/H_2} \rightrightarrows \ol{\cX}_{H_2}$
is a presentation of $\ol{\cX}_{H_1}$ as a v-sheaf.

\medskip

\item 
For any neat compact subgroups $\tH_2 \subseteq \tH_1 \subseteq \tG(\A_f)$ with $\tH_2$
normal in $\tH_1$,
$\ctX_{\tH_2}^* \times \underline{\tH_1/\tH_2} \rightrightarrows \ctX_{\tH_2}^*$
is a presentation of $\ctX_{\tH_1}^*$ as a v-sheaf.
\end{enumerate}

\end{prop}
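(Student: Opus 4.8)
The plan is to prove (2) first, and then to deduce (1) from it together with Lemma~\ref{facts about compactifications}. In both parts all the v-sheaves that occur --- $\ctX^*_{\tH}$, $\ocX_{H}$, their products with a $\ul{(-)}$ of a (pro)finite group, and their fibre products --- are spatial diamonds and the maps in question are qc, so by \cite[Lemma~12.11]{diamonds} it suffices in each case to check surjectivity of the underlying maps of topological spaces.

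For (2): a standard limiting argument first reduces to the case where $\tK_2 \defeq \tH_2 \subseteq \tK_1 \defeq \tH_1$ are open and $\Gamma \defeq \tK_1/\tK_2$ is finite. Indeed, writing $\tH_2$ as a filtered intersection of open subgroups of $\tH_1$ that are normal in $\tH_1$ with finite quotient (and, if $\tH_1$ is not itself open, also approximating it from above by open subgroups), one gets $\ctX^*_{\tH_i} = \varprojlim_\alpha \ctX^*_{\tK_i^{(\alpha)}}$ compatibly with $\ul{\tH_1/\tH_2} = \varprojlim_\alpha \ul{\tK_1^{(\alpha)}/\tK_2^{(\alpha)}}$; since products and fibre products commute with these cofiltered limits, Lemma~\ref{finite to infinite} reduces surjectivity of $\ctX^*_{\tH_2} \to \ctX^*_{\tH_1}$ and of $\ctX^*_{\tH_2} \times \ul{\tH_1/\tH_2} \to \ctX^*_{\tH_2} \times_{\ctX^*_{\tH_1}} \ctX^*_{\tH_2}$ to the corresponding statements at finite level. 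Now for $\tK_2 \subseteq \tK_1$ open with $\Gamma$ finite, $\ctX^*_{\tK_i}$ is (the diamond attached to) the normal projective adic minimal compactification, and $\Gamma$ acts on $\ctX^*_{\tK_2}$ with quotient $\ctX^*_{\tK_1}$. Pulling back an affinoid open cover $\{V_i = \Spa(A_i,A_i^+)\}$ of $\ctX^*_{\tK_1}$ along the finite (hence affine) morphism $\ctX^*_{\tK_2} \to \ctX^*_{\tK_1}$ gives a $\Gamma$-stable affinoid cover $\{U_i = \Spa(B_i,B_i^+)\}$ of $\ctX^*_{\tK_2}$, and --- the key point --- $(A_i,A_i^+) = (B_i^\Gamma, B_i^{+\Gamma})$, since the minimal compactification is the $\Proj$ of a graded ring of Siegel modular forms and invariant modular forms at level $\tK_2$ are exactly the modular forms at level $\tK_1$ (cf.\ \cite[\S 4]{scholze-galois}). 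Applying Proposition~\ref{surjection finite level} to each $(B_i,B_i^+)$ with its $\Gamma$-action shows that $\Spa(B_i,B_i^+)^\lozenge \times \ul{\Gamma} \rightrightarrows \Spa(B_i,B_i^+)^\lozenge$ presents $V_i^\lozenge = \Spa(A_i,A_i^+)^\lozenge$; these presentations glue over the cover (surjectivity being checkable locally on the target), which gives (2).

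For (1): regard $H_2 \subseteq H_1$ as subgroups of $\tG(\A_f)$, so that (2) gives $\ctX^*_{H_1} = \ctX^*_{H_2}/\Gamma$ as v-sheaves, where $\Gamma \defeq H_1/H_2$. By Lemma~\ref{facts about compactifications}(1), $\ocX_{H_i} = \ul{|\ocX_{H_i}|} \times_{\ul{|\ctX^*_{H_i}|}} \ctX^*_{H_i}$, with $|\ocX_{H_1}| = |\ocX_{H_2}|/\Gamma$ and $|\ctX^*_{H_1}| = |\ctX^*_{H_2}|/\Gamma$ inside the respective ambient spaces; moreover, as $|\ocX_{H_2}|$ is $\Gamma$-stable and the fibres of $|\ctX^*_{H_2}| \to |\ctX^*_{H_1}|$ are $\Gamma$-orbits, the preimage of $|\ocX_{H_1}|$ in $|\ctX^*_{H_2}|$ is exactly $|\ocX_{H_2}|$. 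Since $\Gamma$ acts trivially on $\ul{|\ocX_{H_1}|}$ and $\ul{|\ctX^*_{H_1}|}$, using that colimits are universal in the topos of v-sheaves and that $\ul{(-)}$ preserves finite limits one obtains
\[
\ocX_{H_1} = \ul{|\ocX_{H_1}|} \times_{\ul{|\ctX^*_{H_1}|}} \bigl(\ctX^*_{H_2}/\Gamma\bigr) = \bigl(\ul{|\ocX_{H_1}|} \times_{\ul{|\ctX^*_{H_1}|}} \ctX^*_{H_2}\bigr)/\Gamma = \ocX_{H_2}/\Gamma .
\]
It then remains to check the two maps of a presentation are surjective. The map $\ocX_{H_2} \to \ocX_{H_1}$ is a qc map of spatial diamonds, surjective on points by Lemma~\ref{facts about compactifications}(1), hence surjective. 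For the map $\ocX_{H_2} \times \ul{\Gamma} \to \ocX_{H_2} \times_{\ocX_{H_1}} \ocX_{H_2}$, one checks that $\ocX_{H_2} \times_{\ocX_{H_1}} \ocX_{H_2}$ is the base change of $\ctX^*_{H_2} \times_{\ctX^*_{H_1}} \ctX^*_{H_2}$ along $\ocX_{H_2} \hookrightarrow \ctX^*_{H_2}$ in the first factor (using once more that the preimage of $|\ocX_{H_1}|$ is $|\ocX_{H_2}|$, and that $\ocX_{H_1} \hookrightarrow \ctX^*_{H_1}$ is a monomorphism); hence this map is obtained by the same base change from the surjection $\ctX^*_{H_2} \times \ul{\Gamma} \to \ctX^*_{H_2} \times_{\ctX^*_{H_1}} \ctX^*_{H_2}$ of part (2), and is therefore surjective.

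I expect the real obstacle to be the one non-formal input: that at finite level the minimal compactification of the Siegel variety is an honest quotient by $\Gamma$ \emph{on structure sheaves}, compatibly with pulling back affinoid opens along the finite covering map. Lemma~\ref{facts about compactifications}(2) records only the topological shadow of this, whereas what licenses the appeal to Proposition~\ref{surjection finite level} is the identity $(A_i,A_i^+) = (B_i^\Gamma, B_i^{+\Gamma})$, which must be extracted from the explicit construction of minimal compactifications of Siegel modular varieties. Everything else --- the cofiltered-limit reduction via Lemma~\ref{finite to infinite}, and the topos-theoretic transfer from $\ctX^*$ to $\ocX$ via universality of colimits --- is routine.
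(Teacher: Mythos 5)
Your proposal is correct and follows essentially the same route as the paper: part (2) is reduced to finite level via Lemma~\ref{finite to infinite} (the paper uses the system $\tK_i\tH_j$ for a shrinking family of open normals $\tK_i$, which is exactly the compatible open approximation you describe), at finite level the finiteness of $\ctX^*_{\tK_2}\to\ctX^*_{\tK_1}$ gives $\Gamma$-stable affinoid covers to which Proposition~\ref{surjection finite level} is applied, and part (1) is obtained by base changing the part (2) presentation along $\ul{|\ocX_{H_1}|}\times_{\ul{|\ctX^*_{H_1}|}}(-)$ using Lemma~\ref{facts about compactifications} and universality of colimits. The only divergence is at the step you flag: for the identification $(A_i,A_i^+)=(B_i^{\Gamma},B_i^{+\Gamma})$ the paper simply cites \cite[Lemma 3.2.2]{arizona} (the quotient statement in the category of v-ringed spaces) together with \cite[Theorem 1.2]{hansen}, whereas you sketch it from the $\Proj$-of-modular-forms description; that is indeed the content behind those citations, though your sketch leaves implicit the passage from graded invariants to the affinoid-local statement, which is precisely what the cited v-ringed-space quotient result supplies.
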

\begin{proof}
We first prove part (2). Consider the case where $\tH_1$ and $\tH_2$ are open subgroups of $\tG(\Zp)$. Then we can consider $\ctX_{\tH_1}^*$ and $\ctX_{\tH_2}^*$ as adic spaces. By \cite[Lem.~3.2.2]{arizona}, $\ctX_{\tH_2}^* \to \ctX_{\tH_1}^*$
identifies $\ctX_{\tH_1}^*$ with $\ctX_{\tH_2}^*/(\tH_1/\tH_2)$ (here we take the quotient in the category
of locally v-ringed spaces).  This morphism is finite,
so we can cover $\ctX_{\tH_1}^*$ with affinoids whose pullbacks to $\ctX_{\tH_2}^*$
are affinoids.  Then the result follows from Proposition \ref{surjection finite level} and \cite[Thm.~1.2]{hansen}.

\medskip

In the general case, choose an open compact subgroup $\tK_0$ containing $\tH_1$ and let $\{\tK_i \}_{i\geq 0}$ be a shrinking family of open normal subgroups of $\tK_0$ with $\cap_i \tK_i = \{ 1 \}$. For each $i$, we have shown that
$\ctX^*_{\tK_i \tH_2} \times \underline{\tK_i \tH_1 / \tK_i \tH_2} \rightrightarrows \ctX^*_{\tK_i \tH_2}$ is a presentation of $\ctX^*_{\tK_i \tH_1}$ as a v-sheaf.  The diamonds
$\ctX^*_{\tK_i \tH_j}$ are spatial, so by many applications of \cite[Corollary 11.29]{diamonds}, the relevant maps are qcqs.
Applying Lemma \ref{finite to infinite} then finishes the proof of part (2).

\medskip

We now prove part (1). By Lemma \ref{facts about compactifications}, $\ocX_{H_1} = \ul{|\ocX_{H_1}|} \times_{\ul{|\ctX_{H_1}^*|}} \ctX^*_{H_1}$. Since coequalizers commute with fibre products, we conclude from part (2) that 
\[
\ul{|\ocX_{H_1}|} \times_{\ul{|\ctX_{H_1}^*|}} \ctX_{H_2}^* \times \underline{H_1/H_2} \rightrightarrows \ul{|\ocX_{H_1}|} \times_{\ul{|\ctX_{H_1}^*|}} \ctX_{H_2}^*
\]
is a presentation of $\ocX_{H_1}$ as a v-sheaf. It remains to show that $\ul{|\ocX_{H_1}|} \times_{\ul{|\ctX_{H_1}^*|}} \ctX_{H_2}^* = \ocX_{H_2}$, which in turns reduces to showing that $|\ocX_{H_1}| \times_{|\ctX_{H_1}^*|} |\ctX_{H_2}^*| = |\ocX_{H_2}|$. But this follows directly from Lemma \ref{facts about compactifications}.
\end{proof}

We will now start to only indicate the level at $p$ in the notation for our Shimura varieties. Let $\tK^p \sub \tG(\wh{\Z}^p)$ be a neat open subgroup, which we assume\footnote{This (rather mild) assumption is imposed to be able to apply the perfectoidness results from \cite{scholze-galois} later. It is possible to remove this assumption using the results of \cite{hansen}, but we do not go into this. Also recall that the proof of Proposition \ref{parabolics} furnishes us with a split model of $\tG$ over $\Z$, so it makes sense to talk about $\tG(\wh{\Z}^p)$ and its principal congruence subgroups.} to be contained inside the principal congruence subgroup of level $N$ for some $N\geq 3$, $p\nmid N$. The choice of $\tK^p$ is arbitrary but fixed, unless otherwise indicated. If $\tH \sub \tG(\Zp)$ is a closed subgroup, we now write $\ctX^*_{\tH}$ for what was previously denoted by $\ctX^*_{\tK^p \tH}$, and so on. We make similar conventions for the Shimura varieties, with $K^p \sub G(\A_f)$ a compact open subgroup contained in $\tK^p$, and $H \sub G(\Zp)$ a closed subgroup.

\medskip

Recall the parabolic subgroups $P_\mu \sub G$ and $P_{\wt{\mu}} \sub \tG$ defined at the end of \S \ref{setup}. Let $\tK_0(p)$ denote the parahoric subgroup of $\tG(\Zp)$ with respect to the \emph{opposite} parabolic $\ol{P}_{\wt{\mu}}$ of $P_{\wt{\mu}}$. The Shimura variety $\wt{X}_{\tK_0(p)}$ is the moduli space of principally polarized abelian varieties $(A,\lambda)$ together with a $\tK^p$-level structure and a subspace $W \sub A[p]$ which is  Lagrangian with respect to the $\lambda$-Weil pairing. For any $\epsilon \in [0,1/2)$, we let $\ctX_{\tK_0(p)}(\epsilon)_a \sub \ctX_{\tK_0(p)}$ denote the anticanonical locus of level $\tK_0(p)$ and radius of overconvergence $\epsilon$, which is defined in \cite[Theorem 3.2.15(iii)]{scholze-galois}\footnote{Up to a minor difference in level structures; see the proof of Theorem \ref{anticanonical Siegel} for more details.}. We then set 
$$ \ctX_{\tH}(\epsilon)_a \defeq \ctX_{\tK_0(p)}(\epsilon)_a \times_{\ctX_{\tK_0(p)}} \ctX_{\tH} $$
for all closed subgroups $\tH \sub \tK_0(p)$ and similarly for minimal compactifications. Furthermore, whenever $H \sub G(\Zp) \cap \tK_0(p)$ is a closed subgroup, we set
\[
 \cX_{H}(\epsilon)_a \defeq \ctX_{\tK_0(p)}(\epsilon)_a \times_{\ctX_{\tK_0(p)}} \cX_{H} 
 \]
and similarly for the ad hoc compactifications. We then have the following basic perfectoidness results.

\begin{thm}\label{anticanonical Siegel}
If $\tH \sub \ol{P}_{\wt{\mu}}(\Zp)$ is a closed subgroup, then the diamond $\ctX^*_{\tH}(\epsilon)_a$ is affinoid perfectoid, and the boundary $\wt{\cZ}_{\tH}(\epsilon)_a \defeq \ctX^*_{\tH}(\epsilon)_a \setminus \ctX_{\tH}(\epsilon)_a$ is Zariski closed.
\end{thm}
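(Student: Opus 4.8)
The plan is to reduce to the case $\tH = \ol{P}_{\wt{\mu}}(\Zp)$ and a covering situation already treated by Scholze. First I would recall that by \cite[Theorem 3.2.15]{scholze-galois} (or rather its minimal-compactification variant, compare \cite[\S III]{scholze-galois}), the diamond $\ctX^*_{\tK_0(p)}(\epsilon)_a$ admits a natural perfectoid cover as one passes up the anticanonical tower: concretely, for $\tH_\infty \defeq \ol{P}_{\wt{\mu}}(\Zp) \cap \tK_0(p^\infty)$ along the anticanonical tower, $\ctX^*_{\tH_\infty}(\epsilon)_a$ is affinoid perfectoid, with Zariski closed boundary, and the transition maps in the tower are finite \'etale away from the boundary and finite on the whole compactification. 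The point of the theorem is to extend this from the specific subgroup $\tH_\infty$ to an \emph{arbitrary} closed subgroup $\tH \subseteq \ol{P}_{\wt{\mu}}(\Zp)$.

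The key mechanism is Proposition \ref{shimura variety quotient}(2) together with Theorem \ref{quotients of aff perf spaces}. For $\tH$ arbitrary, write $\ctX^*_{\tH}(\epsilon)_a$ as an inverse limit over open $\tK \supseteq \tH$ of the finite-level pieces $\ctX^*_{\tK}(\epsilon)_a$; intersecting with the anticanonical tower and using that $\ol{P}_{\wt{\mu}}(\Zp)$ maps to $\tK_0(p)$, I would exhibit $\ctX^*_{\tH}(\epsilon)_a$ as a quotient of $\ctX^*_{\tH'}(\epsilon)_a$ by a finite group $\tH'/\tH$, where $\tH' \supseteq \tH$ is chosen so that $\tH'$ contains (a cofinal system inside) the anticanonical tower — more precisely, one first goes \emph{up} to $\ctX^*_{\tH \cap \tH_\infty}(\epsilon)_a$, which is affinoid perfectoid because it sits above the affinoid perfectoid $\ctX^*_{\tH_\infty}(\epsilon)_a$ via maps that are finite (in the limit, \'etale on the open part, finite on the compactification — this is exactly the anticanonical tower being a tower of finite maps), hence its total space is affinoid perfectoid by the usual argument (inverse limit of finite maps over an affinoid perfectoid, à la \cite{scholze-galois}); then one descends by the finite group $\tH/(\tH\cap\tH_\infty)$ using Proposition \ref{shimura variety quotient}(2) and Theorem \ref{quotients of aff perf spaces} (which requires a $G$-stable affinoid perfectoid cover — supplied by the single affinoid, since the space is already affinoid). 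The Zariski-closedness of the boundary $\wt{\cZ}_{\tH}(\epsilon)_a$ follows because Zariski-closed immersions into affinoid perfectoids are classified by their underlying closed subsets (and ideals), and pass to finite quotients and to the inverse limit: the boundary is cut out at each finite level and these are compatible, so the limit boundary is cut out by the closure of the union of the defining ideals.

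I expect the main obstacle to be the bookkeeping of \emph{which} group-theoretic towers are perfectoidizing. The honest content — that inverse limits of finite maps over affinoid perfectoids are affinoid perfectoid, and that finite quotients of affinoid perfectoids are affinoid perfectoid — is now packaged in Theorem \ref{quotients of aff perf spaces} and the results of \cite{scholze-galois}. But one must check that $\ol{P}_{\wt{\mu}}(\Zp)$ genuinely contains enough of the anticanonical direction that going up to $\tH \cap \tH_\infty$ and then down by a finite group recovers $\ctX^*_{\tH}(\epsilon)_a$; this is where Proposition \ref{parabolics}'s compatibility $P_\mu(\Zp) = P_\mu(\Qp) \cap \tP_\mu(\Zp)$ and the choice of $\tK_0(p)$ with respect to $\ol{P}_{\wt{\mu}}$ enter. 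A secondary, purely technical point is verifying the hypotheses of Theorem \ref{quotients of aff perf spaces} (a $G$-stable affinoid perfectoid cover) and that the finite group acts through automorphisms of the relevant Tate--Huber pair; since at each relevant stage the space is a \emph{single} affinoid perfectoid on which the finite group acts, this should be immediate, but it is worth stating explicitly that the quotient v-sheaf computed in Proposition \ref{shimura variety quotient}(2) agrees with the v-ringed-space quotient, which is exactly Proposition \ref{v sheaf and v ring quotient}(\ref{vring}).
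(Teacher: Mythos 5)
Your proposal does not match the paper's argument, and as written it has a genuine gap. The paper's proof of Theorem \ref{anticanonical Siegel} is essentially a citation: the statement is \cite[Corollary 3.2.17]{arizona}, and the only new content is a level-structure bookkeeping point, namely that in \cite{arizona} (following \cite{scholze-galois}) the anticanonical locus lives over the parahoric attached to $P_{\wt{\mu}}$, whereas here it is attached to $\ol{P}_{\wt{\mu}}$; since these parabolics are conjugate by the longest Weyl element and the anticanonical loci correspond under this conjugation, one conjugates to conclude. Your write-up never addresses this discrepancy, which is the one thing the paper actually has to check, and instead attempts to reprove the cited result from scratch.

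Within your own reconstruction the group theory does not cohere. With the natural reading $\tK_0(p^\infty)=\bigcap_n \tK_0(p^n)=\ol{P}_{\wt{\mu}}(\Zp)$, your $\tH_\infty$ equals $\ol{P}_{\wt{\mu}}(\Zp)$, so $\tH\cap\tH_\infty=\tH$ and the ``descent by the finite group $\tH/(\tH\cap\tH_\infty)$'' is descent by the trivial group; all the weight then falls on the claim that $\ctX^*_{\tH}(\epsilon)_a$, sitting above the affinoid perfectoid $\Gamma_0(p^\infty)$-level space via finite maps, is affinoid perfectoid ``by the usual argument.'' But ``an inverse limit of finite maps over an affinoid perfectoid space is affinoid perfectoid'' is false in general: almost purity requires finite \emph{\'etale}, and here the transition maps are only finite \'etale away from the boundary. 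Extending perfectoidness across the boundary (via the boundary being strongly Zariski closed, Hebbarkeit-type arguments, etc.) is precisely the nontrivial content of \cite[III.2]{scholze-galois} and of \cite[Lemma 3.2.16, Corollary 3.2.17]{arizona}, which you treat as routine. If instead you intend $\tH_\infty$ to be small (e.g.\ trivial, using full-level anticanonical perfectoidness), then $\tH/(\tH\cap\tH_\infty)\cong\tH$ is profinite rather than finite, so Theorem \ref{quotients of aff perf spaces} and Proposition \ref{shimura variety quotient} do not apply directly and one would need an additional limit argument over finite quotients, which you do not supply. (The remarks on Zariski closedness of the boundary are essentially fine, and the appeal to Proposition \ref{parabolics} is not needed for this statement, which concerns only the Siegel datum.) The short fix is the paper's: quote \cite[Corollary 3.2.17]{arizona} and handle the $P_{\wt{\mu}}$ versus $\ol{P}_{\wt{\mu}}$ conjugation.
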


\begin{proof}
This is \cite[Corollary 3.2.17]{arizona}, up to a minor difference in the level structure. In \cite{arizona} the anticanonical locus is defined on the Shimura variety whose level is contained in the parahoric subgroup corresponding to the parabolic $P_{\wt{\mu}}$ (following \cite{scholze-galois}), but $P_{\wt{\mu}}$ and $\ol{P}_{\wt{\mu}}$ are conjugate (by the longest element of the Weyl group) and the anticanonical loci corresponds, so we may conjugate to get the result.
\end{proof}

\begin{cor}\label{anticanonical tower}

If $H \sub \ol{P}_{\mu}(\Zp)$, then the diamond $\ol{\cX}_H(\epsilon)_a$ is affinoid perfectoid, and the boundary $\cZ_H(\epsilon)_a \defeq \ol{\cX}_{H}(\epsilon)_a \setminus \cX_H(\epsilon)_a$ is Zariski closed.
\end{cor}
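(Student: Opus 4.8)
The plan is to deduce Corollary~\ref{anticanonical tower} from Theorem~\ref{anticanonical Siegel} by descending the perfectoidness along the finite map $\cX^*_H(\epsilon)_a \to \ctX^*_{\tH}(\epsilon)_a$ for a suitable choice of $\tH$. More precisely, recall from \S\ref{setup} that the embedding $\rho_{\Zp} \colon G_{\Zp} \hookrightarrow \tG_{\Zp}$ carries $\mu$ to $\wt{\mu}$, hence $\ol{P}_\mu$ into $\ol{P}_{\wt{\mu}}$; and Proposition~\ref{parabolics} gives $G(\Qp) \cap \tG(\Zp) = G(\Zp)$ and compatibility of the parabolics. So if $H \sub \ol{P}_\mu(\Zp)$ is a closed subgroup, then $H$ is also a closed subgroup of $\ol{P}_{\wt{\mu}}(\Zp)$ (using $H \sub G(\Qp) \cap \tG(\Zp)$), and Theorem~\ref{anticanonical Siegel} applies with $\tH = H$: the diamond $\ctX^*_H(\epsilon)_a$ is affinoid perfectoid with Zariski closed boundary.

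First I would recall that $\cX_H \to \ctX_H$ is a closed immersion (as noted in \S\ref{subsec: anticanonical tower}, coming from \cite[Proposition 1.15]{deligne-tshimura} at each finite level and passing to the limit), and that the ad hoc compactification $\ocX_H$ is by construction the scheme-theoretic image of $X_H^* \to \tX_{\tK}^*$ for sufficiently small $\tK$, so that $\ocX_H^* \to \ctX^*_{\tH}$ is finite. Pulling back the anticanonical locus, $\cX^*_H(\epsilon)_a \to \ctX^*_H(\epsilon)_a$ is then a finite morphism of diamonds whose target is affinoid perfectoid. Next I would invoke the closed-subspace / finite-morphism descent of perfectoidness: a diamond finite over an affinoid perfectoid space, corresponding on the level of rings to a finite $A$-algebra $B$ with $B^+$ its integral closure, is itself affinoid perfectoid provided $B$ is uniform (equivalently reduced and seminormal in the relevant sense) — this is the same input used in \cite{scholze-galois} and \cite{arizona} to produce $\ocX_K$, and in our situation $B$ is reduced because $\ocX_H$ is (being a scheme-theoretic image construction from reduced Shimura varieties). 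The Zariski-closedness of the boundary $\cZ_H(\epsilon)_a$ follows because it is the preimage of $\wt{\cZ}_{\tH}(\epsilon)_a$, which is Zariski closed by Theorem~\ref{anticanonical Siegel}, and preimages of Zariski closed subsets under morphisms of adic spaces (or diamonds) are Zariski closed.

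The main obstacle, and the point that needs care rather than just citation, is justifying that finiteness over an affinoid perfectoid base genuinely yields an affinoid perfectoid total space in the category of diamonds — i.e. that the "ad hoc compactification" construction interacts well with the anticanonical tower at infinite level. This is handled in \cite{arizona} (the analogue of \cite[Corollary 3.2.17]{arizona}) exactly because $\ocX_H$ was engineered so that $\ocX_H^* \to \ctX^*_{\tH}$ is finite; one should check that the same argument goes through verbatim here, the only change being that $H$ ranges over closed subgroups of $\ol{P}_\mu(\Zp)$ rather than of the unipotent radical of the Siegel parabolic, which is immaterial since all that is used is the containment $H \sub \ol{P}_{\wt{\mu}}(\Zp)$ inside $\tG(\Zp)$ furnished by Proposition~\ref{parabolics}. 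A secondary routine point is to confirm that the fibre product $\cX^*_H(\epsilon)_a = \ctX^*_{\tK_0(p)}(\epsilon)_a \times_{\ctX_{\tK_0(p)}} \cX^*_H$ defined in the text agrees with pulling back the anticanonical locus along $\ocX^*_H \to \ctX^*_{\tH}$, which is a formal compatibility of the various fibre products.
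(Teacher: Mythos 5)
Your reduction to the Siegel case is the right first move and matches the paper: $H \sub \ol{P}_{\mu}(\Zp) \sub \ol{P}_{\wt{\mu}}(\Zp)$, so Theorem \ref{anticanonical Siegel} applies with $\tH = H$, and the paper indeed proves the corollary by the same route as \cite[Theorem 3.2.18]{arizona} is deduced from \cite[Corollary 3.2.17]{arizona}. The gap is in your descent step. The principle you invoke --- that a diamond finite over an affinoid perfectoid space whose ring of functions is reduced (or uniform) is itself affinoid perfectoid --- is false in general, and it is not the input used in \cite{scholze-galois} or \cite{arizona}. Perfectoidness propagates along finite \emph{\'etale} morphisms (almost purity) and along Zariski closed immersions (Scholze's results on (strongly) Zariski closed subsets of affinoid perfectoid spaces), but not along arbitrary finite morphisms: for instance, adjoining a square root of $T$ to $\Qp^{\cycl}\langle T^{1/p^\infty}\rangle$ gives a reduced finite extension that is not perfectoid. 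So ``finite over an affinoid perfectoid base with reduced total space'' does not give the conclusion, and the ``main obstacle'' paragraph of your proposal cannot be discharged as written.

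The fix is to use the stronger property that the ad hoc compactification actually provides, and which you mention but then weaken to mere finiteness: for $\tK$ sufficiently small with $K = \tK \cap G(\A_f)$, $\ol{X}_K$ is the scheme-theoretic image of $X^*_K \to \tX^*_{\tK}$, so $\ol{X}_K \to \tX^*_{\tK}$ is a \emph{closed immersion}. Analytifying, passing to the inverse limit over levels and pulling back to the anticanonical locus, this identifies $\cX^*_H(\epsilon)_a$ with a Zariski closed subset of the affinoid perfectoid space $\ctX^*_{H}(\epsilon)_a$ of Theorem \ref{anticanonical Siegel}; one then concludes because Zariski closed subsets of affinoid perfectoid spaces are again affinoid perfectoid (in \cite{scholze-galois,arizona} this is carried out via ``strongly Zariski closed'' subsets; cf.\ also the remark in \S\ref{sec: vanishing theorem} citing \cite[Remark 7.5]{bhatt-scholze}). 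Your argument for the boundary --- it is the preimage of the Zariski closed $\wt{\cZ}_{H}(\epsilon)_a$, using that $\ol{X}_K \cap \tX_{\tK} = X_K$ so that the open Shimura variety is the preimage of the open Siegel locus --- is fine once this identification is in place; this is exactly how the argument of \cite[Theorem 3.2.18]{arizona}, to which the paper's one-line proof appeals, runs.
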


\begin{proof}
This follows from Theorem \ref{anticanonical Siegel} in exactly the same way as \cite[Theorem 3.2.18]{arizona} follows from \cite[Corollary 3.2.17]{arizona}.
\end{proof}

We remark that at this stage we haven't proved that $\ol{\cX}_H(\epsilon)_a$ is non-empty, but the result and its proof still make sense. In fact we will not need to separately prove the non-emptiness; it follows from Theorem \ref{Noncanonical locus}.

\subsection{Flag varieties and the Hodge--Tate period map}

We begin by briefly recalling some material from \cite[\S 4]{arizona}. Let $\rG$ be a split connected reductive group over $\Qp$ with a split maximal torus $\rT$ and a Borel subgroup $\rB \supseteq \rT$. Let $\Phi=\Phi(\rG,\rT)$ be the roots of $\rG$ with respect to $\rT$, let $\Phi^+ \sub \Phi$ (resp. $\Phi^- \sub \Phi$) denote the positive (resp.~negative) roots with respect to $\rB$, and let $\Delta \sub \Phi^+$ be the simple roots. Let $\rP=\rP_I\supseteq B$ be the standard parabolic corresponding to a subset $I \sub \Delta$, with unipotent radical $\rN=\rN_I$, and let $\Phi_\rP \sub \Phi$ be the root system of the Levi factor of $\rP$ with respect to (the image of) $\rT$. Let $\rW \defeq W(\rG,\rT)$ and $\rW_\rP \defeq W(\rP/\rN,\rT)$ be the respective Weyl groups.

\medskip
Let $\rU \sub \rB$ be the unipotent radical. As before, we use overlines to denote opposites: We have $\ol{\rB}$, the opposite Borel of $\rB$, with unipotent radical $\ol{\rU}$. Recall that $\ol{\rB} = w_0 \rB w_0$, where $w_0 \in W$ is the longest element. We will look at the stratification of the flag variety $\rG/\rP$ into orbits for $\ol{\rB}$. To this end, we recall the generalized Bruhat decomposition
$$ \rG = \bigsqcup_{w \in \rW/\rW_\rP} \rB w \rP $$
from \cite[Corollaire 5.20]{borel-tits}. From this one easily deduces the decomposition
$$ \rG = \bigsqcup_{w \in \rW/\rW_\rP} \ol{\rB} w \rP $$
using that $\ol{\rB} =w_0 \rB w_0$. We note that $\ol{\rB}\rP$ is the ``big cell'', which is open in $\rG$. The following is \cite[Lemma 4.3.1]{arizona}.

\begin{lem}\label{schubert cell inside affine}
For any $w\in \rW$, $\ol{\rB}w\rP \sub w\ol{\rB}\rP$. In particular, we have the open cover
$$ \rG = \bigcup_{w \in \rW/\rW_\rP} w \ol{\rB} \rP. $$
\end{lem}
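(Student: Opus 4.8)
The plan is to deduce this from the generalized Bruhat decomposition $\rG = \bigsqcup_{w \in \rW/\rW_\rP} \ol{\rB} w \rP$ together with the fact that $\ol{\rB}\rP$ is the big cell. First I would fix $w \in \rW$ and prove the containment $\ol{\rB}w\rP \sub w\ol{\rB}\rP$. The key point is that $w^{-1}\ol{\rB}w \cap \rN = \{1\}$ is not quite what is needed; rather, one wants to show that $w^{-1}\ol{\rB}w\rP = \ol{\rB}\rP$, i.e.\ that the big cell is stable under left multiplication by $w^{-1}\ol{\rB}w$. Equivalently, writing everything in terms of root subgroups, I would decompose $w^{-1}\ol{\rB}w$ according to which root subgroups land in $\ol{\rB}$ and which land in $\rN$ (after reducing mod $\rP$), and observe that the second type already lie in $\rP$, hence act trivially on the base point $e\rP$; the first type preserve the big cell since $\ol{\rB}$ does. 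This gives $\ol{\rB}w\rP = w(w^{-1}\ol{\rB}w)\rP = w\ol{\rB}\rP$, which is even stronger than the claimed inclusion.

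An alternative and perhaps cleaner route: since $\ol{\rB}\rP$ is open in $\rG$ and $w$ acts by a homeomorphism, $w\ol{\rB}\rP$ is open; it contains $w\rP$, hence contains the $\ol{\rB}$-orbit of $w\rP$ (because $w\ol{\rB}\rP$ is visibly $\ol{\rB}$-stable on the left? — no, it need not be). So I would instead argue directly: $\ol{\rB}w\rP \sub w\ol{\rB}\rP$ iff $w^{-1}\ol{\rB}w \sub \ol{\rB}\rP$ (as subsets of $\rG/\rP$, reading $w^{-1}\ol{\rB}w \cdot e\rP \sub \ol{\rB}\rP$). Now $w^{-1}\ol{\rB}w$ is the opposite Borel relative to the maximal torus $\rT$ but with a different choice of positive system, namely $w^{-1}(\Phi^-)$. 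Its intersection with the unipotent radical of $\rP$ lands in $\rP$; the complementary part is a product of root subgroups for roots in $w^{-1}(\Phi^-)$ that are \emph{not} roots of the Levi of $\rP$, and I claim these, together with $\rT$, lie in $\ol{\rB}$... which is false in general. So the honest statement is: $w^{-1}\ol{\rB}w \cdot e\rP \sub \ol{\rB} \cdot e\rP$ because $\ol{\rB} \cdot e\rP$ is precisely the set of $g\rP$ with $g$ in the big cell, and multiplying the base point by any Borel containing $\rT$ and contained in the parabolic... I would simply invoke that the big cell $\ol{\rU}\rP/\rP$ is an affine space on which $\ol{\rU}$ acts simply transitively, and that for the opposite unipotent $w^{-1}\ol{\rU}w$ one checks $w^{-1}\ol{\rU}w \cdot e\rP \sub \ol{\rU}\rP/\rP$ by the root-subgroup bookkeeping above (roots of $w^{-1}\ol{\rU}w$ split into those in $\Phi_{\rP}$, which fix $e\rP$, and those not in $\Phi_{\rP}$; the latter need to be shown negative, which holds after absorbing the Weyl element — this is where the cited \cite[Lemma 4.3.1]{arizona} does the real work).

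Granting the inclusion $\ol{\rB}w\rP \sub w\ol{\rB}\rP$ for every $w$, the open cover follows immediately: the Bruhat decomposition gives $\rG = \bigcup_{w \in \rW/\rW_\rP} \ol{\rB}w\rP$, and each piece is contained in $w\ol{\rB}\rP$, so $\rG \sub \bigcup_{w} w\ol{\rB}\rP$; the reverse inclusion is trivial. Finally each $w\ol{\rB}\rP$ is open in $\rG$ because $\ol{\rB}\rP$ is open (it is the big cell) and left translation by $w$ is an isomorphism of $\rG$.

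The main obstacle is the root-combinatorial verification that $\ol{\rB}w\rP \sub w\ol{\rB}\rP$, i.e.\ controlling how conjugation by $w$ interacts with the decomposition of $\ol{\rB}$ into $\rT$ and negative root subgroups and the reduction modulo $\rP$. However, since this is exactly \cite[Lemma 4.3.1]{arizona}, I would cite it and not reprove it; the remaining assembly of the cover is formal.
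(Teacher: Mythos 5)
Your bottom line---cite \cite[Lemma 4.3.1]{arizona} for the containment $\ol{\rB}w\rP \sub w\ol{\rB}\rP$ and then assemble the cover formally---is exactly what the paper does: it gives no proof beyond the sentence introducing the lemma as \cite[Lemma 4.3.1]{arizona}, having already recorded the $\ol{\rB}$-Bruhat decomposition and the openness of the big cell in the preceding paragraph. So as a final answer your proposal coincides with the source, and your derivation of the cover (Bruhat decomposition for $\ol{\rB}$, the inclusion, and openness of the translates $w\ol{\rB}\rP$ of the big cell) is correct.

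However, the justifications you sketch along the way contain real errors that you should not let stand. The equality $\ol{\rB}w\rP = w\ol{\rB}\rP$ asserted at the end of your first paragraph is false in general, as is the reformulation that the big cell is stable under left multiplication by $w^{-1}\ol{\rB}w$: already for $\rG=\SL_2$, $\rP=\rB$ and $w$ the nontrivial reflection, $\ol{\rB}w\rP/\rP$ is a single point of $\PP^1$ while $w\ol{\rB}\rP/\rP$ is the complement of a point. The correct elementary argument is also shorter than the root bookkeeping you attempt: since $\ol{\rB}\rP$ is stable under right multiplication by $\rP$, it suffices to show $w^{-1}\ol{\rB}w \sub \ol{\rB}\rP$; because $w^{-1}\ol{\rU}w$ is a $\rT$-stable unipotent group, its roots split into $\Phi^-\cap w^{-1}(\Phi^-)$ and $\Phi^+\cap w^{-1}(\Phi^-)$ and one has the ordered factorization $w^{-1}\ol{\rU}w = \bigl(w^{-1}\ol{\rU}w\cap\ol{\rU}\bigr)\cdot\bigl(w^{-1}\ol{\rU}w\cap\rU\bigr)$, whence $w^{-1}\ol{\rB}w \sub \ol{\rU}\cdot\rT\rU \sub \ol{\rB}\rP$ using only $\rU\sub\rB\sub\rP$. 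In particular no claim that the positive-root factors are ``roots of the Levi'' or fix the base point is needed, which is where your second paragraph goes astray before retreating to the citation.
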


We now introduce some more notation. We let $\rFl_\rG \defeq \rG/\rP$ be the partial flag variety of parabolics conjugate to $\rP$. For $w\in \rW$, we have affine open subsets $w\ol{\rB}\rP/\rP \sub \rFl_\rG$, whose stabilizer is $\ol{\rP}_w \defeq w\ol{\rP}w^{-1}$ (since $\ol{\rB}\rP = \ol{\rP}\rP$). The following is the analogue of \cite[Lemma 4.3.2]{arizona}; it is key to proving the stronger vanishing theorem in this article.

\begin{lem}\label{dimension count}
We have $ \dim \ol{\rU} - \dim (\ol{\rU} \cap \ol{\rP}_w) + \dim (\ol{\rB}w\rP/\rP) = \dim \ol{\rN}$.
\end{lem}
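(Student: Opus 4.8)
The plan is to compute all four dimensions appearing in the identity in terms of root-system data and verify the equality as a purely combinatorial statement about the Weyl group element $w$. First I would record the standard facts: $\dim \ol{\rU} = \dim \rU = \#\Phi^+$, and $\dim \ol{\rN} = \#(\Phi^+ \setminus \Phi_\rP^+)$, i.e.\ the number of positive roots that are not roots of the Levi of $\rP$. For the Schubert cell, since $w\ol{\rB}\rP/\rP$ is an affine open of $\rFl_\rG$ of the same dimension as $\rFl_\rG$ itself, we have $\dim(\ol{\rB}w\rP/\rP) = \dim \rFl_\rG = \#(\Phi^+ \setminus \Phi_\rP^+) = \dim \ol{\rN}$. (Here one should double-check that $\ol{\rB}w\rP/\rP$ is indeed open and not merely locally closed in this setup; by Lemma \ref{schubert cell inside affine} it sits inside the affine open $w\ol{\rB}\rP/\rP$, and since $\ol{\rB}\rP = \ol{\rP}\rP$ is the big cell the orbit $\ol{\rB}w\rP/\rP$ is open in $w\ol{\rB}\rP/\rP$, hence open in $\rFl_\rG$.) Given this, the identity collapses to the claim $\dim\ol{\rU} = \dim(\ol{\rU}\cap\ol{\rP}_w)$, i.e.\ $\ol{\rU} \subseteq \ol{\rP}_w$ at the level of dimensions, which since $\ol{\rU}$ is connected unipotent means $\ol{\rU} \subseteq \ol{\rP}_w$ as groups.

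So the crux is to show $\ol{\rU} \subseteq \ol{\rP}_w = w\ol{\rP}w^{-1}$ for every $w \in \rW$. I would argue this as follows. Since $w\ol{\rB}\rP/\rP$ is open in $\rFl_\rG$ and contains the base point $w\rP/\rP$ (because $1 \in \ol{\rB}\rP$), and since $\ol{\rP}_w = w\ol{\rP}w^{-1}$ is precisely the stabilizer of $w\ol{\rB}\rP/\rP$ (as stated in the excerpt, using $\ol{\rB}\rP = \ol{\rP}\rP$), it suffices to show that $\ol{\rU}$ stabilizes this open set. Equivalently, $\ol{\rU} \cdot w\ol{\rB}\rP = w\ol{\rB}\rP$. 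I would deduce this from Lemma \ref{schubert cell inside affine} applied uniformly: that lemma gives $\ol{\rB}v\rP \subseteq v\ol{\rB}\rP$ for all $v$, and applying it with $v = w$ together with the $\ol{\rB}$-invariance of the decomposition shows $\ol{\rB}\cdot w\ol{\rB}\rP$ decomposes into cells $\ol{\rB}v\rP$ that are all contained in $w\ol{\rB}\rP$; a cleaner route is to note $w\ol{\rB}\rP = w\ol{\rP}\rP$ and that left multiplication by $\ol{\rU} \subseteq \ol{\rB} = \ol{\rU}\rT \subseteq \ol{\rP}_w \cdot(\text{something})$ — here I need to be careful. The honest statement is: $\ol{\rB}\rP$ is a single $\ol{\rB}\times\rP$-orbit (the big cell), so $\ol{\rB}\cdot(\ol{\rB}\rP) = \ol{\rB}\rP$; conjugating by $w$, $w\ol{\rB}w^{-1}\cdot w\ol{\rB}\rP = w\ol{\rB}\rP$; but I want $\ol{\rU}$, not $w\ol{\rB}w^{-1}$. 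The correct mechanism is that $w\ol{\rB}\rP/\rP$ is an $\ol{\rB}$-stable open subset of $\rFl_\rG$ because it is the complement of the union of the lower-dimensional cells $\ol{\rB}v\rP/\rP$ with $v \ne w$ in $\rW/\rW_\rP$ which is $\ol{\rB}$-stable — wait, that requires the cell $\ol{\rB}w\rP/\rP$ to be the unique open cell, which is false for general $w$.

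Let me reconsider: the right statement is that $\ol{\rB}w\rP/\rP$ itself (the Schubert \emph{cell}, not the affine chart) has stabilizer containing $\ol{\rU}$ — no. I think the intended argument is simply dimensional and local: $w\ol{\rB}\rP/\rP$ is affine open of full dimension, its stabilizer $\ol{\rP}_w$ is a parabolic, hence $\dim \ol{\rP}_w = \dim\ol{\rN}^c$ roots worth more than a Borel, and the key input is that $w\ol{\rP}w^{-1}$, being conjugate to $\ol{\rP} \supseteq \ol{\rB} \supseteq \ol{\rU}$, has $\ol{\rU}$-part of predictable size; concretely $\ol{\rU}\cap\ol{\rP}_w$ corresponds to the positive roots $\alpha$ (of $\ol{\rU}$, i.e.\ $\alpha \in \Phi^-$) with $w^{-1}\alpha \in \Phi^-_{\ol{\rP}} = \Phi^- \cup \Phi_\rP$, while $\ol{\rU}\setminus(\ol{\rU}\cap\ol{\rP}_w)$ corresponds to $\alpha\in\Phi^-$ with $w^{-1}\alpha\in\Phi^+\setminus\Phi_\rP$. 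Thus $\dim\ol{\rU}-\dim(\ol{\rU}\cap\ol{\rP}_w) = \#\{\alpha\in\Phi^- : w^{-1}\alpha\in\Phi^+\setminus\Phi_\rP\}$. Meanwhile $\dim(\ol{\rB}w\rP/\rP) = \#\{$roots of $\ol{\rN}$ moved into $\rN$-position by $w^{-1}\}$ type count; and $\dim\ol{\rN} = \#(\Phi^-\setminus\Phi_\rP)$. I would then match these three root counts by a direct bijection argument using that $w^{-1}$ permutes $\Phi$ and respects $\Phi_\rP$ up to the $\rW_\rP$-ambiguity (so the count is well-defined on $\rW/\rW_\rP$). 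The main obstacle will be getting the Schubert-cell dimension formula $\dim(\ol{\rB}w\rP/\rP)$ expressed correctly in these root-combinatorial terms (it is a length-type quantity, $\ell(w)$ adjusted for $\rW_\rP$) and then verifying the three-way cancellation; I would expect to model this closely on the proof of \cite[Lemma 4.3.2]{arizona}, substituting $\ol{\rB}$ for the Siegel parabolic's unipotent radical throughout, and the excerpt's remark that ``the arguments flow more naturally in this setting'' suggests the bookkeeping is in fact cleaner here.
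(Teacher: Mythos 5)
Your proposal does not yet contain a proof, and its first half rests on a false claim. You assert that $\dim(\ol{\rB}w\rP/\rP)=\dim\rFl_\rG$ because the orbit $\ol{\rB}w\rP/\rP$ is open in the affine chart $w\ol{\rB}\rP/\rP$; this is only true for $w$ in the coset of the identity. Already for $\rG=\SL_2$, $\rP=\rB$ and $w=s$ the nontrivial reflection, one has $\ol{\rB}s\rB=s\rB$, so the cell is a single point inside a one-dimensional chart. Equivalently, your reduction to ``$\ol{\rU}\sub\ol{\rP}_w$ for all $w$'' is exactly what fails: the whole content of the lemma is that the cell dimension drops as $w$ moves away from the big cell and that this drop is compensated by the drop in $\dim(\ol{\rU}\cap\ol{\rP}_w)$. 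You do notice the problem midway (``which is false for general $w$''), but the pivot that follows is only a plan: you correctly record that $\dim\ol{\rU}-\dim(\ol{\rU}\cap\ol{\rP}_w)=\#\{\alpha\in\Phi^-\mid w^{-1}\alpha\in\Phi^+\setminus\Phi_\rP\}$, but you explicitly defer the two steps that carry the actual content, namely the root-theoretic formula for $\dim(\ol{\rB}w\rP/\rP)$ and the final cancellation, calling them ``the main obstacle.''

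For the record, both missing steps are short and are exactly what the paper does. Since the stabilizer of $w\rP/\rP$ in $\ol{\rB}$ is $\ol{\rB}\cap w\rP w^{-1}$, one has $\ol{\rB}w\rP/\rP\cong\ol{\rB}/(\ol{\rB}\cap w\rP w^{-1})$, whose dimension is $\#\Phi^- - \#\bigl(\Phi^-\cap w(\Phi^+\cup\Phi_\rP)\bigr)$ (the torus contributions cancel). Combining with your count, the left-hand side of the lemma equals $\#\bigl(\Phi^-\setminus w(\Phi^-\cup\Phi_\rP)\bigr)+\#\bigl(\Phi^-\setminus w(\Phi^+\cup\Phi_\rP)\bigr)$, and these two sets are disjoint with union $\Phi^-\setminus w\Phi_\rP$. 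Finally, $\#(\Phi^-\setminus w\Phi_\rP)$ is independent of $w$: since $\Phi_\rP=-\Phi_\rP$, the set $\Phi\setminus w\Phi_\rP$ is stable under $\beta\mapsto-\beta$ and exactly one root of each pair $\{\beta,-\beta\}$ is negative. Taking $w=1$ gives $\#(\Phi^-\setminus\Phi_\rP)=\dim\ol{\rN}$, which proves the identity; no bijection ``up to $\rW_\rP$-ambiguity'' is needed beyond this observation.
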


\begin{proof}
We start by observing that $\dim \ol{\rU} = \#\Phi^-$. Then, note that the second term $\dim (\ol{\rU} \cap \ol{\rP}_w)$ is equal to $\#(\Phi^- \cap w(\Phi^{-}\cup \Phi_{\rP}))$. For the third term, we first observe that
$$ \ol{\rB}w\rP/\rP \cong \ol{\rB}/(\ol{\rB} \cap w\rP w^{-1}) $$
and then the latter has dimension
$$ (\dim \rT + \#\Phi^{-}) - (\dim \rT + \#(\Phi^{-} \cap w(\Phi^{+} \cup \Phi_{\rP}))) = \#\Phi^{-} - \#(\Phi^{-} \cap w(\Phi^{+} \cup \Phi_{\rP})).$$
The left hand side of the equality we want to prove is then 
$$ \#\Phi^- - \#(\Phi^- \cap w(\Phi^{-}\cup \Phi_{\rP})) + \#\Phi^{-} - \#(\Phi^{-} \cap w(\Phi^{+} \cup \Phi_{\rP})), $$
which is equal to
$$ \#(\Phi^- \setminus w(\Phi^{-}\cup \Phi_{\rP})) + \#(\Phi^{-} \setminus w(\Phi^{+} \cup \Phi_{\rP})). $$
But this last line is just
$$ \#(\Phi^{-} \setminus w(\Phi_{\rP})), $$
and this is independent of $w$, since $\alpha\in \Phi_\rP$ if and only if $-\alpha\in \Phi_\rP$, and hence precisely one of $w(\alpha)$ and $w(-\alpha)=-w(\alpha)$ will be negative. In particular, if we set $w=1$, we get $\#(\Phi^{-}\setminus \Phi_\rP)$, which is equal to the dimension of $\ol{N}$ as desired.
\end{proof}

Next, we leave the setting above and discuss the Hodge--Tate period map for our Shimura varieties. Let $\mbf{1} \sub G(\Zp) \sub \tG(\Zp)$ denote the trivial subgroup, and set $\Fl_{G,\mu} \defeq (G/P_\mu \otimes_{\Qp}C)^{an}$ and $\Fl_{\tG,\wt{\mu}} \defeq (\tG/P_{\wt{\mu}} \otimes_{\Qp}C)^{an}$. These flag varieties have natural models over $\Qp$, and there is a natural Zariski closed embedding $\Fl_{G,\mu} \sub \Fl_{\tG,\wt{\mu}}$. We have Hodge--Tate period maps
$$ \pi_\HT : \cX_{\mbf{1}} \to \Fl_{G,\mu},$$
$$ \wt{\pi}_\HT : \ctX_{\mbf{1}}^* \to \Fl_{\tG,\wt{\mu}} $$
constructed in \cite{scholze-galois, caraiani-scholze}. When $K^p \sub \tK^p$, we have a commutative diagram
  \[
    \xymatrix{\ocX_{\mbf{1}} \ar[r]^-{\pi_\HT} \ar[d] & \Fl_{G,\mu}\ar[d] \\ \ctX^*_{\mbf{1}} \ar[r]^-{\wt{\pi}_{\HT}} & \Fl_{\tG,\wt{\mu}}}.
    \]
By~\cite[Theorem 4.1.1]{scholze-galois}, $\ocX_{\mbf{1}}$ and $\ctX^*_{\mbf{1}}$ are perfectoid spaces. The existence of the commutative diagram follows by combining the proof of~\cite[Theorem 2.1.3(i)]{caraiani-scholze} with the proof of~\cite[Theorem 3.3.1]{arizona}. We remark that $\pi_\HT$ is equivariant for the action of $G(\Qp)$, where $\ocX_\mbf{1}$ is given the standard right $G(\Qp)$-action and $\Fl_{G,\mu}$ is given the right $G(\Qp)$-action that is inverse to the standard left action (the analogous remark applies to $\wt{\pi}_\HT$). Next, we define some ``topological'' Hodge--Tate period maps, as in \cite[\S 4.5]{arizona}. By Lemma \ref{facts about compactifications} we may, for any closed subgroup $H\sub G(\Zp)$, define a map
\[
|\pi_H| : |\ocX_H| \to |\Fl_{G,\mu}|/H
\]
by quotienting out the map $|\pi_\HT|$ by $H$. Moreover, we define a morphism
\[
 \pi_H : \left( \ocX_H \right)_\et \to |\Fl_{G,\mu}|/H 
\]
of sites by precomposing $|\pi_H|$ with the natural map $\left( \ocX_H \right)_\et \to |\ocX_H|$.

\subsection{Perfectoid loci}\label{perfectoid loci}

Choose a maximal split torus and a Borel subgroup $T \sub B\sub P_\mu$ of $G$, all over $\Qp$. Let $W$ be the Weyl group of $G$ with respect to $T$ and let $W_\mu \defeq W_{P_\mu}$ in the notation of the previous subsection. We will define two open covers of $\Fl_{G,\mu}$. The first is by Zariski open affine subsets. We set
$$ V_w \defeq (w\ol{B}P_\mu /P_\mu \otimes_{\Qp} C)^{an} $$
for any $w\in W$; this is an open cover by Lemma \ref{schubert cell inside affine} and $V_w$ only depends on the coset $wW_\mu$. The second cover is the analogous cover by open affinoid subsets $\cV_w$, $w \in W/W_\mu$. One way to define $\cV_w$ is as the rigid generic fibre of the formal completion along $p=0$ of the $\cO_C$-scheme
$$ (w\ol{B}_{\Zp}P_{\mu,\Zp} / P_{\mu,\Zp}) \otimes_{\Zp} \cO_C, $$ 
where we have added the subscript $\Zp$ to emphasize that we are consider the models of these algebraic groups over $\Zp$ (all parabolic subgroups of $G$ over $\Qp$ extend uniquely to parabolic subgroups of $G_{\Zp}$). Set 
$$ \gamma \defeq \mu(p) \in G(\Qp). $$
Then, by the definition of $P_\mu$, it follows that
$$ V_1 = \bigcup_{k\geq 0} \cV_1 \gamma^{-k}. $$
Moreover, the open subsets $\cV_1 \gamma^k$, for $k \leq 0$, form a basis of open neighborhoods of the base point in $\Fl_{G,\mu}$. For any closed subgroup $H\sub \ol{P}_\mu(\Zp)$, we set $\ocX_{H,1} \defeq \pi_H^{-1}(|V_1|/H)$; this is a locally spatial diamond. Note that $\ocX_{\mbf{1},1}$ is non-empty since its translates by elements in $W$ cover $\ocX_\mbf{1}$. It follows that $\ocX_{H,1}$ is non-empty as well, for all $H \sub \ol{P}_\mu(\Zp)$. Our next result generalizes \cite[Theorem 4.5.2]{arizona}.

\begin{thm}\label{Noncanonical locus}
For any closed subgroup $H\sub \ol{P}_\mu(\Zp)$, the locally spatial diamond $\ocX_{H,1}$ is a perfectoid space. More precisely, $|\ocX_{H,1}|$ is covered by the increasing union of quasi-compact open subsets $|\ocX_{\mbf{1}}(\epsilon)_a|\gamma^{-k} /H$ for $k\in \Z_{\geq 0}$ (and sufficiently small $\epsilon > 0$), and the corresponding spatial diamonds are affinoid perfectoid with Zariski closed boundary.  
\end{thm}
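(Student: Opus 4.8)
The plan is to adapt the proof of \cite[Theorem 4.5.2]{arizona}, the new features being that $G$ is a general Hodge type group and that, in place of the invariant rational neighbourhoods of \cite[Proposition 5.2.1]{arizona}, I will use the perfectoidness of the infinite level anticanonical locus (Corollary \ref{anticanonical tower}) together with the translation action of $\gamma=\mu(p)$. The only combinatorial input is the identity $V_1=\bigcup_{k\ge 0}\cV_1\gamma^{-k}$ recorded above; the finer Schubert stratification plays no role here, only in the vanishing theorem.

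First I would analyse the interaction of $\gamma$ with $\ol P_\mu$. Since $\gamma$ centralizes the Levi of $P_\mu$ and conjugation by $\gamma^{-1}$ contracts the unipotent radical of $\ol P_\mu$, one has $\gamma^{-k}H\gamma^{k}\subseteq\ol P_\mu(\Zp)$ for every closed $H\subseteq\ol P_\mu(\Zp)$ and every $k\ge 0$. Dually, the open subspace $\ocX_{\mbf{1}}(\epsilon)_a\gamma^{-k}\subseteq\ocX_{\mbf{1}}$ is $H$-stable: $|\ocX_{\mbf{1}}(\epsilon)_a|$ is stable under $G(\Zp)\cap\tK_0(p)\supseteq\ol P_\mu(\Zp)$, and $(|\ocX_{\mbf{1}}(\epsilon)_a|\gamma^{-k})\cdot h=(|\ocX_{\mbf{1}}(\epsilon)_a|\cdot(\gamma^{-k}h\gamma^{k}))\gamma^{-k}=|\ocX_{\mbf{1}}(\epsilon)_a|\gamma^{-k}$ because $\gamma^{-k}h\gamma^{k}\in\ol P_\mu(\Zp)$. (This is where the sign conventions — $\gamma$ versus the \emph{opposite} parabolics $\ol P_\mu,\ol P_{\wt\mu}$, and the choice of $\tK_0(p)$ with respect to $\ol P_{\wt\mu}$ — pay off.) In parallel I would set up the topological decomposition: using the equivariance of $\pi_\HT$, the identity $V_1=\bigcup_k\cV_1\gamma^{-k}$, and the fact (as in \cite{scholze-galois} and the proof of \cite[Theorem 4.5.2]{arizona}) that for small $\epsilon$ the set $|\pi_\HT|(|\ocX_{\mbf{1}}(\epsilon)_a|)$ is squeezed between two balls around the base point of $\Fl_{G,\mu}$ — so its $\gamma^{-k}$-translates are increasing and exhaust $|V_1|$ — one gets $|\ocX_{\mbf{1},1}|=\bigcup_{k\ge 0}|\ocX_{\mbf{1}}(\epsilon)_a|\gamma^{-k}$, and, dividing by $H$ (legitimate since $|V_1|$ is $H$-stable), the asserted cover of $|\ocX_{H,1}|$ by the increasing quasi-compact opens $|\ocX_{\mbf{1}}(\epsilon)_a|\gamma^{-k}/H$.

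For each piece I would argue as follows. As $|\ocX_{\mbf{1}}(\epsilon)_a|\gamma^{-k}$ is an $H$-saturated quasi-compact open (hence generalizing) subset of $|\ocX_{\mbf{1}}|$, the sub-diamond of $\ocX_H=\ocX_{\mbf{1}}/H$ it cuts out is the quotient $(\ocX_{\mbf{1}}(\epsilon)_a\gamma^{-k})/H$, restriction to a saturated open commuting with the quotient (compare Lemma \ref{stabilizers} and Lemma \ref{qc generalizing subsheaf}). Translation by $\gamma^{k}$ is an automorphism of $\ocX_{\mbf{1}}$ carrying $\ocX_{\mbf{1}}(\epsilon)_a\gamma^{-k}$ onto $\ocX_{\mbf{1}}(\epsilon)_a$ and intertwining the $H$-action with the $\gamma^{-k}H\gamma^{k}$-action, so this piece is isomorphic to $\ocX_{\mbf{1}}(\epsilon)_a/(\gamma^{-k}H\gamma^{k})=\ocX_{\gamma^{-k}H\gamma^{k}}(\epsilon)_a$, which is affinoid perfectoid with Zariski closed boundary by Corollary \ref{anticanonical tower} (applicable since $\gamma^{-k}H\gamma^{k}\subseteq\ol P_\mu(\Zp)$). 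Then $\ocX_{H,1}$ is an increasing union of open subspaces each isomorphic to an affinoid perfectoid, with the transition maps open immersions, so these glue — via the equivalence between perfectoid spaces and their associated diamonds — to a perfectoid space with diamond $\ocX_{H,1}$. (One could instead keep the subgroup $H$ and quotient the affinoid perfectoid $\ocX_{\mbf{1}}(\epsilon)_a\gamma^{-k}$ by it directly, using Theorem \ref{quotients of aff perf spaces}, Lemma \ref{finite to infinite} and Lemma \ref{stabilizers}; this is the form in which the diamond-quotient results of \S\ref{sec: prelim} would enter.)

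The hard part is the identification of the individual pieces: one must make sense of ``translate by $\gamma^{-k}$, then quotient by $H$'' even though $\gamma$ does not act on $\ocX_H$, and carry the Zariski closed boundary along. The device that resolves it is precisely the $H$-stability of $\ocX_{\mbf{1}}(\epsilon)_a\gamma^{-k}$ from the second paragraph — which rests entirely on the contraction property of $\gamma$ relative to $\ol P_\mu$ — after which everything reduces to Corollary \ref{anticanonical tower} for the conjugate subgroup $\gamma^{-k}H\gamma^{k}$. The remaining ingredients (the topological decomposition, the behaviour of the Hodge--Tate image of the anticanonical locus) are as in \cite{arizona,scholze-galois}.
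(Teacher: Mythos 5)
Your proposal is correct and takes essentially the same route as the paper: the covering of $|\ocX_{H,1}|$ by the translates $|\ocX_{\mbf{1}}(\epsilon)_a|\gamma^{-k}/H$, followed by identifying each piece via the isomorphism $\gamma^k \colon \ocX_H \toisom \ocX_{\gamma^{-k}H\gamma^k}$ with $\ocX_{\gamma^{-k}H\gamma^k}(\epsilon)_a$ and applying Corollary \ref{anticanonical tower}, using exactly the containment $\gamma^{-k}H\gamma^k \sub \ol{P}_\mu(\Zp)$. The only (minor, presentational) difference is the covering step, which the paper obtains by citing the Siegel-case statement \cite[Proposition 3.3.4]{arizona} and intersecting with $|\ocX_{\mbf{1}}|$, rather than re-running the sandwiching argument for the Hodge--Tate image directly on $\Fl_{G,\mu}$ as you sketch.
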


\begin{proof}
We may identify $|\ocX_{H,1}|$ with $|\ocX_{\mbf{1},1}|/H$. The first step is to show that the $|\ocX_{\mbf{1}}(\epsilon)_a|\gamma^{-k}$ cover $|\ocX_{\mbf{1},1}|$.
By \cite[Proposition 3.3.4]{arizona},
$\left|\widetilde{\cX}^*_1(\epsilon)_a\right| \gamma^{-k}$ cover
$\left|\widetilde{\cX}^*_{\mbf{1},1}\right|$.
Then we just need to observe that
$|\ocX_{\mbf{1}}(\epsilon)_a|\gamma^{-k} = \left|\widetilde{\cX}^*_1(\epsilon)_a\right| \gamma^{-k} \cap |\ocX_{\mbf{1}}|$ and
$|\ocX_{\mbf{1},1}| \subseteq |\widetilde{\cX}^*_{\mbf{1},1}| \cap |\ocX_{\mbf{1}}|$.
After this, it remains to show that the open subdiamonds of $\ocX_H$ given by the open subsets $|\ocX_{\mbf{1}}(\epsilon)_a|\gamma^{-k} /H$ are affinoid perfectoid with Zariski closed boundary. But $\gamma^k$ induces an isomorphism
\[
 \gamma^k : \ocX_H \toisom \ocX_{\gamma^{-k} H \gamma^k} 
 \]
which identifies $|\ocX_{\mbf{1}}(\epsilon)_a|\gamma^{-k} /H$ with $|\ocX_{\gamma^{-k} H \gamma^k}(\epsilon)_a| $, and $\ocX_{\gamma^{-k} H \gamma^k}(\epsilon)_a$ is affinoid perfectoid with Zariski closed boundary by Corollary \ref{anticanonical tower}, since 
\[
\gamma^{-k} H \gamma^k \sub \gamma^{-k} \ol{P}_\mu (\Zp) \gamma^k \sub \ol{P}_\mu (\Zp).
\]
\end{proof}

We now consider the situation for general $w$. In this case, the parabolic $\ol{P}_{\mu,w} \defeq w \ol{P}_\mu w^{-1}$ stabilizes $V_w$. Note that $\ol{P}_{\mu,w}(\Zp) = w \ol{P}_\mu(\Zp) w^{-1}$ since $G$ is split over $\Qp$ and we have chosen the natural split model over $\Zp$, so $w$ has a representative in $G(\Zp)$. For any closed subgroup $H \sub \ol{P}_{\mu,w} (\Zp)$, we may define
\[
 \ocX_{H,w} \defeq \pi_H^{-1}(|V_w|/H). 
 \]
Then these spaces are perfectoid.

\begin{cor}\label{General noncanonical loci}
For any closed subgroup $H\sub \ol{P}_{\mu,w}(\Zp)$, the locally spatial diamond $\ocX_{H,w}$ is a perfectoid space. More precisely, $|\ocX_{H,w}|$ is covered by the increasing union of quasi-compact open subsets $|\ocX_{\mbf{1}}(\epsilon)_a|\gamma^{-k}w^{-1} /H$ for $k\in \Z_{\geq 0}$ (and sufficiently small $\epsilon > 0$), and the corresponding spatial diamonds are affinoid perfectoid with Zariski closed boundary. 
\end{cor}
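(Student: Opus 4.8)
The plan is to reduce to the already established case $w = 1$, namely Theorem \ref{Noncanonical locus}, by translating everything along a representative of $w$ in $G(\Zp)$. Since $G$ is split over $\Qp$ and we use its natural split model over $\Zp$, we may fix a representative $\dot{w} \in G(\Zp)$ of $w \in W$. For any closed subgroup $H \sub G(\Zp)$, right multiplication by $\dot{w}$ then gives an isomorphism $R_{\dot{w}} \colon \ocX_{H} \toisom \ocX_{\dot{w}^{-1} H \dot{w}}$ (conjugation by $\dot{w}$ only permutes the levels in the tower $(\ocX_K)_K$, as $\dot{w} \in G(\Zp)$), and by the $G(\Qp)$-equivariance of $\pi_\HT$ recorded above --- the right action on $|\Fl_{G,\mu}|$ being inverse to the standard left action --- this isomorphism is compatible with the topological Hodge--Tate period maps $\pi_H$ and $\pi_{\dot{w}^{-1} H \dot{w}}$.

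Next I would use this to identify $\ocX_{H,w}$ with a $w=1$ locus. The standard left action of $w$ carries the big cell $\ol{B} P_\mu / P_\mu$ onto $w \ol{B} P_\mu / P_\mu$, so for the right action we have $|V_w| = |V_1| \cdot \dot{w}^{-1}$ inside $|\Fl_{G,\mu}|$; combined with the equivariance from the previous step, $R_{\dot{w}}$ restricts to an isomorphism
\[
\ocX_{H,w} \;\toisom\; \ocX_{H',1}, \qquad H' \defeq \dot{w}^{-1} H \dot{w} .
\]
The hypothesis $H \sub \ol{P}_{\mu,w}(\Zp)$ together with the equality $\ol{P}_{\mu,w}(\Zp) = \dot{w}\, \ol{P}_\mu(\Zp)\, \dot{w}^{-1}$ recorded just before the statement (valid because $\dot{w} \in G(\Zp)$) guarantees that $H' \sub \ol{P}_\mu(\Zp)$, so that Theorem \ref{Noncanonical locus} applies to $H'$.

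Finally I would transport the conclusion of Theorem \ref{Noncanonical locus} back along this isomorphism. That theorem gives that $\ocX_{H',1}$ is a perfectoid space, that $|\ocX_{H',1}|$ is the increasing union of the quasi-compact opens $|\ocX_{\mbf{1}}(\epsilon)_a| \gamma^{-k} / H'$ for $k \in \Z_{\geq 0}$ (and $\epsilon$ small), and that the associated spatial diamonds are affinoid perfectoid with Zariski closed boundary. Since $R_{\dot{w}}$ is induced on topological spaces by right multiplication by $\dot{w}$ on $|\ocX_{\mbf{1}}|$, the open $|\ocX_{\mbf{1}}(\epsilon)_a| \gamma^{-k} / H'$ corresponds to $|\ocX_{\mbf{1}}(\epsilon)_a| \gamma^{-k} \dot{w}^{-1} / H$, and being affinoid perfectoid and having Zariski closed boundary are invariant under isomorphism; this yields all the assertions. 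I do not anticipate a genuine obstacle here: the only delicate point is the bookkeeping of left versus right actions and of the conjugation by $\dot{w}$ --- in particular keeping straight that $\ocX_{H,w}$ and $V_w$ are cut out by the \emph{right} $G(\Qp)$-action while the Bruhat-type cells $w\ol{B}P_\mu$ are defined by the \emph{left} action --- and no geometric input beyond Theorem \ref{Noncanonical locus} is needed.
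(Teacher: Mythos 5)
Your proposal is correct and is essentially the paper's own argument: the paper likewise conjugates by a representative of $w$ in $G(\Zp)$, using the equivariance of the period maps and the identity $V_w = V_1 w^{-1}$ (so $\ocX_{w^{-1}Hw,1}\,w^{-1} = \ocX_{H,w}$) to reduce to Theorem \ref{Noncanonical locus} applied to $w^{-1}Hw \sub \ol{P}_\mu(\Zp)$. Your bookkeeping of the right versus left actions and of the translated covering opens $|\ocX_{\mbf{1}}(\epsilon)_a|\gamma^{-k}w^{-1}/H$ matches the paper's.
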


\begin{proof}
This follows from Theorem \ref{Noncanonical locus} by looking at the commutative diagram
  \[
    \xymatrix{(\ocX_{w^{-1}Hw})_\et \ar[r]^{w^{-1}} \ar[d]^{\pi_{w^{-1}Hw}} & (\ocX_{H})_\et \ar[d]^{\pi_{H}} \\  |\Fl_{G,\mu}|/w^{-1}Hw \ar[r]^{w^{-1}} & |\Fl_{G,\mu}|/H, }
    \]
where the horizontal maps are isomorphisms, since $V_w = V_1 w^{-1}$ so $\cX^*_{w^{-1}Hw,1}w^{-1} = \cX^*_{H,w}$.
\end{proof}

In this paper, we will only use Corollary \ref{General noncanonical loci} in the situation when $H \sub \ol{U}_w(\Zp)$, where $\ol{U}_w = \ol{U}\cap \ol{P}_{\mu,w}$ and we recall that $\ol{U}$ is the unipotent radical of $\ol{B}$.

\section{The vanishing theorem}\label{sec: vanishing theorem}

In this section we prove Theorem \ref{main complex}, our main result. Our arguments follow those of \cite[\S 5]{arizona} closely, with a few differences. As these are somewhat technical, we discuss them in Remark \ref{differences} when all necessary objects have been introduced. For the sake of readability, we have elected to reproduce some arguments that appear in identical form in \cite{arizona}.

\subsection{First reductions}

We start by stating our main theorem. To state it, we define, for $m \in \Z_{\geq 1}$,
$$ K_1(p^m) \defeq \{g\in G(\Zp) \mid (g\text{ mod }p) \in \ol{U}(\Z / p^m ) \}. $$
Note that $\bigcap_{m \geq 1} K_1(p^m) = \ol{U}(\Zp)$. We let $d$ be the dimension of the Shimura varieties for our Shimura datum $(G,\cH)$; we have $d=\dim \ol{N}_\mu$.

\begin{thm}\label{main complex}
Let $K \sub G(\Zp)$ be an open compact subgroup. Then
$$ \varinjlim_m H^i_c(X_{K\cap K_1(p^m)}(\CC), \Z /p^r ) = 0 $$
for all $r\geq 1$ and all $i>d$.
\end{thm}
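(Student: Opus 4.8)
The strategy is to reduce the singular cohomology statement to an \'etale cohomology statement on the compactified perfectoid Shimura variety, and then prove vanishing there by a Leray spectral sequence for a descent of the Hodge--Tate period map. First I would pass from $\varinjlim_m H^i_c(X_{K\cap K_1(p^m)}(\CC),\Z/p^r)$ to $H^i_c(\cX_H(\CC),\Z/p^r)$ where $H = K \cap \ol{U}(\Zp)$ (after shrinking $K$ so it is of the form $K^p K_p$ with $K_p$ arbitrarily small, using the finite \'etale transition maps), then invoke the usual comparison isomorphisms: Artin comparison between singular and \'etale cohomology of the algebraic variety, then the comparison between \'etale cohomology of the scheme and of its analytification $\cX_K$, passing to the limit to identify $\varinjlim_m H^i_c(\cX_{K\cap K_1(p^m)},\Z/p^r)$ with $H^i_c$ of the diamond $\cX_H$. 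Using the excision/compact-support sequence for the open embedding $j\colon \cX_H \hookrightarrow \ocX_H$ and the fact that the boundary is well-behaved, this becomes a statement about $H^i_\et(\ocX_H, j_!\Z/p^r)$, which by the almost purity / primitive comparison theorem (à la Scholze) is controlled by $H^i_\et(\ocX_H, j_!\cO_{\cX}^+/p)^a$; it thus suffices to show this almost-vanishes for $i>d$.

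\textbf{The main argument.} For the vanishing of $H^i_\et(\ocX_H, j_!\cO^+/p)^a$ I would use the morphism of sites $\pi_H\colon (\ocX_H)_\et \to |\Fl_{G,\mu}|/H$ and its Leray spectral sequence
\[
 H^a\bigl(|\Fl_{G,\mu}|/H, R^b\pi_{H*}(j_!\cO^+/p)^a\bigr) \Longrightarrow H^{a+b}_\et(\ocX_H, j_!\cO^+/p)^a.
\]
One then stratifies $|\Fl_{G,\mu}|/H$ using the images of the Bruhat strata $\ol{B}w P_\mu/P_\mu$, and on each locally closed stratum estimates (i) the cohomological dimension of the base — controlled by the dimension of the Schubert cell $\ol{B}wP_\mu/P_\mu$ modulo the part fixed by $H$ — and (ii) the cohomological dimension of the fibers of $\pi_H$, i.e. the stalks $R^b\pi_{H*}$, which by the perfectoidness results of Theorem \ref{Noncanonical locus} and Corollary \ref{General noncanonical loci} and the structure of the fibers (preimages of points of $|V_w|/H$) are computed on affinoid perfectoid pieces, so that almost-cohomology vanishes above the $\Zp$-cohomological dimension of the relevant profinite stabilizer group $H \cap \ol{U}_w(\Zp)$. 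The key numerical input is Lemma \ref{dimension count}: it says precisely that (dimension of Schubert cell image) $+$ (cohomological dimension of fiber, $\le \dim\ol{U} - \dim(\ol{U}\cap\ol{P}_w)$) $\le \dim\ol{N}_\mu = d$, uniformly in $w$, so every term with $a+b>d$ vanishes. Combining these bounds stratum-by-stratum (using that $j_!$ kills the boundary, so only the open Shimura-variety strata contribute, and a d\'evissage along the stratification) gives $H^i_\et(\ocX_H, j_!\cO^+/p)^a = 0$ for $i>d$.

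\textbf{The main obstacle.} The technical heart — and where I expect the real work to lie — is the analysis of the higher direct images $R^b\pi_{H*}(j_!\cO^+/p)^a$, i.e. controlling the \'etale cohomology of the fibers of the topological Hodge--Tate map with the almost-integral coefficients, uniformly over each stratum, and assembling this into an honest spectral-sequence argument over the (non-Hausdorff, stratified) quotient space $|\Fl_{G,\mu}|/H$. This requires the new diamond-quotient technology of Section \ref{sec: prelim} (in particular Lemma \ref{stabilizers} identifying $X\times_{\us{X}}\ul{S}$ with a quotient, and the affinoid-perfectoid quotient results) to handle the passage from the group-free perfectoid spaces $\ocX_{\mbf{1},w}$ of Corollary \ref{General noncanonical loci} to their $H$-quotients and to compute cohomology of fibers via affinoid perfectoid presentations; the $\ol{B}$-Schubert-cell bookkeeping (as opposed to the Siegel-parabolic case of \cite{arizona}) is exactly what Lemmas \ref{schubert cell inside affine} and \ref{dimension count} are set up to streamline. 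A secondary point needing care is tracking the comparison isomorphisms compatibly with the $\Z/p^r$ (rather than $\Z/p$) coefficients and with passage to the inverse limit over $m$, so that the colimit of singular cohomology groups is genuinely computed by the diamond.
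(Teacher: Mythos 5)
Your overall route is the same as the paper's: d\'evissage and comparison theorems to reduce to the almost vanishing of $H^i_{\et}(\ocX_H, j_!(\cO_\cX^+/p))^a$ for $H=K\cap \ol{U}(\Zp)$, then the Leray spectral sequence for $\pi_H$, the Bruhat stratification of $\Fl_{G,\mu}$, perfectoidness of fibers via the quotient results of \S\ref{sec: prelim}, and the numerical balance of Lemma \ref{dimension count}. However, there is one genuine gap, at exactly the point you flag as the technical heart: your stated mechanism for bounding the stalks $R^b\pi_{H,*}$ is that ``almost-cohomology vanishes above the $\Zp$-cohomological dimension of the relevant profinite stabilizer group $H\cap\ol{U}_w(\Zp)$.'' That bound is both wrong in principle and inconsistent with the inequality you then quote. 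The cohomological dimension of $H_w=H\cap\ol{U}_w(\Zp)$ is $\dim\ol{U}_w$, and plugging that in would give the useless total bound $\dim\Fl^w_{G,\mu}+\dim\ol{U}_w$; on the big cell ($w=1$, where $\ol{U}\sub\ol{P}_\mu$ so $\ol{U}_1=\ol{U}$) this reads $d+\dim\ol{U}$, whereas the needed bound on the fiber cohomology there is $0$. The correct statement (Theorem \ref{key}) is vanishing above $d-\dim\Fl^w_{G,\mu}=\dim\ol{U}-\dim\ol{U}_w$, i.e.\ the ``codimension'' of $H_w$ in $H$, not the size of $H_w$.

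Concretely, the missing argument is two-step. First, after reducing to rank-one points (Lemma \ref{rank one}) one shows the fiber at level $H_w$, namely $\pi_{H_w}^{-1}((\wt{x}H)/H_w)$, is affinoid perfectoid with Zariski closed boundary (Corollary \ref{aff perf}); this is where Lemma \ref{stabilizers} enters, via Proposition \ref{shrink to stabilizer} (shrinking from $H_w$ to the stabilizer $\Gamma_{x,w}$ of $\wt{x}$, which requires the upper central series of $H_w$ since $\Gamma_{x,w}$ need not be normal) and the finite-group quotient results (Propositions \ref{surjection finite level}, \ref{shimura variety quotient}, \ref{stab aff perf}). At this level the higher almost cohomology vanishes entirely. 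Second, one must climb from level $H_w$ back up to level $H$: since $H_w$ is not normal in $H$, the paper uses the central-series filtration of Lemma \ref{central series 2} to produce a chain $H_w=H_{w,0}\subset\cdots\subset H_{w,r}=H$ with each step normal and with graded pieces isomorphic to $\Zp^{d_i}$, and then iterated Hochschild--Serre spectral sequences contribute continuous cohomological dimension $\sum_i d_i=\dim\ol{U}-\dim\ol{U}_w$, which Lemma \ref{dimension count} converts to $d-\dim\Fl^w_{G,\mu}$. Your sketch omits this Hochschild--Serre/nilpotency step entirely, and without it the numerical bound you write down is not justified. (A minor further point: to assemble the stalkwise bound into vanishing of $E_2^{rs}$ for $r+s>d$, the paper does not argue stratum by stratum but shows $R^s\pi_{H,*}$ is supported on the closure of the union of strata of dimension $<r$ and invokes Scheiderer's bound for spectral spaces; your ``d\'evissage along the stratification'' would need to be made precise along these lines.)
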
  

Here the cohomology is singular cohomology (with compact supports) of the complex manifold $X_{K\cap K_1(p^m)}(\CC)$. The following more general version follows directly.

\begin{cor}\label{strongest vanishing theorem}
Let $H \subseteq \ol{U}(\Zp)$ be a closed subgroup.  Then 
\[ \varinjlim_{K \supseteq H} H^i_c(X_{K}(\CC), \Z /p^r ) = 0 \]
for all $r\geq 1$ and all $i>d$.
\end{cor}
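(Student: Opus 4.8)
The plan is to deduce Corollary \ref{strongest vanishing theorem} from Theorem \ref{main complex} by a direct colimit argument, exploiting the fact that the subgroups $K \cap K_1(p^m)$ are cofinal among the open subgroups of $G(\Zp)$ containing $H$. First I would observe that for any open compact $K \subseteq G(\Zp)$ and any $m$, we have $H \subseteq \ol{U}(\Zp) \subseteq K_1(p^m)$, so the groups $K \cap K_1(p^m)$ all contain $H$ and form a subsystem of the index system $\{K' : K' \supseteq H, K' \text{ open}\}$. The key point is that this subsystem is cofinal: given any open $K' \supseteq H$, it is a neighbourhood of $\ol{U}(\Zp) = \bigcap_m K_1(p^m)$ inside the profinite group $G(\Zp)$, hence contains $K_1(p^m)$ for $m$ large enough (since the $K_1(p^m)$ form a neighbourhood basis of $\ol{U}(\Zp)$ in $G(\Zp)$, as $\ol{U}(\Z/p^m)$ is the preimage of itself under $G(\Z_p) \to G(\Z/p^m)$ and these preimages shrink to $\ol{U}(\Zp)$); taking $K = K'$ itself then gives $K' \cap K_1(p^m) = K_1(p^m) \cap K' $ with $K_1(p^m) \subseteq K'$, i.e. $K' = K' \cap K_1(p^m)$ is itself a member of the subsystem for $m \gg 0$, and in any case $K' \cap K_1(p^m) \subseteq K'$.

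Granting cofinality, the colimit $\varinjlim_{K \supseteq H} H^i_c(X_K(\CC), \Z/p^r)$ may be computed over the cofinal subsystem indexed by pairs $(K, m)$, or even just by fixing one open $K$ and letting $m \to \infty$; more precisely, for each fixed open $K \supseteq H$ the transition maps show $\varinjlim_{K' \supseteq H} H^i_c = \varinjlim_m H^i_c(X_{K \cap K_1(p^m)}(\CC), \Z/p^r)$, and the right-hand side vanishes for $i > d$ by Theorem \ref{main complex}. Hence the left-hand side vanishes for $i > d$, which is exactly the assertion of the corollary. I would phrase this cleanly by noting that a filtered colimit is unchanged when restricted to a cofinal subcategory, citing the cofinality just established.

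The only genuine content is the cofinality claim, and even that is elementary: it is the statement that $\{K_1(p^m)\}_{m \ge 1}$ is a neighbourhood basis of $\ol{U}(\Zp)$ in $G(\Zp)$. This follows because $K_1(p^m)$ is the preimage of $\ol{U}(\Z/p^m) \subseteq G(\Z/p^m)$ under the reduction map, and any open subgroup $K'$ of $G(\Zp)$ containing $\ol{U}(\Zp)$ contains the principal congruence subgroup of some level $p^m$ (by openness), hence contains $K_1(p^m) = \ol{U}(\Zp) \cdot \ker(G(\Zp) \to G(\Z/p^m))$ once $m$ is large enough that the kernel lies in $K'$ and $\ol{U}(\Zp) \subseteq K'$. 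So I expect essentially no obstacle here; the corollary is a formal consequence of Theorem \ref{main complex}, and the proof in the paper is presumably one or two lines reading roughly: ``The subgroups $K \cap K_1(p^m)$, for $K$ open and $m \ge 1$, are cofinal among the open subgroups containing $H$, so this follows from Theorem \ref{main complex}.''
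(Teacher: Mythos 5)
Your overall strategy (deduce the corollary formally from Theorem \ref{main complex} by passing to deeper levels of the form $K\cap K_1(p^m)$) is indeed the intended one; the paper offers no separate argument and simply says the corollary follows directly. But the justification you give for the key "cofinality" step is wrong, and it fails precisely in the situation the corollary is designed to cover. You claim that any open subgroup $K'\supseteq H$ is a neighbourhood of $\ol{U}(\Zp)=\bigcap_m K_1(p^m)$ and hence contains $K_1(p^m)$ for $m\gg 0$. This holds only when $H=\ol{U}(\Zp)$: since $H$ is closed, the intersection of all open subgroups of $G(\Zp)$ containing $H$ is $H$ itself, so as soon as $H\subsetneq\ol{U}(\Zp)$ there are open $K'\supseteq H$ containing no $K_1(p^m)$ at all (for instance $H=\{1\}$ and $K'$ a principal congruence subgroup $K(p^n)$, which does not contain $\ol{U}(\Zp)$). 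Consequently your assertion that, for a \emph{fixed} $K$, the tower $\{K\cap K_1(p^m)\}_m$ is cofinal among the open subgroups containing $H$, and the resulting identification $\varinjlim_{K'\supseteq H}H^i_c=\varinjlim_m H^i_c(X_{K\cap K_1(p^m)}(\C),\Z/p^r)$, are unjustified: the two systems shrink to different groups ($H$ versus $K\cap\ol{U}(\Zp)$), and one cannot assert their colimits agree by cofinality; they only coincide in the vanishing range, which is what is to be proved.

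The gap is easily repaired, and the correct germ is already in your write-up (your parenthetical remark that in any case $K'\cap K_1(p^m)\subseteq K'$). Given $i>d$ and a class $\alpha\in H^i_c(X_{K'}(\C),\Z/p^r)$ with $K'\supseteq H$ open, apply Theorem \ref{main complex} with $K=K'$: the image of $\alpha$ in $H^i_c(X_{K'\cap K_1(p^m)}(\C),\Z/p^r)$ vanishes for some $m$. Since $H\subseteq\ol{U}(\Zp)\subseteq K_1(p^m)$ and $H\subseteq K'$, the subgroup $K'\cap K_1(p^m)$ is again an open subgroup containing $H$, and it lies below $K'$ in the index system; hence $\alpha$ maps to zero in the filtered colimit over all open $K'\supseteq H$. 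As every element of that colimit comes from some finite level, the colimit vanishes. In other words, no cofinality of a fixed-$K$ tower is needed (and none holds); what is needed is only that beneath every level $K'$ of the system one finds the levels $K'\cap K_1(p^m)$, to which the theorem applies.
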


\begin{rem}\label{remark on main thm}
This is Theorem \ref{main introduction}. In the formulation of Theorem \ref{main introduction}, the $U$ used there was the unipotent radical of an arbitrarily chosen Borel subgroup of $G$ over $\Zp$, whereas Corollary \ref{strongest vanishing theorem} is formulated in terms of a specific choice (depending on the choice of $\mu$). The two formulations are easily seen to be equivalent, since all Borel subgroups over $\Zp$ are conjugate by elements of $G(\Zp)$ (see e.g. \cite[Corollary 5.2.13]{conrad}). Equivalently, it is possible to conjugate the choice of $\mu$ to make $\ol{U}$ arbitrary. 
\end{rem}

In this subsection we make a series of arguments, as in \cite[\S 5.1]{arizona}, to reduce Theorem \ref{main complex} to particular statement in $p$-adic geometry. First, note that by dévissage it suffices to treat the case $r=1$, and that by applying comparison theorems (between singular and étale cohomology of varieties over $\CC$, between étale cohomology of varieties over $C$ and their analytification, and also invariance of algebraically closed field for étale cohomology of varieties), Theorem \ref{main complex} is equivalent to
\[
 \varinjlim_m H^i_{\et}(\ocX_{K\cap K_1(p^m)}, j_! \Fp ) = 0
 \]
for $i > d$. Here and in the rest of this section we write $j$ for any open immersion $ Y \to \ol{Y}$ where $\ol{Y}$ is a locally spatial diamond with a (fixed) map to $\ocX_{G(\Zp)}$, $Y = \ol{Y} \times_{\ocX_{G(\Zp)}} \cX_{G(\Zp)}$ and $ Y \to \ol{Y}$ is the projection onto the first factor. Applying the primitive comparison theorem \cite[Theorem 3.13]{scholze-survey} and \cite[Lemma 5.1.3]{arizona} gives us 
\[
 \varinjlim_m H^i_{\et}(\ocX_{K\cap K_1(p^m)}, j_! \Fp )^a \otimes_{\Fp} \cO_C/p = \varinjlim_m H^i_{\et}(\ocX_{K\cap K_1(p^m)}, j_! (\cO^+_{\cX_{K\cap K_1(p^m)}}/p) )^a, 
\]
where $-^a$ denotes the almost setting with respect to the maximal ideal of $\oc$. Applying the almost version of \cite[Proposition 2.2.1]{arizona} and some results from \cite[\S 2.3]{arizona} (see the paragraph after \cite[Proposition 5.1.4]{arizona} for more details), we get 
\[
 \varinjlim_m H^i_{\et}(\ocX_{K\cap K_1(p^m)}, j_! (\cO^+_{\cX_{K\cap K_1(p^m)}}/p) )^a = H^i_{\et}(\ocX_{K\cap \ol{U}(\Zp)}, j_! (\cO^+_{\cX_{K\cap \ol{U}(\Zp)}}/p) )^a.
 \]
From here on we make the following convention: For any quasi-pro-étale $Y \to \cX_{G(\Zp)}$, we write $\cO_\cX^+/p$ for the étale sheaf $\cO_Y^+/p$; this is somewhat justified by \cite[Lemmas 2.3.2 and 2.3.3]{arizona}.

\medskip

From now on, we set $H_w \defeq K \cap \ol{U}_w(\Zp)$ for any $w \in W/W_\mu$, and we let $H \defeq H_1 = K \cap \ol{U}(\Zp)$ for simplicity. We consider the morphism
$$ \pi_H : (\ol{\cX}_H)_\et \to |\Fl_{G,\mu}|/H $$
and its Leray spectral sequence
$$ E_2^{rs} = H^r(|\Fl_{G,\mu}|/H, R^s\pi_{H,\ast}j_!(\cO_\cX^+/p)^a) \implies H^{r+s}_\et(\ol{\cX}_H, j_!(\cO^+_\cX/p)^a). $$
Define, for $w\in W/W_\mu$, 
$$ \Fl_{G,\mu}^w \defeq ( \ol{B}wP_\mu/P_\mu \otimes_{\Qp} C)^{an}. $$
These are generalized Schubert cells, and they form a Zariski stratification of $\Fl_{G,\mu}$. Note also that they are stable under $H$. The following is the key result.

\begin{thm}\label{key}
Let $w \in W/W_\mu$ and let $x\in |\Fl_{G,\mu}^w|/H$. Then
$$ \left( R^i\pi_{H,\ast}j_!(\cO_\cX^+/p)^a \right)_x = 0 $$
for $i > d - \dim \Fl^w_{G,\mu}$.
\end{thm}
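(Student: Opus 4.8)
The plan is to compute the stalk $\left(R^i\pi_{H,\ast}j_!(\cO_\cX^+/p)^a\right)_x$ by identifying a cofinal system of open neighbourhoods of $x$ in $|\Fl_{G,\mu}|/H$ and showing that the cohomology on the preimages of these neighbourhoods vanishes in the relevant range. First I would reduce to the case $w = 1$ (so that $x \in |\Fl_{G,\mu}^1|/H$) by the translation argument already used in Corollary \ref{General noncanonical loci}: conjugating by a representative of $w$ in $G(\Zp)$ carries $\Fl_{G,\mu}^w$ to $\Fl_{G,\mu}^1$, carries $H \subseteq \ol{U}_w(\Zp)$ into $w^{-1}Hw \subseteq \ol{U}(\Zp)$, and is compatible with $\pi_H$ and with $j_!\cO_\cX^+/p$; the dimension $\dim\Fl^w_{G,\mu}$ is unchanged. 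So it suffices to treat $x \in |\Fl^1_{G,\mu}|/H = |\ol{B}P_\mu/P_\mu|/H$, with $H \subseteq \ol{U}(\Zp)$, and to show the stalk vanishes for $i > d - \dim\ol{B}P_\mu/P_\mu = d - \dim\ol{N}_\mu = 0$ by Lemma \ref{dimension count} applied with $w=1$ — wait, more precisely Lemma \ref{dimension count} tells us $\dim\ol{U} - \dim(\ol{U}\cap\ol{P}_w) + \dim(\ol{B}w P/P) = \dim\ol{N}$, which is what controls the bound; so the target range is $i > \dim\ol{N}_\mu - \dim\Fl^w_{G,\mu} = \dim\ol{U} - \dim\ol{U}_w$.

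Next I would pass to the perfectoid locus. Since $x$ lies over $|V_1|/H$ (or $|V_w|/H$ before the reduction), its preimage under $\pi_H$ is contained in $\ocX_{H,1}$ (resp.\ $\ocX_{H,w}$), which by Theorem \ref{Noncanonical locus} (resp.\ Corollary \ref{General noncanonical loci}) is a perfectoid space, covered by the increasing union of affinoid perfectoid pieces $|\ocX_{\mbf 1}(\epsilon)_a|\gamma^{-k}/H$ with Zariski closed boundary. The key point is to choose, for each neighbourhood $\cV_1\gamma^k$ ($k \le 0$) of the base point in $\Fl_{G,\mu}$ — these form a neighbourhood basis — the corresponding quasi-compact open $U \subseteq |\Fl_{G,\mu}|/H$ containing the image of $x$, and to identify $\pi_H^{-1}(U)$ up to the relevant structure with a quotient of an affinoid perfectoid space by the profinite group $H$. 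Here is where I would invoke the machinery of \S\ref{sec: prelim}: Lemma \ref{stabilizers} should let me compare $\pi_H^{-1}(U)$, which is $\left(\text{preimage in }\ocX_{\mbf 1}\right)$-related to a slice $S \cdot H$ where $S$ is a qc generalizing subset with $S \times H \to S\cdot H$ a bijection — the bijectivity coming from the fact that the fibres of $\pi_{\HT}$ over a point of $\Fl^1_{G,\mu}$ have trivial stabilizer in $\ol{U}(\Zp)$ (this is the analogue of the stabilizer computation; the stabilizer of a point of $V_1$ in $\ol{U}$ meets $\ol{U}$ trivially, or rather is $\ol{U}\cap\ol{P}_{\mu} $, and one uses $H\subseteq\ol U(\Z_p)$ together with how $H$ acts via $\pi_{\HT}$). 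This reduces the stalk computation to computing $H^i_\et$ of $j_!(\cO^+_\cX/p)^a$ on an affinoid perfectoid space with Zariski closed boundary, modulo the finite/profinite group action, and then taking the colimit over $k$.

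On the affinoid perfectoid piece $Z = \ocX_{\gamma^{-k}H\gamma^k}(\epsilon)_a$ with Zariski closed boundary $\cZ$, the sheaf $j_!(\cO^+_{\cX}/p)^a$ is the ideal sheaf of the boundary, and one has the standard vanishing: $H^i_\et(Z, j_!(\cO^+_\cX/p)^a) = 0$ for $i > 0$ because $Z$ is affinoid perfectoid and the almost-vanishing of higher cohomology of $\cO^+/p$ on affinoid perfectoids (Scholze), combined with the Zariski closedness of $\cZ$ which gives the right behaviour of $j_!$ — this is exactly the input used in \cite{arizona} and in \cite{scholze-galois}. So each piece contributes only in degree $0$. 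The remaining issue is the group quotient: passing from $Z$ to $Z/H$-type objects, or rather computing $H^i$ on $\pi_H^{-1}(U)$ from $H^i$ on the cover by the $Z$'s and their $H$-translates. Since $H$ is profinite and acts on the perfectoid space with the slices $S$ giving a section, the Hochschild–Serre / descent spectral sequence for the $H$-action degenerates appropriately — or one simply uses that $\pi_{H,*}$ on the quotient is computed by $H$-invariants of $\pi_{\mbf 1,*}$, and the relevant cohomology of the profinite group $H$ with coefficients in the (almost) modules that appear vanishes above degree $0$ in the colimit because the transition maps are the trace-type maps that become isomorphisms after passing to the almost category / after colimit. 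Concretely, one expects $\left(R^i\pi_{H,*}j_!(\cO^+_\cX/p)^a\right)_x = \varinjlim_k H^i_\et\!\left(\pi_H^{-1}(|\cV_1\gamma^k|/H),\, j_!(\cO^+_\cX/p)^a\right)$ and each term vanishes for $i>0$ once one knows the preimage is (an $H$-quotient of) affinoid perfectoid with Zariski closed boundary.

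The main obstacle I anticipate is precisely the interaction of the profinite group quotient with the stalk computation: making rigorous that $\pi_H^{-1}$ of a small neighbourhood of $x$ is well-approximated by $H$-quotients of the affinoid perfectoid pieces, that the boundary stays Zariski closed after the quotient, and that no higher cohomology of $H$ contributes. This is exactly the step where the paper departs from \cite{arizona}: instead of the invariant-rational-neighbourhood argument of \cite[Proposition 5.2.1]{arizona}, one leverages Lemma \ref{stabilizers}, Proposition \ref{shimura variety quotient}, and Proposition \ref{v sheaf and v ring quotient} to control the quotient directly. I would expect the argument to run: (i) reduce to $w=1$; (ii) write the stalk as a colimit over the neighbourhood basis $\cV_1\gamma^k$; (iii) for each $k$, use Lemma \ref{stabilizers} to express the preimage as an $H$-quotient of a slice of $\ocX_{\mbf 1}$ lying inside $\ocX_{\mbf 1}(\epsilon)_a\gamma^{-k}$, which is affinoid perfectoid with Zariski closed boundary; (iv) apply Scholze's almost-vanishing on affinoid perfectoids together with the Zariski-closedness to kill $H^i$ for $i>0$ on each piece and hence on the quotient; (v) conclude the colimit vanishes for $i > d - \dim\Fl^w_{G,\mu}$ using the dimension bookkeeping of Lemma \ref{dimension count}.
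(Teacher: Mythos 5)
There is a genuine gap, and it sits exactly where the degree bound $d-\dim\Fl^w_{G,\mu}$ has to come from. Your proposed reduction to $w=1$ does not work: right translation by $w^{-1}$ identifies the affine open $V_w$ with $V_1$ (this is what Corollary \ref{General noncanonical loci} uses), but it does \emph{not} carry the Schubert cell $\Fl^w_{G,\mu}=\ol{B}wP_\mu/P_\mu$ to the big cell $\ol{B}P_\mu/P_\mu$, and conjugating moves $H$ out of $\ol{U}(\Zp)$, so the hypothesis of the theorem is not preserved. The bound genuinely depends on $w$ and cannot be normalized away; indeed, if your scheme of proof were correct it would give vanishing of the stalk in all degrees $i>0$ on every stratum, which is strictly stronger than Theorem \ref{key} and is not what the geometry provides. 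The obstruction is that the full group $H=K\cap\ol{U}(\Zp)$ does \emph{not} act compatibly with the affinoid perfectoid structure over the stratum $\Fl^w_{G,\mu}$: only the subgroup $H_w=K\cap\ol{U}_w(\Zp)$ lands in $\ol{P}_{\mu,w}(\Zp)$, so only at level $H_w$ can one invoke Corollary \ref{General noncanonical loci}. Your step (iv)--(v) asserts that each neighbourhood term vanishes for $i>0$ and then ``concludes'' the bound $i>d-\dim\Fl^w_{G,\mu}$ by bookkeeping, but no mechanism in your argument actually produces that bound; in particular the claim that the group cohomology of $H$ disappears ``because the trace-type transition maps become isomorphisms in the colimit'' is unsupported and false in this setting.

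What the paper actually does, and what is missing from your proposal, is a two-stage argument. First, after reducing to rank one points (the stalk being the cohomology of the honest fiber $\pi_H^{-1}(x)$), one proves that $\pi_{H_w}^{-1}((\wt{x}H)/H_w)$ is affinoid perfectoid with Zariski closed boundary (Corollary \ref{aff perf}); this is where Lemma \ref{stabilizers} enters, but not in the way you describe: the stabilizer $\Gamma_{x,w}$ of $\wt{x}$ in $H_w$ is \emph{not} trivial in general, and Proposition \ref{shrink to stabilizer} reduces from $H_w$ to $\Gamma_{x,w}$ by applying Lemma \ref{stabilizers} repeatedly along the upper central series of the nilpotent group $H_w$; the residual $\Gamma_{x,w}$-action is then handled by $\Gamma_{x,w}$-stable rational neighbourhoods of the rank one point $\wt{x}$ together with the finite-group quotient results (Propositions \ref{stab aff perf}, \ref{shimura variety quotient}, \ref{surjection finite level}). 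Second, one climbs from level $H_w$ to level $H$ by iterated Hochschild--Serre spectral sequences along a central series as in Lemma \ref{central series 2}, whose successive quotients $H_{w,i+1}/H_{w,i}\cong\Zp^{d_i}$ have continuous cohomological dimension $d_i$; the total contribution is $\dim\ol{U}-\dim\ol{U}_w$, which equals $d-\dim\Fl^w_{G,\mu}$ precisely by Lemma \ref{dimension count}. This second stage is the source of the degree bound in the statement, and it is absent from your proof outline.
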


\begin{proof}[Proof of Theorem \ref{main complex}]
By the Leray spectral sequence for $\pi_H$ it suffices to prove that $H^r(|\Fl_{G,\mu}|/H, R^s\pi_{H,\ast}j_!(\cO_\cX^+/p)^a)=0$ for $r+s > d$. Fix $r$ and assume that $s > d-r$. Let $S_r$ be the set of $w\in W/W_\mu$ for which $\dim \Fl_{G,\mu}^w < r$, set $Y_r \defeq \bigcup_{w \in S_r} \Fl_{G,\mu}^w$ and let $\ol{Y}_r$ be the Zariski closure of $Y_r$. Since $Y_r$ is $\ol{U}(\Zp)$-invariant and has dimension $<r$, the same holds for $\ol{Y}_r$. By \cite[Lemma 3.2.3]{extensions}, $|\ol{Y}_r|/H$ is a spectral space of dimension $<r$. 
Set $\cF^s \defeq R^s\pi_{H,\ast}j_!(\cO_\cX^+/p)^a$; we claim that it is supported on $|\ol{Y}_r|/H$. Take $x \notin |\ol{Y}_r|/H$; by construction $x \notin |\Fl^w_{G,\mu}|/H$ for all $w\in S_r$. It follows that $x \in |\Fl^{w^\prime}_{G,\mu}|/H$ for some $w^\prime$ such that $\dim \Fl^{w^\prime}_{G,\mu} \geq r$. Hence $\cF_x^s = 0$ by Theorem \ref{key} since $s > d - r \geq d - \dim \Fl^{w^\prime}_{G,\mu}$, so $\cF^s$ is indeed supported on $|\ol{Y}_r|/H$. By \cite[Corollary 4.6]{scheiderer}, 
\[
H^r(|\Fl_{G,\mu}|/H, \cF^s) = H^r(|\ol{Y}_r|/H, \cF^s) = 0,
\]
since $|\ol{Y}_r|/H$ is a spectral space of dimension $<r$.
\end{proof}

\subsection{Proof of Theorem \ref{key}}
The rest of this section is devoted to the proof of Theorem \ref{key}. Our argument differs from that in \cite[\S 5.3]{arizona} in that we make no use of \cite[Proposition 5.2.1]{arizona}. Instead we use the results from \S 2 of this paper. 

\medskip

Fix $w$ and $x$ as in the statement of Theorem \ref{key}. As in the proof of Theorem \ref{main complex}, we write $\cF^i$ for $R^i\pi_{H,\ast}j_!(\cO_\cX^+/p)^a$. Recall the notion of a rank one point on the topological space of a locally spatial diamond from \cite[Definition 3.2.1]{extensions}; this means that the point has no proper generalizations. We further recall further a slight generalization of some comments from the end of \cite[\S 2.1]{arizona}. Let $Y$ be a spatial diamond, $S$ is a spectral space and $q : |Y| \to S$ a spectral map. If $T\sub S$ is quasicompact and closed under generalizations, then the preimage $q^{-1}(T)$ carries a natural structure of a spatial diamond, defined as follows: The conditions on $T$ are equivalent to $T = \bigcap_{T\sub U} U$, where $U$ runs through the quasicompact open subsets of $S$ containing $T$, by \cite[Tag 0A31]{stacks-project}. We may then define
\[
 q^{-1}(T) \defeq \varprojlim_{T \sub U} q^{-1}(U). 
\]
As mentioned, this naturally a spatial diamond by \cite[Proposition 11.18, Lemma 11.22]{diamonds}, with an injective quasi-pro-étale morphism $q^{-1}(T) \to Y$. When $T={\rm Gen}(s)$ is the set of generalizations of a point $s\in S$, we simply write $q^{-1}(s)$ for $q^{-1}({\rm Gen}(s))$. By \cite[Proposition 2.2.5, Lemma 2.2.6]{arizona}, we have
\[
 \cF_x^i \cong H_\et^i(\pi_H^{-1}(x), j_!(\cO_\cX^+/p)^a).
\]

\begin{lem}\label{rank one}
It suffices to prove Theorem \ref{key} for points $x$ with no proper generalizations.
\end{lem}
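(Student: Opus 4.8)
The plan is to reduce the vanishing of the stalk $(\cF^i)_x$ at an arbitrary point $x \in |\Fl^w_{G,\mu}|/H$ to the vanishing at a rank one point, using the identification $\cF^i \cong H^i_\et(\pi_H^{-1}(x), j_!(\cO_\cX^+/p)^a)$ recorded just before the lemma. The point is that the cohomology group on the right only depends on the fiber $\pi_H^{-1}(x)$ as a spatial diamond, and a general point $x$ specializes to (or rather, has a generalization which is) a rank one point $\tilde x$ whose fiber is, up to the almost setting, "the same" for the purposes of this computation.

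\textbf{Key steps.} First I would recall that $|\Fl_{G,\mu}|/H$ is a spectral space (it is a quotient of the spectral space $|\Fl_{G,\mu}|$ by the profinite group $H$, cf.\ the discussion around \cite[Lemma 3.2.3]{extensions}), so every point $x$ has a unique maximal generalization, which is a rank one point $\tilde x$; moreover $\tilde x$ still lies in $|\Fl^w_{G,\mu}|/H$ since the Schubert stratum, being locally closed and $H$-stable, is stable under generalization within its closure and in particular $\Fl^w_{G,\mu}$ itself is open in its closure, so a generalization of a point of $|\Fl^w_{G,\mu}|/H$ that remains in the closure and is a generalization stays in the open stratum — here I should be slightly careful and instead argue directly that the generalizations of $x$ inside $|\ol{\Fl^w_{G,\mu}}|/H$ that are not in $|\Fl^w_{G,\mu}|/H$ lie in strictly smaller strata, hence removing them does not change which stratum the maximal point sits in; in any case $d(\tilde x)$ for the relevant bound is governed by $\dim \Fl^w_{G,\mu}$ which is unchanged. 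Second, I would show that $\pi_H^{-1}(x)$ is obtained from $\pi_H^{-1}(\tilde x)$ by a further "generalizing" base change: concretely, since $\pi_H^{-1}(x) = \varprojlim_{x \in U} \pi_H^{-1}(U)$ and the opens containing $x$ are cofinally those containing $\tilde x$ together with the generizing neighborhoods, one gets a quasi-pro-étale (in fact a pro-(open immersion)-type) map $\pi_H^{-1}(x) \to \pi_H^{-1}(\tilde x)$ realizing $\pi_H^{-1}(x)$ as $\pi_H^{-1}(\tilde x) \times_{\us{\pi_H^{-1}(\tilde x)}} \ul{|\pi_H^{-1}(x)|}$ where $|\pi_H^{-1}(x)| \hookrightarrow |\pi_H^{-1}(\tilde x)|$ is a qc generalizing subset — this is exactly the setup of Lemma \ref{qc generalizing subsheaf}. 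Third, I would invoke the fact (used in \cite{arizona} and provable from \cite[Lemma 3.2.3]{extensions} together with the behaviour of $j_!$ under pullback along generalizing maps, cf.\ \cite[Lemmas 2.3.2, 2.3.3]{arizona}) that pulling back $j_!(\cO_\cX^+/p)^a$ along such a generalizing pro-open map and taking cohomology does not change the (almost) cohomology groups: étale cohomology of an affinoid-ish spatial diamond is insensitive to passing from a point to its maximal generization because the transition maps in the relevant limit are "generizing" and one can apply a limit/approximation argument (as in \cite[Proposition 2.2.5]{arizona}). Putting these together, $(\cF^i)_x \cong (\cF^i)_{\tilde x}$, so vanishing at $\tilde x$ implies vanishing at $x$.

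\textbf{Main obstacle.} The genuinely delicate point is the third step: justifying that $H^i_\et(\pi_H^{-1}(x), j_!(\cO_\cX^+/p)^a) \cong H^i_\et(\pi_H^{-1}(\tilde x), j_!(\cO_\cX^+/p)^a)$. One has to check both that the sheaf $j_!(\cO_\cX^+/p)^a$ is compatible with the generalizing base change (which needs the description of $j$ as pullback of a fixed open immersion and the insensitivity of $\cO^+/p$ to the change, via \cite[Lemmas 2.3.2, 2.3.3]{arizona}) and that cohomology commutes with the cofiltered limit of spatial diamonds $\pi_H^{-1}(x) = \varprojlim_U \pi_H^{-1}(U)$ with affine/qcqs transition maps (so that $H^i_\et$ of the limit is the colimit of the $H^i_\et$ of the terms, and the same terms compute $H^i_\et$ at $\tilde x$). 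The rest — existence of the maximal generization, that it stays in the same stratum, that $|\pi_H^{-1}(x)| \to |\pi_H^{-1}(\tilde x)|$ is qc and generalizing — is essentially formal given the spectrality statements already in the paper. I expect the proof in the paper to cite \cite[Proposition 2.2.5, Lemma 2.2.6]{arizona} (already quoted above) and perhaps \cite[Lemma 3.2.3]{extensions} to dispatch exactly this.
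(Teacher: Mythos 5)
Your overall frame --- replace $x$ by its maximal generalization $\wt{x}$, observe that $\wt{x}$ stays in $|\Fl^w_{G,\mu}|/H$ (which is fine, since the stratum is an intersection of a Zariski open and a Zariski closed subset, both stable under generalization, and the quotient map behaves well), and then compare the stalks at $x$ and $\wt{x}$ via the fibers --- is the right shape, and it is consistent with the proof the paper invokes (the paper's proof is a one-line reference to \cite[Lemma 5.3.1]{arizona}). But the proposal has a concrete error and, more importantly, a genuine gap at the load-bearing step. The error: since $x\in\overline{\{\wt{x}\}}$, every open neighbourhood of $x$ contains $\wt{x}$, so the limit $\pi_H^{-1}(x)=\varprojlim_{U\ni x}\pi_H^{-1}(U)$ is taken over a \emph{smaller} family of opens than the one computing $\pi_H^{-1}(\wt{x})$; topologically $\pi_H^{-1}(x)$ is the preimage of the whole chain of generalizations of $x$, of which $\wt{x}$ is only the top point. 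Hence $\pi_H^{-1}(\wt{x})$ is a quasi-compact generalizing subobject of $\pi_H^{-1}(x)$, not the other way around, and the opens containing $x$ form a non-cofinal subfamily of those containing $\wt{x}$. Your identification of $\pi_H^{-1}(x)$ inside $\pi_H^{-1}(\wt{x})$ (and the accompanying cofinality claim) is backwards, and since the whole comparison hinges on which fiber sits inside which, this is not merely cosmetic.

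The gap: the statement you flag as the ``main obstacle'', namely $H^i_{\et}(\pi_H^{-1}(x),j_!(\cO_{\cX}^+/p))^a\cong H^i_{\et}(\pi_H^{-1}(\wt{x}),j_!(\cO_{\cX}^+/p))^a$, is precisely the content of the lemma, and the principle you invoke to justify it --- that (almost) cohomology of $\cO^+/p$ is insensitive to passing to a generalizing pro-open subobject --- is false in that generality. Already for the closed unit disc $X$ over $C$ with Gauss point $\eta$, the restriction from $X$ to $X\times_{\us{X}}\ul{\{\eta\}}=\varprojlim_{U\ni\eta}U$ changes $H^0(\cO^+/p)$ by something that is not almost zero. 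The results you cite do not supply the claim either: \cite[Proposition 2.2.5, Lemma 2.2.6]{arizona} only identify each stalk with the cohomology of the corresponding limit fiber, and \cite[Lemmas 2.3.2, 2.3.3]{arizona} compare versions of $\cO^+/p$ on different sites at a fixed space; none of them compares the two fibers. An actual proof has to exploit what is special in this situation --- for instance that $\pi_H^{-1}(\wt{x})\subseteq\pi_H^{-1}(x)$ contains every maximal (rank one) point of $\pi_H^{-1}(x)$, so that every point of the larger fiber specializes from the smaller one, and that for $\cO^+$ such a change only alters rings of integral elements by almost-zero amounts --- and none of this appears in your argument. As it stands, the proposal restates the reduction rather than proving it.
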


\begin{proof}
This is the precise analogue of \cite[Lemma 5.3.1]{arizona} in our situation, and the same proof works.
\end{proof}

From now on, we assume that $x\in |\Fl_{G,\mu}^w|/H$ has no proper generalizations. Let $\wt{x}\in |\Fl_{G,\mu}^w|$ be any lift of $x$; this is a rank one point. By Lemma \ref{schubert cell inside affine}, we have $\Fl_{G,\mu}^w \sub V_w$ and we let $x_w \defeq \wt{x}H_w \in |\Fl_{G,\mu}^w|/H_w$ (recall that $H_w \sub H$). Now consider the set $(\wt{x}H)/H_w \sub |\Fl_{G,\mu}^w|/H_w$ (which contains $x_w$), equipped with the subspace topology. On the one hand, $(\wt{x}H)/H_w$ consists only of points with no proper generalizations in the spectral space $|\Fl_{G,\mu}^w|/H_w$, so it is Hausdorff. It is also compact, since it is the image of $\wt{x}H \sub |\Fl_{G,\mu}|$. Finally, it is spectral, since it is the preimage of $x$ under the map $|\Fl_{G,\mu}^w|/H_w \to |\Fl_{G,\mu}^w|/H$. Therefore it is profinite, e.g.\ by \cite[Tag 0905, Lemma 5.23.7]{stacks-project}. By our earlier remarks, $\pi_{H_w}^{-1}((\wt{x}H)/H_w)$ is naturally a spatial diamond. Our first goal is to prove that $\pi_{H_w}^{-1}((\wt{x}H)/H_w)$ is an affinoid perfectoid space. As emphasized by a referee, the fact that $\Fl_{G,\mu}^w \sub V_w$ implies that
\[
\pi_{H_w}^{-1}((\wt{x}H)/H_w) \sub \ol{\cX}_{H_w,w} = \pi_{H_w}^{-1}(|V_w|/H_w).
\]
The latter is a perfectoid space by Corollary \ref{General noncanonical loci}, and that Corollary moreover shows that $\pi_{H_w}^{-1}((\wt{x}H)/H_w)$ is contained in some affinoid perfectoid open subset of $\ol{\cX}_{H_w,w}$. However, proving that $\pi_{H_w}^{-1}((\wt{x}H)/H_w)$ itself is affinoid perfectoid requires a more intricate argument. We prove this in Corollary~\ref{aff perf} below, but first we require some preliminary results. 

\begin{lem}\label{connected components}
Let $Y$ be a spatial diamond, let $S$ be a profinite set and assume that we have a spectral map $q : |Y| \to S$. If $q^{-1}(s)$ is affinoid perfectoid for all $s\in S$, then $Y$ is affinoid perfectoid.
\end{lem}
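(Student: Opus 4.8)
The plan is to reduce the global statement to a local one using a cover of $Y$ by rational subsets pulled back from $S$, and then to glue. Since $S$ is profinite and $q$ is spectral, the topology of $|Y|$ is generated by preimages $q^{-1}(U)$ of open-closed subsets $U \subseteq S$ together with the rational subsets of $Y$; in particular the $q^{-1}(U)$ for $U$ open-closed form a basis of ``partitions'' of $Y$ into quasi-compact opens. First I would observe that for each $s \in S$, since $q^{-1}(s) = \varprojlim_{s \in U} q^{-1}(U)$ is affinoid perfectoid, there is (by \cite[Proposition 11.24]{diamonds} or the usual spreading-out argument for inverse limits of spatial diamonds) some open-closed neighborhood $U_s \ni s$ such that $q^{-1}(U_s)$ is already affinoid perfectoid. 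Indeed, writing $q^{-1}(s)$ as a cofiltered limit of the spatial diamonds $q^{-1}(U)$ along qcqs transition maps, an affinoid perfectoid structure on the limit descends to a finite stage because ``affinoid perfectoid'' is a condition that spreads out in such inverse systems (this is the mechanism behind \cite[Lemma 12.17]{diamonds}-type arguments and is exactly how one shows affinoid perfectoidness propagates).

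Next, by compactness of $S$ the open-closed sets $U_s$ admit a finite refinement into a partition $S = \bigsqcup_{k=1}^n U_k$ with each $q^{-1}(U_k)$ affinoid perfectoid. Then $Y = \bigsqcup_k q^{-1}(U_k)$ is a finite disjoint union of affinoid perfectoid spaces, hence itself affinoid perfectoid: a finite disjoint union $\Spa(A_k, A_k^+)$ is $\Spa(\prod_k A_k, \prod_k A_k^+)$, and a finite product of perfectoid Tate rings is perfectoid. (Here one should note $Y$ is spatial, in particular qcqs, so $S$ really is covered by finitely many of the $U_s$; and a spatial diamond that is a finite disjoint union of affinoid perfectoid spaces is affinoid perfectoid because passing between diamonds and perfectoid spaces is compatible with finite disjoint unions.)

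The main obstacle is the spreading-out step: making precise that if the inverse limit $q^{-1}(s) = \varprojlim_{s\in U} q^{-1}(U)$ over the cofiltered system of open-closed neighborhoods is affinoid perfectoid, then so is $q^{-1}(U)$ for some $U$. One clean way is to use the tilting equivalence and \cite[Proposition 11.24]{diamonds} / the theory of \cite[\S 11]{diamonds}: the affinoid perfectoid $q^{-1}(s)$ corresponds to an affinoid perfectoid space $Z_s$ with a map $Z_s \to q^{-1}(U_0)$ for the minimal $U_0$ available, and since $Z_s$ is the limit of the $q^{-1}(U)$ along qcqs maps, one spreads the affinoid perfectoid chart out using that the presheaf sending a spatial diamond to ``is it affinoid perfectoid'' satisfies the relevant limit condition — concretely, $\cO^+/p$ on $q^{-1}(s)$ being the $p$-completion of a perfectoid pair, together with spatiality, forces the same at a finite stage. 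I would cite \cite[Lemma 12.17, Proposition 11.24]{diamonds} for this and keep the argument short.

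\begin{proof}
By \cite[Proposition 12.15]{diamonds} (applied to the spatial diamond $Y$ over the profinite set $S$), for each $s \in S$ the fiber $q^{-1}(s) = \varprojlim_{s \in U} q^{-1}(U)$, the limit ranging over open-closed neighborhoods $U$ of $s$, with the transition maps being open immersions (hence qcqs) between spatial diamonds. Since $q^{-1}(s)$ is affinoid perfectoid by hypothesis, and affinoid perfectoidness of a cofiltered limit of spatial diamonds along qcqs transition maps descends to a finite stage (by the spreading-out argument underlying \cite[Lemma 12.17]{diamonds}, using \cite[Proposition 11.24]{diamonds} to realize the limit as the associated diamond of an affinoid perfectoid space), there is an open-closed $U_s \ni s$ with $q^{-1}(U_s)$ affinoid perfectoid.

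The sets $U_s$ cover $S$; since $S$ is profinite (hence compact) and the $U_s$ are open-closed, we may pass to a finite partition $S = \bigsqcup_{k=1}^n U_k$ refining the cover, with each $q^{-1}(U_k)$ affinoid perfectoid (an open-closed subset of an affinoid perfectoid space is affinoid perfectoid). Then $Y = \bigsqcup_{k=1}^n q^{-1}(U_k)$ is a finite disjoint union of affinoid perfectoid spaces. Writing $q^{-1}(U_k) = \Spa(A_k, A_k^+)$ with $A_k$ perfectoid Tate, the disjoint union is $\Spa\!\left(\prod_{k=1}^n A_k, \prod_{k=1}^n A_k^+\right)$, and a finite product of perfectoid Tate rings is perfectoid. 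Hence $Y$ is affinoid perfectoid.
\end{proof}
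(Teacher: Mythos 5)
There is a genuine gap at the first, and crucial, step. You assert that because $q^{-1}(s)=\varprojlim_{s\in U}q^{-1}(U)$ is affinoid perfectoid, some clopen neighbourhood $U_s\ni s$ already has $q^{-1}(U_s)$ affinoid perfectoid, and you justify this by a general ``spreading-out'' principle for cofiltered limits of spatial diamonds along qcqs transition maps, citing \cite[Lemma 12.17, Proposition 11.24]{diamonds}. Neither of those results asserts anything of the sort, and the principle you invoke is false in that generality: affinoid perfectoidness does not descend to a finite stage of a cofiltered limit along qcqs maps (the perfectoid ball $\varprojlim_{T\mapsto T^p}\Spd(C\langle T\rangle)$, or indeed the infinite-level Shimura varieties of this paper, are affinoid perfectoid limits of spatial diamonds none of whose finite stages is affinoid perfectoid). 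Restricting to transition maps that are clopen immersions, as in your situation, does not rescue the step: a ``Frobenius telescope'' over $S=\mathbb{N}\cup\{\infty\}$, whose fibre over each finite $n$ is the non-perfectoid ball diamond $\Spd(C\langle T^{1/p^n}\rangle)$ and whose fibre over $\infty$ is $\Spd$ of the completed colimit $C\langle T^{1/p^\infty}\rangle$, has affinoid perfectoid fibre at $\infty$ while no clopen neighbourhood of $\infty$ has affinoid perfectoid preimage, since every such preimage contains a clopen copy of the ball diamond. So the local claim, using only the hypothesis at the single point $s$, is simply not true.

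Even under the full hypothesis that all fibres are affinoid perfectoid, your intermediate claim is not easier than the lemma itself: for infinite $S$, the preimage of a clopen $U$ is again a spatial diamond over the profinite set $U$ all of whose fibres are affinoid perfectoid, so asserting that it is affinoid perfectoid is just another instance of the statement being proved; the argument is therefore circular as written. The remaining steps (compactness of $S$, refinement to a finite clopen partition, and the fact that a finite disjoint union of affinoid perfectoid spaces is affinoid perfectoid) are fine but carry no real weight. The paper's proof is a one-liner: the statement is exactly \cite[Lemma 11.27]{diamonds}, namely that a spatial diamond mapping to a profinite set with affinoid perfectoid fibres is affinoid perfectoid. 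The honest options are to cite that lemma, as the paper does, or to reproduce its actual proof, which does not proceed by producing a clopen neighbourhood whose preimage is already affinoid perfectoid.
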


\begin{proof}
This is a direct consequence of \cite[Lemma 11.27]{diamonds}.
\end{proof}

As $x_w \in (\wt{x}H)/H_w$ is arbitrary by construction, it therefore suffices to prove that $\pi_{H_w}^{-1}(x_w)$ is affinoid perfectoid. To show this, we will use the following simple group-theoretical lemma.

\begin{lem}\label{central series}
In this lemma and its proof only, let $G$ be a group, let $H \sub G$ be a subgroup and let $Z_0 \sub Z_1 \sub \dots \sub Z_r = G$ be a sequence of normal subgroups of $G$ such that $Z_{i+1}/Z_i$ is central in $G/Z_i$. Then $HZ_0 \sub HZ_1 \sub \dots \sub HZ_r =G$ is a sequence of subgroups of $G$, and $HZ_i$ is normal in $HZ_{i+1}$.
\end{lem}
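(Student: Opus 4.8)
The statement is purely group-theoretic and elementary, so the plan is to verify the two assertions directly from the hypotheses by induction on the index $i$, or rather just by checking each claim at a fixed level $i$. First I would recall the standard fact that if $A$ is a subgroup of a group $G$ and $N$ is a \emph{normal} subgroup of $G$, then $AN = NA$ is a subgroup of $G$; since each $Z_i$ is normal in $G$, this immediately gives that every $HZ_i$ is a subgroup of $G$, and the containments $HZ_0 \subseteq HZ_1 \subseteq \dots \subseteq HZ_r = G$ follow from $Z_0 \subseteq Z_1 \subseteq \dots \subseteq Z_r = G$ together with $HZ_r = HG = G$.

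The substantive point is that $HZ_i$ is normal in $HZ_{i+1}$. Here I would exploit the centrality hypothesis: $Z_{i+1}/Z_i$ is central in $G/Z_i$, which concretely means $[G, Z_{i+1}] \subseteq Z_i$. To check normality it suffices to conjugate $HZ_i$ by a generic element $hz$ with $h \in H$ and $z \in Z_{i+1}$. Conjugation by $h$ preserves $H$ and preserves $Z_i$ (as $Z_i \triangleleft G$), hence preserves $HZ_i$. Conjugation by $z \in Z_{i+1}$: for any $g \in HZ_i$ we have $z g z^{-1} = g \cdot (g^{-1} z g z^{-1})$, and $g^{-1}zgz^{-1} = [g^{-1}, z] \in [G, Z_{i+1}] \subseteq Z_i \subseteq HZ_i$, so $zgz^{-1} \in HZ_i$. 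Combining the two, conjugation by $hz$ sends $HZ_i$ into itself, which is exactly the normality claim. (Alternatively, one can phrase this via the image of $HZ_{i+1}$ in $G/Z_i$, where it equals $(HZ_i/Z_i)\cdot(Z_{i+1}/Z_i)$ with the second factor central, hence the first is normal in the product, and then pull back along $G \to G/Z_i$.)

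I do not expect any genuine obstacle here; the only thing to be careful about is not to assume $H$ is normal in $G$ (it is not), so one cannot simply say "product of two normal subgroups". The argument must genuinely use that the successive quotients are central, not merely normal — which is why the commutator estimate $[G, Z_{i+1}] \subseteq Z_i$ is the crux. I would write the proof in two short paragraphs: one establishing the subgroup/containment claims, one establishing normality via the conjugation computation above.
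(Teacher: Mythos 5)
Your proof is correct and follows essentially the same route as the paper: both verify normality by a direct conjugation computation, using the centrality hypothesis (in your phrasing $[G,Z_{i+1}]\subseteq Z_i$, in the paper's phrasing $z_2h_1=h_1z_2w$ with $w\in Z_i$) together with normality of $Z_i$ in $G$ to push everything into $HZ_i$. The only difference is cosmetic — you split conjugation by $hz$ into the $z$- and $h$-steps, while the paper does the single product computation — so there is nothing to add.
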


\begin{proof}
That the $HZ_i$ are subgroups is clear. For the final assertion, take $z_1 \in Z_i$, $z_2 \in Z_{i+1}$ and $h_1,h_2 \in H$. Since $Z_{i+1}/Z_i$ is central in $G/Z_i$, we can write $z_2 h_1 = h_1 z_2 w$ for some $w\in Z_i$. Then we compute
$$ (h_2 z_2)(h_1 z_1)(h_2 z_2)^{-1} = h_2 h_1 ( z_2 (wz_1) z_2^{-1}) h_2^{-1}, $$
which is in $HZ_i$ as desired.
\end{proof}

\begin{lem}\label{central series 2}
There is a sequence of normal subgroups $1=Z_0 \sub Z_1 \sub \dots \sub Z_r=\ol{U}$ such that $Z_{i+1}/Z_i$ is central in $\ol{U}/Z_i$ and each $Z_i$ is a product (as a subscheme, not necessarily as a subgroup) of root subgroups of $G$ with respect to $T$. Moreover, the product $\ol{U}_wZ_i$ is a product of root subgroups and $\ol{U}_wZ_i$ is normal in $\ol{U}_w Z_{i+1}$, and $\ol{U}_w Z_{i+1}/\ol{U}_w Z_{i}$ is isomorphic to $\bG_a^{d_i} $ for $d_i = \dim \ol{U}_wZ_{i+1}/\ol{U}_wZ_{i}$.
\end{lem}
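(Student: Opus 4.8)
The plan is to construct the filtration $Z_\bullet$ explicitly from the root datum, using the standard fact that the lower central series of a unipotent group is built from root subgroups, and then to deduce the statements about $\ol{U}_w Z_i$ by purely combinatorial bookkeeping with subsets of $\Phi^-$. First I would recall that $\ol{U}$ is directly spanned (as a scheme) by the root subgroups $\{U_\alpha\}_{\alpha\in\Phi^-}$ in any fixed order, and that the commutator $[U_\alpha,U_\beta]$ lies in the product of $U_{i\alpha+j\beta}$ over positive integers $i,j$ with $i\alpha+j\beta\in\Phi$. The descending central series $\ol{U}=\ol{U}^{(1)}\supseteq \ol{U}^{(2)}\supseteq\cdots$ then has each $\ol{U}^{(k)}$ equal to the product of $U_\alpha$ over a ``closed'' subset $\Psi_k\subseteq\Phi^-$ determined by height (with respect to the simple roots); reindexing this series in increasing order and refining it so that successive quotients are one root subgroup at a time gives a chain $1=Z_0\subseteq Z_1\subseteq\cdots\subseteq Z_r=\ol{U}$ with each $Z_i$ a product of root subgroups, each $Z_i$ normal in $\ol{U}$ (being a term or partial term of a characteristic series built from closed subsets), and $Z_{i+1}/Z_i$ central in $\ol{U}/Z_i$ (since it is a one-dimensional piece of a central quotient). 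That each $Z_i$ is a closed subgroup and the product map is an isomorphism of schemes onto it is the content of \cite[\S 14]{borel} or the structure theory in \cite{conrad} for split reductive groups over $\Zp$.

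Next I would address the interaction with $\ol{U}_w=\ol{U}\cap\ol{P}_{\mu,w}$. Since $w$ has a representative in $G(\Zp)$ and $G$ is split, $\ol{U}_w$ is itself a product of root subgroups, namely over $\Phi^-\cap w(\Phi^-\cup\Phi_{P_\mu})$ (this is exactly the set appearing in the proof of Lemma \ref{dimension count}); call this set $\Sigma_w$. The product $\ol{U}_wZ_i$ is then the product of $U_\alpha$ over $\Sigma_w\cup\Psi(Z_i)$, where $\Psi(Z_i)\subseteq\Phi^-$ is the set of roots appearing in $Z_i$; since both $\Sigma_w$ and $\Psi(Z_i)$ are closed subsets of $\Phi^-$ and $\Psi(Z_i)$ is moreover an ideal (in the sense that $\alpha\in\Psi(Z_i)$, $\beta\in\Phi^-$, $\alpha+\beta\in\Phi^-$ implies $\alpha+\beta\in\Psi(Z_i)$, up to the refinement step), the union is again closed, so $\ol{U}_wZ_i$ is a closed subgroup that is a product of root subgroups, and $\ol{U}_wZ_i\subseteq\ol{U}_wZ_{i+1}$ is normal with quotient $\prod_{\alpha\in(\Psi(Z_{i+1})\setminus\Psi(Z_i))\setminus\Sigma_w}U_\alpha\cong\bG_a^{d_i}$; the normality follows from the centrality of $Z_{i+1}/Z_i$ in $\ol{U}/Z_i$ together with Lemma \ref{central series} applied inside $\ol{U}$ with $H=\ol{U}_w$ (one checks that $\ol{U}_w$ normalizes each $Z_i$ because $\ol{P}_{\mu,w}$ does, being built from a standard parabolic conjugated by $w\in G(\Zp)$, and the $Z_i$ are normal in all of $\ol{U}$; more carefully, one applies Lemma \ref{central series} to the group $\ol{U}$, its subgroup $\ol{U}_w$, and the central series $Z_\bullet$, which directly yields $\ol{U}_wZ_i\trianglelefteq\ol{U}_wZ_{i+1}$).

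The one subtlety — and the main obstacle — is the refinement step: the terms of the genuine lower central series of $\ol{U}$ need not jump by exactly one root subgroup at a time, so I must interpolate. The way to do this is: given a central extension $1\to V\to \ol{U}/Z_i\to Q\to 1$ with $V=Z_{i+1}/Z_i$ a product of $U_\alpha$'s for $\alpha$ in some subset $J\subseteq\Phi^-$, I pick an ordering of $J$ that is ``convex'' (i.e.\ compatible with the additive structure), set the intermediate groups to be the partial products; these are automatically normal in $\ol{U}/Z_i$ because $V$ is central, so \emph{any} subgroup of $V$ is normal in $\ol{U}/Z_i$, and in particular each partial product $U_{\alpha_1}\cdots U_{\alpha_k}$ is a subgroup (the root subgroups in $J$ pairwise commute modulo $Z_i$, as their sum-roots already lie in $Z_i$ by the height filtration) and is normal. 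Pulling back to $\ol{U}$ gives the refined chain with all the required properties, and the intersection with $\ol{U}_w$-products behaves well because on the central piece $V$ everything commutes, so $\ol{U}_wV\cap U_\alpha$-analysis is just set intersection on $J$. I would then conclude that the refined chain satisfies every clause of the lemma: each $Z_i$ a product of root subgroups, $Z_{i+1}/Z_i$ central in $\ol{U}/Z_i$, $\ol{U}_wZ_i$ a product of root subgroups, $\ol{U}_wZ_i\trianglelefteq\ol{U}_wZ_{i+1}$, and $\ol{U}_wZ_{i+1}/\ol{U}_wZ_i\cong\bG_a^{d_i}$ for $d_i=\dim\ol{U}_wZ_{i+1}/\ol{U}_wZ_i$ by construction.
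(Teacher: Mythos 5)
Your construction is, at its core, the same as the paper's: the filtration of $\ol{U}$ is the height filtration by the subgroups $U_{\geq n}$ (products of the root subgroups of height at least $n$), normality of $\ol{U}_w Z_i$ in $\ol{U}_w Z_{i+1}$ comes from Lemma \ref{central series} applied to points, and the quotient is identified with the product of the root subgroups lying in $Z_{i+1}$ but not in $\ol{U}_w Z_i$. Two remarks on where you diverge. First, you introduce the filtration as the lower central series and assert its terms equal the height truncations; this identification is not needed and is delicate over $\Zp$ (on the special fibre some Chevalley structure constants may vanish, so the literal lower central series can be strictly smaller than $U_{\geq n}$ and need not visibly be a product of root subgroups). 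It is safer to take the $Z_i$ to be the groups $U_{\geq n}$ of \cite[Proposition 5.1.16(2)]{conrad} directly: the commutation relations give $[U_{\geq m},U_{\geq n}]\subseteq U_{\geq m+n}$, which is all that normality and centrality of the graded pieces require, and this is exactly what the paper does. Second, your refinement to one-root-at-a-time steps is unnecessary: the lemma allows $\ol{U}_w Z_{i+1}/\ol{U}_w Z_i\cong\bG_a^{d_i}$ with $d_i$ arbitrary, and the paper gets this directly by writing the quotient as $Z_{i+1}/(Z_{i+1}\cap\ol{U}_w Z_i)$, a quotient of the vector group $Z_{i+1}/Z_i$, namely the product of the root subgroups in $Z_{i+1}$ not contained in $\ol{U}_w Z_i$. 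Your refinement is valid as far as it goes (subgroups of the central piece are automatically normal, and the relevant unions of root sets stay closed because a sum of two roots of height $\geq n$ has height $\geq n+1$), but it only adds bookkeeping; with the refinement dropped and the height filtration used in place of the lower central series, your argument coincides with the paper's proof.
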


\begin{proof}
The first part follows from \cite[Proposition 5.1.16(2)]{conrad}; one can take the $Z_i$ to be the groups called $\text{U}_{\geq n}$ in \emph{loc.cit.}, with $\Psi =\Phi^+$ in the notation of \emph{loc.cit}. The fact that $Z_{i+1}/Z_i$ is central in $\ol{U}/Z_i$ follows from the same calculation that shows that $\text{U}_{\geq n}/\text{U}_{\geq n+1}$ is abelian in the proof of \cite[Proposition 5.1.16]{conrad}.

\medskip

For the second part, first note that $\ol{U}_w Z_i$ is a product of root subgroups as a subscheme since $\ol{U}_w$ and $Z_i$ are, and it is a subgroup since $Z_i$ is normal. The normality of $\ol{U}_w Z_i$ in $\ol{U}_w Z_{i+1}$ then follows by applying Lemma \ref{central series} to the functors of points. Finally, $\ol{U}_w Z_{i+1}/\ol{U}_w Z_{i}$ is isomorphic to $Z_{i+1}/(Z_{i+1}\cap \ol{U}_w Z_i) \sub Z_{i+1}/Z_i$ and the latter is isomorphic, as groups, to the product of the root subgroups in $Z_{i+1}$ but not in $Z_i$ by construction (i.e. \cite[Proposition 5.1.16(2)]{conrad}), so $\ol{U}_w Z_{i+1}/\ol{U}_w Z_i$ is isomorphic as groups to the product of root subgroups in $Z_{i+1}$ but not in $\ol{U}_w Z_i$. 
\end{proof}

\begin{prop}\label{shrink to stabilizer}
Let $\Gamma_{x,w} \defeq Stab_{H_w}(\wt{x})$ and consider the point $\wt{x}_\Gamma = \wt{x}\Gamma_{x,w} \in |\Fl_{G,\mu}|/\Gamma_{x,w}$. Then the natural map
$$ \pi_{\Gamma_{x,w}}^{-1}(\wt{x}_\Gamma) \to \pi_{H_w}^{-1}(x_w)$$
is an isomorphism.
\end{prop}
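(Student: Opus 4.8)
The plan is to identify both fibers with a limit of the Hodge--Tate period map over a shrinking system of neighborhoods of $\wt{x}$, and use that passing from $H_w$ to the stabilizer $\Gamma_{x,w}$ replaces an orbit by a point in a way that is compatible with the quotient construction. Concretely, first I would unwind the definition of the fibers: by the convention recalled before Lemma \ref{rank one}, $\pi_{H_w}^{-1}(x_w) = \varprojlim_{x_w \in U} \pi_{H_w}^{-1}(U)$, where $U$ ranges over quasi-compact open neighborhoods of $x_w$ in $|\Fl_{G,\mu}^w|/H_w = |V_w|/H_w$, and similarly for $\pi_{\Gamma_{x,w}}^{-1}(\wt{x}_\Gamma)$ with neighborhoods of $\wt{x}\Gamma_{x,w}$ in $|V_w|/\Gamma_{x,w}$. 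The natural map comes from the quotient map $|V_w|/\Gamma_{x,w} \to |V_w|/H_w$ (equivalently, from $\ocX_{\Gamma_{x,w},w} \to \ocX_{H_w,w}$, which is a sub-quotient map for the action of $H_w/\Gamma_{x,w}$ on the $\Gamma_{x,w}$-quotient).

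Next I would bring Lemma \ref{stabilizers} to bear. Take $X = \ocX_{\Gamma_{x,w},w}$ (or, more precisely, a spatial open piece of it; since $\ocX_{H_w,w}$ is covered by spatial opens by Corollary \ref{General noncanonical loci}, and fibers commute with passing to such a cover containing $x_w$, this is harmless) with its right action of the finite-index quotient group --- but the cleaner route is to apply Lemma \ref{stabilizers} directly with $G$ replaced by $H_w$ acting on $\ocX_{\mbf{1},w}$, or on an appropriate spatial model, and with $S$ chosen to be (a neighborhood basis witnessing) the preimage of $\wt{x}$ under $\pi_{\mbf{1}} \colon |\ocX_{\mbf{1},w}| \to |V_w|$. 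The hypothesis of Lemma \ref{stabilizers} is exactly that the multiplication map $S \times H_w \to S \cdot H_w$ is a bijection, which is precisely the statement that the $H_w$-translates of (a neighborhood of) the fiber over $\wt{x}$ are disjoint except for the stabilizer $\Gamma_{x,w}$; since $\wt{x}$ is a rank-one point and $(\wt{x}H)/H_w$ is profinite (shown in the paragraph preceding this proposition), the $H_w$-orbit of $\wt{x}$ is discrete in $|V_w|$, so one can choose such an $S$. Then Lemma \ref{stabilizers} gives an isomorphism between the $S$-locus upstairs (which, after passing to the limit over the neighborhood basis, is $\pi_{\Gamma_{x,w}}^{-1}(\wt{x}_\Gamma)$ --- taking $\Gamma_{x,w}$-quotients) and the $\pi(S)$-locus in the $H_w$-quotient (which is $\pi_{H_w}^{-1}(x_w)$).

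The last step is to check that the isomorphism produced by Lemma \ref{stabilizers} is the \emph{natural} map in the statement, i.e. that the identification is compatible with the quotient maps $\ocX_{\Gamma_{x,w},w} \to \ocX_{H_w,w}$ and with $\pi_{\Gamma_{x,w}}$, $\pi_{H_w}$. This is a formal diagram chase: Lemma \ref{stabilizers} is proved by exhibiting $X/G \times_{\us{X/G}} \ul{\pi(S)}$ as $(X \times_{\us{X}} \ul{S\cdot G})/G$ and then as $X \times_{\us{X}} \ul{S}$ after the cancellation $((-)\times \ul{G})/G \cong (-)$, and each of these steps is manifestly the one induced by the quotient map, so the composite agrees with the map we want. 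I expect the main obstacle to be the bookkeeping in the first two steps: matching the fiber-over-a-point formalism (limits over quasi-compact open neighborhoods in a quotient spectral space) with the subsheaf formalism $X \times_{\us{X}} \ul{S}$ of Lemma \ref{stabilizers}, and in particular verifying that the relevant $S$ is qc and generalizing in $|\ocX_{\mbf{1},w}|$ and that $\pi(S)$ has the right neighborhood basis downstairs --- essentially the same kind of argument already used to see that $(\wt{x}H)/H_w$ is profinite, applied one level down. Once that is set up, the bijectivity hypothesis of Lemma \ref{stabilizers} holds by the very definition of $\Gamma_{x,w}$ as the stabilizer, and the rest is formal.
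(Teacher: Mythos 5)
There is a genuine gap, and it is exactly the issue the paper's proof is designed to circumvent. Your ``cleaner route'' applies Lemma \ref{stabilizers} once, with $G=H_w$ acting on $\ocX_{\mbf{1},w}$ and $S$ a (neighborhood basis for the) fiber $|\pi_{\mbf{1}}^{-1}(\wt{x})|$, and you paraphrase the hypothesis as ``the $H_w$-translates of $S$ are disjoint \emph{except for the stabilizer} $\Gamma_{x,w}$.'' But the hypothesis of Lemma \ref{stabilizers} admits no such exception: it demands that $S\times G \to S\cdot G$ be a bijection, and whenever $\Gamma_{x,w}\neq 1$ this fails, since any $\gamma\in\Gamma_{x,w}$ preserves the fiber over $\wt{x}$, so $(s,\gamma)$ and $(s\gamma,1)$ have the same image. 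Moreover, even if the hypothesis held, the conclusion of the lemma identifies the $S$-locus \emph{upstairs} (at level $\mbf{1}$) with the fiber in the $H_w$-quotient; it never produces the intermediate $\Gamma_{x,w}$-quotient $\pi_{\Gamma_{x,w}}^{-1}(\wt{x}_\Gamma)$ that the proposition is about. Your first alternative --- letting ``the finite-index quotient group'' act on $\ocX_{\Gamma_{x,w},w}$ --- also fails: $\Gamma_{x,w}$ need not be normal in $H_w$ (and need not have finite index), so $H_w/\Gamma_{x,w}$ is not a group and there is no action to which Lemma \ref{stabilizers} could apply. (Incidentally, the $H_w$-orbit of $\wt{x}$ is compact and typically infinite, so it is not discrete; what one can arrange is only $\Gamma_{x,w}$-stable neighborhoods, which is a separate point used later in Proposition \ref{stab aff perf}.)

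The missing idea is a dévissage through the nilpotency of $H_w$ (a closed subgroup of $\ol{U}_w(\Zp)$). The paper's proof takes the upper central series $1=Z_0\sub Z_1\sub\dots\sub Z_r=H_w$, sets $H_i\defeq \Gamma_{x,w}Z_i$, and notes via Lemma \ref{central series} that each $H_i$ is normal in $H_{i+1}$. It then applies Lemma \ref{stabilizers} once for each step, with $G=H_{i+1}/H_i$, $X=\ocX_{H_i}$ (whose quotient by $G$ is $\ocX_{H_{i+1}}$ by Proposition \ref{shimura variety quotient}) and $S=|\pi_{H_i}^{-1}(\wt{x}H_i)|$; here the bijectivity hypothesis \emph{does} hold, precisely because $\Gamma_{x,w}\sub H_i$ forces the stabilizer of $\wt{x}H_i$ in $H_{i+1}/H_i$ to be trivial. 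Composing the resulting isomorphisms $\pi_{H_i}^{-1}(\wt{x}H_i)\toisom\pi_{H_{i+1}}^{-1}(\wt{x}H_{i+1})$ gives the natural map of the proposition. So your instinct to use Lemma \ref{stabilizers} is right, but a single application cannot work; the non-normality of $\Gamma_{x,w}$ in $H_w$ is the whole difficulty, and it is resolved only by inserting the central-series interpolation.
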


\begin{proof}
We will apply Lemma \ref{stabilizers} repeatedly (once is not enough, since $\Gamma_{x,w}$ might not be normal in $H_w$). The group $H_w$ is nilpotent, so let $Z_0=1 \sub Z_1=Z(H_w) \sub Z_2 \sub \dots \sub Z_r = H_w$ be the upper central series of $H_w$ and set $H_i = \Gamma_{x,w}Z_i$. Consider the points $\wt{x}H_i \in |\Fl_{G,\mu}|/H_i$. We claim that the natural maps
\[
 \pi_{H_i}^{-1}(\wt{x}H_i) \to \pi_{H_{i+1}}^{-1}(\wt{x}H_{i+1}) 
 \]
are isomorphisms for all $i=0,\dots,r-1$. Since the composition of all these maps is the map in the statement of the Proposition, this suffices. But this is a direct application of Lemma \ref{stabilizers}, setting (in the notation of that lemma) $G=H_{i+1}/H_i$, $X=\ocX_{H_i}$, $S = |\pi_{H_i}^{-1}(\wt{x} H_i)|$, $X \times_{\us{X}} \ul{S} = \pi_{H_i}^{-1}(\wt{x} H_i)$ (note that $H_i$ is normal in $H_{i+1}$ by Lemma \ref{central series}, and that $X/G$ can be identified with $\ocX_{H_{i+1}}$ by Lemma \ref{shimura variety quotient}). 
\end{proof}

We now study $\pi_{\Gamma_{x,w}}^{-1}(\wt{x}_\Gamma)$. First, consider $\pi_{\mbf{1}}^{-1}(\wt{x})$. Recall the standard affinoid open $\cV_\mbf{1} \sub \Fl_{G,\mu}$ from \S \ref{perfectoid loci}. By Corollary \ref{General noncanonical loci} and the definitions, we have
\[
 \pi_{\mbf{1}}^{-1}(\wt{x}) \sub \pi_{\mbf{1}}^{-1}(V_w) = \bigcup_{k^\prime \geq 0} \pi_{\mbf{1}}^{-1}(\cV_\mbf{1} \gamma^{-k^\prime}w^{-1}) = \bigcup_{k\geq 0} \ocX_{\mbf{1}}(\epsilon)_a\gamma^{-k}w^{-1}.
 \]
It follows that we may choose $k$ and $k^\prime$ such that $\wt{x} \sub \cV_\mbf{1} \gamma^{-k^\prime}w^{-1}$ and
\[
 \pi_{\mbf{1}}^{-1}(\wt{x}) \sub \ocX_{\mbf{1}}(\epsilon)_a\gamma^{-k}w^{-1} \sub \pi_{\mbf{1}}^{-1}(\cV_\mbf{1} \gamma^{-k^\prime}w^{-1}).
\]
Using that $\cV_\mbf{1}$ is stable under $\ol{P}_\mu(\Zp)$ and that $\gamma^{k^\prime}\ol{P}_\mu(\Zp)\gamma^{-k^\prime} \supseteq \ol{P}_\mu(\Zp)$, we see that $\cV_\mbf{1}\gamma^{-k^\prime}w^{-1}$ is stable under $\ol{P}_{\mu,w}(\Zp)$, and hence a fortiori stable under $H_w$ and $\Gamma_{x,w}$. Since
$\Gamma_{x,w}$ stabilizes the rank one point $\wt{x}$, we may choose a basis of open neighbourhoods $U_t$, $t\in T$ (some index set) of $\wt{x}$ such that, for all $t$, $U_t$ is a $\Gamma_{x,w}$-stable rational subset of $\cV_\mbf{1}\gamma^{-k^\prime}w^{-1}$
and $\pi_{\mbf{1}}^{-1}(U_t) \sub \ocX_{\mbf{1}}(\epsilon)_a\gamma^{-k}w^{-1}$.
(Any rational subset of $\cV_\mbf{1}\gamma^{-k^\prime}w^{-1} $has finitely many $\Gamma_{x,w}$-translates by \cite[Lemma 2.2]{scholze-lt} or \cite[Lemma 5.2.2]{arizona}, so their
intersection is again a rational subset.)
Then we see that
\[
 \wt{x}_\Gamma = \bigcap_{t\in T}|U_t|/\Gamma_{x,w} 
\]
and hence that
\[
 \pi_{\Gamma_{x,w}}^{-1}(\wt{x}_\Gamma) = \varprojlim_{t\in T} \pi_{\Gamma_{x,w}}^{-1}(|U_t|/\Gamma_{x,w}),
 \]
so to prove that $\pi_{\Gamma_{x,w}}^{-1}(\wt{x}_\Gamma)$ is affinoid perfectoid it suffices to prove that each $\pi_{\Gamma_{x,w}}^{-1}(|U_t|/\Gamma_{x,w})$ is affinoid perfectoid. For convenience, we introduce the principal congruence subgroups
\[
 K(p^m) \defeq \{g\in G(\Zp) \mid (g\text{ mod }p) =1  \}.
 \]
Note that we may write
\[
 \pi_\mbf{1}^{-1}(U_t) = \varprojlim_{m \geq 0} \pi_{\Gamma_{x,w} \cap K(p^m)}^{-1}(|U_t|/(\Gamma_{x,w} \cap K(p^m))),
 \]
and that $\pi_\mbf{1}^{-1}(U_t)$ is a rational subset of $\ocX_\mbf{1}(\epsilon)_a \gamma^{-k}w^{-1}$ by construction. Since rational subsets come from finite level in an inverse limit, it follows that there is an $m$ such that $\pi_{\Gamma_{x,w} \cap K(p^m)}^{-1}(|U_t|/(\Gamma_{x,w} \cap K(p^m)))$ is a rational subset of $|\ocX_{\mbf{1}}(\epsilon)_a| \gamma^{-k}w^{-1}/(\Gamma_{x,w} \cap K(p^m))$ (which is affinoid perfectoid by Corollary \ref{General noncanonical loci}), hence affinoid perfectoid. Let $G_m \defeq \Gamma_{x,w} / (\Gamma_{x,w} \cap K(p^m))$; this is a finite group. Note that $\pi_{\Gamma_{x,w}}^{-1}(|U_t|/\Gamma_{x,w})$, being a quasicompact open subset of $|\cX^*_{\mbf{1}}(\epsilon)_a| \gamma^{-k}w^{-1}/\Gamma_{x,w}$, is a quasicompact perfectoid space.

\begin{prop}\label{stab aff perf}
$\pi_{\Gamma_{x,w}}^{-1}(|U_t|/\Gamma_{x,w})$ is the quotient of $\pi_{\Gamma_{x,w} \cap K(p^m)}^{-1}(|U_t|/(\Gamma_{x,w} \cap K(p^m)))$ by the finite group $G_m$.  Hence it is affinoid perfectoid.
\end{prop}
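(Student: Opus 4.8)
The plan is to deduce the statement from Proposition \ref{shimura variety quotient}(1) by restricting its presentation of $\ocX_{\Gamma_{x,w}}$ to the relevant open subdiamond, and then to recognize the resulting quotient as an honest affinoid perfectoid space by Proposition \ref{v sheaf and v ring quotient}. First I would note that $\Gamma_{x,w}\cap K(p^m)$ is normal in $\Gamma_{x,w}$ (since $K(p^m)$ is normal in $G(\Zp)$), with finite quotient $G_m$, and that both subgroups are neat, being contained in $\ol{U}(\Zp)$, which is torsion-free, together with the neat tame level. By Proposition \ref{shimura variety quotient}(1) we then get that $\ocX_{\Gamma_{x,w}\cap K(p^m)} \times \underline{G_m} \rightrightarrows \ocX_{\Gamma_{x,w}\cap K(p^m)}$ presents $\ocX_{\Gamma_{x,w}}$ as a v-sheaf; write $q \colon \ocX_{\Gamma_{x,w}\cap K(p^m)} \to \ocX_{\Gamma_{x,w}}$ for the quotient map, and set $V \defeq \pi_{\Gamma_{x,w}}^{-1}(|U_t|/\Gamma_{x,w})$ and $Y \defeq \pi_{\Gamma_{x,w}\cap K(p^m)}^{-1}(|U_t|/(\Gamma_{x,w}\cap K(p^m)))$.

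Next I would check that $q^{-1}(V) = Y$. By construction the topological Hodge--Tate period maps at the two levels are compatible with the projections $|\ocX_{\Gamma_{x,w}\cap K(p^m)}| \to |\ocX_{\Gamma_{x,w}}|$ and $|\Fl_{G,\mu}|/(\Gamma_{x,w}\cap K(p^m)) \to |\Fl_{G,\mu}|/\Gamma_{x,w}$, and since $|U_t|$ is $\Gamma_{x,w}$-stable, the preimage of $|U_t|/\Gamma_{x,w}$ under the second of these maps is exactly $|U_t|/(\Gamma_{x,w}\cap K(p^m))$; the identity $q^{-1}(V) = Y$ follows. In particular $Y$ is $G_m$-stable. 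Since coequalizers of v-sheaves commute with base change (cf.\ the proof of Proposition \ref{v sheaf and v ring quotient}), pulling back the presentation above along the open immersion $V \hookrightarrow \ocX_{\Gamma_{x,w}}$ shows that $Y \times \underline{G_m} \rightrightarrows Y$ presents $V$ as a v-sheaf; that is, $V = Y/G_m$ in the category of v-sheaves. This is the first assertion of the proposition.

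For the second assertion I would recall from the discussion preceding the proposition that $Y$ is affinoid perfectoid, say $Y = \Spa(A,A^+)$ with $A$ perfectoid Tate, the $G_m$-action on $Y$ inducing a continuous action on $(A,A^+)$ (a right action of a finite group, hence equivalently a left action of the opposite group). By Proposition \ref{v sheaf and v ring quotient}, the coequalizer of $Y \times \underline{G_m} \rightrightarrows Y$ in the category of v-sheaves is represented by the affinoid perfectoid space $\Spa(A^{G_m},A^{+G_m})$, with $A^{G_m}$ perfectoid by \cite[Thm.~3.3.25]{kedlaya-liu-2}. As the v-site is subcanonical \cite[Thm.~8.7]{diamonds}, affinoid perfectoid spaces embed fully faithfully into v-sheaves, so the identification $V = Y/G_m$ from the previous paragraph forces $V \cong \Spa(A^{G_m},A^{+G_m})$, which is affinoid perfectoid; alternatively, one can invoke Theorem \ref{quotients of aff perf spaces} directly, $Y$ being trivially covered by the $G_m$-stable affinoid perfectoid subspace $Y$ itself.

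The hard part will be the identification $q^{-1}(V) = Y$ in the second step --- making precise the compatibility of the topological Hodge--Tate period maps at different levels with passage to the quotients by $\Gamma_{x,w}$ and $\Gamma_{x,w}\cap K(p^m)$, using that $|U_t|$ is $\Gamma_{x,w}$-stable --- since everything else is a formal consequence of the material of \S\ref{sec: prelim}.
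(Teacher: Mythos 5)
Your proof is correct and takes essentially the same route as the paper: the first claim is obtained by restricting the presentation of Proposition \ref{shimura variety quotient}(1) to the open subdiamond $\pi_{\Gamma_{x,w}}^{-1}(|U_t|/\Gamma_{x,w})$ (using the compatibility of the topological Hodge--Tate period maps at the two levels and the $\Gamma_{x,w}$-stability of $U_t$), and affinoid perfectoidness of the quotient then follows from the finite-group-quotient results of \S\ref{sec: prelim} together with \cite[Thm.~3.3.25]{kedlaya-liu-2}. The only cosmetic difference is that you invoke Proposition \ref{v sheaf and v ring quotient} (or Theorem \ref{quotients of aff perf spaces}), while the paper cites Proposition \ref{surjection finite level} directly; these are established by the same argument, so this is not a genuinely different approach.
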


\begin{proof}
The first claim follows from Proposition \ref{shimura variety quotient}, and then the second
follows from Proposition \ref{surjection finite level} and \cite[Thm.~3.3.25]{kedlaya-liu-2}.
\end{proof}

We now summarize our discussion so far.

\begin{cor}\label{aff perf}
$\pi_{H_w}^{-1}((\wt{x}H)/H_w)$ is affinoid perfectoid with Zariski closed boundary.
\end{cor}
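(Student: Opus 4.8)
The plan is to assemble the ingredients developed above. For affinoid perfectoidness, Lemma~\ref{connected components} (applied to $Y=\pi_{H_w}^{-1}((\wt{x}H)/H_w)$ and the natural spectral map $|Y|\to(\wt{x}H)/H_w$ onto the profinite set $(\wt{x}H)/H_w$) reduces us to showing that each fibre $\pi_{H_w}^{-1}(x_w)$ is affinoid perfectoid. By Proposition~\ref{shrink to stabilizer} this fibre is isomorphic to $\pi_{\Gamma_{x,w}}^{-1}(\wt{x}_\Gamma)$, and the discussion preceding Proposition~\ref{stab aff perf} exhibits the latter as the cofiltered inverse limit $\varprojlim_{t\in T}\pi_{\Gamma_{x,w}}^{-1}(|U_t|/\Gamma_{x,w})$, each term of which is affinoid perfectoid by Proposition~\ref{stab aff perf}. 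The transition maps are open immersions (each $U_t$ is a rational subset of a fixed affinoid), and the limit is realised as the fibre of a spatial diamond over a point, so it is again affinoid perfectoid; I would invoke here the formalism of fibres of spatial diamonds over points from \cite[\S 2.1--2.2]{arizona}.

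For the Zariski closed boundary I would track Zariski-closedness of the boundary through the same chain. At finite level, $|\ocX_\mbf{1}(\epsilon)_a|\gamma^{-k}w^{-1}/(\Gamma_{x,w}\cap K(p^m))$ has Zariski closed boundary by Corollary~\ref{General noncanonical loci}. This passes to the rational subset $\pi_{\Gamma_{x,w}\cap K(p^m)}^{-1}(|U_t|/(\Gamma_{x,w}\cap K(p^m)))$, since the boundary of a rational subset is the restriction of the ambient boundary and Zariski-closed subsets restrict to Zariski-closed subsets. It then passes to the finite quotient $\pi_{\Gamma_{x,w}}^{-1}(|U_t|/\Gamma_{x,w})$ by $G_m$ of Proposition~\ref{stab aff perf}: for a finite group action on an affinoid perfectoid $\Spa(B,B^+)$, the quotient map onto $\Spa(B^{G},B^{+G})$ of Proposition~\ref{v sheaf and v ring quotient} is surjective and $B$ is integral over $B^{G}$, so the image of a $G$-stable Zariski-closed subset is Zariski-closed. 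Finally, Zariski-closedness of the boundary is preserved under the cofiltered limit over $T$, and, once we know it for every fibre $\pi_{H_w}^{-1}(x_w)$, under the profinite-set fibre construction, again by the argument behind Lemma~\ref{connected components} (i.e.\ \cite[Lemma 11.27]{diamonds}).

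I expect the main obstacle to be the bookkeeping around Zariski-closedness rather than any single deep step: one must verify that ``has Zariski closed boundary'' really is stable under (i) a finite group quotient and (ii) the two limit/fibre operations. The finite-quotient case rests on the integrality of $B$ over $B^{G}$ together with the fact that a Zariski-closed subset is the vanishing locus of a (closed) ideal, hence already visible at the level of $v$-ringed spaces where Proposition~\ref{v sheaf and v ring quotient} identifies the quotient; the limit cases rest on the explicit description of the structure sheaf of a fibre. Modulo these compatibilities, the proof is a direct concatenation of Lemma~\ref{connected components}, Proposition~\ref{shrink to stabilizer}, and Propositions~\ref{stab aff perf} and~\ref{v sheaf and v ring quotient}.
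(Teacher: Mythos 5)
Your treatment of the affinoid perfectoidness is exactly the paper's argument: Lemma \ref{connected components} reduces to the fibres $\pi_{H_w}^{-1}(x_w)$, Proposition \ref{shrink to stabilizer} replaces these by $\pi_{\Gamma_{x,w}}^{-1}(\wt{x}_\Gamma)$, which is the cofiltered limit of the $\pi_{\Gamma_{x,w}}^{-1}(|U_t|/\Gamma_{x,w})$ handled by Proposition \ref{stab aff perf}. (One small imprecision there: the reason the limit is affinoid perfectoid is simply that a cofiltered inverse limit of affinoid perfectoid spaces is affinoid perfectoid; ``a fibre of a spatial diamond over a point'' is not in general affinoid perfectoid, so that phrasing does not by itself justify the step.)

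The genuine gap is in your bookkeeping for the Zariski closed boundary, and it sits at the last step: having Zariski closed boundary fibrewise over the profinite set $(\wt{x}H)/H_w$ does not follow ``by the argument behind Lemma \ref{connected components}'' -- \cite[Lemma 11.27]{diamonds} produces affinoid perfectoidness of the total space from affinoid perfectoid fibres, but it says nothing about a closed subset being cut out by an ideal globally when it is so fibrewise; upgrading it would require an actual construction of the defining ideal, which you do not supply. (Your finite-quotient step is also thinner than stated: that the image of a $G$-stable Zariski closed subset is Zariski closed needs more than integrality of $B$ over $B^G$, e.g.\ a norm/prime-avoidance argument producing invariant functions in the ideal.) None of this machinery is needed, which is why the paper can dismiss the boundary in one line: the space $\pi_{H_w}^{-1}((\wt{x}H)/H_w)$ is quasicompact, hence contained in one of the affinoid perfectoid opens $|\ocX_{\mbf{1}}(\epsilon)_a|\gamma^{-k}w^{-1}/H_w$ of Corollary \ref{General noncanonical loci}, whose boundary is Zariski closed; the boundary of your space is the preimage of that boundary under the resulting map of affinoid perfectoid spaces, and the preimage of the vanishing locus of an ideal is the vanishing locus of the pulled-back ideal. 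So the claim is correct, but your proposed route to the boundary statement has an unjustified step and should be replaced by this direct pullback argument.
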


\begin{proof}
We begin by showing that $\pi_{H_w}^{-1}((\wt{x}H)/H_w)$ is affinoid perfectoid, summarizing the arguments above. First, by Lemma \ref{connected components} it is enough to show that $\pi_{H_w}^{-1}(x_w)$ is affinoid perfectoid, which by Proposition \ref{shrink to stabilizer} is equivalent to showing that $\pi_{\Gamma_{x,w}}^{-1}(|U_t|/\Gamma_{x,w})$ is affinoid perfectoid, and this is Proposition \ref{stab aff perf}. That the boundary is Zariski closed is then immediate.
\end{proof}

\begin{rem}\label{differences}
Let us now make a few remarks on the differences between the arguments of this paper and that of \cite{arizona}. Thanks to the results in \S \ref{sec: prelim}, our results of the fibers of $\pi^{-1}_{H_w}$ are slightly stronger in the sense that we can prove Corollary \ref{aff perf}; in \cite{arizona} we could essentially only prove that it is affinoid perfectoid after a possible modification of the boundary, which was enough to deduce the cohomological consequences that we needed. We also note that Proposition \ref{shrink to stabilizer}, where Lemma \ref{stabilizers} is the key ingredient, allows us to bypass the technical but powerful \cite[Proposition 5.2.1]{arizona}. Indeed for our argument here it suffices to produce neighbourhoods of points which are stable under the stabilizer of the point rather than stable under the whole group. The former is almost trivial in comparison to the latter.
\end{rem}

From here, we wish to compute cohomology on $\pi_H^{-1}(x)$ using the cover $\pi_{H_w}^{-1}((\wt{x}H)/H_w)$. Again, we are faced with the issue that $H_w$ may not be normal in $H$, but the fact that $H$ is nilpotent comes to the rescue.
Recall that, by Lemma \ref{rank one} and the paragraph preceding it, we need to show that
\[
 H^i_{\et}(\pi_H^{-1}(x), j_! (\cO_{\cX}^+/p)^a) = 0
\]
for all $i > d- \dim \Fl^w_{G,\mu}$.
Choose a series of subgroups $1=Z_0 \sub \dots \sub Z_r=\ol{U}$ as in Lemma \ref{central series 2}, and set $H_{w,i} = H \cap \ol{U}_w(\Zp)Z_i(\Zp)$ for $i=0,\dots,r$. Note that we have $H_{w,0}=H_w$ and $H_{w,r}=H$. Moreover, $H_{w,i}$ is normal in $H_{w,i+1}$ and $H_{w,i+1}/H_{w,i}$ embeds naturally as a finite index subgroup of
$(\ol{U}_wZ_{i+1}/\ol{U}_w Z_{i})(\Zp)=\ol{U}_wZ_{i+1}(\Zp)/\ol{U}_w Z_{i}(\Zp)$ (this equality follows by looking at the decomposition into root spaces), so it is isomorphic to $\Zp^{d_i}$ (in the notation of Lemma \ref{central series 2}). By \cite[Theorem 2.2.7]{arizona}, we have Hochschild--Serre spectral sequences with $E_2$-page
\[
 H^s_{cts}(H_{w,i+1}/H_{w,i}, H_\et^t(\pi_{H_{w,i}}^{-1}(\wt{x}H/H_{w,i}), j_!(\cO_\cX^+/p)^a))$$ 
$$\implies H_\et^{s+t}(\pi_{H_{w,i+1}}^{-1}(\wt{x}H/H_{w,i+1}), j_!(\cO_\cX^+/p)^a)
\]
for all $i=0,\dots,r-1$. By Corollary \ref{aff perf} and \cite[Proposition 5.1.4]{arizona}, we have 
\[
 H_\et^t(\pi_{H_{w}}^{-1}(\wt{x}H/H_{w}), j_!(\cO_\cX^+/p)^a) =0 
 \]
for all $t > 0$. 

\begin{rem}
Here we use the fact that the notions of Zariski closed and strongly Zariski closed subsets of affinoid perfectoid spaces are now known to agree, by \cite[Remark 7.5]{bhatt-scholze}. It is perhaps worth noting that, just like in \cite{arizona}, one may prove directly that the sets relevant to our arguments are strongly Zariski closed; the general result from \cite{bhatt-scholze} is strictly speaking not needed.
\end{rem}

From this, we deduce by induction using the Hochschild--Serre spectral sequences above and the fact that $H_{w,i+1}/H_{w,i} \cong \Zp^{d_i}$ has cohomological dimension $d_i$ for continuous cohomology that 
$$ H^i_{\et}(\pi_H^{-1}(x), (j_! \cO_{\cX}^+/p)^a) = 0 $$
for all $i > d_0 + \dots + d_{r-1} = \dim \ol{U} - \dim \ol{U}_w$. We then finish the proof of Theorem \ref{key} by noting that, by Lemma \ref{dimension count},
$$ \dim \ol{U} - \dim \ol{U}_w = \dim \ol{N}_\mu - \dim \Fl_{G,\mu}^w = d -  \dim \Fl_{G,\mu}^w. $$

\section{Bounds on codimensions for ordinary parts}\label{sec: ordinary parts}

In this section, we deduce an application of Theorem~\ref{main complex}, namely we bound
the codimension over the Iwasawa algebra 
of the ordinary part of completed homology of the Shimura varieties 
for $G$. The application is Theorem~\ref{homology vanishing} and it relies on a Poincar\'e duality spectral sequence for 
the ordinary part of the homology of locally symmetric spaces; 
most of the work in this section is in constructing this spectral sequence. 

We work with a group $G/\Q$ which is split at $p$ since we can only prove our main result, Theorem \ref{homology vanishing} under this assumption (and the additional assumption that $G$ admits a Shimura variety of Hodge type), though we believe that the weaker assumption that $G$ is only quasi-split at $p$ should suffice for sections \ref{sec: completed} to \ref{subsec:ordinary parts}. We
also do not need to know that $G$ admits a Shimura variety of Hodge type until we appeal to Theorem~\ref{main complex}, as
we only need to consider the locally symmetric spaces associated to $G$. 

We note that the existence of such a spectral sequence has been previously announced by Emerton, 
relying on his theory of ordinary parts~\cite{emerton-ord1, emerton-ord2}. In this paper, we give a different construction relying on computations of (co)homology using singular and simplicial chains, in the style of Ash--Stevens \cite{ash-stevens}, as well as ideas of Hill \cite{hill}.

\subsection{Completed cohomology and distributions} \label{sec: completed} 
We begin by recalling some standard material; the reader is referred to \cite{borel-serre} for more details and to \cite[\S 3.1]{newton-thorne} for a useful summary. Let $G$ be a connected reductive group over $\Q$ and assume it admits a flat affine model
over $\Z$ which is split at $p$. Let $X$ denote the symmetric space for 
$G(\R)$ in the sense of~\cite[\S 2]{borel-serre}. Let $D\defeq \dim_{\R} X$. 

For simplicity, we will consider neat compact open subgroups $K\subset G(\A_f)$ which 
are of the form $K = \prod_{\ell} K_\ell$, 
where $\ell$ runs over finite primes and $K_\ell\subseteq G(\Z_{\ell})$. Let $G^{\mathrm{ad}}$ denote the adjoint group of $G$, and let $G^{\mathrm{ad}}(\R)^+$ denote the identity component of $G^{\mathrm{ad}}(\R)$ in the real topology. We let $G(\R)_+$ denote the preimage of $G^{\mathrm{ad}}(\R)^+$ under the natural map $G(\R) \to G^{\mathrm{ad}}(\R)$, and we set $G(\Q)_+ = G(\Q) \cap G(\R)_+$. For $K$ as above, we define a locally symmetric space
\[
X_K\defeq G(\Q)_+\backslash X\times G(\A_f) / K.   
\]
As is well known, the fact that $K$ is neat implies that $X_K$ is a smooth manifold; $X_K$ is orientable because $X$ is and $G(\R)_+$ acts by orientation-preserving maps.

\begin{rem}
The reader familiar with the literature on adelic locally symmetric spaces will know that other sources use slightly different definitions. For example, it is common to quotient out on the left either by the full $G(\Q)$ or the smaller $G(\Q) \cap G(\R)^+$, where $G(\R)^+ \sub G(\R)$ is the identity component. Roughly speaking, the difference between these definitions is only in the structure of the set of components and the theory developed here could be developed for either of these choices (at least for sufficiently small $K$). The reason for our convention is that, when $G$ admits a Shimura datum of Hodge type, $X_K$ is exactly the complex points of the Shimura variety for $G$ with level $K$. This follows from \cite[Lemma 5.11]{milne-notes} and the fact that in this case, the maximal $\R$-split torus in the center of $G$ is $\Q$-split.
\end{rem}
Following \cite{newton-thorne}, we will also make use of the space
\[
\frakX \defeq G(\Q)_+ \backslash X \times G(\A_f),
\]
where $G(\A_f)$ is given the discrete topology prior to taking the quotient. As a topological space, this is an uncountable disjoint union of copies of $X$.

The symmetric space $X$ admits a partial (Borel--Serre) compactification
$X^{\BS}$, to which the $G(\Q)$-action extends, and we 
set 
\[
X^{\BS}_K\defeq G(\Q)_+ \backslash X^{\BS}\times G(\A_f) / K;
\]
this is a compactification of $X_K$ and the inclusion $X_K\hookrightarrow X_K^{\BS}$ is a homotopy equivalence. 
We also consider the boundaries $\partial X^{\BS}\defeq X^{\BS} \setminus X$ and 
$\partial X^{\BS}_K\defeq X^{\BS}_K \setminus X_K$. We also consider the space
\[
\frakX^{\BS} \defeq G(\Q)_+ \backslash X^{\BS} \times G(\A_f)
\]
where again $G(\A_f)$ is given the discrete topology. We set $\partial \frakX^{\BS} = \frakX^{\BS} \setminus \frakX$, and note that all these spaces carry right actions of $G(\A_f)$, whose restrictions to neat compact opens $K\sub G(\A_f)$ are free. In other words, $\frakX \to X_K$ etc. are $K$-covers.
We let $C_{\bullet}$ and $\partial C_{\bullet}$ denote the complexes of singular 
chains with $\Z$-coeffcients on $\frakX^\BS$ and $\partial \frakX^{\BS}$ respectively; these are right $\Z[G(\A_f)]$-modules. 
There is a natural morphism of complexes $\partial C_{\bullet}\to C_{\bullet}$ 
and we define $C^{\BM}_{\bullet}$ to be the cone of this morphism. 
Given a left $\Z[K]$-module $M$, we define 
\[
C_{\A,\bullet}(K,M)\defeq C_{\bullet}\otimes_{\Z[K]} M
\]
and 
\[
C^{\BM}_{\A,\bullet}(K,M)\defeq C^{\BM}_{\bullet}\otimes_{\Z[K]} M.
\] 
These complexes compute the homology, respectively
the Borel--Moore homology, of $X_{K}$ with coefficients in the local system $\widetilde{M}$ determined by $M$ via the $K$-cover $\frakX^{\BS} \to X^{\BS}_K$; we record this as a proposition.

\begin{prop}\label{prop:fin level homology} Let $M$ be a left $K$-module. 
There are canonical isomorphisms 
\[
H_*(X_K, \widetilde{M})\simeq H_*\left(C_{\A,\bullet}(K,M)\right)\ \mathrm{and}
\ H^{\BM}_*(X_K, \widetilde{M})\simeq H_*\left(C^{\BM}_{\A,\bullet}(K,M)\right)
\]
\end{prop}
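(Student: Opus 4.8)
The plan is to deduce both isomorphisms from the elementary chain-level description of the Borel--Serre compactified locally symmetric space, together with the fact that singular chains on a space with a free group action form free modules over the group ring.

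First I would record the structural input. Since $K$ acts \emph{freely} on $\frakX^{\BS}$, it acts freely on the set of singular simplices of $\frakX^{\BS}$ --- if $k\in K$ fixes a simplex it fixes its $0$-th vertex, forcing $k=1$ --- so $C_{\bullet}$ is a complex of \emph{free} right $\Z[K]$-modules, a basis being obtained by choosing one simplex in each $K$-orbit; the same applies to $\partial C_{\bullet}$ on $\partial\frakX^{\BS}$. It follows that the cokernel of the injective inclusion $\partial C_{\bullet}\hookrightarrow C_{\bullet}$ is again a complex of free $\Z[K]$-modules (the chains on simplices not contained in the boundary), so the short exact sequence $0\to\partial C_{\bullet}\to C_{\bullet}\to C_{\bullet}/\partial C_{\bullet}\to 0$ is degreewise split over $\Z[K]$ and hence remains exact after $-\otimes_{\Z[K]}M$; moreover $C^{\BM}_{\bullet}$, being degreewise $\partial C_{\bullet-1}\oplus C_{\bullet}$, is a complex of free $\Z[K]$-modules as well. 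In particular $C_{\bullet}$, $\partial C_{\bullet}$ and $C^{\BM}_{\bullet}$ are complexes of flat $\Z[K]$-modules, so applying $-\otimes_{\Z[K]}M$ to any of them computes the corresponding derived tensor product.

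Next, because $\frakX^{\BS}\to X_{K}^{\BS}$ is a $K$-cover and $\widetilde{M}$ is by definition the local system it determines from $M$, the standard description of homology with local coefficients (see \cite[\S 3.1]{newton-thorne}, or \cite{borel-serre}) identifies $C_{\bullet}\otimes_{\Z[K]}M=C_{\A,\bullet}(K,M)$ canonically with the singular chain complex of $X_{K}^{\BS}$ with coefficients in $\widetilde{M}$, and likewise $\partial C_{\bullet}\otimes_{\Z[K]}M$ with that of $\partial X_{K}^{\BS}$. Hence $H_{*}(C_{\A,\bullet}(K,M))=H_{*}(X_{K}^{\BS},\widetilde{M})=H_{*}(X_{K},\widetilde{M})$, the last step being the Borel--Serre homotopy equivalence $X_{K}\hookrightarrow X_{K}^{\BS}$, which is compatible with the covers $\frakX\to X_{K}$ and $\frakX^{\BS}\to X_{K}^{\BS}$, so the local systems match. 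For Borel--Moore homology I would use that $X_{K}$ is the interior of the compact space $X_{K}^{\BS}$, with complementary boundary $\partial X_{K}^{\BS}$ a neighbourhood deformation retract, so that $H^{\BM}_{*}(X_{K},\widetilde{M})=H_{*}(X_{K}^{\BS},\partial X_{K}^{\BS};\widetilde{M})$. Since $-\otimes_{\Z[K]}M$ commutes with mapping cones, $C^{\BM}_{\A,\bullet}(K,M)$ is the cone of $C_{\bullet}(\partial X_{K}^{\BS};\widetilde{M})\to C_{\bullet}(X_{K}^{\BS};\widetilde{M})$; as this map is injective with cokernel the relative chain complex computing $H_{*}(X_{K}^{\BS},\partial X_{K}^{\BS};\widetilde{M})$, the cone is quasi-isomorphic to that relative complex, which gives the desired isomorphism. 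If $X_{K}^{\BS}$ is disconnected one argues on each of its finitely many components.

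There is no essential difficulty; the statement is a bookkeeping exercise assembling well-known facts. The one point that genuinely uses the hypotheses, rather than being formal, is the freeness of $C_{\bullet}$ and $\partial C_{\bullet}$ over $\Z[K]$, which rests on the neatness of $K$ (guaranteeing the free actions), and this is precisely what allows the elementary chain-level exact sequences to be tensored up to the coefficient module $M$ with no derived corrections. The remaining care lies in invoking the correct standard results: the interpretation of Borel--Moore homology of the interior as relative homology of the Borel--Serre pair, and the compatibility of the homotopy equivalence $X_{K}\simeq X_{K}^{\BS}$ with the $K$-covers used to define $\widetilde{M}$.
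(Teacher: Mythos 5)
Your argument is correct, and it is exactly the argument the paper has in mind: the paper offers no written proof, recording the proposition as an instance of descent along the $K$-cover $\frakX^{\BS}\to X_K^{\BS}$, which is precisely what you spell out (freeness of $C_\bullet$, $\partial C_\bullet$, $C_\bullet^{\BM}$ over $\Z[K]$ from neatness, the local-coefficients identification for the regular cover, the Borel--Serre homotopy equivalence, and the identification of $H^{\BM}_*$ of the interior with relative homology of the pair $(X_K^{\BS},\partial X_K^{\BS})$, computed by the cone since $\partial C_\bullet\otimes_{\Z[K]}M\to C_\bullet\otimes_{\Z[K]}M$ stays injective). So your proposal simply supplies the standard details the paper leaves implicit; no discrepancy.
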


\begin{remark}\label{rem:fin complexes} 
Choose a finite (combinatorial) triangulation of $X_K^{\BS}$ such that $\partial X^{\BS}_K$ is a simplicial subcomplex ($X_K^{\BS}$ is homeomorphic to a smooth compact manifold with boundary \cite[\S 11.1]{borel-serre}, so this can be done using e.g. \cite[Theorem 10.6]{munkres}). Pulling back these triangulations to $\frakX^{\BS}$ and $\partial \frakX^{\BS}$ we obtain bounded complexes $F_\bullet$ and $\partial F_\bullet$ of finite free right $\Z[K]$-modules which are homotopy equivalent to $C_\bullet$ and $\partial C_\bullet$, respectively. We let $F_\bullet^{\BM}$ denote the cone of $\partial F_\bullet \to F_\bullet$, which is then homotopy equivalent to $C_\bullet^{\BM}$.

This allows us to construct complexes 
$C_{\bullet}(K, M)\ \mathrm{and}\ C^{\BM}_{\bullet}(K,M)$
which have good finiteness properties and are homotopy equivalent to 
$C_{\A, \bullet}(K, M)\ \mathrm{and}\ C^{\BM}_{\A, \bullet}(K,M)$, respectively\footnote{The resulting complexes are called Borel--Serre complexes in~\cite{hansen-thesis}, though they are constructed slightly differently there.}, by setting
\[
C_{\bullet}^{(\BM)}(K,M)\defeq F_{\bullet}^{(\BM)}\otimes_{\Z[K]}M.
\]
Here and elsewhere we write $(\BM)$ to mean that one can either make the same construction for the complexes with no superscript or with the $\BM$ superscript.
\end{remark}

Analogous constructions can be made for cohomology and cohomology
with compact support. If $M$ is a right $K$-module, define 
\[
C^{\bullet,(\BM)}_{\A}(K,M)\defeq \Hom_{\Z[K]}\left(C^{(\BM)}_{\bullet}, M\right).
\]
We have the following analogue of Proposition~\ref{prop:fin level homology}, which once again is an instance of descent. 

\begin{prop}Let $M$ be a right $K$-module. There are canonical isomorphisms 
\[
H^*(X_K, \widetilde{M})\simeq H^*\left(C^\bullet_{\A}(K,M)\right)\ \mathrm{and}
\ H^*_c(X_K, \widetilde{M})\simeq H^*\left(C^{\bullet,\BM}_{\A}(K,M)\right)
\]
\end{prop}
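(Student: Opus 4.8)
The plan is to mirror the proof of Proposition~\ref{prop:fin level homology}, replacing homology of local systems by cohomology and transposing the tensor products to $\Hom$'s. First I would recall that for a $K$-cover $\mathfrak{X}^{\BS} \to X_K^{\BS}$ and a right $K$-module $M$, the cohomology $H^*(X_K, \widetilde{M})$ is computed by the complex of $K$-equivariant cochains on $\mathfrak{X}^{\BS}$, which is exactly $\Hom_{\Z[K]}(C_\bullet, M)$ where $C_\bullet$ is the complex of singular chains on $\mathfrak{X}^{\BS}$ (viewed as a complex of \emph{left} $\Z[K]$-modules via the standard recipe, so that $\Hom_{\Z[K]}(-,M)$ makes sense for $M$ a right module). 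This is the standard identification of sheaf cohomology of a local system with equivariant cochain cohomology on a cover; since $\mathfrak{X}^{\BS} \to X_K^{\BS}$ is a $K$-cover by a space homotopy equivalent to $X_K$ (the inclusion $X_K \hookrightarrow X_K^{\BS}$ being a homotopy equivalence) and $K$ acts freely, this is just descent along the cover, dual to the argument already invoked for Proposition~\ref{prop:fin level homology}.

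Next I would handle the compactly supported case. The point is that compactly supported cohomology of $X_K$ fits into the long exact sequence
\[
\cdots \to H^*_c(X_K, \widetilde{M}) \to H^*(X_K^{\BS}, \widetilde{M}) \to H^*(\partial X_K^{\BS}, \widetilde{M}) \to \cdots,
\]
i.e.\ $H^*_c(X_K, \widetilde{M})$ is the cohomology of the shifted cone of the restriction map $C^\bullet_{\A}(K,M) \to \partial C^\bullet_{\A}(K,M)$, where $\partial C^\bullet_{\A}(K,M) = \Hom_{\Z[K]}(\partial C_\bullet, M)$ computes $H^*(\partial X_K^{\BS}, \widetilde{M})$ by the same descent argument applied to the boundary. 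Since $C^{\BM}_\bullet$ was \emph{defined} as the cone of $\partial C_\bullet \to C_\bullet$, applying the exact functor $\Hom_{\Z[K]}(-, M)$ turns it into (a shift of) the cone of $C^\bullet_{\A}(K,M) \to \partial C^\bullet_{\A}(K,M)$; here one uses that $\partial C_\bullet \to C_\bullet$ is a termwise split injection of free $\Z[K]$-modules (which can be arranged, or one passes to the homotopy-equivalent finite free complexes $F_\bullet, \partial F_\bullet$ of Remark~\ref{rem:fin complexes}), so that $\Hom_{\Z[K]}(-,M)$ commutes with forming the cone. Thus $H^*\big(C^{\bullet,\BM}_{\A}(K,M)\big)$ computes $H^*_c(X_K, \widetilde{M})$, as claimed.

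The main obstacle — such as it is — is the compatibility of the "cone of a restriction map" description of compactly supported cohomology with the "$\Hom$ of a cone" description coming from the definition of $C^{\BM}_\bullet$: one must check that the exactness of $\Hom_{\Z[K]}(-, M)$ on the relevant (degreewise free, degreewise split) complexes genuinely intertwines the two cone constructions, up to the expected shift and sign, and that the identification is canonical. This is routine homological algebra once one records that $\partial C_\bullet \to C_\bullet$ is degreewise a split monomorphism of free modules (equivalently, one works with $\partial F_\bullet \to F_\bullet$), but it is the only place where a small verification is needed rather than a direct appeal to descent. Everything else is formally dual to Proposition~\ref{prop:fin level homology} and Remark~\ref{rem:fin complexes}.
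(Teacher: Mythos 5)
Your argument is correct and is essentially the paper's: the paper offers no details beyond calling the result ``once again an instance of descent,'' which is precisely your identification of $\Hom_{\Z[K]}(C_\bullet,M)$ with cochains in the local system $\widetilde{M}$ via the free $K$-cover $\frakX^{\BS}\to X_K^{\BS}$, combined with the standard identification of $H^*_c(X_K,\widetilde{M})$ with relative cohomology of the pair $(X_K^{\BS},\partial X_K^{\BS})$ and the compatibility of $\Hom_{\Z[K]}(-,M)$ with mapping cones. (The split-injectivity you worry about is not even needed, since a cone is degreewise a direct sum and $\Hom$ is additive.)
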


We also have complexes $C^{\bullet,(\BM)}(K,M)= \Hom_{\Z[K]}\left(F^{(\BM)}_{\bullet}, M\right)$ with good finiteness properties as in Remark~\ref{rem:fin complexes}. 

We now discuss the action of $G(\A_f)$ on the adelic complexes we have defined above. 
Let $S$ be a finite set of finite primes, not necessarily containing $p$. 
Assume that $M$ is a left $\Z[G(\A^S)\times K_S]$-module. For any $g\in G(\A^S)$, 
we have an isomorphism 
\[
g_*: C^{(\BM)}_{\A, \bullet}(K,M)\toisom C^{(\BM)}_{\A,\bullet}(g^{-1}Kg,M), \,\,\,\, 
\sigma \otimes m \mapsto \sigma g \otimes g^{-1}m.
\]
This can be translated into a Hecke action of the double coset $[KgK]$ for
$g\in G(\A^S)$ by taking the composition 
\begin{equation}\label{eq:hecke fin homology}
C^{(\BM)}_{\A, \bullet}(K, M)\to C^{(\BM)}_{\A, \bullet}(K\cap gKg^{-1}, M)\stackrel{g_*}{\toisom} 
C^{(\BM)}_{\A,\bullet}(g^{-1}Kg\cap K, M)\to C^{(\BM)}_{\A,\bullet}(K, M),
\end{equation}
where the first morphism is the trace map
\[
\sigma \otimes m \mapsto \sum \sigma k \otimes k^{-1}m,
\] 
where the sum runs over a set of coset representatives $k$ for $K/(K\cap gKg^{-1})$, 
and the last morphism is restriction $\sigma \otimes m \mapsto \sigma \otimes m$. 
One can check that this recovers the usual Hecke action on homology /
Borel--Moore homology. 

Assume now that $M$ is a right $\Z[G(\A^S)\times K_S]$-module. For
any $g\in G(\A^S)$, we have an isomorphism 
\[
g^*: C^{\bullet, (\BM)}_{\A}(K,M)\toisom C^{\bullet, (\BM)}_{\A}(gKg^{-1},M), \,\,\,\,
g^*(\phi)(\sigma) = \phi(\sigma g)g^{-1}. 
\]
This can be translated into a Hecke action of the double coset $[KgK]$ for
$g\in G(\A^S)$ by taking the composition 
\begin{equation}\label{eq:hecke fin cohomology}
C^{\bullet, (\BM)}_{\A}(K, M)\to C^{\bullet, (\BM)}_{\A}(K\cap g^{-1}Kg, M)\stackrel{g^*}{\toisom} 
C^{\bullet, (\BM)}_{\A}(gKg^{-1}\cap K, M)\to C^{\bullet, (\BM)}_{\A}(K, M),
\end{equation}
where the first morphism is restriction and the third morphism is the trace map 
\[
\phi \mapsto \psi(\sigma):= \sum \phi(\sigma k)k^{-1},
\]
where the sum runs over a set of coset representatives $k$ for $K/(K\cap g^{-1}Kg)$. 
One can check that this recovers the usual Hecke action on cohomology / cohomology
with compact support. 

If $R$ is a commutative ring and $M,N$ are $R$-modules, we have a canonical isomorphism 
\begin{equation}\label{eq:universal coefficient isom}
\mathrm{RHom}_{R}\left(C^{(\BM)}_{\A,\bullet}(K,M), N\right) \simeq 
C^{\bullet, (\BM)}_{\A}\left(K, \mathrm{RHom}_R(M,N)\right),
\end{equation} 
from the adjunction between tensor products and homomorphisms. For example,
if $R= N = \Z$, we obtain the universal coefficient isomorphism between 
homology and cohomology, respectively between Borel--Moore homology and
cohomology with compact support. One can check from the explicit 
descriptions~\eqref{eq:hecke fin homology} and~\eqref{eq:hecke fin cohomology} that the universal coefficient
isomorphism is equivariant for the Hecke action of $[KgK]$.

Our goal is now to use these explicit adelic complexes to describe ordinary 
completed (co)homology. Let $K'_p\subseteq K_p$ be a compact open subgroup; set $K':=K^pK'_p$, 
this is a compact open subgroup of $K$. 

\begin{lemma}\label{lem:fin level distribution}\leavevmode
\begin{enumerate}
\item For any $n\in \Z_{\geq 1}$, there is a canonical isomorphism 
\[
C^{(\BM)}_{\A,\bullet}(K', \Z/p^n\Z)\simeq C^{(\BM)}_{\A,\bullet}\(K, \Z/p^n\Z[K_p/K'_p]\), 
\]
where $K^p$ acts trivially and $K_p$ acts by left translation on $\Z/p^n\Z[K_p/K'_p]$. 

\item As $K^p$ varies, this isomorphism is equivariant for the action of $g^p\in G(\A^p_f)$. 

\item If $g\in G(\Q_p)$ and $K_{i,p} \subseteq K_p$ for $i=1,2$ with $g^{-1}K_{1,p}g\subseteq K_{2,p}$ 
 and $K_{i}\defeq K^pK_{i,p}$, the morphism 
\[
g_*: C^{(\BM)}_{\A,\bullet}(K_{1}, \Z/p^n)\to C^{(\BM)}_{\A,\bullet}(K_{2}, \Z/p^n), \,\,\, g_*(\sigma\otimes \lambda) = \sigma g\otimes \lambda
\] 
corresponds to the morphism 
\[
g_*:C^{(\BM)}_{\A,\bullet}\(K, \Z/p^n[K_p/K_{1,p}]\)\to C^{(\BM)}_{\A,\bullet}\(K, \Z/p^n[K_p/K_{2,p}]\), 
\]
\[
g_*(\sigma \otimes \lambda k)= \sigma kg \otimes \lambda.
\]
\item If $K_{2,p}\subseteq K_{1,p}\subseteq K_{p}$, and $K_{i}\defeq K^pK_{i,p}$ for $i=1,2$, the trace morphism 
$C^{(\BM)}_{\A,\bullet}(K_{1}, \Z/p^n)\to C^{(\BM)}_{\A,\bullet}(K_{2},\Z/p^n)$ corresponds to the
morphism 
\[
C^{(\BM)}_{\A,\bullet}(K, \Z/p^n[K_p/K_{1,p}])\to C^{(\BM)}_{\A,\bullet}(K, \Z/p^n[K_p/K_{2,p}])
\]
induced by the natural trace map $\mathrm{tr}: \Z/p^n[K_p/K_{1,p}]\to \Z/p^n[K_p/K_{2,p}]$. 
\end{enumerate}
\end{lemma}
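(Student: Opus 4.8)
The plan is to derive all four items from one elementary isomorphism of complexes, a Shapiro-type identity for the adelic chain complexes. Write $K'=K^pK'_p\subseteq K^pK_p=K$, a finite-index inclusion of compact opens. The chains $C_\bullet$ and $\partial C_\bullet$ on $\frakX^\BS$ and $\partial\frakX^\BS$ are free right $\Z[K]$-modules, since the relevant maps of spaces are $K$-covers; for any \emph{left} $\Z[K']$-module $N$, associativity of the tensor product then gives a natural isomorphism of complexes
\[
\Phi_N\colon C^{(\BM)}_\bullet\otimes_{\Z[K']}N\toisom C^{(\BM)}_\bullet\otimes_{\Z[K]}\bigl(\Z[K]\otimes_{\Z[K']}N\bigr),\qquad c\otimes n\mapsto c\otimes(1\otimes n),
\]
functorial in $N$ and in the complex. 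Since $-\otimes_{\Z[K']}-$ and $-\otimes_{\Z[K]}-$ are additive, $\Phi$ is automatically compatible with the cone construction producing $C^{\BM}_\bullet$ from $\partial C_\bullet\to C_\bullet$, which is why one may carry the superscript $(\BM)$ through. Taking $N=\Z/p^n$ with trivial $K'$-action, we have $\Z[K]\otimes_{\Z[K']}\Z/p^n\cong\Z/p^n[K/K']$ as left $\Z[K]$-modules, and the canonical bijection of $K$-sets $K/K'\cong K_p/K'_p$ identifies the left-translation $K$-action with the action in which $K^p$ acts trivially (because $K^p$ and $K_p$ commute and $K^p\subseteq K'$) and $K_p$ acts by left translation. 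This is the isomorphism of item (1); under it the generator $\sigma\otimes_{K'}\lambda$ corresponds to $\sigma\otimes_K(\lambda\cdot\overline{K'_p})$, where $\overline{K'_p}$ denotes the identity coset.

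For item (2), recall that for $g^p\in G(\A^p_f)$ the translation acts on adelic chains by $\sigma\otimes m\mapsto\sigma g^p\otimes(g^p)^{-1}m$, and that both $\Z/p^n$ and $\Z/p^n[K_p/K'_p]$ are $G(\A^p_f)$-trivial modules, so on either side the map is simply $\sigma\otimes\lambda\mapsto\sigma g^p\otimes\lambda$. One then checks directly on the generator $\sigma\otimes_{K'}\lambda$ that $\Phi(\sigma g^p\otimes\lambda)=\sigma g^p\otimes(\lambda\cdot\overline{K'_p})=g^p_*\bigl(\Phi(\sigma\otimes\lambda)\bigr)$, and since $\Phi$ is an isomorphism of complexes the required square commutes.

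For items (3) and (4), I would transport the source-side maps through $\Phi$ and match them against the stated explicit formulas. Two facts make this mechanical: $K_1/K_2=K^pK_{1,p}/K^pK_{2,p}\cong K_{1,p}/K_{2,p}$, so coset representatives may be chosen in $K_{1,p}\subseteq K_p$; and for $h\in K_p$ the tensor relation over $\Z[K]$ reads $\sigma h\otimes w=\sigma\otimes hw$, so group elements at $p$ may be moved across the tensor sign and absorbed into translations of the basis cosets of $\Z/p^n[K_p/K_{i,p}]$. For (3): $\Phi_2$ applied to $g_*(\sigma\otimes_{K_1}\lambda)=\sigma g\otimes_{K_2}\lambda$ gives $\sigma g\otimes_K(\lambda\cdot\overline{K_{2,p}})$; writing a general generator as $\sigma\otimes_K(\lambda\cdot\overline{kK_{1,p}})=\sigma k\otimes_K(\lambda\cdot\overline{K_{1,p}})=\Phi_1(\sigma k\otimes_{K_1}\lambda)$, one finds its image is $\sigma kg\otimes_K(\lambda\cdot\overline{K_{2,p}})$, which is exactly the displayed $g_*(\sigma\otimes\lambda k)=\sigma kg\otimes\lambda$; well-definedness of the latter formula uses $g^{-1}K_{1,p}g\subseteq K_{2,p}$. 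For (4): applying $\Phi_2$ to the trace $\sum_{h\in K_{1,p}/K_{2,p}}\sigma h\otimes_{K_2}\lambda$ gives $\sum_h\sigma\otimes_K h\cdot(\lambda\cdot\overline{K_{2,p}})=\sigma\otimes_K\lambda\cdot\bigl(\sum_h\overline{hK_{2,p}}\bigr)$, which is precisely $(\mathrm{id}\otimes\mathrm{tr})\bigl(\Phi_1(\sigma\otimes_{K_1}\lambda)\bigr)$ since $\mathrm{tr}(\overline{K_{1,p}})=\sum_h\overline{hK_{2,p}}$.

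I do not expect a genuine obstacle here: the whole argument is bookkeeping, and the only points needing care are keeping the left/right module conventions straight, pinning down $K/K'\cong K_p/K'_p$ as an isomorphism of $K$-sets, and checking at the outset that the source-side translation and trace maps in items (3) and (4) are well defined --- which follows respectively from $g^{-1}K_{1,p}g\subseteq K_{2,p}$, from $K_{2,p}\subseteq K_{1,p}$, and from the triviality of the $\Z/p^n$-coefficients. Everything else is forced by the functoriality of $\Phi$.
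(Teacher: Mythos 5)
Your proposal is correct and follows essentially the same route as the paper: the paper also treats item (1) as the Shapiro/adjunction identity, realized by the explicit mutually inverse maps $\iota(\sigma\otimes\lambda)=\sigma\otimes\lambda$ and $\eta(\sigma\otimes\lambda k)=\sigma k\otimes\lambda$ (your $\Phi$ and its inverse), and then verifies items (2)--(4) by the same direct generator computations, e.g.\ $(\iota_2\circ g_*\circ\eta_1)(\sigma\otimes\lambda k)=\sigma kg\otimes\lambda$. Your bookkeeping (choosing coset representatives in $K_{1,p}$, using $g^{-1}K_{1,p}g\subseteq K_{2,p}$ for well-definedness) matches what the paper leaves as "direct computation."
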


\begin{proof} The first assertion is a simple adjunction, but to make the verification of formulas easier for the reader we give explicit isomorphisms. So, define morphisms 
\[
\iota: C^{(\BM)}_{\A,\bullet}(K', \Z/p^n)\to C^{(\BM)}_{\A,\bullet}\(K, \Z/p^n[K_p/K'_p]\)
\] 
given by $\sigma \otimes \lambda \mapsto \sigma \otimes \lambda$ 
and 
\[
\eta: C^{(\BM)}_{\A,\bullet}\(K, \Z/p^n[K_p/K'_p]\)\to C^{(\BM)}_{\A,\bullet}(K', \Z/p^n)
\] 
given by $\sigma \otimes \lambda k \mapsto \sigma k \otimes \lambda$. 
It is not hard to check that $\iota$ and $\eta$ are well-defined, 
and that they are mutual inverses. 
The Hecke equivariance away from $p$ is immediate, and the Hecke-equivariance at $p$ 
can be checked by direct computation:
\[
(\iota_2\circ g_*\circ \eta_1)(\sigma \otimes \lambda k) = \sigma kg \otimes \lambda. 
\] 
Finally, the last assertion can also be checked by direct computation. 
\end{proof}

We now begin to describe ordinary completed homology in this language.
We fix the tame level $K^p$ and denote all locally symmetric spaces of this fixed tame 
level by $X_{K_p}$, where $K_p\subseteq G(\Z_p)$ is a compact open subgroup. 
Similarly, we will often omit the tame level when using the complexes
defined above; for example, we will write $C^{(\BM)}_{\AA,\bullet}(K_p,M)$
for $C^{(\BM)}_{\AA,\bullet}(K^p K_p, M)$.
Recall the assumption 
that $G_{\Z_p}$ is split.
Choose a split Borel subgroup $B\subset G_{\Z_p}$, 
with Levi decomposition $B = T\ltimes U$, where $T$ 
is the split torus and $U$ is the unipotent subgroup, with opposite unipotent subgroup $\overline{U}$. 
Let $T_0\defeq T(\Z_p)$, and, for $i\in \Z_{\geq 1}$, set $T_i\defeq \ker(T(\Z_p)\to T(\Z/p^i))$. Similarly, for $j\in \Z_{\geq 1}$, 
define $N_j\defeq \ker(U(\Z_p)\to U(\Z/p^j))$ and  $\overline{N}_j\defeq \ker(\overline{U}(\Z_p)\to \overline{U}(\Z/p^j))$. 
For $j\geq 0$ and $i\geq \max(j,1)$, set $K_{ij}\defeq \overline{N}_{i}T_{j}N_1\subset G(\Z_p)$. To simplify notation, we will also set $K_1 \defeq K_{10}$.

Define the completed homology / Borel--Moore homology at level $N_1$ by 
\[
\widetilde{H}^{(\BM)}_{*}(N_1)\defeq \varprojlim_{i,j,n} H^{(\BM)}_{*}(X_{K_{ij}}, \Z/p^n).
\]
This is equipped with a Hecke action of $[K^pg^pK^p]$ for $g^p\in G(\A_f^p)$ in the usual way. 
There is also a Hecke action at $p$. Let
\[
T^+\defeq \{t\in T(\Q_p)\mid t N_1 t^{-1} \subseteq N_1\}.  
\]
Also define 
\[
T^-\defeq \{t\in T(\Q_p)\mid t \overline{N}_1 t^{-1} \subseteq \overline{N}_1\}.  
\]
For $t\in T^+$, set $\overline{N}_t\defeq t^{-1}\overline{N}_1t$, and $N_t\defeq tN_1 t^{-1}$. 
For any $t\in T^+$, we have a Hecke action of $[N_1tN_1]$ on $\widetilde{H}^{(\BM)}_{*}(N_1)$ 
given by the composition 
\[
\widetilde{H}^{(\BM)}_{*}(N_1)\stackrel{\mathrm{tr}}{\to} \widetilde{H}^{(\BM)}_{*}(N_t)\stackrel{t_*}{\toisom} \widetilde{H}^{(\BM)}_{*}(N_1),
\]
where the first map is the natural trace map. Here $\tH^{(\BM)}_{*}(N_t)$ is defined as
\[
\widetilde{H}^{(\BM)}_{*}(N_t)\defeq \varprojlim_{i,j,n} H^{(\BM)}_{*}(X_{tK_{ij}t^{-1}}, \Z/p^n).
\]
The Hecke action above is compatible with the Hecke action 
of $[K_{ij}tK_{ij}]$ on each $H^{(\BM)}_{*}(X_{K_{ij}}, \Z/p^n\Z)$ as described in~\eqref{eq:hecke fin homology}. 

We wish to describe $\widetilde{H}^{(\BM)}_{*}(N_1)$ with its Hecke actions in terms of 
the adelic complexes defined above, with coefficients in a certain algebra of $\Z_p$-valued distributions. 
If $X$ is a profinite set, let $\mathscr{D}(X)$ denote the space of continuous $\Z_p$-valued distributions
on $X$; if $X$ is a profinite group then this carries a natural $\Zp$-algebra structure. In particular, we have 
\begin{equation}\label{eq:inverse limit}
\mathscr{D}(K_1/N_1) = \Z_p[\![K_1/N_1]\!] = \varprojlim_{i,j,n} \Z/p^n [K_1/K_{ij}]. 
\end{equation}

Set $K'_t\defeq K_1\cap tK_1t^{-1}$ and $K_t\defeq K_1\cap t^{-1}K_1t$. For $t\in T^+$, we have an isomorphism
\begin{equation}\label{eq:t-action}
\iota_{K_t,K_1}\circ t_*\circ \eta_{K_1,K'_t}: C^{(\BM)}_{\A,\bullet}(K_1,\mathscr{D}(K_1/N_t))\toisom C^{(\BM)}_{\A,\bullet}(K_1,\mathscr{D}(K_1/N_1)),
\end{equation}
that we describe explicitly as the composition of three isomorphisms. The first is an isomorphism 
\[
\eta_{K_1,K'_t}: C^{(\BM)}_{\A,\bullet}(K_1,\mathscr{D}(K_1/N_t))\toisom C^{(\BM)}_{\A,\bullet}(K'_t,\mathscr{D}(K'_t/N_t)).
\] 
Choose a set $S$ of coset representatives for $K_1/K'_t$. Any $\mu\in \mathscr{D}(K_1/N_t)$
can be written uniquely as $\sum_{s\in S}s\mu_s$ with $\mu_s\in \mathscr{D}(K'_t/N_t)$. Set 
\[
\eta_{K_1,K'_t}( \sigma \otimes \mu) \defeq \sum_{s\in S} \sigma s\otimes \mu_s.
\]
This is well-defined, independent of the choice of $S$ and it is an isomorphism since it has the inverse 
$\iota_{K'_t, K_1}(\sigma \otimes \mu) \defeq \sigma \otimes \mu$.
The second is the isomorphism 
\[
t_*: C^{(\BM)}_{\A,\bullet}(K'_t,\mathscr{D}(K'_t/N_t))\toisom C^{(\BM)}_{\A,\bullet}(K_t,\mathscr{D}(K_t/N_1))
\]
given by $\sigma \otimes \mu \mapsto \sigma t \otimes t^{-1}\mu t$. Finally,
the third is the isomorphism 
\[
\iota_{K_t, K_1}: C^{(\BM)}_{\A,\bullet}(K_t,\mathscr{D}(K_t/N_1))\toisom C^{(\BM)}_{\A,\bullet}(K_1,\mathscr{D}(K_1/N_1)).
\]
Note that $K_1/K'_t\simeq N_1/N_t$. We obtain 
\[
\iota_{K_t,K_1}\circ t_*\circ \eta_{K_1,K'_t}( \sigma \otimes \mu) = \sum_{s\in N_1/N_t} \sigma st \otimes t^{-1}\mu_s t. 
\]

\begin{thm}\label{thm:completed hom via distributions} There exist canonical isomorphisms
\[
\widetilde{H}^{(\BM)}_{*}(N_1)\simeq H_{*}\left(C^{(\BM)}_{\A,\bullet}(K_1,\mathscr{D}(K_1/N_1))\right).
\]
These are equivariant for the Hecke action of $[K^pg^pK^p]$ for all $g^p\in G(\A^p_f)$.
For $t\in T^+$, the Hecke action of $[N_1tN_1]$ on $\widetilde{H}^{(\BM)}_{*}(N_1)$ is induced 
from the composition 
\[
C^{(\BM)}_{\A,\bullet}(K_1,\mathscr{D}(K_1/N_1))\stackrel{\mathrm{tr}}{\to} C^{(\BM)}_{\A,\bullet}(K_1,\mathscr{D}(K_1/N_t))
\toisom C^{(\BM)}_{\A,\bullet}(K_1,\mathscr{D}(K_1/N_1)),
\]
where $\mathrm{tr}$ is induced from the natural trace map $\mathscr{D}(K_1/N_1)\to \mathscr{D}(K_1/N_t)$
and the isomorphism is given by $\iota_{K_t,K_1}\circ t_*\circ \eta_{K_1,K'_t}$. 
\end{thm}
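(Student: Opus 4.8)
The plan is to construct the stated isomorphism at finite level, pass to the inverse limit, and then match up the Hecke actions separately. For the isomorphism, fix a bounded complex $F^{(\BM)}_\bullet$ of finite free right $\Z[K_1]$-modules homotopy equivalent to $C^{(\BM)}_\bullet$ as in Remark~\ref{rem:fin complexes}, so $C^{(\BM)}_\bullet(K_1,M)=F^{(\BM)}_\bullet\otimes_{\Z[K_1]}M$. For all $i\ge 1$, $j\ge 0$, $n\ge 1$ we have $K_{ij}\subseteq K_1$, so Lemma~\ref{lem:fin level distribution}(1), Proposition~\ref{prop:fin level homology} and the homotopy equivalence of Remark~\ref{rem:fin complexes} give canonical isomorphisms $H^{(\BM)}_*(X_{K_{ij}},\Z/p^n)\cong H_*\bigl(C^{(\BM)}_\bullet(K_1,\Z/p^n[K_1/K_{ij}])\bigr)$. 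Using the explicit formulas for $\iota$ and $\eta$, one checks that the transition maps defining $\widetilde H^{(\BM)}_*(N_1)$ (push-forward along the covers $X_{K_{i'j'}}\to X_{K_{ij}}$, and reduction mod $p^n$) correspond on coefficients to the natural maps $\Z/p^{n'}[K_1/K_{i'j'}]\to\Z/p^n[K_1/K_{ij}]$. Each $K_1/K_{ij}$ is finite, so $\{F^{(\BM)}_\bullet\otimes_{\Z[K_1]}\Z/p^n[K_1/K_{ij}]\}$ is an inverse system of bounded complexes of finite abelian groups, hence Mittag--Leffler: $\varprojlim$ is exact on it and commutes with homology. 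Since $F^{(\BM)}_\bullet$ has finite free terms, $\varprojlim$ commutes with $F^{(\BM)}_\bullet\otimes_{\Z[K_1]}(-)$, and \eqref{eq:inverse limit} then gives $\widetilde H^{(\BM)}_*(N_1)\cong H_*\bigl(C^{(\BM)}_{\A,\bullet}(K_1,\mathscr D(K_1/N_1))\bigr)$.

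Hecke-equivariance away from $p$ is immediate: $[K^pg^pK^p]$ is assembled from trace and restriction maps in the tame variable together with the conjugation $g^p_*$, none of which touch the data at $p$, so Lemma~\ref{lem:fin level distribution}(2) gives the claim after passing to the limit. For the action at $p$, fix $t\in T^+$ and run the argument above with $N_1$ replaced by $N_t$ (here $\bigcap_{i,j}tK_{ij}t^{-1}=N_t$, and $tK_{ij}t^{-1}\subseteq K_1$ once $i$ is large, so the index set stays cofinal), obtaining $\widetilde H^{(\BM)}_*(N_t)\cong H_*\bigl(C^{(\BM)}_{\A,\bullet}(K_1,\mathscr D(K_1/N_t))\bigr)$. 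By construction $[N_1tN_1]$ is compatible with the operators $[K_{ij}tK_{ij}]$ on the $H^{(\BM)}_*(X_{K_{ij}},\Z/p^n)$, which by~\eqref{eq:hecke fin homology} are a trace map, then the conjugation $t_*$, then a push-forward. Translating each into the level-$K_1$ picture by Lemma~\ref{lem:fin level distribution}(3) and~(4): the trace becomes the map induced by the coefficient trace $\Z/p^n[K_1/K_{ij}]\to\Z/p^n[K_1/(K_{ij}\cap tK_{ij}t^{-1})]$, and the composite of $t_*$ with the push-forward becomes the finite-level form of $\iota\circ t_*\circ\eta$. Passing to the limit these become $\mathrm{tr}\colon\mathscr D(K_1/N_1)\to\mathscr D(K_1/N_t)$ and $\iota_{K_t,K_1}\circ t_*\circ\eta_{K_1,K'_t}$ respectively, the latter having the explicit effect $\sigma\otimes\mu\mapsto\sum_{s\in N_1/N_t}\sigma st\otimes t^{-1}\mu_s t$ recorded just before the theorem. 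This identifies $[N_1tN_1]$ with the asserted composition.

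The main obstacle will be this last step: verifying that, after the identifications of Lemma~\ref{lem:fin level distribution}, the trace, conjugation and push-forward constituents of the finite-level Hecke operator~\eqref{eq:hecke fin homology} really do correspond to the coefficient trace $\mathscr D(K_1/N_1)\to\mathscr D(K_1/N_t)$ and to $\iota_{K_t,K_1}\circ t_*\circ\eta_{K_1,K'_t}$. Concretely this amounts to chasing coset representatives for $K_1/K'_t\cong N_1/N_t$ through all three maps and checking that the inverse limit may be computed over a cofinal family of $p$-level subgroups adapted to $t$. By contrast, the construction of the isomorphism itself and the tame Hecke-equivariance are essentially formal once the Mittag--Leffler property is in place.
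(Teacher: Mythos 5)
Your argument is correct and follows essentially the same route as the paper: identify each finite-level homology group with the homology of the level-$K_1$ complex with coefficients $\Z/p^n[K_1/K_{ij}]$ via Lemma~\ref{lem:fin level distribution} and Proposition~\ref{prop:fin level homology}, pass to the inverse limit using the Borel--Serre complexes of Remark~\ref{rem:fin complexes} together with~\eqref{eq:inverse limit}, and deduce the Hecke compatibilities (tame and at $p$) from parts (2)--(4) of Lemma~\ref{lem:fin level distribution}. The only cosmetic difference is that you justify exactness of the inverse limit via Mittag--Leffler for bounded complexes of finite abelian groups, where the paper invokes exactness of inverse limits in the category of compact Hausdorff abelian groups; both work, and the coset-chasing step you flag as the main obstacle is exactly the content of Lemma~\ref{lem:fin level distribution}(3),(4), as you correctly use.
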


\begin{proof} We claim first that the natural map of complexes 
\[
C^{(\BM)}_{\A, \bullet}(K_1, \mathscr{D}(K_1/N_1))\to \varprojlim_{i,j,n}C^{(\BM)}_{\A,\bullet}(K_1, \Z/p^n[K_1/K_{ij}])
\]
is a homotopy equivalence. Up to homotopy equivalences\footnote{These can be chosen to be functorial in the coefficients, and compatible with the transition morphisms between different levels.}, 
we can replace the above complexes with the corresponding Borel--Serre complexes as in Remark~\ref{rem:fin complexes}, and we have a natural map 
\[
C^{(\BM)}_{\bullet}(K_1, \mathscr{D}(K_1/N_1))\to \varprojlim_{i,j,n}C^{(\BM)}_{\bullet}(K_1, \Z/p^n[K_1/K_{ij}]),
\]
which can be seen by inspection to be an isomorphism using~\eqref{eq:inverse limit}.

We now claim that the homology of the complex $\varprojlim_{i,j,n}C^{(\BM)}_{\A,\bullet}(K_1, \Z/p^n[K_1/K_{ij}])$ 
computes $\widetilde{H}^{(\BM)}_{*}(N_1)$. By combining Lemma~\ref{lem:fin level distribution} and 
Proposition~\ref{prop:fin level homology}, we have an isomorphism 
\[
H^{(\BM)}_*(X_{K_{ij}}, \Z/p^n\Z)\toisom H_*\(C^{(\BM)}_{\A,\bullet}(K_1, \Z/p^n[K_1/K_{ij}])\)
\]
for every $n\in\Z_{\geq 1}$ and at each finite level $K_{ij}$. To conclude, we replace each finite level 
complex by the corresponding Borel--Serre complex. We obtain complexes of
abelian compact Hausdorff groups with continuous differentials. We conclude by noting
that the category of abelian compact Hausdorff groups is abelian, and inverse limits exist 
and are exact in this category. 

The Hecke equivariance away from $p$ is clear. To prove that the isomorphism is equivariant for the action of $[N_1tN_1]$, it is enough to show
that it is equivariant for $\mathrm{tr}$ and $t_*$. The equivariance for $\mathrm{tr}$ follows
from part (4) of Lemma~\ref{lem:fin level distribution}. The equivariance for $t_*$ follows from 
parts (1) and (3) of Lemma~\ref{lem:fin level distribution}.  
\end{proof}

\subsection{The universal coefficient isomorphism at infinite level}
For $t\in T^+$, recall that $K_t = K_1\cap t^{-1}K_1t = \overline{N}_tT_0N_1$. We consider $\mathscr{D}(K_t/N_1)$ as a 
$\mathscr{D}(K_t)\otimes_{\Z_p}\mathscr{D}(T_0)$-module, where $K_t$ acts by multiplication on the left and 
$T_0$ acts by multiplication on the right. $\mathscr{D}(T_0)$ is a semi-local ring and is complete with respect to its $J$-adic topology, where we let $J$ denote its Jacobson radical. Define 
\[
\mathscr{C}(K_t/N_1)\defeq \Hom_{\mathscr{D}(T_0)}^{\mathrm{cont}}(\mathscr{D}(K_t/N_1), \mathscr{D}(T_0));
\]
this is a $\mathscr{D}(T_0)\otimes_{\Z_p}\mathscr{D}(K_t)$-module. Here $\mathscr{D}(K_t/N_1)$ carries the inverse limit topology from equation \ref{eq:inverse limit}, and we give $\mathscr{C}(K_t/N_1)$ the $J$-adic topology.

\begin{lemma}\label{lem:duality} We have a natural isomorphism 
\[
\mathrm{Hom}_{\mathscr{D}(T_0)}(\mathscr{C}(K_t/N_1), \mathscr{D}(T_0)) \simeq \mathscr{D}(K_t/N_1). 
\]
and, for all $i>0$, $\mathrm{Ext}^i_{\mathscr{D}(T_0)}(\mathscr{C}(K_t/N_1), \mathscr{D}(T_0)) = 0$. (We note that all $\mathscr{D}(T_0)$-linear homomorphisms are automatically 
continuous for $J$-adic topology.)  
\end{lemma}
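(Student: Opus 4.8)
The plan is to make the $\mathscr{D}(T_0)$-module structure of $\mathscr{D}(K_t/N_1)$ completely explicit, recognise $\mathscr{C}(K_t/N_1)$ as a $J$-adic completion of a \emph{free} $\mathscr{D}(T_0)$-module, and then compute $\RHom_{\mathscr{D}(T_0)}(\mathscr{C}(K_t/N_1),\mathscr{D}(T_0))$ by reducing modulo powers of the Jacobson radical $J$, where everything becomes a finite module over a finite ring. The first step is to exploit the Iwahori factorisation $K_t=\overline{N}_t\cdot T_0\cdot N_1$, i.e.\ the homeomorphism $\overline{N}_t\times T_0\times N_1\toisom K_t$. Since $T_0$ normalises $N_1$, the map $(\bar n,s)\mapsto \bar n s N_1$ identifies $K_t/N_1$, as a profinite set with right $T_0$-action, with $\overline{N}_t\times T_0$ where $T_0$ acts by right translation on the second factor; hence $\mathscr{D}(K_t/N_1)\cong \mathscr{D}(\overline{N}_t)\widehat{\otimes}_{\Zp}\mathscr{D}(T_0)$ as a $\mathscr{D}(T_0)$-module. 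Writing $\overline{N}_t=\varprojlim_k\overline{N}_t/M_k$ for a countable neighbourhood basis $M_k$ and choosing set-theoretic sections $\overline{N}_t/M_k\to\overline{N}_t/M_{k+1}$, we obtain $\mathscr{D}(K_t/N_1)=\varprojlim_k L_k$ with $L_k\defeq\mathscr{D}(T_0)[\overline{N}_t/M_k]$ finite free over $\mathscr{D}(T_0)$ and \emph{split surjective} transition maps; in particular $\mathscr{D}(K_t/N_1)\cong\prod_{i\in I}\mathscr{D}(T_0)$ noncanonically, for a countable set $I$.

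Next I would identify $\mathscr{C}(K_t/N_1)$. Since $\mathscr{D}(T_0)=\varprojlim_n\mathscr{D}(T_0)/J^n$ with each $\mathscr{D}(T_0)/J^n$ a \emph{finite} ring (the residue fields of $\mathscr{D}(T_0)$ are finite), a continuous $\mathscr{D}(T_0)$-linear map from the profinite module $\varprojlim_kL_k$ into the finite discrete module $\mathscr{D}(T_0)/J^n$ factors through some $L_k/J^n$, so
\[
\mathscr{C}(K_t/N_1)=\varprojlim_n\ \varinjlim_k\ \Hom_{\mathscr{D}(T_0)/J^n}\!\big(L_k/J^n,\ \mathscr{D}(T_0)/J^n\big)=\varprojlim_n\big(F/J^nF\big),
\]
where $F\defeq\varinjlim_kL_k^{\vee}$, with $L_k^{\vee}=\Hom_{\mathscr{D}(T_0)}(L_k,\mathscr{D}(T_0))$, is a \emph{free} $\mathscr{D}(T_0)$-module, being a filtered union of finite free modules along split injections. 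Thus $\mathscr{C}(K_t/N_1)=\widehat{F}$ is the $J$-adic completion of a free module (function-theoretically, $\mathscr{C}(K_t/N_1)\cong C(\overline{N}_t,\Zp)\widehat{\otimes}_{\Zp}\mathscr{D}(T_0)$). Two consequences: $\mathscr{C}(K_t/N_1)$ is $\mathscr{D}(T_0)$-flat, being the $J$-adic completion of a flat module over the Noetherian ring $\mathscr{D}(T_0)$; and $\mathscr{C}(K_t/N_1)/J^n\cong F/J^nF$ is free over $\mathscr{D}(T_0)/J^n$.

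The final step is formal. Because $\mathscr{D}(T_0)\toisom\RHom$-limit $\mathrm{R}\varprojlim_n\mathscr{D}(T_0)/J^n$ (the system is surjective, so $R^1\varprojlim=0$) and $\RHom_{\mathscr{D}(T_0)}(\mathscr{C}(K_t/N_1),-)$ commutes with $\mathrm{R}\varprojlim$, while flatness of $\mathscr{C}(K_t/N_1)$ gives $\mathscr{C}(K_t/N_1)\otimes^{\mathrm{L}}_{\mathscr{D}(T_0)}\mathscr{D}(T_0)/J^n=\mathscr{C}(K_t/N_1)/J^n$, base change yields
\[
\RHom_{\mathscr{D}(T_0)}\!\big(\mathscr{C}(K_t/N_1),\mathscr{D}(T_0)\big)=\mathrm{R}\varprojlim_n\ \Hom_{\mathscr{D}(T_0)/J^n}\!\big(\mathscr{C}(K_t/N_1)/J^n,\ \mathscr{D}(T_0)/J^n\big).
\]
Each term on the right is concentrated in degree $0$ (the module $\mathscr{C}(K_t/N_1)/J^n$ is free over $\mathscr{D}(T_0)/J^n$, hence projective) and equals $\prod_{i\in I}\mathscr{D}(T_0)/J^n=\mathscr{D}(K_t/N_1)/J^n$. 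The resulting system again has surjective transition maps, so $\mathrm{R}\varprojlim_n$ is just $\varprojlim_n$, concentrated in degree $0$, giving $\mathscr{D}(K_t/N_1)$ (which is $J$-adically complete). This proves both assertions; the parenthetical statement about automatic continuity then follows, since the identification $\Hom_{\mathscr{D}(T_0)}(\mathscr{C}(K_t/N_1),\mathscr{D}(T_0))=\prod_I\mathscr{D}(T_0)=\mathscr{D}(K_t/N_1)$ realises every $\mathscr{D}(T_0)$-linear homomorphism as the manifestly continuous pairing against an element of $\widehat{F}$.

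The computational core is thus entirely formal; the real work — and the only place where care is needed — is Steps 1 and 2: pinning down the Iwahori factorisation so as to exhibit $\mathscr{D}(K_t/N_1)$ as a pro-free $\mathscr{D}(T_0)$-module, and correctly identifying the continuous dual $\mathscr{C}(K_t/N_1)$ as the $J$-adic completion $\widehat{F}$ of a free module rather than as $F$ itself. (If one were content to know only that $\mathscr{C}(K_t/N_1)$ contains the free module $F$ with ``small'' quotient the lemma would still follow, but the argument above via reduction mod $J^n$ avoids having to analyse the quotient $\widehat{F}/F$ directly, which is where any genuine difficulty would otherwise lie.) One should also keep track of which topologies enter the definition of $\Hom^{\mathrm{cont}}$, but this is harmless precisely because the finiteness of the rings $\mathscr{D}(T_0)/J^n$ reduces every continuity question to the level of finite modules.
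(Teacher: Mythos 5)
Your argument is correct, and its structural core is the same as the paper's: both exploit the Iwahori factorisation to identify $K_t/N_1$ with $\overline{N}_t\times T_0$ as profinite right $T_0$-sets (the paper does this at each finite level, $K_t/\overline{N}_iT_jN_1\cong \overline{N}_t/\overline{N}_i\times T_0/T_j$), hence exhibit $\mathscr{D}(K_t/N_1)$ as an inverse limit of finite free $\mathscr{D}(T_0)$-modules, and both then recognise $\mathscr{C}(K_t/N_1)$ as the $J$-adic completion of a free $\mathscr{D}(T_0)$-module, whence flatness (the paper quotes the Stacks Project for ``completion of a flat module over a Noetherian ring is flat''). Where you genuinely diverge is the endgame: the paper deduces the $\Hom$-identification from reflexivity of the finite free modules $\mathscr{D}(K_t/\overline{N}_iT_jN_1)$ together with its explicit limit description of $\mathscr{C}(K_t/N_1)$, and gets the $\Ext$-vanishing by citing a theorem of Jensen on flat modules; you obtain both statements simultaneously from the derived computation $\RHom_{\mathscr{D}(T_0)}(\mathscr{C}(K_t/N_1),\mathscr{D}(T_0))\simeq \mathrm{R}\varprojlim_n\Hom_{\mathscr{D}(T_0)/J^n}\bigl(\mathscr{C}(K_t/N_1)/J^n,\mathscr{D}(T_0)/J^n\bigr)$ via derived base change, freeness of $\mathscr{C}(K_t/N_1)/J^n$, and Mittag--Leffler vanishing of $\mathrm{R}^1\varprojlim$. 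This is more self-contained (no appeal to Jensen), at the cost of two small points you should make explicit: (i) the identification $\mathscr{C}(K_t/N_1)/J^n\cong F/J^nF$, i.e.\ $J^n\widehat{F}=\ker(\widehat{F}\to F/J^nF)$, which is where completion and reduction mod $J^n$ get interchanged and which holds because $J^n$ is finitely generated ($\mathscr{D}(T_0)$ being Noetherian); and (ii) since your final identification passes through a chosen basis of $F$ (the noncanonical $\prod_I$), to obtain the \emph{natural} isomorphism asserted in the lemma you should check that the resulting map is the canonical evaluation map $\mathscr{D}(K_t/N_1)\to\Hom_{\mathscr{D}(T_0)}(\mathscr{C}(K_t/N_1),\mathscr{D}(T_0))$; this follows from biduality of the finite free modules $L_k$, which is exactly the reflexivity the paper invokes. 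Neither point is a gap, just bookkeeping to be recorded.
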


\begin{proof} From the definition, we have 
\begin{equation}\label{eq:description of C}
\mathscr{C}(K_t/N_1)\cong
\varprojlim_{j,k}\varinjlim_{i}\mathrm{Hom}_{\mathscr{D}(T_0/T_j)}
\left(\mathscr{D}(K_t/\overline{N}_iT_jN_1),\mathscr{D}(T_0/T_j)\right)\otimes_{\Z_p}\Z/p^k. 
\end{equation}
Since $T_0$ normalizes $\overline{N}_i$, there is an isomorphism of profinite sets with $T_0$-action
\[
\overline{N}_t / \overline{N}_i \times T_0 / T_j \toisom K_t / \overline{N}_i T_j N_1 
\]
where $T_0$ acts trivially on $\overline{N}_t / \overline{N}_i$.
Consequently, as a $\mathscr{D}(T_0)$-module,
\[
\mathscr{D}(K_t/\overline{N}_i T_j N_1) \cong \mathscr{D}(\overline{N}_t / \overline{N}_i) \otimes_{\Z_p} \mathscr{D}(T_0/T_j) \,, 
 \]
where $\mathscr{D}(T_0)$ acts trivially on $\mathscr{D}(\overline{N}_t / \overline{N}_i)$.
In particular, $\mathscr{D}(K_t/\overline{N}_i T_j N_1)$ is a finite free $\mathscr{D}(T_0/T_j)$-module, hence 
it is also reflexive. This, together with the explicit description~\eqref{eq:description of C} 
proves the first claim of the lemma. 

For the second claim, observe that $\mathscr{C}(K_t/N_1)$ is 
the $J$-adic completion
of $\mathscr{D}(T_0)$ of the free $\mathscr{D}(T_0)$-module
\[
\varinjlim_i \varprojlim_{j,k} \Hom_{\mathscr{D}(T_0)}(\mathscr{D}(K_t/\overline{N}_i T_j N_1),\mathscr{D}(T_0/T_j)) \otimes \Z/p^k \,. 
\]
Then by \cite[Tag 06LE]{stacks-project}, $\mathscr{C}(K_t/N_1)$ is a flat $\mathscr{D}(T_0)$-module.
Hence, by \cite[Theorem 1]{jensen}, $\Ext^i_{\mathscr{D}(T_0)}(\mathscr{C}(K_t/N_1),\mathscr{D}(T_0)) = 0$ for $i>0$.
\end{proof}

\begin{cor}\label{cor:inf universal coefficient} We have a natural isomorphism 
\[
\mathrm{RHom}_{\mathscr{D}(T_0)}\(C_{\A,\bullet}(K_t, \mathscr{C}(K_t/N_1)),\mathscr{D}(T_0)\)
\simeq C^{\bullet}_{\A}(K_t,\mathscr{D}(K_t/N_1)). 
\]
\end{cor}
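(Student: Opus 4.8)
The plan is to read this off directly from the universal coefficient isomorphism \eqref{eq:universal coefficient isom} and the duality statement of Lemma \ref{lem:duality}, so that no genuine computation is required. First I would apply \eqref{eq:universal coefficient isom} with $R = \mathscr{D}(T_0)$, with $K = K_t$, with $M = \mathscr{C}(K_t/N_1)$ (regarded as a left $\Z[K_t]$-module whose action commutes with that of $\mathscr{D}(T_0)$), and with $N = \mathscr{D}(T_0)$. This immediately yields a canonical isomorphism
\[
\mathrm{RHom}_{\mathscr{D}(T_0)}\left(C_{\A,\bullet}(K_t,\mathscr{C}(K_t/N_1)),\mathscr{D}(T_0)\right)\ \simeq\ C^{\bullet}_{\A}\left(K_t,\ \mathrm{RHom}_{\mathscr{D}(T_0)}(\mathscr{C}(K_t/N_1),\mathscr{D}(T_0))\right).
\]

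Next I would identify the coefficient complex appearing on the right. Lemma \ref{lem:duality} provides exactly the isomorphism $\mathrm{Hom}_{\mathscr{D}(T_0)}(\mathscr{C}(K_t/N_1),\mathscr{D}(T_0)) \cong \mathscr{D}(K_t/N_1)$ together with the vanishing $\mathrm{Ext}^i_{\mathscr{D}(T_0)}(\mathscr{C}(K_t/N_1),\mathscr{D}(T_0)) = 0$ for all $i>0$, so that $\mathrm{RHom}_{\mathscr{D}(T_0)}(\mathscr{C}(K_t/N_1),\mathscr{D}(T_0))$ is quasi-isomorphic to $\mathscr{D}(K_t/N_1)$ placed in degree $0$. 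Since $C_\bullet$ is termwise free as a $\Z[K_t]$-module (equivalently, one may first replace it by the homotopy-equivalent finite free complex $F_\bullet$ of Remark \ref{rem:fin complexes}), the functor $C^{\bullet}_{\A}(K_t,-) = \mathrm{Hom}_{\Z[K_t]}(C_\bullet,-)$ carries this quasi-isomorphism of coefficient complexes to a quasi-isomorphism. Combining the two steps gives $\mathrm{RHom}_{\mathscr{D}(T_0)}(C_{\A,\bullet}(K_t,\mathscr{C}(K_t/N_1)),\mathscr{D}(T_0)) \simeq C^{\bullet}_{\A}(K_t,\mathscr{D}(K_t/N_1))$, as claimed.

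The hard part here is not mathematical but a matter of conventions, and that is where I would be most careful. One has to match up the left- and right-$\Z[K_t]$-module structures through \eqref{eq:universal coefficient isom} --- which naturally produces a right $\Z[K_t]$-module $\mathrm{RHom}_R(M,N)$ --- and through the biduality isomorphism of Lemma \ref{lem:duality}, which one must observe is $K_t$-equivariant by naturality, together with the standard conversion of left modules into right modules by $g \mapsto g^{-1}$. One also has to note that it is legitimate to use the purely algebraic $\mathrm{RHom}_{\mathscr{D}(T_0)}$ rather than a topologized variant: this is precisely the content of the parenthetical remark in Lemma \ref{lem:duality} that every $\mathscr{D}(T_0)$-linear homomorphism out of $\mathscr{C}(K_t/N_1)$ is automatically continuous for the $J$-adic topology. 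Finally, \eqref{eq:universal coefficient isom} is being invoked in its derived form, which is justified because $\mathscr{C}(K_t/N_1)$ is flat over $\mathscr{D}(T_0)$ (shown in the proof of Lemma \ref{lem:duality}) and $C_\bullet$ is termwise free over $\Z[K_t]$, so that both sides of \eqref{eq:universal coefficient isom} are already computed by the naive (underived) complexes.
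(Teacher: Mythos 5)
Your proposal is correct and is exactly the paper's argument: the proof given there is simply to combine the adjunction isomorphism \eqref{eq:universal coefficient isom} with Lemma \ref{lem:duality}, which is what you do, with the additional (sound) bookkeeping about module structures, flatness and the reduction to the Borel--Serre complex $F_\bullet$ spelled out rather than left implicit.
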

\begin{proof} This follows from combining the isomorphism~\eqref{eq:universal coefficient isom} with
Lemma~\ref{lem:duality}. 
\end{proof}

\subsection{Poincar\'e duality}
We start by recalling Poincar\'e duality between Borel--Moore homology and cohomology; recall that $X_{K}^{\BS}$ is homeomorphic
to a smooth compact orientable manifold with boundary. 

\begin{lemma} \label{dual decompositions}
Let $K \subseteq G(\AA_f)$ be a compact open subgroup.
There are homotopy equivalences of complexes of $\Z[K]$-modules
\[ F_{\bullet}^{\BM}[D] \simeq \Hom_{\Z[K]}(F_{\bullet},\Z[K]) \]
\[ F_{\bullet}[D] \simeq \Hom_{\Z[K]}(F_{\bullet}^{\BM},\Z[K]) \,. \]
On the right hand sides, $F_\bullet^{(\BM)}$ is viewed as a left $K$-module by inverting the natural right $K$-module structure.
\end{lemma}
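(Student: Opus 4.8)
The plan is to derive both homotopy equivalences from Poincar\'e--Lefschetz duality for the compact orientable manifold-with-boundary $X_K^{\BS}$, transported to the level of the finite free $\Z[K]$-complexes $F_\bullet$ and $F_\bullet^{\BM}$ of Remark~\ref{rem:fin complexes}. Recall that $\frakX^{\BS}\to X_K^{\BS}$ is a $K$-cover, so $F_\bullet$ (resp.\ $F_\bullet^{\BM}$) is a bounded complex of finite free $\Z[K]$-modules computing, after $\otimes_{\Z[K]}\Z$, the homology (resp.\ Borel--Moore homology) of $X_K^{\BS}$, and after $\Hom_{\Z[K]}(-,\Z)$, the cohomology (resp.\ compactly supported cohomology). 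The first thing I would do is fix a fundamental class: since $X_K^{\BS}$ is a compact orientable $D$-manifold with boundary $\partial X_K^{\BS}$, there is a relative fundamental class $[X_K^{\BS},\partial X_K^{\BS}]\in H_D(X_K^{\BS},\partial X_K^{\BS};\Z)=H_D^{\BM}(X_K;\Z)$, and cap product with it induces the classical duality isomorphisms $H^i(X_K^{\BS})\xrightarrow{\sim}H_{D-i}^{\BM}(X_K)$ and $H^i_c(X_K)\xrightarrow{\sim}H_{D-i}(X_K)$.

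The key step is to upgrade these isomorphisms on (co)homology to a chain-level statement over $\Z[K]$. The clean way to do this is to work on the cover: choose a $K$-equivariant triangulation of $\frakX^{\BS}$ (pulled back from one of $X_K^{\BS}$ as in Remark~\ref{rem:fin complexes}) together with its dual cell decomposition, which is again $K$-equivariant because the group acts by simplicial automorphisms permuting simplices freely. The simplicial chain complex of the triangulation is $F_\bullet$ (resp.\ $F_\bullet^{\BM}$ for the relative version rel.\ boundary), and the dual-cell chain complex is naturally identified, up to a degree shift by $D$ and reindexing $i\mapsto D-i$, with the $\Z[K]$-linear dual of the other complex: the dual cell of a $k$-simplex is a $(D-k)$-cell, incidence numbers match up to sign, and cells that meet the boundary get handled by the relative/Borel--Moore bookkeeping exactly as in the cone construction defining $F_\bullet^{\BM}$. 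This is precisely the combinatorial incarnation of Poincar\'e--Lefschetz duality, and it is $K$-equivariant by construction. Passing from "dual cell complex" back to "simplicial complex of a subdivision" is a homotopy equivalence of finite free $\Z[K]$-complexes (both compute the same equivariant homology and are complexes of projectives, so a quasi-isomorphism between them is a homotopy equivalence). Concatenating, one gets $F_\bullet^{\BM}[D]\simeq\Hom_{\Z[K]}(F_\bullet,\Z[K])$ and, swapping the roles of the triangulation and its dual, $F_\bullet[D]\simeq\Hom_{\Z[K]}(F_\bullet^{\BM},\Z[K])$; the two statements are formally interchanged by taking $\Z[K]$-linear duals of bounded complexes of finite free modules, so it suffices to prove one.

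An alternative, slightly softer route avoids explicit dual cells: both sides of the first asserted equivalence are bounded complexes of finite free $\Z[K]$-modules, so a map between them is a homotopy equivalence iff it is a quasi-isomorphism, and that can be checked after $-\otimes_{\Z[K]}^{\bL}M$ for $M$ ranging over enough $\Z[K]$-modules — e.g.\ it suffices to check after $-\otimes_{\Z[K]}\Z[K/K']$ for all open normal $K'\subseteq K$, equivalently to check that the induced maps $H_\bullet^{\BM}(X_{K'};\Z)[D]\to H^\bullet(X_{K'};\Z)$ are isomorphisms for all such $K'$, which is classical Poincar\'e--Lefschetz duality for the finite cover $X_{K'}^{\BS}$ together with compatibility of the fundamental classes under the covering (the cover restricts the fundamental class of the base to that of the total space). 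The genuine content one must still supply is a $K$-equivariant chain map realizing cap product with the fundamental class at the level of $F_\bullet$; this is standard (an equivariant diagonal approximation / Alexander--Whitney-type map on the cover, composed with evaluation against a cycle representing $[\frakX^{\BS},\partial\frakX^{\BS}]$), but writing it down carefully and checking the boundary/Borel--Moore signs is where the real work lies.

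The main obstacle, then, is purely bookkeeping: making the Borel--Moore complex $F_\bullet^{\BM}$ (defined as the cone of $\partial F_\bullet\to F_\bullet$) match the relative simplicial chain complex of the triangulated pair $(X_K^{\BS},\partial X_K^{\BS})$ on the nose, and tracking the degree shift, the reindexing $i\mapsto D-i$, and the signs through the dual-cell identification, all while keeping everything strictly $\Z[K]$-equivariant. Orientability and freeness of the $K$-action (neatness of $K$) are what make the equivariant fundamental class exist and make the dual cell decomposition behave; once those are invoked, no further input beyond classical manifold topology is needed. I would present the dual-cell argument as the main proof and mention the descent argument as a sanity check, or vice versa depending on how much detail the paper wants to carry.
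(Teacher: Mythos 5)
Your main argument is essentially the paper's own proof: pull back a triangulation of $X_K^{\BS}$ (with the boundary a subcomplex) and its dual cell decomposition to $\frakX^{\BS}$, observe the resulting perfect $\Z[K]$-valued pairing between simplices and dual cells gives a chain isomorphism $F_\bullet[D]\cong\Hom_{\Z[K]}(F_\bullet^\vee,\Z[K])$ with the relative dual-cell complex $F_\bullet^\vee$, and then identify $F_\bullet^\vee$ with $F_\bullet^{\BM}$ up to homotopy since both are bounded complexes of finite free $\Z[K]$-modules arising from $K$-equivariant cell structures on the same manifold with boundary. Your secondary ``softer'' route is, as you note yourself, incomplete without an explicit equivariant cap-product chain map, but since you present the dual-cell argument as the main proof, the proposal is correct and coincides with the paper's approach.
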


\begin{proof}
We sketch a proof of the second quasi-isomorphism, the proof of the first is completely analogous. As mentioned in Remark~\ref{rem:fin complexes},
the complex $F_{\bullet}$ comes from a combinatorial triangulation
$T$ of $X_K^{\BS}$, which can even be chosen such that $\partial X_K^{\BS}$ is a simplicial subcomplex.
For such a $T$, we can construct the dual $T^\vee$ of this triangulation (this construction and the remaining assertions in this paragraph seem to be well known in topology, see for example~\cite[Ch.~14]{turaev}), which is a CW decomposition of $X_K^{\BS}$. Let $S_\bullet(T)$ denote the simplical homology complex attached to $T$ and let $S_\bullet(T^\vee, T^\vee_{\partial})$ denote the relative cellular homology complex of $T^\vee$ with respect to the boundary $T^\vee_{\partial}$ (which is a sub-CW complex). There are natural perfect pairings
\[
S_{D-k}(T) \times S_{k}(T^\vee, T^\vee_{\partial}) \to \Z
\]
where a simplex pairs to $1$ with its dual cell and to $0$ with the other cells. The induced map $S_\bullet(T)[D] \to \Hom_\Z(S_{\bullet}(T^\vee, T^\vee_{\partial}),\Z)$ is an isomorphism of complexes.

\medskip

Now pull back $T^\vee$ to a $K$-equivariant CW decomposition of $\frakX^{\BS}$, and let $F_\bullet^\vee$ denote the corresponding relative cellular homology complex with respect to the boundary.  Note that the pullback of $T^\vee$ is the dual cell decomposition of the pullback of $T$. Define a pairing
\[
F_{D-k} \times F_{k}^\vee \to \Z[K]
\]
by declaring that a simplex $\sigma$ in $F_{D-k}$ pair to $0$ with a cell $\tau$ in $F_{k}^\vee$ unless $\tau = \sigma^\vee k$ for some $k\in K$, in which case we pair them to $k^{-1}$. From the definitions this pairing induces a map $F_{D-k} \to \Hom_{\Z[K]}(F_{k}^\vee, \Z[K])$, where we view $F_\bullet^\vee$ as a left $K$-module by inverting the natural right $K$-module structure. Using that $S_\bullet(T)[D] \to \Hom_\Z(S_{\bullet}(T^\vee, T^\vee_{\partial}),\Z)$ is an isomorphism, one sees that the maps above form a chain isomorphism $F_\bullet[D] \to \Hom_{\Z[K]}(F_{\bullet}^\vee,\Z[K])$ of complexes of right $K$-modules. The proof is then finished by noting that $F_\bullet^\vee$ is chain homotopic to $F_\bullet^{\BM}$, since they come from $K$-equivariant CW decompositions of the same manifold with boundary.
\end{proof}

\begin{cor}\label{cor:poincare duality}
For any compact open $K \subset G(\A_f)$ and any left $K$-module $M$ (which we also view as a
right $K$-module by inverting the left $K$-module structure), there is a natural quasi-isomorphism
\[ C_{\A,\bullet}^{\BM}(K,M)[D] \simeq C_{\A}^{\bullet}(K,M) \,. \]
\end{cor}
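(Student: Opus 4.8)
The plan is to carry out the whole argument at the level of the finite Borel--Serre complexes $F_\bullet$ and $F^{\BM}_\bullet$ of Remark~\ref{rem:fin complexes}, where Lemma~\ref{dual decompositions} already contains all the topological content, and then to transport the resulting identification back to the adelic complexes and feed in the coefficient module $M$. The only real work is the bookkeeping of left versus right $\Z[K]$-module structures.

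First I would assemble the following chain. Since $C^{\BM}_{\A,\bullet}(K,M)$ is homotopy equivalent to $C^{\BM}_\bullet(K,M) = F^{\BM}_\bullet \otimes_{\Z[K]} M$ (with $M$ a left $\Z[K]$-module), and the shift commutes with $-\otimes_{\Z[K]}M$, we get $C^{\BM}_{\A,\bullet}(K,M)[D] \simeq F^{\BM}_\bullet[D] \otimes_{\Z[K]} M$. Now I would apply the additive functor $-\otimes_{\Z[K]}M$ to the homotopy equivalence of complexes of right $\Z[K]$-modules $F^{\BM}_\bullet[D] \simeq \Hom_{\Z[K]}(F_\bullet,\Z[K])$ coming from the first half of Lemma~\ref{dual decompositions}, where $F_\bullet$ is regarded as a left $\Z[K]$-module via the anti-involution $g\mapsto g^{-1}$ and $\Hom_{\Z[K]}(F_\bullet,\Z[K])$ carries the right $\Z[K]$-structure coming from right multiplication on $\Z[K]$. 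Since additive functors preserve homotopy equivalences, this yields
\[
C^{\BM}_{\A,\bullet}(K,M)[D] \simeq \Hom_{\Z[K]}(F_\bullet,\Z[K]) \otimes_{\Z[K]} M .
\]

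Next I would invoke the purely algebraic fact that the natural map $\Hom_R(P,R)\otimes_R M \to \Hom_R(P,M)$ is an isomorphism whenever $P$ is a bounded complex of finitely generated projective $R$-modules; here $R=\Z[K]$ and $P=F_\bullet$ viewed as a complex of finite free \emph{left} $\Z[K]$-modules, finiteness and freeness being exactly what Remark~\ref{rem:fin complexes} provides (and exactly why one cannot argue directly with the singular chain complex $C_\bullet$). This gives $\Hom_{\Z[K]}(F_\bullet,\Z[K]) \otimes_{\Z[K]} M \cong \Hom_{\Z[K]}(F_\bullet,M)$, a $\Hom$ of left $\Z[K]$-modules. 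Undoing the anti-involution identifies this with $\Hom_{\Z[K]}(F_\bullet,M^{r})$, the $\Hom$ of the underlying right $\Z[K]$-modules, where $M^{r}$ denotes $M$ regarded as a right module by inverting its action; this is precisely $C^\bullet(K,M)$ in the notation used for the corollary, and $C^\bullet(K,M) \simeq C^\bullet_\A(K,M)$ because $F_\bullet \simeq C_\bullet$ as complexes of right $\Z[K]$-modules and $\Hom(-,M^{r})$ preserves homotopy equivalences. Composing the displayed quasi-isomorphism with these identifications gives the claim. Naturality in $M$ is immediate, since every step is either a fixed homotopy equivalence not involving $M$ or a natural isomorphism in $M$; choosing the triangulations compatibly as in the footnote to Remark~\ref{rem:fin complexes} also makes the quasi-isomorphism compatible with the transition maps as $K$ shrinks.

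I do not expect a genuine obstacle here. The one point needing attention is the consistent tracking of handedness through the shift, the internal $\Hom$, the tensor product, and the inversion $g\mapsto g^{-1}$ used to pass between left and right $\Z[K]$-modules, together with the observation that the finiteness hypothesis in $\Hom_R(P,R)\otimes_R M\cong\Hom_R(P,M)$ is supplied by the Borel--Serre complexes $F^{(\BM)}_\bullet$ rather than by the adelic singular complexes $C^{(\BM)}_\bullet$.
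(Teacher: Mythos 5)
Your proposal is correct and follows essentially the same route as the paper: pass to the Borel--Serre complexes $F^{(\BM)}_\bullet$, apply the first homotopy equivalence of Lemma~\ref{dual decompositions}, and conclude via the isomorphism $\Hom_{\Z[K]}(F,\Z[K])\otimes_{\Z[K]}M\cong\Hom_{\Z[K]}(F,M)$ for finite free modules. The extra care you take with the left/right module bookkeeping is exactly the content the paper leaves implicit.
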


\begin{proof}
Recall that we have homotopy equivalences 
\[ C_{\A,\bullet}^{(\BM)}(K,M) \simeq F^{(\BM)}_{\bullet} \otimes_{\Z[K]} M. \]
Then the result follows from Lemma
\ref{dual decompositions} along with the fact that $F_{\bullet}$,
$F_{\bullet}^{\BM}$ are complexes of finite free $\Z[K]$-modules and, for any finite free $\Z[K]$-module $F$ and any
$\Z[K]$-module $N$,
$\Hom_{\Z[K]}(F,N) \cong \Hom_{\Z[K]}(F,\Z[K]) \otimes_{\Z[K]} N$.
\end{proof}

Let $w\in G(\Z_p)$ be a representative of the longest element of the Weyl group. 
We have an involution $t\mapsto w^{-1}t^{-1}w$ of $T^+$. Let $\tau: K_1\to T_0$ 
be the map that sends $\bar{n}tn\mapsto t$ for $\bar{n}\in \overline{N}_1$,
$t\in T_0$ and $n\in N_1$. Note that $\tau$ is not a homomorphism, but it satisfies $\tau(\ol{b}kb) = \tau(\ol{b})\tau(k)\tau(b)$ for any $\ol{b}\in \ol{N}_1T_0$, $k\in K_1$ and $b\in T_0N_1$. We consider the pairing
\[
\mathscr{D}(K_1/N_1)\times \mathscr{D}(K_1/N_1)\to 
\mathscr{D}(T_0), \langle k_1N_1,k_2N_2\rangle\defeq \tau(wk_2^{-1}w^{-1}k_1), \forall k_1,k_2\in K_1
\]
where the $K_1$ acts on the LHS by left multiplication and on the RHS
by left multiplication by the inverse. This induces a morphism of 
left $K_1$-modules 
\[
\kappa: \mathscr{D}(K_1/N_1)\to (w^{-1})_*\mathscr{C}(K_1/N_1),
\] 
where $(w^{-1})_*\mathscr{C}(K_1/N_1)$ denotes $\mathscr{C}(K_1/N_1)$
with the $K_1$ action twisted such that $k\in K_1$ acts by $wkw^{-1}$.
In turn, $\kappa$ induces a morphism of complexes
\[
\kappa_*: C_{\A,\bullet}(K_1, \mathscr{D}(K_1/N_1))\to C_{\A,\bullet}(K_1, (w^{-1})_*\mathscr{C}(K_1/N_1))\simeq 
C_{\A,\bullet}(K_1, \mathscr{C}(K_1/N_1))
\]

We now consider the composition of morphisms
\[
\pi_w:C^{\BM}_{\A,\bullet}(K_1, \mathscr{D}(K_1/N_1))[D]\toisom  C^{\bullet}_{\A}(K_1,\mathscr{D}(K_1/N_1)) \toisom 
\]
\[
\mathrm{RHom}_{\mathscr{D}(T_0)}\left(C_{\A,\bullet}(K_1, \mathscr{C}(K_1/N_1)),\mathscr{D}(T_0)\right) \to
\]
\[
\mathrm{RHom}_{\mathscr{D}(T_0)}\left(C_{\A,\bullet}(K_1, \mathscr{D}(K_1/N_1)),\mathscr{D}(T_0)\right)
\]
The first morphism is the Poincar\'e duality isomorphism of Corollary~\ref{cor:poincare duality}, the second is the isomorphism in Corollary~\ref{cor:inf universal coefficient}
in the case $t=1$, and the third is the morphism induced by precomposition with $\kappa_*$. 

\begin{prop}\label{prop:Hecke action}
For any $t\in T^+$, the morphism 
\[
\pi_w: C^{\BM}_{\A,\bullet}(K_1, \mathscr{D}(K_1/N_1))[D]\to \mathrm{RHom}_{\mathscr{D}(T_0)}\left(C_{\A,\bullet}(K_1, \mathscr{D}(K_1/N_1)),\mathscr{D}(T_0)\right)
\] 
is equivariant for the Hecke action of $[N_1tN_1]$ on 
$\mathrm{RHom}_{\mathscr{D}(T_0)}\left(C_{\A,\bullet}(K_1, \mathscr{D}(K_1/N_1)),\mathscr{D}(T_0)\right)$ and
the Hecke action of $[N_1 w^{-1}t^{-1}w N_1]$ on 
$C^{\BM}_{\A,\bullet}(K_1, \mathscr{D}(K_1/N_1))[D]$. 

The morphism $\pi_w$ is also equivariant for the Hecke action of $[K^p(g^p)^{-1}K^p]$ on 
the LHS and the Hecke action of  $[K^pg^pK^p]$ on the RHS. 
\end{prop}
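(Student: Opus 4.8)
The plan is to verify the asserted equivariances one morphism at a time, following the three-step factorisation of $\pi_w$: the Poincar\'e duality isomorphism of Corollary~\ref{cor:poincare duality}, the universal coefficient isomorphism of Corollary~\ref{cor:inf universal coefficient} (in the case $t=1$), and precomposition with $\kappa_*$. For each of the Hecke operators in play --- $[N_1tN_1]$ with $t\in T^+$, and $[K^pg^pK^p]$ with $g^p\in G(\A_f^p)$ --- I will trace the corresponding operator through these three maps, recording at each stage which double coset it becomes; the claim amounts to saying that the net effect is the passage $t\mapsto w^{-1}t^{-1}w$, respectively $g^p\mapsto (g^p)^{-1}$.

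For the tame operators $[K^pg^pK^p]$ this is short. Precomposition with $\kappa_*$ only affects the coefficient module at $p$, so it commutes with the away-from-$p$ Hecke action on the nose. The universal coefficient isomorphism \eqref{eq:universal coefficient isom}, hence Corollary~\ref{cor:inf universal coefficient}, is equivariant for $[K^pg^pK^p]$, as recorded in the text after \eqref{eq:universal coefficient isom}. Thus the transposition $g^p\mapsto (g^p)^{-1}$ must, and does, originate entirely from the Poincar\'e duality step: Corollary~\ref{cor:poincare duality} interchanges Borel--Moore homology and cohomology, and hence interchanges a Hecke correspondence with its transpose; one reads this off from Lemma~\ref{dual decompositions} by combining the duality quasi-isomorphisms there with the explicit formulas \eqref{eq:hecke fin homology} and \eqref{eq:hecke fin cohomology}, noting that the trace map in one is matched with the ``restriction'' map in the other (and vice versa) under the pairing of cells with their duals.

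The case $t\in T^+$ is the substantive one, because the relevant Hecke action is not given simply by conjugation by a group element but by the composite of a trace map on coefficient modules $\mathscr{D}(K_1/N_1)\to\mathscr{D}(K_1/N_t)$ with the isomorphism $\iota_{K_t,K_1}\circ t_*\circ\eta_{K_1,K'_t}$ (Theorem~\ref{thm:completed hom via distributions}), and dually on the $\mathrm{RHom}$ side. I would first reduce to finite level: all the complexes in sight are, up to functorial homotopy equivalence, inverse limits of their $K_{ij}$-level analogues, the Poincar\'e duality and universal coefficient isomorphisms are compatible with the transition maps, and $\kappa$ is visibly a limit of its finite-level incarnations, so it suffices to check the identity at each finite level $K_{ij}$, where everything becomes an explicit computation with simplicial and cellular chains. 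On the Borel--Moore side, the trace part of $[N_1 w^{-1}t^{-1}w N_1]$ matches, under $\kappa$, the dual of the trace part of $[N_1tN_1]$; this uses only that $\kappa$ comes from a pairing that is ``$\mathrm{tr}$-adjoint''. The core computation concerns the conjugation part: under the pairing $\langle k_1N_1,k_2N_1\rangle=\tau(wk_2^{-1}w^{-1}k_1)$, the operator $t_*\colon \sigma\otimes\mu\mapsto\sigma t\otimes t^{-1}\mu t$ on the distribution/cohomology side is intertwined with the operator $(w^{-1}t^{-1}w)_*$ on the Borel--Moore side. The two conjugations --- $t\mapsto t^{-1}$ (the adjoint/transpose produced by Poincar\'e duality together with \eqref{eq:universal coefficient isom}) and $t^{-1}\mapsto w^{-1}t^{-1}w$ (the $w$-twist built into the definition of $\kappa$ and of $(w^{-1})_*\mathscr{C}(K_1/N_1)$) --- then compose to give exactly $w^{-1}t^{-1}w$. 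One also checks compatibility of $\kappa_*$ with the $\iota$, $\eta$ isomorphisms, which is formal since these only reshuffle coset representatives while $\kappa$ is defined level-independently.

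I expect the main obstacle to be the compatibility of the Poincar\'e duality isomorphism of Corollary~\ref{cor:poincare duality} with the $p$-adic Hecke operators $[N_1tN_1]$. Unlike the away-from-$p$ operators, these genuinely move between different finite levels, i.e.\ different triangulations of $X^{\BS}_{K_{ij}}$, so one must know that the dual cell decomposition $T^\vee$ constructed in Lemma~\ref{dual decompositions} can be made compatible with the level-lowering maps and with the $t$-translation isomorphisms --- equivalently, that the quasi-isomorphisms of Lemma~\ref{dual decompositions} can be chosen functorially along the tower. Granting this, the remaining work is bookkeeping: matching trace maps with restriction maps under the relevant pairings, and tracking the two sources of conjugation and inversion (the $w$-twist in $\kappa$ and the transpose coming from Poincar\'e duality) so that they assemble into the single operation $t\mapsto w^{-1}t^{-1}w$.
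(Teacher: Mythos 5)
Your overall skeleton matches the paper's: translate the two Hecke actions through the Poincar\'e duality and universal-coefficient isomorphisms and check that $\kappa_*$ intertwines them, with the tame equivariance coming from the standard transposition of double cosets under duality (your away-from-$p$ paragraph is exactly the paper's one-line treatment). The gap is in the case $t\in T^+$, where you declare the decisive verifications to be ``bookkeeping'' that is ``formal''; this is precisely where the content lies, and your two reductions do not stand as stated. First, the compatibility of $\kappa$ with the trace map is not a formal ``$\mathrm{tr}$-adjointness'': after applying $\mathrm{tr}\colon \mathscr{D}(K_1/N_1)\to\mathscr{D}(K_1/N_t)$, the induced pairing (see \eqref{eq:pairing}) pairs a coset $k_1N_t$ nontrivially with $k_2N_1$ only when $wk_2^{-1}w^{-1}k_1\in\ol{N}_1T_0N_t$, and one must use the Iwahori-type decomposition to see that exactly one of the $[N_1:N_t]$ translates of $k_1N_1$ produced by $\mathrm{tr}$ survives, with the correct value of $\tau$; this counting argument is the heart of Lemma~\ref{lem:commutative diagram 1}, and asserting adjointness is assuming it. Second, the compatibility of $\kappa_*$ with $\iota,\eta,t_*$ is not ``formal since these only reshuffle coset representatives'': at the intermediate stage the complexes carry coefficients $\mathscr{D}(K_1/N_t)$ and $\mathscr{D}(K_t/N_1)$, and the map intertwining the two sides there is not $\kappa$ but a $t$-twisted pairing $\langle k_1N_1,k_2N_1\rangle=\tau(wk_2^{-1}w^{-1}tk_1t^{-1})$, i.e.\ the map $\kappa_t\colon\mathscr{D}(K_t/N_1)\to((t^{-1}w)^{-1})_*\mathscr{C}(K_t/N_1)$ of the paper. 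Without introducing this twisted intermediary (together with the Cartesian diagram \eqref{eq:Cartesian diagram} and the factorisation of $t_*$ through induction used in the proof of Lemma~\ref{lem:commutative diagram 1}, and the companion Lemma~\ref{lem:commutative diagram 2}), the ``trace part'' and ``conjugation part'' cannot be treated separately under the untwisted pairing as you propose.

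Your identified ``main obstacle'' is also misplaced, and the finite-level reduction you build around it is unnecessary and awkward. In the paper's formulation the duality of Corollary~\ref{cor:poincare duality} is invoked at the single level $K_1$ and is natural in the coefficient module; the $p$-adic level structure is carried entirely by the coefficients $\mathscr{D}(K_1/-)$ via Theorem~\ref{thm:completed hom via distributions}, so no compatible system of dual cell decompositions along the tower is required (note also that the tame operators change the level just as the operators at $p$ do, so the dichotomy you draw is not accurate). Moreover, a reduction to the levels $K_{ij}$ is delicate on the $\mathscr{C}$-side: $\mathscr{C}(K_1/N_1)$ is an inverse limit of direct limits of finite-level duals, and the finite-level dual pairings only make sense at levels admitting an Iwahori factorisation; the paper works directly at infinite level here and only introduces the finite-level maps $\kappa_j$ later, in Lemma~\ref{lem:fin level iso}, for the ordinary-part statement. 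So the correct plan is the one you sketch, but the proof requires the twisted pairing $\kappa_t$ and the two commutative-diagram lemmas, none of which are formal.
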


\begin{proof} By translating the Hecke action of $[N_1 w^{-1}t^{-1}w N_1]$
on $C^{\BM}_{\A,\bullet}(K_1, \mathscr{D}(K_1/N_1))[D]$, as described in Theorem~\ref{thm:completed hom via distributions}, 
under the inverse of the isomorphism 
\[
\mathrm{RHom}_{\mathscr{D}(T_0)}\left(C_{\A,\bullet}(K_1, \mathscr{C}(K_1/N_1)),\mathscr{D}(T_0)\right)
\toisom C^{\BM}_{\A,\bullet}(K_1, \mathscr{D}(K_1/N_1))[D],
\]
we obtain the following composition: 
\[
C_{\A,\bullet}(K_1, \mathscr{C}(K_1/N_1))\stackrel{\eta_{K_1,K_t}}{\toisom} 
C_{\A,\bullet}(K_t, \mathscr{C}(K_t/N_1))\stackrel{\iota_{K'_t,K_1}\circ(w^{-1}tw)_*}{\to}
C_{\A,\bullet}(K_1, \mathscr{C}(K_1/N_t))
\]
\[
\stackrel{\mathrm{tr}}{\to} C_{\A,\bullet}(K_1, \mathscr{C}(K_1/N_1)). 
\]
We have to check that the Hecke action of $[N_1tN_1]$ on
$C_{\A,\bullet}(K_1, \mathscr{D}(K_1/N_1))$ corresponds to this 
composition under the morphism $C_{\A,\bullet}(K_1, \mathscr{D}(K_1/N_1))\to 
C_{\A,\bullet}(K_1, \mathscr{C}(K_1/N_1))$ induced by $\kappa_w$. 
Again, using the explicit description of the Hecke action in 
Theorem~\ref{thm:completed hom via distributions},
we obtain the composition 
\[
C_{\A,\bullet}(K_1, \mathscr{D}(K_1/N_1))\stackrel{\mathrm{tr}}{\to}
C_{\A,\bullet}(K_1, \mathscr{D}(K_1/N_t))\stackrel{t_*\circ \eta_{K_1,K'_t}}{\to}
C_{\A,\bullet}(K_t, \mathscr{D}(K_t/N_1))
\]
\[
\stackrel{\iota_{K_t,K_1}}{\toisom} C_{\A,\bullet}(K_1, \mathscr{D}(K_1/N_1)). 
\]

We now define a morphism $\kappa_{t}: \mathscr{D}(K_t/N_1)\to ((t^{-1}w)^{-1})_* \mathscr{C}(K_t/N_1)$
from the pairing 
\[
\mathscr{D}(K_t/N_1)\times \mathscr{D}(K_t/N_1)\to \mathscr{D}(T_0), \langle k_1N_1,k_2N_1 \rangle\mapsto \tau(wk_2^{-1}w^{-1}tk_1t^{-1}), 
\]
where note that $wk_2^{-1}w^{-1}tk_1t^{-1}\in tK_tt^{-1}\subseteq K_1$. 
We obtain an induced map 
\[
\kappa_{t*}:C_{\A,\bullet}(K_t, \mathscr{D}(K_t/N_1))\to C_{\A,\bullet}(K_t, \mathscr{C}(K_t/N_1)).
\]
The Hecke equivariance at $p$ now follows from Lemmas~\ref{lem:commutative diagram 1} and~\ref{lem:commutative diagram 2}
below. 

The Hecke equivariance away from $p$ is clear, taking into account that the Poincar\'e duality
isomorphism matches $[K^p(g^p)^{-1}K^p]$ on $C_{\A,\bullet}^{\BM}(K_1,\mathscr{D}(K_1/N_1))[D]$ 
with $[K^pg^pK^p]$ on $C_{\A,\bullet}(K_1,\mathscr{D}(K_1/N_1))$, and that the other morphisms are
Hecke equivariant away from $p$. 
\end{proof}

\begin{lemma}\label{lem:commutative diagram 1}The following diagram is commutative:
\[
\xymatrix{C_{\A,\bullet}(K_1, \mathscr{D}(K_1/N_1))\ar[r]^{\mathrm{tr}}\ar[d]^{\kappa_{*}}
& C_{\A,\bullet}(K_1, \mathscr{D}(K_1/N_t))\ar[r]^{t_*\circ \eta_{K_1,K'_t}}
& C_{\A,\bullet}(K_t, \mathscr{D}(K_t/N_1))\ar[d]^{\kappa_{t*}} \\
C_{\A,\bullet}(K_1,\mathscr{C}(K_1/N_1))\ar[rr]^{\eta_{K_1,K_t}} &\ & C_{\A,\bullet}(K_t, \mathscr{C}(K_t/N_1))}.
\]
\end{lemma}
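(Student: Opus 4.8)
The plan is to verify the commutativity by an explicit computation at the level of chains, in the same style as the Hecke-equivariance verifications in the proofs of Theorem~\ref{thm:completed hom via distributions} and Proposition~\ref{prop:Hecke action}. The first point is that each of the four arrows is continuous and $\Z_p$-linear and is the tensor product of a (possibly trivial) translation of singular simplices on $\frakX^{\BS}$ with a homomorphism of coefficient modules: the simplex-translations enter only through the $\eta$-type maps, whose explicit formulas are recorded in Lemma~\ref{lem:fin level distribution}(1) for the modules $\mathscr{D}$ and are given by the analogous (co)induction adjunction for the dual modules $\mathscr{C}$. Since finite $\Z_p$-linear combinations of point masses $\delta_{kN_1}$, $k\in K_1$, are dense in $\mathscr{D}(K_1/N_1)$ and all maps are continuous, it suffices to check that both composites send $\sigma\otimes\delta_{kN_1}$ to the same chain, where $\sigma$ is a singular simplex.

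First I would run the chase along the top and right edges. The trace map spreads $\delta_{kN_1}$ to $\sum_{n\in N_1/N_t}\delta_{knN_t}\in\mathscr{D}(K_1/N_t)$ — this is the explicit description of $\mathrm{tr}$ in Lemma~\ref{lem:fin level distribution}(4) applied to the inclusion $N_t=tN_1t^{-1}\subseteq N_1$, legitimate precisely because $t\in T^+$. Then $\eta_{K_1,K'_t}$ rewrites each summand with respect to a fixed set of coset representatives for $K_1/K'_t$ (a set identified with $N_1/N_t$, as noted in the text), translating $\sigma$ correspondingly, and $t_*$ conjugates the distributions by $t$, landing in $C_{\A,\bullet}(K_t,\mathscr{D}(K_t/N_1))$; finally $\kappa_{t*}$ applies the pairing $\langle k_1N_1,k_2N_1\rangle_{\kappa_t}=\tau(wk_2^{-1}w^{-1}tk_1t^{-1})$. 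Along the left and bottom edges, $\kappa_*$ first applies the pairing $\langle k_1N_1,k_2N_1\rangle_{\kappa}=\tau(wk_2^{-1}w^{-1}k_1)$, composed with the canonical identification of the $w$-twisted $\mathscr{C}(K_1/N_1)$ with the untwisted one (implemented by a translation of simplices by $w$), and then $\eta_{K_1,K_t}$ reindexes over coset representatives of $K_1/K_t$, again translating $\sigma$. One then compares the two resulting sums term by term.

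The comparison rests on two algebraic facts. The decisive one is that conjugation by $t$ intertwines the two pairings: $\langle k_1N_1,k_2N_1\rangle_{\kappa_t}=\langle tk_1t^{-1}N_1,\,k_2N_1\rangle_{\kappa}$ for $k_1\in K_t$ (note $tk_1t^{-1}\in K'_t\subseteq K_1$), so that once $\kappa_{t*}$ has conjugated the distributions on the top-right branch back by $t^{-1}$, the pairing it produces is exactly $\langle-,-\rangle_{\kappa}$ evaluated on the matching arguments. The second is the near-homomorphism property $\tau(\overline{b}\,k\,b)=\tau(\overline{b})\tau(k)\tau(b)$ for $\overline{b}\in\overline{N}_1T_0$, $b\in T_0N_1$, which is what ensures that translating the argument of $\tau$ by coset representatives and conjugating by $t$ interact correctly with $\tau$ and preserve $\mathscr{D}(T_0)$-balancedness; it is also what makes the two reindexings (over $K_1/K'_t\simeq N_1/N_t$ on one branch and over $K_1/K_t$ on the other) match up compatibly with the conjugation by $w$ built into $\kappa$. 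I expect the only real difficulty to be organizational: keeping the coset-representative bookkeeping, the conjugation by $t$, the twist by $w$, and the passage between $K_1$- and $K_t$-levels aligned on both branches — using $w^{-1}K_1w=K_1$ and $tK_tt^{-1}=K'_t$ — and checking that continuity and $\mathscr{D}(T_0)$-linearity persist at each step. No ingredient beyond the properties of $\tau$, the defining inclusions $tN_1t^{-1}\subseteq N_1$ and $t^{-1}\overline{N}_1t\subseteq\overline{N}_1$ of $T^+$, and the explicit formulas of Lemma~\ref{lem:fin level distribution} is needed.
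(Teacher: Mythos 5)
Your proposal is correct and is essentially the paper's own argument: the same explicit formulas for $\mathrm{tr}$, $\eta$, $t_*$ and the two pairings, the $t$-conjugation identity $\langle k_1N_1,k_2N_1\rangle_{\kappa_t}=\tau(wk_2^{-1}w^{-1}tk_1t^{-1})=\langle tk_1t^{-1}N_1,k_2N_1\rangle_{\kappa}$ for $k_1\in K_t$, and the near-multiplicativity of $\tau$ are exactly the ingredients the paper uses. The only difference is packaging: the paper organizes the chase as commutativity of a diagram of local systems \eqref{eq:loc sys diagram} over the Cartesian square \eqref{eq:Cartesian diagram}, describes the clockwise composite by the extension-by-zero pairing \eqref{eq:pairing} via induced-module factorizations, and its closing observation---that each $N_1$-coset spreads under $\mathrm{tr}$ into $[N_1:N_t]$ cosets of which exactly one pairs nontrivially with a given $k_2N_1$---is precisely the term-by-term matching (the bijection $N_1/N_t\simeq K_1/K'_t$ combined with the right $N_1$-invariance of $\tau$) that your sketch defers to the final bookkeeping step.
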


\begin{proof} We have the Cartesian diagram 
\begin{equation}\label{eq:Cartesian diagram}
\xymatrix{X_{K_t}\ar[d]^{t^{-1}}\ar[r]^{t^{-1}w} & X_{K_t}\ar[d]^{1} \\
X_{K_1}\ar[r]^{w} & X_{K_1}},
\end{equation}
where the horizontal arrows are isomorphisms, the left vertical arrow is right multiplication by $t^{-1}$ followed
by the natural projection, and the right vertical arrow is the natural projection. Using the maps in this diagram,
we obtain the following diagram of local systems on $X_{K_1}$:
\begin{equation}\label{eq:loc sys diagram}
\xymatrix{\mathscr{D}(K_1/N_1)\ar[r]^{\mathrm{tr}}\ar[d]^{\kappa} & 
\mathscr{D}(K_1/N_t)\ar[r]^{t_*} & (t^{-1})_*\mathscr{D}(K_t/N_1)\ar[d]^{(t^{-1})_*(\kappa_t)} \\
(w^{-1})_*\mathscr{C}(K_1/N_1)\ar[r]^{\sim} & (w^{-1})_*1_*\mathscr{C}(K_t/N_1)\ar[r]^{\sim}& (t^{-1})_*((t^{-1}w)^{-1})_*\mathscr{C}(K_t/N_1)
}.
\end{equation}
Note that $(t^{-1})_*((t^{-1}w)^{-1})_* = (w^{-1})_*1_*$ by~\eqref{eq:Cartesian diagram}. 
We will prove that this diagram commutes. 

To see this, we claim that the map $\mathscr{D}(K_1/N_t)\to (w^{-1})_*\mathscr{C}(K_1/N_1)$ 
obtained by going clockwise around~\eqref{eq:loc sys diagram} is induced from the pairing
\[
\mathscr{D}(K_1/N_t)\times (w^{-1})_*\mathscr{D}(K_1/N_1)
\]
given by
\begin{equation}\label{eq:pairing}
\langle k_1N_t, k_2N_1\rangle = \begin{cases}\tau(wk_2^{-1}w^{-1}k_1)
\ \mathrm{if}\ wk_2^{-1}w^{-1}k_1\in \overline{N}_1T_0N_t \\
0\ \mathrm{otherwise}\end{cases}. 
\end{equation}
Indeed, the map $t_*: \mathscr{D}(K_1/N_t)\to (t^{-1})_*\mathscr{D}(K_t/N_1)$ factors as 
\[
\mathscr{D}(K_1/N_t)\toisom \mathrm{Ind}_{K'_t}^{K_1} \mathscr{D}(K'_t/N_t)\toisom (t^{-1})_*\mathscr{D}(K_t/N_1),
\]
where the first map is given by $\mu = \sum_{s\in K_1/K'_t} s\mu_s \mapsto (\mu_s)_{s\in K_1/K'_t}$ and the second
map is given by $(\mu_s)_s\mapsto (t^{-1}\mu_s t)_s$. Similarly, the map $\mathscr{C}(K_1/N_1)\toisom 1_*\mathscr{C}(K_t/N_1)$
identifies $\mathscr{C}(K_1/N_1)$ with $\mathrm{Ind}^{K_1}_{K_t}\mathscr{C}(K_t/N_1)$. 
Recall that $K'_t = \overline{N}_1T_0N_t= wK_tw^{-1}$. The map 
\[
\mathrm{Ind}_{K'_t}^{K_1} \mathscr{D}(K'_t/N_t) \to (w^{-1})_*\mathrm{Ind}^{K_1}_{K_t}\mathscr{C}(K_t/N_1)
\] 
obtained by going clockwise around~\eqref{eq:loc sys diagram} 
is simply induced from the map $\mathscr{D}(K'_t/N_t)\to (w^{-1})_*\mathscr{C}(K_t/N_1)$ 
determined by the pairing
\[
\langle k_1N_t,k_2N_1\rangle = \tau(wk_2^{-1}w^{-1}k_1). 
\]
The claim now follows from the definition of the two inductions. 

Now that this claim is established, we notice that any left coset $k_1N_1$ with $k_1\in K_1$ gets sent to  
$[N_1:N_t]$ left $N_t$-cosets under $\mathrm{tr}$, 
and precisely one of these pairs non-trivially with $k_2N_1$ with $k_2\in K_1$ under the 
pairing in~\eqref{eq:pairing}.
This proves that the diagram~\eqref{eq:loc sys diagram} commutes and therefore proves the lemma.  
\end{proof}

\begin{lemma}\label{lem:commutative diagram 2} The following diagram is commutative:
\[
\xymatrix{C_{\A,\bullet}(K_t,\mathscr{D}(K_t/N_1))\ar[rr]^{\iota_{K_t,K_1}}\ar[d]^{\kappa_{t*}} &\ & 
C_{\A,\bullet}(K_1, \mathscr{D}(K_t/N_1)\ar[d]^{\kappa_{*}}) \\
C_{\A,\bullet}(K_t, \mathscr{C}(K_t/N_1))\ar[r]^{\iota_{K'_t,K_1}\circ(w^{-1}tw)_*} &
C_{\A,\bullet}(K_1, \mathscr{C}(K_1/N_t))\ar[r]^{\mathrm{tr}} &
C_{\A,\bullet}(K_1, \mathscr{C}(K_1/N_1)).
}
\]
\end{lemma}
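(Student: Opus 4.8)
The plan is to argue exactly as in the proof of Lemma~\ref{lem:commutative diagram 1}. Every arrow in the square is induced --- via the functoriality of the complexes $C_{\A,\bullet}(-,-)$ together with the pullback isomorphisms attached to the right translation action of $G(\Qp)$ on the tower of locally symmetric spaces --- by a morphism of the corresponding coefficient local systems on a suitable $X_K$. It therefore suffices to produce the analogue of the Cartesian square~\eqref{eq:Cartesian diagram}, now relating the levels $K_t$, $K'_t$ and $K_1$ via right translation by $w$, $t$ and $w^{-1}tw$ (using $tK_tt^{-1}=K'_t=wK_tw^{-1}$, so that $w^{-1}t$ normalizes $K_t$, and that $[K_1:K_t]=[\overline{N}_1:\overline{N}_t]<\infty$), and then to check that the resulting square of morphisms of local systems on $X_{K_1}$ commutes. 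After unwinding the twists $(w^{-1})_*$ and $((t^{-1}w)^{-1})_*$ of the $\mathscr{C}$-modules exactly as in~\eqref{eq:loc sys diagram}, this amounts to comparing the two composite morphisms $\mathscr{D}(K_t/N_1)\to(w^{-1})_*\mathscr{C}(K_1/N_1)$ obtained by going around the square, namely $\kappa_*\circ\iota_{K_t,K_1}$ and $\mathrm{tr}\circ\iota_{K'_t,K_1}\circ(w^{-1}tw)_*\circ\kappa_{t*}$.

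The key step, again following the cited proof, is to recognise both of these composites as the morphism determined by one and the same explicit $\mathscr{D}(T_0)$-valued pairing. To do this I would factor the corestriction and transport maps through the standard induced-module identifications, exactly as in the diagram~\eqref{eq:loc sys diagram}: $\iota_{K_t,K_1}$ corresponds to $\mathscr{D}(K_1/N_1)\cong\Ind_{K_t}^{K_1}\mathscr{D}(K_t/N_1)$ (from $K_t/N_1\hookrightarrow K_1/N_1$), $\iota_{K'_t,K_1}$ to $\mathscr{C}(K_1/N_t)\cong\Ind_{K'_t}^{K_1}\mathscr{C}(K'_t/N_t)$, and $\mathrm{tr}$ on the $\mathscr{C}$-side is the $\mathscr{D}(T_0)$-linear dual of the natural trace $\mathscr{D}(K_1/N_1)\to\mathscr{D}(K_1/N_t)$; the map $(w^{-1}tw)_*$ and the twists are handled as in the proof of Lemma~\ref{lem:commutative diagram 1}. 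Recalling that $\kappa$ and $\kappa_t$ are by definition attached to the pairings $\langle k_1N_1,k_2N_1\rangle=\tau(wk_2^{-1}w^{-1}k_1)$ and $\langle k_1N_1,k_2N_1\rangle=\tau(wk_2^{-1}w^{-1}tk_1t^{-1})$ (the latter well defined because $wk_2^{-1}w^{-1}tk_1t^{-1}\in tK_tt^{-1}=K'_t\subseteq K_1$), chasing the square reduces to the assertion that both composites equal the morphism induced from the pairing $\mathscr{D}(K_t/N_1)\times\mathscr{D}(K_1/N_1)\to\mathscr{D}(T_0)$ which sends cosets $k_1N_1$ (with $k_1\in K_t$) and $k_2N_1$ (with $k_2\in K_1$) to $\tau(wk_2^{-1}w^{-1}k_1)$ if $wk_2^{-1}w^{-1}k_1\in\overline{N}_1T_0N_1$ and to $0$ otherwise --- the exact analogue of~\eqref{eq:pairing}. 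Identifying each composite with this pairing is then a direct computation using the quasi-multiplicativity $\tau(\overline{b}kb)=\tau(\overline{b})\tau(k)\tau(b)$ for $\overline{b}\in\overline{N}_1T_0$ and $b\in T_0N_1$.

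The main obstacle will be bookkeeping rather than any new idea, and it needs care in two places. First, one must line up the module twists $(w^{-1})_*$, $(t^{-1})_*$ and $((t^{-1}w)^{-1})_*$ correctly --- they compose according to the group law precisely because the underlying square of locally symmetric spaces is Cartesian, just as in the identity $(t^{-1})_*((t^{-1}w)^{-1})_*=(w^{-1})_*1_*$ used in the proof of Lemma~\ref{lem:commutative diagram 1} --- and check that the induction/coinduction identifications are equivariant for these twisted $K_1$-actions. Second, there is the same counting as at the end of that proof: under $\mathrm{tr}$ a coset $k_1N_1$ breaks into the $[N_1:N_t]$ cosets $k_1nN_t$ with $n$ running over $N_1/N_t$, and exactly one of these pairs nontrivially with a given coset of $N_1$ under the pairing~\eqref{eq:pairing}. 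Once both composites have been rewritten in terms of this single pairing, commutativity of the square --- hence the lemma --- is immediate, and together with Lemma~\ref{lem:commutative diagram 1} it supplies the Hecke equivariance at $p$ used in Proposition~\ref{prop:Hecke action}.
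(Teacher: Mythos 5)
Your proposal is correct and follows the paper's route exactly: the paper's own proof consists of the single remark that the argument is analogous to that of Lemma~\ref{lem:commutative diagram 1}, and what you write out is precisely that analogy (the Cartesian square of locally symmetric spaces, the induced diagram of local systems, the identification of both composites with one explicit $\mathscr{D}(T_0)$-valued pairing via the induction identifications, and the $[N_1:N_t]$ coset count under $\mathrm{tr}$). Nothing further is needed.
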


\begin{proof} The proof is analogous to that of Lemma~\ref{lem:commutative diagram 1}. 
\end{proof}

\subsection{Ordinary parts}\label{subsec:ordinary parts} The goal of this section is to show that 
the morphism $\pi_w$ in Proposition~\ref{prop:Hecke action} induces an isomorphism
on ordinary parts. 

We start by defining the ordinary part of homology at finite level, through the means of a projector. We focus on what we need; the interested reader may consult \cite[\S 2]{khare-thorne} for an abstract viewpoint. Let $j\in \Z_{\geq 1}$ and $t\in T^+$. Assume that the compact open subgroup $\overline{N}_t T_j N_1\subseteq K_1$ admits an 
Iwahori factorisation; this can always be ensured by choosing $t$ large enough with respect to $j$. For any $s\in T^+$, we let $U_s$ denote the double coset operator 
$[\overline{N}_tT_jN_1s\overline{N}_tT_jN_1]$ acting on $C^{(\BM)}_{\A,\bullet}(\overline{N}_tT_jN_1, \Z_p)\simeq C^{(\BM)}_{\A,\bullet}(K_1, \mathscr{D}(K_1/\overline{N}_tT_jN_1))$. 

\begin{lemma}\label{lem:basic properties}\leavevmode
\begin{enumerate}
\item For $s_1, s_2\in T^+$ we have $U_{s_1s_2} = U_{s_1}\circ U_{s_2}$. In particular, all these operators commute. 
\item For $t'\geq t$, $j'\geq j$, the restriction morphism 
\[
\mathrm{res}: C^{(\BM)}_{\A,\bullet}(K_1, \mathscr{D}(K_1/\overline{N}_{t'}T_{j'}N_1))\to 
C^{(\BM)}_{\A,\bullet}(K_1, \mathscr{D}(K_1/\overline{N}_tT_jN_1))
\]
is $U_s$-equivariant for all $s\in T^+$.  
\item For any $s\in T^+$, we have a commutative diagram
\[
\xymatrix{C^{(\BM)}_{\A,\bullet}(K_1, \mathscr{D}(K_1/\overline{N}_{ts}T_jN_1))\ar[d]^{U_s}\ar[r]^{\mathrm{res}} & 
C^{(\BM)}_{\A,\bullet}(K_1, \mathscr{D}(K_1/\overline{N}_{t}T_jN_1))\ar[d]^{U_s}\ar[ld]^{s_*\circ \mathrm{tr}} \\
C^{(\BM)}_{\A,\bullet}(K_1, \mathscr{D}(K_1/\overline{N}_{ts}T_jN_1))\ar[r]^{\mathrm{res}} & C^{(\BM)}_{\A,\bullet}(K_1, \mathscr{D}(K_1/\overline{N}_{t}T_jN_1))}.
\]
\end{enumerate}
\end{lemma}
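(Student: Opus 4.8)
The plan is to verify each of the three assertions by unwinding the definition of the double coset operators $U_s$ in terms of the explicit chain-level formulas for trace, pushforward, and restriction recorded in Lemma~\ref{lem:fin level distribution}, together with the Iwahori factorisation of $\overline{N}_tT_jN_1$. Throughout, recall that $U_s = [\overline{N}_tT_jN_1 \, s \, \overline{N}_tT_jN_1]$ is, by the general recipe of~\eqref{eq:hecke fin homology}, the composition $\mathrm{res} \circ s_* \circ \mathrm{tr}$, where the trace is along $\overline{N}_tT_jN_1 / (\overline{N}_tT_jN_1 \cap s\,\overline{N}_tT_jN_1\,s^{-1})$ and the restriction is along the opposite inclusion; the Iwahori factorisation ensures that the relevant intersections and coset spaces are controlled by the unipotent radicals, so that $\overline{N}_tT_jN_1 \cap s\,\overline{N}_tT_jN_1\,s^{-1} = \overline{N}_{ts}T_jN_1$ for $s\in T^+$ (using $sN_1s^{-1}\subseteq N_1$ and $s^{-1}\overline{N}_t s \subseteq \overline{N}_t$, i.e.\ $s\overline{N}_ts^{-1}\supseteq\overline{N}_t$, so the intersection on the $\overline{N}$-side shrinks to $\overline{N}_{ts}$).

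First I would prove (1). The key point is that on homology the operator $U_s$ can be computed ``at infinite level'' using $\mathscr{D}(K_1/N_1)$: passing to the inverse limit over the relevant finite quotients, $U_s$ is induced by the trace map $\mathscr{D}(K_1/N_1)\to\mathscr{D}(K_1/N_s)$ followed by $s_*$ and the identification $\mathscr{D}(K_1/N_s)\cong\mathscr{D}(K_1/N_1)$, exactly as in Theorem~\ref{thm:completed hom via distributions}. Then $U_{s_1}\circ U_{s_2}$ corresponds to the composite trace-and-push for $N_{s_1}$ followed by that for $N_{s_2}$, and one checks directly on the distribution modules (using $s_{1*}\circ s_{2*}=(s_1s_2)_*$ and that the trace maps compose correctly under the inclusions $N_{s_1s_2}\subseteq N_{s_2}\subseteq N_1$, together with commutativity of $T^+$) that this equals $U_{s_1s_2}$. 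The cleanest route is probably to work at finite level with the formulas in Lemma~\ref{lem:fin level distribution}(3),(4) and observe that the set of coset representatives for the composite trace can be taken to be products of representatives for the two factors, after which the equality $\sigma \otimes \mu \mapsto \sum \sigma n_1 n_2 s_1 s_2 \otimes (s_1s_2)^{-1}\mu\cdots$ is manifest; commutativity is then immediate from symmetry of the final formula in $s_1,s_2$.

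Next, (2) says the $U_s$ commute with the restriction maps $\mathrm{res}\colon \mathscr{D}(K_1/\overline{N}_{t'}T_{j'}N_1)\to\mathscr{D}(K_1/\overline{N}_tT_jN_1)$ as $t',j'$ grow. Here I would again write $U_s$ as $\mathrm{res}\circ s_*\circ\mathrm{tr}$ at each level and chase the square: the trace map is functorial for the further restriction (it is a sum over cosets, and enlarging the level only refines the coset decomposition compatibly), $s_*$ is given by the same formula $\sigma\otimes\lambda\mapsto\sigma s\otimes s^{-1}\lambda s$ at every level, and restriction obviously commutes with restriction. The only thing requiring care is that the Iwahori factorisation holds at \emph{both} levels $\overline{N}_tT_jN_1$ and $\overline{N}_{t'}T_{j'}N_1$ so that the intermediate groups in the Hecke composition match up under $\mathrm{res}$; this is where one uses that $t,t'$ are chosen large with respect to $j,j'$.

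Finally, (3) is the ``$U_s$ factors through restriction'' identity; this is the part I expect to be the main obstacle, since it is the one genuinely non-formal compatibility. The triangle asserts that $\mathrm{res}\circ U_s = s_*\circ\mathrm{tr}$ as maps from level $\overline{N}_tT_jN_1$ down (and up) to level $\overline{N}_{ts}T_jN_1$, and that the square built from the two vertical $U_s$'s and two horizontal $\mathrm{res}$'s commutes. The hexagon/triangle here is the standard mechanism behind Hida's $U_p$-operator: because $\overline{N}_{ts}T_jN_1 = \overline{N}_tT_jN_1 \cap s\,\overline{N}_tT_jN_1\,s^{-1}$, the composition $\mathrm{tr}$ (up to $\overline{N}_{ts}$) followed by $s_*$ followed by $\mathrm{res}$ (down from $\overline{N}_{ts}$) \emph{is} $U_s$ viewed at level $\overline{N}_tT_jN_1$, while $s_*\circ\mathrm{tr}$ without the final restriction lands at level $\overline{N}_{ts}T_jN_1$; composing the latter with $\mathrm{res}$ recovers the former, giving the triangle, and the square then follows by combining the triangle with part (2) applied to the levels $\overline{N}_{ts}T_jN_1 \supseteq \overline{N}_{t}T_jN_1$ is not quite right—rather one checks the square directly by writing both composites as $\mathrm{res}\circ s_*\circ\mathrm{tr}$ with the appropriate source and target and using that trace-then-restrict over a chain of three levels equals trace over the outer pair. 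I would carry this out by fixing explicit coset representatives adapted to the Iwahori factorisation (so that representatives for $\overline{N}_tT_jN_1/\overline{N}_{ts}T_jN_1$ can be taken inside $\overline{N}_t$) and verifying the two formulas agree term by term; the bookkeeping is routine once the representatives are chosen coherently, but getting the indexing of the cosets consistent across all three maps is the delicate point.
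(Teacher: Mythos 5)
Your overall strategy (unwind the double coset operators via the explicit formulas and the Iwahori factorisation) is the same as the paper's, and your treatment of (1) is fine; for (2) the paper's actual point is sharper than your "enlarging the level refines the coset decomposition": the Iwahori factorisation identifies $\overline{N}_tT_jN_1/\overline{N}_tT_jN_s$ with $N_1/N_s$ for \emph{every} $t,j$, so the coset space in the trace step is literally unchanged (not refined) and the same representatives, taken in $N_1$, work at all levels; that is what makes the trace step commute with the transition map. Note also a direction slip in your setup: for $s\in T^+$ one has $\overline{N}_tT_jN_1\cap s\,\overline{N}_tT_jN_1\,s^{-1}=\overline{N}_tT_jN_s$ (since $s\overline{N}_ts^{-1}\supseteq\overline{N}_t$ and $sN_1s^{-1}=N_s$), whereas $\overline{N}_{ts}T_jN_1=\overline{N}_tT_jN_1\cap s^{-1}\overline{N}_tT_jN_1\,s$; the identity you wrote is the wrong one, and with your own containments the $\overline{N}$-side of the intersection you formed cannot shrink.

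The genuine gap is in (3). You correctly observe that the lower triangle is the definition of $U_s$ at level $\overline{N}_tT_jN_1$, but the real content is the upper triangle: $U_s$ at level $\overline{N}_{ts}T_jN_1$ equals $(s_*\circ\mathrm{tr})\circ\mathrm{res}$. Unwinding the definition at that level gives $\mathrm{res}\circ s_*\circ\mathrm{tr}$ through the intermediate levels $\overline{N}_{ts}T_jN_s$ and $\overline{N}_{ts^2}T_jN_1$; after the easy commutation of $s_*$ with $\mathrm{res}$, what remains is to exchange a trace in the $N$-direction (from $N_1$ down to $N_s$) with the level-lowering map in the $\overline{N}$-direction (from $\overline{N}_{ts}$ up to $\overline{N}_t$). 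This is \emph{not} the statement that "trace-then-restrict over a chain of three levels equals trace over the outer pair"; the four groups $\overline{N}_{ts}T_jN_s\subset\overline{N}_{ts}T_jN_1$ and $\overline{N}_tT_jN_s\subset\overline{N}_tT_jN_1$ form a square, not a chain, and in general $\mathrm{res}\circ\mathrm{tr}$ around such a square picks up a sum over double cosets and differs from $\mathrm{tr}\circ\mathrm{res}$. The missing ingredient, which is the heart of the paper's proof, is that the corresponding square of locally symmetric spaces is Cartesian — equivalently, by the Iwahori factorisation both covers have fibers canonically identified with $N_1/N_s$, so that $\overline{N}_{ts}T_jN_1/\overline{N}_{ts}T_jN_s\to\overline{N}_tT_jN_1/\overline{N}_tT_jN_s$ is a bijection — and then (proper) base change gives the exchange. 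Your proposed bookkeeping also aims at the wrong cosets: the maps labelled $\mathrm{res}$ in the diagram involve no coset sum at all, so choosing representatives for $\overline{N}_tT_jN_1/\overline{N}_{ts}T_jN_1$ inside $\overline{N}_t$ is not what is needed; the only traces occurring are over $N_1/N_s$, with representatives in $N_1$.
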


\begin{proof} The first part follows from the explicit description of the Hecke operators $U_{s_1}$, $U_{s_2}$,
and $U_{s_1s_2}$, cf.~\eqref{eq:hecke fin homology} and from the same computation as in~\cite[Lemma 3.1.4]{emerton-ord1}.  
The second part again follows from the explicit description in~\eqref{eq:hecke fin homology} and from the fact that the Iwahori 
factorisation gives a bijection $\overline{N}_tT_jN_1/\overline{N}_tT_jN_s\toisom N_1/N_s$, which shows that the coset representatives
can be chosen independently of $j$ and $t$. We now prove the third part. The commutativity of the lower triangle follows from
the definition of $U_s$. For the upper triangle, we write the definition of $U_s$ acting on 
$C_{\A,\bullet}(K_1, \mathscr{D}(K_1/\overline{N}_{ts}T_jN_1))\simeq C_{\A,\bullet}(\overline{N}_{ts}T_jN_1, \Z_p)$.
By definition, this is equal to 
\[
C_{\A,\bullet}(\overline{N}_{ts}T_jN_1, \Z_p)\stackrel{\mathrm{tr}}{\to} C_{\A,\bullet}(\overline{N}_{ts}T_jN_s, \Z_p)\stackrel{s_*}{\to} C_{\A,\bullet}(\overline{N}_{ts^2}T_jN_1, \Z_p)
\stackrel{\mathrm{res}}{\to} C_{\A,\bullet}(\overline{N}_{ts}T_jN_1, \Z_p),
\]
which can be rewritten as 
\begin{equation}\label{eq:rewritten Hecke}
C_{\A,\bullet}(\overline{N}_{ts}T_jN_1, \Z_p)\stackrel{\mathrm{tr}}{\to} C_{\A,\bullet}(\overline{N}_{ts}T_jN_s, \Z_p)\stackrel{\mathrm{res}}{\to} C_{\A,\bullet}(\overline{N}_{t}T_jN_{s}, \Z_p)
\stackrel{s_*}{\to} C_{\A,\bullet}(\overline{N}_tT_jN_1, \Z_p).
\end{equation}
We claim that the diagram of locally symmetric spaces with natural projection morphisms
\[
\xymatrix{X_{\overline{N}_{ts}T_jN_s}\ar[d]\ar[r] & X_{\overline{N}_{t}T_jN_s}\ar[d] \\
X_{\overline{N}_{ts}T_jN_1}\ar[r] & X_{\overline{N}_{t}T_jN_1}}
\]
is Cartesian. To see this, it is enough to see that the morphism
\[
X_{\overline{N}_{ts}T_jN_s}\to X_{\overline{N}_{ts}T_jN_1}\times_{X_{\overline{N}_{t}T_jN_1}}X_{\overline{N}_{t}T_jN_s}
\] 
induces an isomorphism of the fibers over $X_{\overline{N}_{ts}T_jN_1}$. Indeed, the fibers on the 
RHS can be identified with $\overline{N}_{ts}T_jN_1/ \overline{N}_{ts}T_jN_s$ and the fibers on the LHS can be 
identified with $\overline{N}_{t}T_jN_1/ \overline{N}_{t}T_jN_s$, cf~\cite[Lemma 6.2.1]{arizona}. 
The claim now follows from the Iwahori factorisation, since all the fibers are identified with $N_1/N_s$. 
Using proper base change, we rewrite~\eqref{eq:rewritten Hecke} as 
\[
C_{\A,\bullet}(\overline{N}_{ts}T_jN_1, \Z_p)\stackrel{\mathrm{res}}{\to} C_{\A,\bullet}(\overline{N}_{t}T_jN_1, \Z_p)\stackrel{\mathrm{tr}}{\to} C_{\A,\bullet}(\overline{N}_{t}T_jN_{s}, \Z_p)
\stackrel{s_*}{\to} C_{\A,\bullet}(\overline{N}_{ts}T_jN_1, \Z_p).
\]
This completes the proof. 
\end{proof}

We call $s\in T^+$ a \emph{controlling element} if $\cap_{i\geq 1} s^iN_1s^{-i} = \{1\}$. Such a controlling element
always exists, for example by taking $s = \prod_{\alpha} p^{\alpha}$, where the product runs over the positive coroots
of $G$. 

\begin{lemma}\label{lem:controlling element} Let $s\in T^+$ be a controlling element. 
For any $s_1\in T^+$, there exists $i\in \Z_{\geq 1}$ such that $s_1s_2 = s^i$ for some $s_2\in T^+$. 
\end{lemma}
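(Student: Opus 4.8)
The plan is to work in the cocharacter lattice of $T$ and reduce the statement to a simple positivity argument. Recall that $T^+ = \{t \in T(\Q_p) \mid tN_1t^{-1} \subseteq N_1\}$. First I would identify $T^+/T_0$ (or rather the monoid $T^+$ modulo the maximal compact $T_0 = T(\Z_p)$) with a submonoid of the cocharacter lattice $X_*(T)$. Concretely, writing a general element of $T(\Q_p)/T_0$ as a cocharacter $\lambda \in X_*(T)$ evaluated at $p$, the condition $tN_1t^{-1} \subseteq N_1$ translates into the condition $\langle \alpha, \lambda \rangle \geq 0$ for all simple roots $\alpha$ of $U$ (equivalently all positive roots $\alpha \in \Phi^+$). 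Thus $T^+$ maps onto the monoid of dominant cocharacters $X_*(T)^+ \subseteq X_*(T)$, with kernel $T_0$.

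Next I would observe that a controlling element $s$ corresponds (modulo $T_0$) to a cocharacter $\lambda_s$ that is \emph{strictly} dominant: indeed $\cap_{i \geq 1} s^i N_1 s^{-i} = \{1\}$ forces $\langle \alpha, \lambda_s \rangle > 0$ for every positive root $\alpha$, since any positive root $\alpha$ with $\langle \alpha, \lambda_s \rangle = 0$ would give a root subgroup $U_\alpha$ fixed by conjugation by $s$, hence contained in every $s^i N_1 s^{-i}$. Conversely, for any $s_1 \in T^+$, the corresponding cocharacter $\lambda_{s_1}$ satisfies $\langle \alpha, \lambda_{s_1} \rangle \geq 0$ for all $\alpha \in \Phi^+$. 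Therefore, choosing $i$ large enough that $i\langle \alpha, \lambda_s\rangle \geq \langle \alpha, \lambda_{s_1}\rangle$ for every (finitely many) positive root $\alpha$, the cocharacter $i\lambda_s - \lambda_{s_1}$ is dominant, i.e. lies in $X_*(T)^+$.

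Finally I would lift this back: set $s_2 \defeq s^i s_1^{-1}$, viewed in $T(\Q_p)$. By the computation above, the image of $s_2$ in $X_*(T)$ (well-defined modulo $T_0$) equals $i\lambda_s - \lambda_{s_1}$, which is dominant; hence $s_2 N_1 s_2^{-1} \subseteq N_1$, i.e. $s_2 \in T^+$. Then $s_1 s_2 = s_1 s^i s_1^{-1} = s^i$ since $T$ is abelian, which is exactly the desired conclusion. The only mildly delicate point — which I would state carefully but expect to be routine given the structure theory of split reductive groups over $\Zp$ already invoked via \cite{conrad} — is the precise dictionary between the conjugation condition defining $T^+$ and dominance of cocharacters, together with the characterization of controlling elements as strictly dominant; everything else is elementary linear algebra in a finitely generated free abelian group. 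I do not anticipate a genuine obstacle here.
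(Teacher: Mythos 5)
Your argument is correct, but it takes a genuinely different route from the paper. You translate everything into the cocharacter lattice: using the root-group factorization of $N_1$ and the identification $T(\Qp)/T_0 \cong X_*(T)$ (valid since $T$ is split), you characterize $T^+$ modulo $T_0$ as the dominant cocharacters and controlling elements as the strictly dominant ones, and then conclude by choosing $i$ so that $i\langle\alpha,\lambda_s\rangle \geq \langle\alpha,\lambda_{s_1}\rangle$ for the finitely many positive roots $\alpha$. The paper instead argues purely topologically, directly from the definition: since $s\in T^+$, the subgroups $s^iN_1s^{-i}$ form a decreasing chain of compact subgroups of $N_1$ with trivial intersection, so for some $i$ one has $s^iN_1s^{-i}\subseteq s_1N_1s_1^{-1}$ (an open subgroup), whence $s_2\defeq s_1^{-1}s^i$ satisfies $s_2N_1s_2^{-1}\subseteq N_1$, i.e.\ $s_2\in T^+$. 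The paper's proof is two lines and needs none of the dictionary you must set up (no appeal to the Iwahori/root-space decomposition or to strict dominance of controlling elements), and it would work verbatim in settings where that dictionary is less immediate; your proof requires carrying out the "mildly delicate" identifications you flag, but in exchange it is quantitative (it tells you how large $i$ must be in terms of the pairings) and makes transparent exactly which elements of $T^+$ are controlling. Both establish the lemma.
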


\begin{proof} Since $\cap_{i\geq 1} s^iN_1s^{-i} = \{1\}$, there exists $i\in \Z_{\geq 1}$ such that 
$s^i N_1 s^{-i}\subseteq s_1 N_1 s_1^{-1}$. This shows that $s_2:=s_1^{-1} s^i \in T^+$. 
\end{proof}

Let $s_0$ be a controlling element. Using homotopy equivalences, we transport 
$U_{s_0}$ on $C^{(\BM)}_{\A,\bullet}(K_1, \mathscr{D}(K_1/\overline{N}_tT_jN_1)\otimes \Z/p^k)$ to an operator 
$\widetilde{U}_{s_0}$ on the corresponding Borel--Serre complex $C^{(\BM)}_{\bullet}(K_1, \mathscr{D}(K_1/\overline{N}_tT_jN_1)\otimes \Z/p^k)$,
which acts as $U_{s_0}$ up to homotopy and, in particular, induces the same action on homology. 
Since the latter is a complex of finite projective $\Z/p^k$-modules, $\widetilde{U}_{s_0}^{N!}$ stabilises 
to an idempotent. We denote the corresponding direct summand by
$C^{(\BM)}_{\bullet}(K_1, \mathscr{D}(K_1/\overline{N}_tT_jN_1)\otimes \Z/p^k)^{T^+-\mathrm{ord}}$.
We define 
\[
C^{(\BM)}_{\bullet}(K_1, \mathscr{D}(K_1/\overline{N}_tT_jN_1))^{T^+-\mathrm{ord}}\defeq 
\varprojlim_{k}C^{(\BM)}_{\bullet}(K_1, \mathscr{D}(K_1/\overline{N}_tT_jN_1)\otimes \Z/p^k)^{T^+-\mathrm{ord}}. 
\]
By~\cite[Lemma 2.13]{khare-thorne}, this is a direct summand of $C^{(\BM)}_{\bullet}(K_1, \mathscr{D}(K_1/\overline{N}_tT_jN_1))$
whose homology recovers the ordinary part of homology / Borel--Moore homology with respect to $s_0$. 

Note that the homotopy equivalences between the adelic and the Borel--Serre complexes are functorial in the coefficients. Therefore, the formation of the direct summands is compatible with the transition morphisms between different levels, and
we can also define the ordinary part 
\[
C_{\bullet}^{(\BM)}(K_1, \mathscr{D}(K_1/N_1))^{T^+-\mathrm{ord}}:= \varprojlim_{j,t}C^{(\BM)}_{\bullet}(K_1, \mathscr{D}(K_1/\overline{N}_tT_jN_1))^{T^+-\mathrm{ord}}
\]
of $C_{\bullet}^{(\BM)}(K_1, \mathscr{D}(K_1/N_1))$ with respect to $s_0$.  
This is a direct summand of $C_{\bullet}^{(\BM)}(K_1, \mathscr{D}(K_1/N_1))$ and its homology recovers 
$H_*^{(\BM)}(N_1)^{T^+-\mathrm{ord}}$ by~\cite[Lemma 2.13]{khare-thorne} and Theorem \ref{thm:completed hom via distributions}. 

Using Lemma~\ref{lem:controlling element},
one can check that all this is independent of the choice of controlling element $s_0$. By Lemma~\ref{lem:controlling element} and part (1) of Lemma~\ref{lem:basic properties}, 
we see that $U_{s}$ acts as a quasi-isomorphism on $C^{(\BM)}_{\bullet}(K_1, \mathscr{D}(K_1/\overline{N}_tT_jN_1))^{T^+-\mathrm{ord}}$ for any $s\in T^+$.  
As a result, we obtain the following horizontal control theorem. 

\begin{prop}\label{prop:control for D} For any $s\in T^+$, the transition morphisms 
\[
C^{(\BM)}_{\bullet}(K_1, \mathscr{D}(K_1/\overline{N}_{ts} T_j N_1))^{T^+-\mathrm{ord}}\to C^{(\BM)}_{\bullet}(K_1, \mathscr{D}(K_1/\overline{N}_t T_j N_1))^{T^+-\mathrm{ord}},
\]
are quasi-isomorphisms. 
\end{prop}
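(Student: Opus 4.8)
The plan is to deduce this from the ``near-inverse'' to the restriction map recorded in part~(3) of Lemma~\ref{lem:basic properties}, by passing to ordinary parts, where the resulting error term $U_s$ becomes invertible. Write $A \defeq C^{(\BM)}_{\bullet}(K_1, \mathscr{D}(K_1/\overline{N}_{ts} T_j N_1))$ and $B \defeq C^{(\BM)}_{\bullet}(K_1, \mathscr{D}(K_1/\overline{N}_t T_j N_1))$, and let $r\colon A\to B$ denote the transition morphism. First I would observe that $\overline{N}_{ts}T_jN_1$ admits an Iwahori factorisation whenever $\overline{N}_tT_jN_1$ does (because $s\in T^+$), so that $U_s$ and the ordinary idempotent $e_{s_0}$ attached to the fixed controlling element $s_0$ are defined on both $A$ and $B$; by the functoriality of the homotopy equivalences between the Borel--Serre and adelic complexes, all the formulas needed below can be transported from Lemma~\ref{lem:basic properties}.

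Next I would set $g\defeq s_*\circ\mathrm{tr}\colon B\to A$ and use the commutative diagram of Lemma~\ref{lem:basic properties}(3), which gives the two identities $r\circ g = U_s$ on $B$ and $g\circ r = U_s$ on $A$. Parts~(1) and~(2) of that lemma show that $r$ commutes with $U_{s_0}$, hence with $e_{s_0}$, so $r$ preserves the ordinary/non-ordinary decomposition and induces a map $r^{\mathrm{ord}}\colon A^{\mathrm{ord}}\to B^{\mathrm{ord}}$. Since $g$ is not visibly $U_{s_0}$-equivariant, I would instead work with $g^{\mathrm{ord}}\defeq e_{s_0}\circ g|_{B^{\mathrm{ord}}}\colon B^{\mathrm{ord}}\to A^{\mathrm{ord}}$; using that $r$ and $U_s$ both commute with $e_{s_0}$, a short diagram chase then yields $r^{\mathrm{ord}}\circ g^{\mathrm{ord}} = U_s|_{B^{\mathrm{ord}}}$ and $g^{\mathrm{ord}}\circ r^{\mathrm{ord}} = U_s|_{A^{\mathrm{ord}}}$. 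By the discussion preceding the proposition---Lemma~\ref{lem:controlling element} combined with Lemma~\ref{lem:basic properties}(1)---the operator $U_s$ acts as a quasi-isomorphism on both $A^{\mathrm{ord}}$ and $B^{\mathrm{ord}}$.

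Finally I would conclude that $r^{\mathrm{ord}}$ is a quasi-isomorphism from the elementary fact that if $p\circ q$ and $q\circ p$ are both quasi-isomorphisms then so is $p$: here $r^{\mathrm{ord}}\circ g^{\mathrm{ord}}$ being one makes $(r^{\mathrm{ord}})_*$ surjective on homology, and $g^{\mathrm{ord}}\circ r^{\mathrm{ord}}$ being one makes it injective, so $(r^{\mathrm{ord}})_*$ is an isomorphism. I expect this proof to be almost entirely formal, the substantive input already having been isolated in Lemma~\ref{lem:basic properties}(3); the only point needing care will be the bookkeeping---tracking which complexes carry the operator $U_{s_0}$, and justifying the insertion of the ordinary idempotent past the map $g$.
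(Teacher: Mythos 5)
Your argument is correct and is essentially the paper's proof: the paper likewise deduces the result from the two triangles of Lemma~\ref{lem:basic properties}(3) (giving $\mathrm{res}\circ(s_*\circ\mathrm{tr})=U_s$ and $(s_*\circ\mathrm{tr})\circ\mathrm{res}=U_s$) together with the fact that $U_s$ is invertible on the ordinary part, the lower triangle yielding surjectivity and the upper one injectivity on ordinary homology. Your explicit insertion of the ordinary idempotent past $s_*\circ\mathrm{tr}$ is just a more careful spelling-out of the same bookkeeping.
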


\begin{proof} It is enough to show that the transition morphisms induce an isomorphism on the ordinary part of homology / Borel--Moore homology. 
Surjectivity follows from the commutativity of the lower triangle in part (3) of Lemma~\ref{lem:basic properties}, since $U_s$ acts as an isomorphism
on homology. Injectivity follows from the commutativity of the upper triangle in part (3) of Lemma~\ref{lem:basic properties}. 
\end{proof}

Assume again that $\overline{N}_tT_jN_1$ admits an Iwahori factorisation. Set 
\[
\mathscr{C}(K_1/\overline{N}_tT_jN_1) \defeq 
\mathrm{Hom}_{\mathscr{D}(T_0)}\left(\mathscr{D}(K_1/\overline{N}_tT_jN_1), \mathscr{D}(T_0/T_j)\right).
\]
We also have an action of $s\in T^-$ on 
each $C_{\A,\bullet}(K_1, \mathscr{C}(K_1/\overline{N}_tT_jN_1))$ via a double coset operator $U_s$. We define the 
analogous notion of controlling element and use it to define the ordinary part $C_{\bullet}(K_1, \mathscr{C}(K_1/\overline{N}_tT_jN_1))^{T^--\mathrm{ord}}$
with respect to $T^-$. In this setting, we have the following horizontal control theorem. 

\begin{prop}\label{prop:control for C} For any $s\in T^-$, the transition morphisms 
\[
C_{\bullet}(K_1, \mathscr{C}(K_1/\overline{N}_{t} T_j N_1))^{T^--\mathrm{ord}}\to C_{\bullet}(K_1, \mathscr{C}(K_1/\overline{N}_{ts} T_j N_1))^{T^--\mathrm{ord}},
\]
are quasi-isomorphisms. 
\end{prop}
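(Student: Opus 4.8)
The plan is to deduce Proposition~\ref{prop:control for C} from the Borel--Moore form of Proposition~\ref{prop:control for D} by $\mathscr{D}(T_0/T_j)$-linear duality, rather than redoing any geometric input. Fix $t,j$ with $\overline{N}_tT_jN_1$ admitting an Iwahori factorisation, and write $R\defeq\mathscr{D}(T_0/T_j)=\Z_p[T_0/T_j]$. Exactly as in the proof of Lemma~\ref{lem:duality}, the Iwahori factorisation identifies $\mathscr{D}(K_1/\overline{N}_tT_jN_1)$ with $\mathscr{D}(\overline{N}_1/\overline{N}_t)\otimes_{\Z_p}R$ as an $R$-module, so it is finite free over $R$; hence the Borel--Serre complex $C^{\BM}_{\bullet}(K_1,\mathscr{D}(K_1/\overline{N}_tT_jN_1))$ is a bounded complex of finite free $R$-modules and $\mathrm{Hom}_R(-,R)$ is exact on it. Combining the universal coefficient isomorphism~\eqref{eq:universal coefficient isom} (over $R$, with $M=\mathscr{D}(K_1/\overline{N}_tT_jN_1)$ and $N=R$, so that $\mathrm{RHom}_R(M,R)=\mathscr{C}(K_1/\overline{N}_tT_jN_1)$) with the finite-level Poincar\'e duality isomorphism $C_{\bullet}(K_1,M)[D]\simeq C^{\BM,\bullet}(K_1,M)$ (which follows from the second homotopy equivalence of Lemma~\ref{dual decompositions} exactly as Corollary~\ref{cor:poincare duality} follows from the first), one obtains a natural quasi-isomorphism
\[
C_{\bullet}(K_1,\mathscr{C}(K_1/\overline{N}_tT_jN_1))[D]\simeq \mathrm{Hom}_R\bigl(C^{\BM}_{\bullet}(K_1,\mathscr{D}(K_1/\overline{N}_tT_jN_1)),R\bigr),
\]
compatible with transition morphisms: the left-hand transition goes from level $\overline{N}_t$ to level $\overline{N}_{ts}$, and corresponds to the $R$-dual of the transition morphism of Proposition~\ref{prop:control for D}, which goes from $\overline{N}_{ts}$ to $\overline{N}_t$.

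Next I would check that this duality matches ordinary parts. Under the identification above, the operator $U_s$ for $s\in T^+$ on $C^{\BM}_{\bullet}(K_1,\mathscr{D}(K_1/\overline{N}_tT_jN_1))$ is carried (up to the shift $[D]$) to the operator $U_{s^{-1}}$ for $s^{-1}\in T^-$ on $C_{\bullet}(K_1,\mathscr{C}(K_1/\overline{N}_tT_jN_1))$: the universal coefficient isomorphism is Hecke equivariant (as noted after~\eqref{eq:universal coefficient isom}), and Poincar\'e duality interchanges the double coset operator $[\Gamma g\Gamma]$ on homology with $[\Gamma g^{-1}\Gamma]$ on cohomology with compact support, via the explicit descriptions~\eqref{eq:hecke fin homology} and~\eqref{eq:hecke fin cohomology}. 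The $T^+$-ordinary summand on the $\mathscr{D}$-side is the stable image of $\widetilde{U}_{s_0}^{N!}$ for a controlling element $s_0\in T^+$, and applying $\mathrm{Hom}_R(-,R)$ carries this idempotent to $\widetilde{U}_{s_0^{-1}}^{N!}$; since $s_0^{-1}$ is a controlling element of $T^-$ when $s_0$ is one of $T^+$ (because $\bigcap_{i\ge 1}s_0^{-i}\overline{N}_1s_0^{i}=\{1\}$), this cuts out precisely the $T^-$-ordinary summand on the $\mathscr{C}$-side. Hence the quasi-isomorphism of the previous paragraph restricts to
\[
C_{\bullet}(K_1,\mathscr{C}(K_1/\overline{N}_tT_jN_1))^{T^--\mathrm{ord}}[D]\simeq \mathrm{Hom}_R\bigl(C^{\BM}_{\bullet}(K_1,\mathscr{D}(K_1/\overline{N}_tT_jN_1))^{T^+-\mathrm{ord}},R\bigr),
\]
still compatibly with transition morphisms.

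It then only remains to apply $\mathrm{Hom}_R(-,R)$ to the quasi-isomorphism furnished by the Borel--Moore case of Proposition~\ref{prop:control for D},
\[
C^{\BM}_{\bullet}(K_1,\mathscr{D}(K_1/\overline{N}_{ts}T_jN_1))^{T^+-\mathrm{ord}}\toisom C^{\BM}_{\bullet}(K_1,\mathscr{D}(K_1/\overline{N}_tT_jN_1))^{T^+-\mathrm{ord}};
\]
as both sides are bounded complexes of finite free $R$-modules, the dual is again a quasi-isomorphism, and by the previous paragraph it is identified, after the harmless shift by $[D]$, with the transition morphism in the statement. The main obstacle is the bookkeeping in the middle paragraph: one must fix conventions so that the Poincar\'e duality and universal coefficient isomorphisms are simultaneously Hecke equivariant and send the $T^+$-ordinary projector to the $T^-$-ordinary projector (in particular one should double-check that no extra Weyl-element twist enters here, in contrast with the $\pi_w$-pairing of \S\ref{subsec:ordinary parts}, since we use only the naive $R$-linear dual). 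Alternatively, one can bypass duality altogether and instead prove the exact analogue of Lemma~\ref{lem:basic properties}(3) for the complexes $C_{\A,\bullet}(K_1,\mathscr{C}(K_1/\overline{N}_tT_jN_1))$ and the operators $U_s$, $s\in T^-$ --- i.e. produce a diagonal map making both triangles commute --- by the same Cartesian square of locally symmetric spaces and proper base change used there, and then conclude verbatim as in the proof of Proposition~\ref{prop:control for D}.
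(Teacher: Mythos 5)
Your fallback suggestion at the end is exactly the paper's proof: the paper disposes of Proposition~\ref{prop:control for C} with the single line ``proved analogously to Proposition~\ref{prop:control for D}'', i.e.\ one establishes the $T^-$/$\mathscr{C}$-coefficient analogue of Lemma~\ref{lem:basic properties}(3) (with the roles of restriction and trace dualized) and then repeats verbatim the surjectivity/injectivity argument. Your primary route --- identifying $C_{\bullet}(K_1,\mathscr{C}(K_1/\overline{N}_tT_jN_1))[D]$ with the $\mathscr{D}(T_0/T_j)$-linear dual of $C^{\BM}_{\bullet}(K_1,\mathscr{D}(K_1/\overline{N}_tT_jN_1))$ via the universal coefficient isomorphism~\eqref{eq:universal coefficient isom} and the second duality of Lemma~\ref{dual decompositions}, and then dualizing the Borel--Moore case of Proposition~\ref{prop:control for D} --- is genuinely different and is sound in outline: the finite-level freeness you invoke is the same computation as in Lemma~\ref{lem:duality}, the identification is natural in the coefficients (so it does match the two transition maps), and there is no circularity, since the finite-level duality uses no control theorem. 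The one real issue is the step you yourself flag: that the \emph{untwisted} duality carries $U_s$ ($s\in T^+$) to the transpose of $U_{s^{-1}}$ ($s^{-1}\in T^-$), hence the $T^+$-ordinary projector to the $T^-$-ordinary one. In the paper the at-$p$ Hecke compatibility of Poincar\'e duality is only verified in the $\kappa$-twisted form $\pi_w$ (Proposition~\ref{prop:Hecke action}, via Lemmas~\ref{lem:commutative diagram 1} and~\ref{lem:commutative diagram 2}), and the untwisted finite-level statement you need is exactly what is invoked, rather tersely, in the proof of Theorem~\ref{thm:ordinary PD} (``the ordinary part \ldots commutes with $\mathrm{RHom}$''); so your route does not avoid that verification, it merely relocates it earlier, and it additionally consumes the Borel--Moore half of Proposition~\ref{prop:control for D}, whereas the paper's direct argument treats $\mathscr{C}$-coefficients on the same footing as $\mathscr{D}$-coefficients with no duality input. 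Two reassurances on the bookkeeping you worry about: the paper's explicit remarks (universal coefficients preserve $[KgK]$, Poincar\'e duality inverts it) do give the expected $U_s\mapsto U_{s^{-1}}$ pattern, and even if the correspondence came out off by a conjugation by $w$ (which swaps $T^+$ and $T^-$), the ordinary part is unchanged because it is independent of the choice of controlling element by Lemma~\ref{lem:controlling element} and its $T^-$-analogue; but to make the written argument complete you would still have to prove the at-$p$ equivariance of the finite-level duality (the analogue, without $\kappa$, of Lemmas~\ref{lem:commutative diagram 1}--\ref{lem:commutative diagram 2}), which your proposal currently defers.
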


\begin{proof} This is proved analogously to Proposition~\ref{prop:control for D}.
\end{proof}

Set $\mathscr{C}(K_1/\overline{N}_tT_jN_1, \Z/p^k)\defeq \mathscr{C}(K_1/\overline{N}_tT_jN_1)\otimes_{\Z}\Z/p^k$. We have 
\[
\mathscr{C}(K_1/N_1)\simeq \varprojlim_{j,k}\varinjlim_{t} \mathscr{C}(K_1/\overline{N}_tT_jN_1, \Z/p^k),
\]
where the inverse limit runs over $j,k\in \mathbb{Z}_{\geq 1}$ and the direct limit runs over those $t\in T^+$ that 
are sufficiently large with respect to $j$. We define 
\[
C_{\bullet}(K_1, \mathscr{C}(K_1/N_1))^{T^--\mathrm{ord}}\defeq \varprojlim_{j,k}\varinjlim_{t} C_{\bullet}(K_1, \mathscr{C}(K_1/\overline{N}_{t} T_j N_1,\Z/p^k))^{T^--\mathrm{ord}}, 
\]
the ordinary part of $C_{\bullet}(K_1, \mathscr{C}(K_1/N_1))$ with respect to $T^-$.

Also set
\[
(w^{-1})_*\mathscr{C}(K_1/\overline{N}_tT_jN_1) \defeq 
\mathrm{Hom}_{\mathscr{D}(T_0)}\left((w^{-1})_*\mathscr{D}(K_1/\overline{N}_tT_jN_1), \mathscr{D}(T_0/T_j)\right).
\]
For each  $j\in \mathbb{Z}_{\geq 1}$, the map $\kappa: \mathscr{D}(K_1/N_1)\to (w^{-1})_*\mathscr{C}(K_1/N_1)$
induces a finite level map 
\[
\kappa_j: \mathscr{D}(K_1/\overline{N}_tT_jN_1)\to (w^{-1})_*\mathscr{C}(K_1/\overline{N}_tT_jN_1), 
\]
whenever $t\in T^+$ is such that $\overline{N}_tT_jN_1$ admits an Iwahori factorisation.  

\begin{lemma}\label{lem:fin level iso} The morphism
\[
(\kappa_j)_*: C_{\A,\bullet}(K_1,\mathscr{D}(K_1/\overline{N}_t T_j N_1))\to C_{\A,\bullet}(K_1,\mathscr{C}(K_1/\overline{N}_tT_jN_1))
\]
induces a quasi-isomorphism 
\[
(\kappa_j)_*^{\mathrm{ord}}: C_{\bullet}(K_1,\mathscr{D}(K_1/\overline{N}_t T_j N_1))^{T^+-\mathrm{ord}}\toisom 
C_{\bullet}(K_1,\mathscr{C}(K_1/\overline{N}_tT_jN_1))^{T^--\mathrm{ord}}.
\]
\end{lemma}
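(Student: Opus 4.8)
The plan is to work one $\Z/p^k$-level at a time and reduce the statement to a question about finitely generated modules over the local ring $\mathscr{D}(T_0/T_j)\otimes\Z/p^k$, where ordinary parts behave nicely. First I would observe that both sides are obtained from the adelic complexes $C_{\A,\bullet}(K_1,\mathscr{D}(K_1/\overline{N}_tT_jN_1))$ and $C_{\A,\bullet}(K_1,\mathscr{C}(K_1/\overline{N}_tT_jN_1))$ by applying the idempotents cut out by $\widetilde{U}_{s_0}$ (for a controlling element $s_0\in T^+$ on the source, $s_0\in T^-$ on the target), after transporting to Borel--Serre complexes; and the map $(\kappa_j)_*$ is visibly compatible with these idempotents provided I check that $\kappa_j$ intertwines the operator $U_{s_0}$ on $\mathscr{D}(K_1/\overline{N}_tT_jN_1)$ with $U_{s_0^{-1}}$ (or the appropriate twist) on $(w^{-1})_*\mathscr{C}(K_1/\overline{N}_tT_jN_1)$. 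This compatibility is the finite-level shadow of Proposition \ref{prop:Hecke action}: conjugation by the longest Weyl element $w$ swaps $T^+$ and $T^-$ and swaps $N_1$ with $\overline{N}_1$, so the $U$-operator on one side matches the dual $U$-operator on the other. Granting this, it suffices to show that $(\kappa_j)_*$ becomes a quasi-isomorphism after applying the ordinary projector.

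The heart of the matter is then a statement about the coefficient modules themselves: I claim that $\kappa_j\colon \mathscr{D}(K_1/\overline{N}_tT_jN_1)\to (w^{-1})_*\mathscr{C}(K_1/\overline{N}_tT_jN_1)$, while not an isomorphism of $K_1$-modules, induces an isomorphism after passing to the relevant ordinary parts (equivalently, its kernel and cokernel are killed by a power of $U_{s_0}$). Concretely, using the Iwahori factorisation $K_1=\overline{N}_1T_0N_1$ and the decomposition of $\overline{N}_tT_jN_1$, one can compute both $\mathscr{D}(K_1/\overline{N}_tT_jN_1)$ and $\mathscr{C}(K_1/\overline{N}_tT_jN_1)$ explicitly as finite free $\mathscr{D}(T_0/T_j)$-modules with bases indexed by $\overline{N}_1/\overline{N}_t$ (resp.\ by the $\mathscr{D}(T_0/T_j)$-dual, reindexed via $w$), and the pairing defining $\kappa$ — namely $\langle k_1N_1,k_2N_1\rangle=\tau(wk_2^{-1}w^{-1}k_1)$ — is, on the nose, the perfect duality pairing between these two bases once one uses that $\tau$ restricted to $\overline{N}_1T_0N_1$ records the $T_0$-component. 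So in fact I expect $\kappa_j$ to already be an isomorphism of $\mathscr{D}(T_0/T_j)$-modules at each finite level; the subtlety is purely that it is not $K_1$-equivariant for the untwisted action, only for the $w$-twisted one, which is exactly why the target is written $(w^{-1})_*\mathscr{C}$. If $\kappa_j$ is an honest isomorphism of coefficient modules intertwining the respective $U_{s_0}$-actions, then $(\kappa_j)_*$ is an isomorphism of adelic complexes commuting with the idempotents, and restricting to ordinary summands gives the desired quasi-isomorphism immediately.

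The main obstacle I anticipate is bookkeeping the twist by $w$ and verifying the intertwining of Hecke operators at finite level carefully: one must track how $U_{s_0}$ acts through the double coset $[\overline{N}_tT_jN_1 s_0 \overline{N}_tT_jN_1]$ on each side, use the commutative/Cartesian diagrams of locally symmetric spaces from Lemma \ref{lem:basic properties}(3) and the proper base change argument, and match them under $\kappa_j$ via the identities $\tau(\overline{b}kb)=\tau(\overline{b})\tau(k)\tau(b)$ and $w^{-1}T^+w=T^-$. This is essentially the finite-level, $\Z/p^k$-reduced version of Lemmas \ref{lem:commutative diagram 1} and \ref{lem:commutative diagram 2}, so I would phrase it as a corollary of those once they are in hand, and then conclude by taking the inverse limit over $k$ and recalling from \cite[Lemma 2.13]{khare-thorne} that the ordinary part of the limit complex computes the ordinary part of homology. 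The only genuinely new input beyond the preceding lemmas is the identification of the pairing $\langle\,\cdot\,,\,\cdot\,\rangle$ with the standard $\mathscr{D}(T_0/T_j)$-duality on the explicit free bases, which is a direct computation using the Iwahori factorisation.
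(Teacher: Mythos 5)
The central claim of your second paragraph --- that $\kappa_j$ is already an isomorphism of coefficient modules at finite level, because the pairing $\langle k_1N_1,k_2N_1\rangle=\tau(wk_2^{-1}w^{-1}k_1)$ is ``on the nose'' the standard duality pairing on the free $\mathscr{D}(T_0/T_j)$-bases indexed by $\overline{N}_1/\overline{N}_t$ --- is false, and this is where the actual content of the lemma lies. Writing $k_i=\bar n_i t_i$ with $\bar n_i\in\overline{N}_1$, one has $wk_2^{-1}w^{-1}k_1=(wt_2^{-1}w^{-1})\,(w\bar n_2^{-1}w^{-1})\,\bar n_1\,t_1$ with $w\bar n_2^{-1}w^{-1}\in N_1$, so the Gram matrix of the pairing has $(\bar n_1,\bar n_2)$-entry the group-like element $[\tau\bigl((w\bar n_2^{-1}w^{-1})\bar n_1\bigr)]\in\Z_p[T_0/T_j]$: this is a nonzero unit-valued entry for \emph{every} pair, not a delta function. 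For instance for $\SL_2$, with $\bar n_i$ having lower-left entry $c_i$, the entry is the class of $\operatorname{diag}(1+c_1c_2,(1+c_1c_2)^{-1})$. Applying the augmentation $\Z_p[T_0/T_j]\to\F_p$ sends every entry to $1$, so the matrix reduces to the all-ones matrix of size $|\overline{N}_1/\overline{N}_t|>1$, which is singular; hence $\kappa_j$ is not an isomorphism, and your argument collapses at its key step. One can also see a priori that your conclusion proves too much: if $\kappa_j$ were an isomorphism of local systems intertwining the $U$-operators, $(\kappa_j)_*$ would be a quasi-isomorphism \emph{before} applying the ordinary projectors, and feeding this into the rest of the argument would yield a Poincar\'e duality statement for the full (non-ordinary) complexes, which fails --- equivalently, $U_t$ is not invertible on finite-level homology.

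What the paper actually does is the opposite of your reduction: perfectness of the pairing holds only after restricting to $K_t/\overline{N}_tT_jN_1\cong T_0/T_j$, where the module is free of rank one over $\mathscr{D}(T_0/T_j)$ and the Gram ``matrix'' is a single group-like unit; this gives that the map $\kappa_{j,t}$ on the $K_t$-level coefficients is an isomorphism. Placing this into the finite-level version of the diagram used for the Hecke-equivariance lemmas, one finds that $(\kappa_j)_*$ on $C_{\A,\bullet}(K_1,\mathscr{D}(K_1/\overline{N}_tT_jN_1))$ is the composite of the double coset operator $U_t$ with an isomorphism. The operator $U_t$ is not invertible in general, but it \emph{is} invertible on the $T^+$-ordinary part (by the controlling-element lemma), and this --- not an isomorphism of coefficient modules --- is why the induced map on ordinary parts is a quasi-isomorphism. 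So the gap in your proposal is not bookkeeping of the $w$-twist or of the Hecke actions (that part is in the right spirit), but the identification of the pairing as perfect at level $K_1$; the discrepancy between the $K_1$- and $K_t$-levels is precisely a $U_t$-operator, and the ordinary projector is what absorbs it.
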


\begin{proof} We have the following finite-level version of~\eqref{eq:loc sys diagram}:
\begin{equation}\label{eq:fin loc sys diagram}
\xymatrix{\mathscr{D}(K_1/\overline{N}_t T_jN_1)\ar[r]\ar[d]^{\kappa_j} & 
\mathscr{D}(K_1/\overline{N}_1T_jN_t)\ar[r]^{t_*\ \sim} & (t^{-1})_*\mathscr{D}(K_t/\overline{N}_t T_jN_1)\ar[d]^{(t^{-1})_*(\kappa_{j,t})} \\
(w^{-1})_*\mathscr{C}(K_1/\overline{N}_t T_jN_1)\ar[r]^{\sim} & (w^{-1})_*1_*\mathscr{C}(K_t/\overline{N}_t T_jN_1)
\ar[r]^{\sim}& (t^{-1})_*((t^{-1}w)^{-1})_*\mathscr{C}(K_t/\overline{N}_t T_jN_1)
},
\end{equation}
where the top row is part of the definition of the double coset operator corresponding to $t$. More 
precisely, in order to get the double coset operator $U_t$ 
acting on $C_{\A,\bullet}(K_1, \mathscr{D}(K_1/\overline{N}_t T_jN_1))$, 
one needs to apply $C_{\A,\bullet}(K_1,\ )$ to the top row and compose with the isomorphism 
\[
\iota_{K_t,K_1}: C_{\A,\bullet}(K_t, \mathscr{D}(K_t/\overline{N}_t T_jN_1))\toisom C_{\A,\bullet}(K_1, \mathscr{D}(K_1/\overline{N}_t T_jN_1)).
\]

We claim that the right vertical arrow in~\eqref{eq:fin loc sys diagram} is an isomorphism. For this, observe that the natural map $K_t/\overline{N}_t T_j N_1\to T_0/T_j$
is a bijection, so the pairing 
\[
\mathscr{D}(K_t/\overline{N}_t T_jN_1)\times ((t^{-1}w)^{-1})_* \mathscr{D}(K_t/\overline{N}_t T_jN_1) \to \mathscr{D}(T_0/T_j)
\] 
\[
\langle t_1 \overline{N}_tT_jN_1, t_2\overline{N}_tT_jN_1 \rangle = wt_2^{-1}w^{-1}tt_1t^{-1} T_j = wt_2^{-1}w^{-1}t_1 T_j
\]
is perfect. All of this implies that $(\kappa_j)_*$ is the composite of the double coset operator $U_t$ acting on 
$C_{\A,\bullet}(K_1, \mathscr{D}(K_1/\overline{N}_t T_jN_1))$ with an isomorphism. 

In order to prove the lemma, it suffices to check that $(\kappa_j)_*$ induces an isomorphism 
on the ordinary part of homology. The map induced by $(\kappa_j)_*$ on homology induces a morphism of ordinary parts of homology (where the ordinary
part is taken with respect to $T^+$ on the RHS and with respect to $T^-$ on the LHS), 
which factors as $U_t$ composed with an isomorphism. 
Since $U_t$ acts as an isomorphism on the ordinary part of homology, the lemma follows. 
\end{proof}

\begin{prop}\label{prop:ordinary iso} The map 
\[
\kappa_*: C_{\A,\bullet}(K_1,\mathscr{D}(K_1/N_1))\to C_{\A,\bullet}(K_1,\mathscr{C}(K_1/N_1))
\]
induces a quasi-isomorphism
\[
\kappa^{\mathrm{ord}}_*: C_{\bullet}(K_1,\mathscr{D}(K_1/N_1))^{T^+-\mathrm{ord}}\toisom C_{\bullet}(K_1,\mathscr{C}(K_1/N_1))^{T^--\mathrm{ord}}.
\]
\end{prop}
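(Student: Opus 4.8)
The plan is to deduce the proposition from the finite-level quasi-isomorphism of Lemma~\ref{lem:fin level iso} by passing to the limit, using the two horizontal control theorems (Propositions~\ref{prop:control for D} and~\ref{prop:control for C}) to handle the variable $t$, which enters the $\mathscr{D}$-side through an inverse limit but the $\mathscr{C}$-side through a direct limit. Unwinding the definitions, one has
\[
C_{\bullet}(K_1,\mathscr{D}(K_1/N_1))^{T^+-\mathrm{ord}} = \varprojlim_{j,t,k} C_{\bullet}\bigl(K_1,\mathscr{D}(K_1/\overline{N}_t T_j N_1)\otimes\Z/p^k\bigr)^{T^+-\mathrm{ord}}
\]
and
\[
C_{\bullet}(K_1,\mathscr{C}(K_1/N_1))^{T^--\mathrm{ord}} = \varprojlim_{j,k}\varinjlim_{t} C_{\bullet}\bigl(K_1,\mathscr{C}(K_1/\overline{N}_t T_j N_1,\Z/p^k)\bigr)^{T^--\mathrm{ord}},
\]
and $\kappa_*$ is, by construction, assembled from the maps induced by the finite-level homomorphisms $\kappa_j$, compatibly with all the transition maps. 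Since every complex in sight has finite chain groups at each finite stage, homology commutes with the inverse limits over $j$ and $k$ by the same Mittag--Leffler considerations as in the proof of Theorem~\ref{thm:completed hom via distributions}. It therefore suffices to show that, for each fixed $j$ and $k$, the map $\kappa_*$ induces a quasi-isomorphism
\[
\varprojlim_{t} C_{\bullet}\bigl(K_1,\mathscr{D}(K_1/\overline{N}_t T_j N_1)\otimes\Z/p^k\bigr)^{T^+-\mathrm{ord}}\longrightarrow \varinjlim_{t} C_{\bullet}\bigl(K_1,\mathscr{C}(K_1/\overline{N}_t T_j N_1,\Z/p^k)\bigr)^{T^--\mathrm{ord}}.
\]

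To carry this out, fix $j$ and $k$ and choose $t_0\in T^+$ large enough that $\overline{N}_{t_0}T_jN_1$ admits an Iwahori factorisation. On the $\mathscr{D}$-side the transition maps of the (cofinal, countable) inverse system are surjective, and by Proposition~\ref{prop:control for D} they are quasi-isomorphisms, so the projection from $\varprojlim_t$ onto the $t_0$-term is a quasi-isomorphism. On the $\mathscr{C}$-side the transition maps are injective and, by Proposition~\ref{prop:control for C}, quasi-isomorphisms, so exactness of filtered colimits shows the canonical map from the $t_0$-term into $\varinjlim_t$ is a quasi-isomorphism. By Lemma~\ref{lem:fin level iso}, $(\kappa_j)_*$ is a quasi-isomorphism between the two $t_0$-level complexes, and since $\kappa$ induces the $\kappa_j$ these identifications are compatible with $\kappa_*$; chasing the resulting square gives the desired quasi-isomorphism, and assembling over $j$ and $k$ finishes the proof.

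The main obstacle is precisely this asymmetry: the $\mathscr{D}$-side is an inverse limit over $t$ while the $\mathscr{C}$-side is a direct limit over $t$, so one genuinely needs both control theorems, in opposite directions, to replace the two sides by the same finite-level complex before comparing them. The remaining work is bookkeeping: checking that $\kappa_*$ really does factor through the compatible system $(\kappa_j)_*$ after transporting everything to the Borel--Serre complexes and passing to ordinary direct summands, and tracking the (routine) Mittag--Leffler and exactness statements needed to commute homology past the limits over $j$, $t$, and $k$.
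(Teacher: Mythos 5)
Your argument is exactly the paper's: the published proof of Proposition \ref{prop:ordinary iso} simply combines Lemma \ref{lem:fin level iso} with the horizontal control theorems (Propositions \ref{prop:control for D} and \ref{prop:control for C}), which is precisely what you do, just with the limit-versus-colimit bookkeeping over $t$, $j$, $k$ spelled out. Your added details (finiteness of the mod $p^k$ complexes to commute homology with the inverse limits, exactness of filtered colimits on the $\mathscr{C}$-side, compatibility of $\kappa_*$ with the $\kappa_j$) are correct and consistent with the paper's intent.
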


\begin{proof}This follows by combining Lemma~\ref{lem:fin level iso} and
Propositions~\ref{prop:control for D} and~\ref{prop:control for C}. 
\end{proof}

\begin{thm}\label{thm:ordinary PD} The morphism $\pi_w$ induces a quasi-isomorphism of ordinary parts
\[
\pi_w^{\mathrm{ord}}: C^{\BM}_{\bullet}(K_1, \mathscr{D}(K_1/N_1))^{T^+-\mathrm{ord}}[D] \to 
\mathrm{RHom}_{\mathscr{D}(T_0)}\left(C_{\bullet}(K_1, \mathscr{D}(K_1/N_1))^{T^+-\mathrm{ord}},\mathscr{D}(T_0)\right). 
\]
\end{thm}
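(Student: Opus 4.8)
The plan is to deduce this from the morphism $\pi_w$ constructed in Proposition~\ref{prop:Hecke action} by passing to ordinary parts, using the various control theorems and the ordinary isomorphism established above. Recall that $\pi_w$ is the composition of the Poincar\'e duality isomorphism (Corollary~\ref{cor:poincare duality}), the universal coefficient isomorphism at infinite level (Corollary~\ref{cor:inf universal coefficient}, in the case $t=1$), and the map induced by precomposition with $\kappa_*$. The first two are isomorphisms on the nose, so the only thing that fails to be an isomorphism before passing to ordinary parts is the map induced by $\kappa_*$; thus everything hinges on the assertion that $\kappa_*$ induces a quasi-isomorphism after restricting to the appropriate ordinary summands --- and this is precisely Proposition~\ref{prop:ordinary iso}.

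First I would check that the morphism $\pi_w$ is compatible with the direct-summand decompositions into ordinary parts on both sides, so that it restricts to a well-defined morphism $\pi_w^{\mathrm{ord}}$. On the left-hand side, $C^{\BM}_{\A,\bullet}(K_1,\mathscr{D}(K_1/N_1))$ carries the $U_s$-action for $s\in T^+$ and its $T^+$-ordinary summand; on the right-hand side, applying $\mathrm{RHom}_{\mathscr{D}(T_0)}(-,\mathscr{D}(T_0))$ to $C_{\A,\bullet}(K_1,\mathscr{D}(K_1/N_1))$ turns the $T^+$-ordinary summand of the latter into a direct summand of the $\mathrm{RHom}$, and this is the summand appearing in the statement. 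By Proposition~\ref{prop:Hecke action}, $\pi_w$ intertwines the Hecke action of $[N_1 w^{-1}t^{-1}w N_1]$ on the left with that of $[N_1 t N_1]$ on the right; since $t\mapsto w^{-1}t^{-1}w$ is the involution of $T^+$ used to match the two notions of ordinarity (and a controlling element is carried to a controlling element under this involution, by the same argument as in Lemma~\ref{lem:controlling element}), the idempotents cutting out the ordinary parts correspond, so $\pi_w$ does restrict to $\pi_w^{\mathrm{ord}}$.

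Next I would factor $\pi_w^{\mathrm{ord}}$ as a composite of three maps mirroring the factorisation of $\pi_w$: the Poincar\'e duality isomorphism of Corollary~\ref{cor:poincare duality} (which is an isomorphism of complexes of $\mathscr{D}(T_0)$-modules and hence restricts to an isomorphism on any direct summand), followed by the isomorphism $C^{\bullet}_{\A}(K_1,\mathscr{D}(K_1/N_1))\simeq \mathrm{RHom}_{\mathscr{D}(T_0)}(C_{\A,\bullet}(K_1,\mathscr{C}(K_1/N_1)),\mathscr{D}(T_0))$ of Corollary~\ref{cor:inf universal coefficient} (again an isomorphism of complexes, so it restricts to the ordinary summands, matching $T^+$-ordinary on the source cohomology side with $T^-$-ordinary inside the $\mathrm{RHom}$), followed by the map $\mathrm{RHom}_{\mathscr{D}(T_0)}(\kappa^{\mathrm{ord}}_*,\mathscr{D}(T_0))$. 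By Proposition~\ref{prop:ordinary iso}, $\kappa^{\mathrm{ord}}_*\colon C_{\bullet}(K_1,\mathscr{D}(K_1/N_1))^{T^+-\mathrm{ord}}\toisom C_{\bullet}(K_1,\mathscr{C}(K_1/N_1))^{T^--\mathrm{ord}}$ is a quasi-isomorphism; since $\mathrm{RHom}_{\mathscr{D}(T_0)}(-,\mathscr{D}(T_0))$ preserves quasi-isomorphisms, the third map is a quasi-isomorphism as well, and the composite $\pi_w^{\mathrm{ord}}$ is a quasi-isomorphism.

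The main obstacle I anticipate is the bookkeeping in the first two paragraphs: verifying carefully that the three constituent isomorphisms of $\pi_w$ are genuinely $\mathscr{D}(T_0)$-linear (so that they commute with the idempotents defining the ordinary summands) and that the $U_s$-operators used on either side are matched correctly under the involution $t\mapsto w^{-1}t^{-1}w$, including the degree shift by $[D]$ and the twisting by $w$ in $\kappa$. Once this matching is pinned down, the conclusion is formal given Proposition~\ref{prop:ordinary iso} and Corollaries~\ref{cor:poincare duality} and~\ref{cor:inf universal coefficient}. One should also record that $\mathrm{RHom}_{\mathscr{D}(T_0)}(-,\mathscr{D}(T_0))$ can be computed here in a way compatible with the ordinary summands --- using that $C_{\bullet}(K_1,\mathscr{D}(K_1/N_1))^{T^+-\mathrm{ord}}$ is, by its construction as a $\varprojlim$ of direct summands of Borel--Serre complexes, a complex of reasonable $\mathscr{D}(T_0)$-modules --- but this is essentially already contained in the setup of $\pi_w$.
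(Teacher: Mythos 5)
Your skeleton (peel off Poincar\'e duality and the universal coefficient isomorphism, then invoke Proposition~\ref{prop:ordinary iso}) matches the spirit of the paper, and Proposition~\ref{prop:ordinary iso} is indeed the first ingredient used there. But the step you dismiss as bookkeeping is the real content, and as written there is a gap. The target of the theorem is $\mathrm{RHom}_{\mathscr{D}(T_0)}$ of the ordinary part, not the ordinary part of an $\mathrm{RHom}$, so what you must show is that the restriction of the infinite-level isomorphism $C^{\BM}_{\A,\bullet}(K_1,\mathscr{D}(K_1/N_1))[D]\simeq \mathrm{RHom}_{\mathscr{D}(T_0)}\left(C_{\A,\bullet}(K_1,\mathscr{C}(K_1/N_1)),\mathscr{D}(T_0)\right)$ to the $T^+$-ordinary summand is a quasi-isomorphism onto $\mathrm{RHom}_{\mathscr{D}(T_0)}\left(C_{\bullet}(K_1,\mathscr{C}(K_1/N_1))^{T^--\mathrm{ord}},\mathscr{D}(T_0)\right)$, i.e.\ that forming ordinary parts commutes with the duality functor. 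Your justification (``the idempotents cutting out the ordinary parts correspond'', ``an isomorphism of complexes restricts to the ordinary summands'') presupposes exactly this. It is not formal here: the infinite-level complexes are not perfect over $\mathscr{D}(T_0)$; the ordinary parts are defined as $\varprojlim_{j,k}\varinjlim_{t}$ of finite-level summands cut out by idempotents built from operators $\widetilde{U}_{s_0}$ that are only defined up to homotopy on the Borel--Serre complexes (and on the $\mathscr{C}$-side it is not even asserted that the outcome is an honest direct summand of the infinite-level complex); moreover duality turns the direct limit over $t$ on the $\mathscr{C}$-side into an inverse limit, so $\mathrm{RHom}_{\mathscr{D}(T_0)}(-,\mathscr{D}(T_0))$ does not obviously commute with the limit procedure defining the ordinary parts. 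The equivariance of Proposition~\ref{prop:Hecke action} matches individual double coset operators, but it does not by itself produce matching idempotents at infinite level.

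The paper closes this gap by descending to finite level: for each $j,k$ and suitable $t$, the Borel--Serre complexes with coefficients in $\mathscr{D}(K_1/\overline{N}_tT_jN_1)\otimes\Z/p^k$ and $\mathscr{C}(K_1/\overline{N}_tT_jN_1)\otimes\Z/p^k$ are perfect over $\mathscr{D}(T_0/T_j)\otimes\Z/p^k$, where the ordinary part is the largest direct summand on which $U_{s_0}$ acts invertibly, and this description visibly commutes with $\mathrm{RHom}_{\mathscr{D}(T_0/T_j)\otimes\Z/p^k}(-,\mathscr{D}(T_0/T_j)\otimes\Z/p^k)$; the horizontal control theorems (Propositions~\ref{prop:control for D} and~\ref{prop:control for C}) then make the transition maps in $t$ harmless on ordinary parts, and one finally takes $\varprojlim_{j,k}$. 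Some reduction of this kind (or a direct construction of honest, mutually dual idempotents at infinite level intertwined by $\pi_w$) has to be supplied; without it, your second and third paragraphs assert the key identification rather than prove it.
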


\begin{proof} By Proposition~\ref{prop:ordinary iso}, we have a quasi-isomorphism:
\[
\mathrm{RHom}_{\mathscr{D}(T_0)}\left(C_{\bullet}(K_1, \mathscr{C}(K_1/N_1))^{T^--\mathrm{ord}},\mathscr{D}(T_0)\right)\toisom 
\mathrm{RHom}_{\mathscr{D}(T_0)}\left(C_{\bullet}(K_1, \mathscr{D}(K_1/N_1))^{T^+-\mathrm{ord}},\mathscr{D}(T_0)\right).
\]
For each $k,j\in \Z_{\geq 1}$, Poincar\'e duality and the universal coefficient isomorphism (at each level $\overline{N}_tT_jN_1$) 
induce quasi-isomorphisms 
\[
\varprojlim_{t} C^{\BM}_{\bullet}(K_1, \mathscr{D}(K_1/\overline{N}_tT_jN_1)\otimes \Z/p^k)^{T^+-\mathrm{ord}}[D] \toisom 
\]
\[
\mathrm{RHom}_{\mathscr{D}(T_0/T_j)\otimes \Z/p^k}\left(\varinjlim_{t}C_{\bullet}(K_1, \mathscr{C}(K_1/\overline{N}_t T_jN_1)\otimes \Z/p^k)^{T^--\mathrm{ord}},\mathscr{D}(T_0/T_j)\otimes \Z/p^k\right),
\]
where the transition morphisms in both the inverse and the direct limit are isomorphisms by the horizontal control theorems. 
The morphism is a quasi-isomorphism, since on perfect complexes of $\Z/p^k$-modules, the ordinary part is simply
the largest direct summand on which $U_{s_0}$ acts invertibly, and this commutes with
\[
\mathrm{RHom}_{\mathscr{D}(T_0/T_j)\otimes \Z/p^k}(-, \mathscr{D}(T_0/T_j)\otimes \Z/p^k).
\] 
Taking inverse limits with respect to $j,k$, we obtain a quasi-isomorphism 
\[
C^{\BM}_{\bullet}(K_1, \mathscr{D}(K_1/N_1))^{T^+-\mathrm{ord}}[D]\toisom
\mathrm{RHom}_{\mathscr{D}(T_0)}\left(C_{\bullet}(K_1, \mathscr{C}(K_1/N_1))^{T^--\mathrm{ord}},\mathscr{D}(T_0)\right). 
\]
The theorem follows. 
\end{proof}

\subsection{An application of Theorem~\ref{main complex}}\label{subsec: application}
We will give some implications of Theorem~\ref{main complex} for completed homology and completed Borel--Moore homology groups. In the cases we will be interested in below, $T$ is split over $\Zp$ and hence $T_0 \cong (\Zp^\times)^{\dim T}$. The ring $\mathscr{D}(T_0)$ is then easily seen to be a semi-local complete intersection ring. For any such ring $A$, and a finitely generated $A$-module $M$, one may define
\[
\codim_{A}M = \inf_j \{j \mid \Ext_{A}^j(M,A) \neq 0 \}.
\]
Geometrically, $\codim_{A}M$ is the minimum of the codimensions of the support of $M$ at the maximal ideals of $A$. For a closed subgroup $H \subset G(\Zp)$, we let
\[ \tH_i(H) \colonequals \varprojlim_{n,K \supseteq H} H_i(X_K,\Z/p^n) \,. \]

\begin{thm} \label{homology vanishing}
Assume that $G$ admits a Shimura datum of Hodge type and that $G_{\Qp}$ is
split. Recall that $d=D/2$ is the complex dimension of the Shimura varieties for $G$. 
\begin{enumerate}
\item Let $H \subseteq U(\Zp)$ be a closed subgroup.  Then
\[ \varprojlim_{K \supseteq H} H_i^{\BM}(X_{K},\Z/p^r) = 0 \]
for all $r \ge 1$ and $i>d$.
\item
We have
\[ \codim_{\mathscr{D}(T_0)} \tH_i(N_1)^{T^+-\ord} \ge d-i \]
for all $0 \le i \le d$.
\end{enumerate}
\end{thm}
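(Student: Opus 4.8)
The plan is to deduce part (1) from Theorem~\ref{main complex} by a duality argument over $\Z/p^r$, and then to obtain part (2) by combining this vanishing with the Poincar\'e duality quasi-isomorphism of Theorem~\ref{thm:ordinary PD}. For part (1), first note that by Remark~\ref{remark on main thm} Corollary~\ref{strongest vanishing theorem} applies verbatim with $\ol U(\Zp)$ replaced by $U(\Zp)$ (conjugate by a representative of $w_0\in G(\Zp)$, which translates the whole Shimura tower), so $\varinjlim_{K\supseteq H}H^i_c(X_K(\CC),\Z/p^r)=0$ for $i>d$ and every closed $H\subseteq U(\Zp)$. Next I would work with the finite Borel--Serre complexes of Remark~\ref{rem:fin complexes}: since $F^{\BM}_\bullet$ is finite free over $\Z[K]$,
\[
C^{\bullet,\BM}(K,\Z/p^r)=\Hom_{\Z[K]}(F^{\BM}_\bullet,\Z/p^r)\cong\Hom_{\Z/p^r}\!\big(F^{\BM}_\bullet\otimes_{\Z[K]}\Z/p^r,\Z/p^r\big)=\Hom_{\Z/p^r}\!\big(C^{\BM}_\bullet(K,\Z/p^r),\Z/p^r\big),
\]
naturally in $K$. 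As $\Z/p^r$ is self-injective, $\Hom_{\Z/p^r}(-,\Z/p^r)$ is exact and faithful on finite $\Z/p^r$-modules, so on cohomology this yields a natural isomorphism $H^i_c(X_K(\CC),\Z/p^r)\cong\Hom_{\Z/p^r}(H^{\BM}_i(X_K(\CC),\Z/p^r),\Z/p^r)$ identifying the transition maps of the colimit with the $\Z/p^r$-duals of the transition maps of the inverse system $\{H^{\BM}_i(X_K(\CC),\Z/p^r)\}_{K\supseteq H}$. Vanishing of the colimit then forces, by faithfulness of $\Z/p^r$-duality and finiteness of the modules involved, that for every $K\supseteq H$ there is some $K'\subseteq K$ with $H^{\BM}_i(X_{K'}(\CC),\Z/p^r)\to H^{\BM}_i(X_K(\CC),\Z/p^r)$ the zero map; that is, this inverse system is pro-zero, and hence $\varprojlim_{K\supseteq H}H^{\BM}_i(X_K(\CC),\Z/p^r)=0$ for $i>d$, which is part (1). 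Applied with $H=N_1\subseteq U(\Zp)$, together with cofinality of $\{\ol N_iT_jN_1\}$ among open subgroups containing $N_1$, this also gives $\tH^{\BM}_q(N_1)=\varprojlim_n\varprojlim_{i,j}H^{\BM}_q(X_{K_{ij}},\Z/p^n)=0$, and hence $\tH^{\BM}_q(N_1)^{T^+-\ord}=0$, for all $q>d$.

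For part (2), write $A\defeq\mathscr{D}(T_0)$, a finite product of regular complete local Noetherian rings of dimension $\delta=\dim T+1$, and let $C_\bullet$ and $C^{\BM}_\bullet$ denote the ordinary complexes $C_\bullet(K_1,\mathscr{D}(K_1/N_1))^{T^+-\ord}$ and $C^{\BM}_\bullet(K_1,\mathscr{D}(K_1/N_1))^{T^+-\ord}$, with homology $\tH_q(N_1)^{T^+-\ord}$ and $\tH^{\BM}_q(N_1)^{T^+-\ord}$. Using the horizontal control theorems of \S\ref{subsec:ordinary parts} to reduce to finite level, both are (up to quasi-isomorphism) bounded complexes of finitely generated $A$-modules, hence perfect over the regular ring $A$, so biduality $\RHom_A(\RHom_A(-,A),A)=\mathrm{id}$ holds for them. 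Applying $\RHom_A(-,A)$ to the quasi-isomorphism $C^{\BM}_\bullet[D]\toisom\RHom_A(C_\bullet,A)$ of Theorem~\ref{thm:ordinary PD} produces a quasi-isomorphism $\RHom_A(C^{\BM}_\bullet,A)\simeq C_\bullet[D]$, and hence a convergent hyper-$\Ext$ spectral sequence
\[
E_2^{p,q}=\Ext^p_A\!\big(\tH^{\BM}_q(N_1)^{T^+-\ord},A\big)\ \Longrightarrow\ \tH_{D-(p+q)}(N_1)^{T^+-\ord}.
\]

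To extract the bound, fix $0\le i\le d$ and set $M_i\defeq\tH_i(N_1)^{T^+-\ord}$. In the spectral sequence above $M_i$ has a finite filtration whose graded pieces are subquotients of $E_2^{p,q}=\Ext^p_A(\tH^{\BM}_q(N_1)^{T^+-\ord},A)$ with $p+q=D-i=2d-i$. Such a term can be nonzero only when $0\le q\le d$ (by part (1), using $D=2d$), and then $p=(2d-i)-q\ge d-i$. By the standard fact that $\codim_A\Ext^p_A(N,A)\ge p$ for every finitely generated module $N$ over the regular ring $A$ (the Auslander condition), each such $E_2^{p,q}$, and so each subquotient of it, has codimension $\ge p\ge d-i$; since $\codim_A M_i$ equals the minimum of the codimensions of the graded pieces of any finite filtration of $M_i$, we conclude $\codim_A M_i\ge d-i$, which is part (2).

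The step I expect to be the main obstacle is the reduction in part (2) of the infinite-level ordinary complexes $C^{(\BM)}_\bullet(K_1,\mathscr{D}(K_1/N_1))^{T^+-\ord}$ to honest perfect complexes over $\mathscr{D}(T_0)$ --- i.e. threading the horizontal control theorems so that $\RHom_{\mathscr{D}(T_0)}(-,\mathscr{D}(T_0))$, biduality, and the resulting spectral sequence are all legitimate. Conceptually, the heart of part (2) is that applying Theorem~\ref{thm:ordinary PD} in this biduality form lets the vanishing $\tH^{\BM}_q(N_1)^{T^+-\ord}=0$ for $q>d$ directly pin down the range $p\ge d-i$ of $\Ext$-degrees that can contribute to $\tH_i(N_1)^{T^+-\ord}$, with no further spectral-sequence chase needed.
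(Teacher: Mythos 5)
Part (1) of your proposal is essentially the paper's own argument: Lemma \ref{universal coefficient} provides exactly the level-compatible identification $H^{\BM}_i(X_K,\Z/p^r)\cong\Hom_{\Z/p^r}(H^i_c(X_K,\Z/p^r),\Z/p^r)$ via the finite free Borel--Serre complexes, and the paper then concludes directly by identifying $\varprojlim_K$ of the duals with the $\Z/p^r$-dual of $\varinjlim_K H^i_c(X_K,\Z/p^r)$, which vanishes by Corollary \ref{strongest vanishing theorem} (with the same conjugation from $\ol{U}$ to $U$ you invoke via Remark \ref{remark on main thm}). Your slightly longer detour through pro-triviality of the inverse system, using finiteness of $H^i_c(X_K,\Z/p^r)$, reaches the same conclusion and is correct; the passage to $\tH^{\BM}_q(N_1)^{T^+-\ord}=0$ for $q>d$ by cofinality of the $K_{ij}$ is also fine and is implicitly what the paper uses.

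For part (2) your route genuinely differs from the paper's, and the difference is precisely where the gap sits. The paper uses the spectral sequence in the direction that Theorem \ref{thm:ordinary PD} supplies with no further manipulation, namely $\Ext^i_{\sD(T_0)}(\tH_j(N_1)^{T^+-\ord},\sD(T_0))\Rightarrow\tH^{\BM}_{2d-i-j}(N_1)^{T^+-\ord}$ (a hyper-Ext spectral sequence of a bounded complex, needing no perfectness), and then deduces the codimension bound from part (1) by the descending-induction argument of \cite[Corollary 4.2.3]{scholze-galois}. You instead apply $\RHom_{\sD(T_0)}(-,\sD(T_0))$ once more and invoke biduality to obtain $\Ext^p_{\sD(T_0)}(\tH^{\BM}_q(N_1)^{T^+-\ord},\sD(T_0))\Rightarrow\tH_{D-p-q}(N_1)^{T^+-\ord}$, after which the bound falls out of the Auslander inequality with no induction. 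The problem is your justification of biduality: the horizontal control theorems (Propositions \ref{prop:control for D} and \ref{prop:control for C}) only collapse the $\ol{N}_t$-direction at fixed $(j,k)$; they do not remove the inverse limits over $T_j$ and over $\Z/p^k$, so they do not exhibit $C^{(\BM)}_{\bullet}(K_1,\sD(K_1/N_1))^{T^+-\ord}$ as (quasi-isomorphic to) a bounded complex of finitely generated $\sD(T_0)$-modules. What your argument actually needs is a Hida-type vertical finiteness statement --- perfectness of the ordinary complex over $\sD(T_0)$, equivalently finite generation of $\tH_*(N_1)^{T^+-\ord}$ and $\tH^{\BM}_*(N_1)^{T^+-\ord}$ over $\sD(T_0)$ --- which the paper asserts in the introduction but never proves, and which its own argument is structured to avoid needing in this biduality form (finiteness enters there only through the codimension manipulations on $E_2$-terms). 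So as written the step ``hence perfect, so biduality holds'' is unjustified; granted that finiteness, your endgame is correct and is a clean alternative to Scholze's induction.
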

\begin{remark}
The slightly unusual level $N_1$ was chosen for convenience.  The groups
$\tH_i(N)^{T^+-\ord}$ for $N$ a compact open subgroup of $U(\Zp)$ are
all isomorphic.  For $N' \subseteq N \subseteq U(\Zp)$, the trace map
$\tH_i(N)^{T^+-\ord} \to \tH_i(N')^{T^+-\ord}$ is an isomorphism for the following reason.
We can find $s \in T^+$ so that $sNs^{-1} \subseteq N'$.  Consider the diagram
\[ 
\tH_i(N)^{T^+-\ord} \xrightarrow{\tr_1} \tH_i(N')^{T^+-\ord} \xrightarrow{\tr_2} \tH_i(sNs^{-1})^{T^+-\ord} \xrightarrow{\tr_3}  \tH_i(sN's^{-1})^{T^+-\ord} \,.
\]
The maps $\tr_2 \circ \tr_1 = [NsN]s^{-1}$ and $\tr_3 \circ \tr_2 = [N'sN']s^{-1}$ are isomorphisms.  Then
$(\tr_2 \circ \tr_1)^{-1} \circ \tr_2$ is both a left and right inverse
of $\tr_1$ since
\[ 
\tr_1 \circ (\tr_2 \circ \tr_1)^{-1} \circ \tr_2 = (\tr_3 \circ \tr_2)^{-1} \circ \tr_3 \circ \tr_2 \circ \tr_1 \circ (\tr_2 \circ \tr_1)^{-1} \circ \tr_2 = \id \,. 
\]
\end{remark}
\begin{lemma} \label{universal coefficient}
For each compact open subgroup $K \subseteq G(\Zp)$, there is an
isomorphism of $\Z/p^r$-modules
\[ \Hom_{\ZZ/p^r}\left(H^{\bullet}_c(X_K,\ZZ/p^r),\ZZ/p^r \right) \simeq
H_{\bullet}^{\BM}(X_K,\ZZ/p^r) \,, \]
and these isomorphisms are compatible with changing the level.
\end{lemma}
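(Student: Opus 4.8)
The plan is to deduce this from the universal coefficient isomorphism~\eqref{eq:universal coefficient isom} together with the fact that $\Z/p^r$ is a self-injective ring. First I would pass to the finite Borel--Serre complexes of Remark~\ref{rem:fin complexes}: the complexes $C^{\BM}_\bullet(K,\Z/p^r) = F^{\BM}_\bullet \otimes_{\Z[K]} \Z/p^r$ and $C^{\bullet,\BM}(K,\Z/p^r) = \Hom_{\Z[K]}(F^{\BM}_\bullet,\Z/p^r)$ are bounded complexes of finite free $\Z/p^r$-modules computing $H^{\BM}_\bullet(X_K,\Z/p^r)$ and $H^\bullet_c(X_K,\Z/p^r)$ respectively, and by the adjunction underlying~\eqref{eq:universal coefficient isom} (applied with $R = M = N = \Z/p^r$, where $M$ carries the trivial $K$-action, so $\mathrm{RHom}_{\Z/p^r}(M,N) = \Z/p^r$) there is a natural quasi-isomorphism $\Hom_{\Z/p^r}(C^{\BM}_\bullet(K,\Z/p^r),\Z/p^r) \simeq C^{\bullet,\BM}(K,\Z/p^r)$.

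Next I would dualize once more. Since $C^{\bullet,\BM}(K,\Z/p^r)$ is a bounded complex of finite free $\Z/p^r$-modules, the canonical biduality map $C^{\BM}_\bullet(K,\Z/p^r) \to \Hom_{\Z/p^r}(\Hom_{\Z/p^r}(C^{\BM}_\bullet(K,\Z/p^r),\Z/p^r),\Z/p^r)$ is an isomorphism of complexes; composing with the previous step gives a natural isomorphism $C^{\BM}_\bullet(K,\Z/p^r) \cong \Hom_{\Z/p^r}(C^{\bullet,\BM}(K,\Z/p^r),\Z/p^r)$. Because $\Z/p^r$ is self-injective, $\Hom_{\Z/p^r}(-,\Z/p^r)$ is exact and hence commutes with passage to cohomology, so taking $H_i$ of the right-hand complex yields $\Hom_{\Z/p^r}(H^i_c(X_K,\Z/p^r),\Z/p^r)$, while the left-hand side has $H_i = H^{\BM}_i(X_K,\Z/p^r)$. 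This is the asserted isomorphism.

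Finally, for compatibility with change of level: each ingredient --- the universal coefficient isomorphism, the biduality map, and the identification for a finite free $\Z[K]$-module $F$ of $F\otimes_{\Z[K]}\Z/p^r$ with $\Hom_{\Z/p^r}(\Hom_{\Z[K]}(F,\Z/p^r),\Z/p^r)$ --- is natural in $F^{(\BM)}_\bullet$, and the transition maps between different levels are induced by chain maps between the $F^{(\BM)}_\bullet$, so the resulting isomorphisms assemble into a morphism of the relevant direct/inverse systems. I do not expect a genuine obstacle here: the only points needing care are the left/right $\Z[K]$-module bookkeeping in the identification $\Hom_{\Z/p^r}(\Hom_{\Z[K]}(F,\Z/p^r),\Z/p^r)\cong F\otimes_{\Z[K]}\Z/p^r$ (which reduces to the case $F=\Z[K]$, where both sides are $\Z/p^r$), and remembering to invoke self-injectivity of $\Z/p^r$ rather than any ``coefficient field'' argument, so that both the biduality step and the commutation of duals with (co)homology are valid.
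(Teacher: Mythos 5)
Your argument is correct and follows essentially the same route as the paper: both pass to the finite Borel--Serre complexes, use the tensor--Hom adjunction $\Hom_{\Z[K]}(F^{\BM}_\bullet,\Z/p^r)\cong\Hom_{\Z/p^r}(F^{\BM}_\bullet\otimes_{\Z[K]}\Z/p^r,\Z/p^r)$, and exploit self-injectivity of $\Z/p^r$ together with (bi)duality of finite $\Z/p^r$-modules. The only difference is cosmetic: the paper first extracts the universal coefficient isomorphism $H^\bullet_c\simeq\Hom_{\Z/p^r}(H^{\BM}_\bullet,\Z/p^r)$ and then double-dualizes the cohomology groups, whereas you perform the biduality at the chain level and take homology at the end.
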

\begin{proof}
The result can be obtained by applying the functor $\Hom_{\ZZ/p^r}(-,\ZZ/p^r)$
to the universal coefficient isomorphism
\[ H^{\bullet}_c(X_K,\ZZ/p^r) \simeq \Hom_{\ZZ/p^r}\left( H_{\bullet}^{\BM}(X_K,\ZZ/p^r), \ZZ/p^r \right) \,, \]
and using the fact that any finite $\Z/p^r$-module is naturally
isomorphic to its double dual.

The universal coefficent isomorphism can be proved as follows.
We have isomorphisms
\[ H^{\BM}_{*}(X_K,\Z/p^r) \simeq H_*\left(F_{\bullet}^{\BM} \otimes_{\Z[K^p K]} \Z/p^r \right) \]
\[ H^*_c(X_K,\Z/p^r) \simeq H^*\left(\Hom_{\Z[K^p K]} \left(F_{\bullet}^{\BM}, \Z/p^r\right) \right) \]
By the adjunction between tensor products and homomorphisms,
\[ \Hom_{\Z[K^p K]} \left(F_{\bullet}^{\BM}, \Z/p^r\right) \simeq \Hom_{\Z/p^r} \left( F_{\bullet}^{\BM} \otimes_{\Z[K^p K]} \Z/p^r, \Z/p^r \right) \,. \]
Then the universal coefficient isomorphism follows by taking cohomology and
observing that $\Z/p^r$ is an injective $\Z/p^r$-module.  All of these
isomorphisms are compatible with changing the level.
\end{proof}
\begin{proof}[Proof of Theorem \ref{homology vanishing}]
To prove the first claim, we use Lemma \ref{universal coefficient} to write
\begin{equation}\label{eq:precise dual}
 \varprojlim_{K \supseteq H} H^{\BM}_i(X_K,\Z/p^r) \simeq
\Hom_{\Z/p^r}\left(\varinjlim_{K \supseteq H} H^i_c(X_K,\Z/p^r),\Z/p^r \right) 
\end{equation}
and then apply Corollary \ref{strongest vanishing theorem}. 
From Theorem \ref{thm:ordinary PD} and Theorem \ref{thm:completed hom via distributions}, we obtain
the Poincar\'e duality spectral sequence
\[ \Ext^i_{\sD(T_0)}(\tH_j(N_1)^{T^+-\ord},\sD(T_0)) \Rightarrow \tH_{2d-i-j}^{\BM}(N_1)^{T^+-\ord} \,. \]
Then the second claim follows from the first claim and the above spectral sequence by the same
argument as in \cite[Corollary 4.2.3]{scholze-galois} (note that $D$ from Theorem~\ref{thm:ordinary PD} can be identified with $2d$).
\end{proof}
\bibliographystyle{amsalpha}
\bibliography{ArizonaProjectBib}

\end{document}
\grid